\newcommand*{\MRref}[2]{ \href{http://www.ams.org/mathscinet-getitem?mr=#1}{MR \textbf{#1}}}
\newcommand*{\arxiv}[1]{\href{http://www.arxiv.org/abs/#1}{arXiv: #1}}
\numberwithin{equation}{section}
\theoremstyle{plain}
\newtheorem{theorem}[equation]{Theorem}
\newtheorem{lemma}[equation]{Lemma}
\newtheorem{proposition}[equation]{Proposition}
\newtheorem{corollary}[equation]{Corollary}
\theoremstyle{definition}
\newtheorem{definition}[equation]{Definition}
\newtheorem{notation}[equation]{Notation}
\theoremstyle{remark}
\newtheorem{remark}[equation]{Remark}
\newtheorem{example}[equation]{Example}
\newcommand*{\KK}{\textup{KK}}
\newcommand*{\Cstarcat}{\mathfrak C^*}
\newcommand*{\KKcat}{\mathfrak{KK}}
\newcommand*{\RKKcat}{\mathfrak{RKK}}
\newcommand*{\RKK}{\textup{RKK}}
\newcommand*{\cRKK}{\mathscr R\textup{KK}}
\newcommand*{\K}{\textup K}
\newcommand*{\KO}{\textup{KO}}
\newcommand*{\RK}{\textup{RK}}
\newcommand*{\PD}{\textup{PD}}
\newcommand*{\SPD}{\textup{PD}_2}
\newcommand*{\Ktop}{\textup K^\textup{top}}
\newcommand*{\forget}{\textup{forget}}
\newcommand*{\ID}{\textup{id}}
\newcommand*{\lf}{\textup{lf}}
\newcommand*{\flip}{\textup{flip}}
\newcommand*{\VB}{V}
\newcommand*{\Spinor}{S}
\newcommand*{\Tubtot}{i}
\newcommand*{\Mult}{\mathcal M}
\newcommand*{\Dual}{\mathcal P}
\newcommand*{\Spinc}{\textup{Spin}^\textup c}
\newcommand*{\C}{\mathbb C}
\newcommand*{\Z}{\mathbb Z}
\newcommand*{\N}{\mathbb N}
\newcommand*{\R}{\mathbb R}
\newcommand*{\Comp}{\mathbb K}
\newcommand*{\Hils}{\mathcal H}
\newcommand*{\CONT}{\textup C}
\newcommand*{\EG}{\mathcal E}
\newcommand*{\Tvert}{\textup T}
\newcommand*{\Grd}{\mathcal G}
\newcommand*{\Base}{Z}
\newcommand*{\Tot}{X} 
\newcommand*{\Other}{Y}
\newcommand*{\grd}{g} 
\newcommand*{\base}{z}
\newcommand*{\tot}{x} 
\newcommand*{\other}{y}
\newcommand*{\UNIT}{\mathds 1}
\newcommand*{\nb}{\nobreakdash}
\newcommand*{\Cst}{\textup C^*}
\newcommand*{\Cstr}{\textup C^*_\textup r}
\newcommand*{\abs}[1]{\lvert#1\rvert}
\newcommand*{\norm}[1]{\lVert#1\rVert}
\newcommand*{\conj}[1]{\overline{#1}}
\newcommand*{\funclass}{\textup V}
\newcommand*{\dualfunclass}{\Lambda}
\newcommand*{\defeq}{\mathrel{\vcentcolon=}}
\newcommand*{\eqdef}{\mathrel{=\vcentcolon}}
\newcommand*{\pt}{\star}
\newcommand*{\comul}{\nabla}
\newcommand*{\blank}{\text\textvisiblespace}
\newcommand*{\inOb}{\mathrel{\in\in}}
\newcommand{\ctau}{\mathcal C_\tau}
\newcommand{\Fol}{\mathcal F}
\DeclareMathOperator{\supp}{supp}
\DeclareMathOperator{\Lef}{Lef}
\DeclareMathOperator{\Eul}{Eul}
\begin{document}
\title{Dualities in equivariant Kasparov theory}

\author{Heath Emerson}
\email{hemerson@math.uvic.ca}
\address{Department of Mathematics and Statistics\\
  University of Victoria\\
  PO BOX 3045 STN CSC\\
  Victoria, B.C.\\
  Canada V8W 3P4}

\author{Ralf Meyer}
\email{rameyer@uni-math.gwdg.de}
\address{Mathematisches Institut and Courant Research Centre ``Higher Order Structures''\\
  Georg-August Universit\"at G\"ottingen\\
  Bunsenstra{\ss}e 3--5\\
  37073 G\"ottingen\\
  Germany}

\begin{abstract}
  We study several duality isomorphisms between equivariant bivariant \(\K\)\nb-theory groups, generalising Kasparov's first and second Poincar\'e duality isomorphisms.

  We use the first duality to define an equivariant generalisation of Lefschetz invariants of generalised self-maps.  The second duality is related to the description of bivariant Kasparov theory for commutative \(\Cst\)\nb-algebras by families of elliptic pseudodifferential operators.  For many groupoids, both dualities apply to a universal proper \(\Grd\)\nb-space.  This is a basic requirement for the dual Dirac method and allows us to describe the Baum--Connes assembly map via localisation of categories.
\end{abstract}

\subjclass[2000]{19K35, 46L80}
\thanks{Heath Emerson was supported by a National Science and Research Council of Canada Discovery grant.  Ralf Meyer was supported by the German Research Foundation (Deutsche Forschungsgemeinschaft (DFG)) through the Institutional Strategy of the University of G\"ottingen.}
\maketitle

\tableofcontents

\section{Introduction}
\label{sec:overview}

The \(\K\)\nb-homology of a smooth compact manifold~\(M\) is naturally isomorphic to the (compactly supported) \(\K\)\nb-theory of its tangent bundle~\(\Tvert M\) via the map that assigns to a \(\K\)\nb-theory class on~\(\Tvert M\) an elliptic pseudodifferential operator with appropriate symbol.  Dually, the \(\K\)\nb-theory of~\(M\) is isomorphic to the (locally finite) \(\K\)\nb-homology of~\(\Tvert M\).  Both statements have bivariant generalisations, which identify Kasparov's group \(\KK_*\bigl(\CONT(M_1),\CONT(M_2)\bigr)\) for two smooth compact manifolds firstly with \(\K^*(\Tvert M_1\times M_2)\), secondly with \(\K^\lf_*(\Tvert M_2\times M_1) \defeq \KK_*(\CONT_0(\Tvert M_2\times M_1),\C)\).

In this article, we consider substantial generalisations of these two duality isomorphisms in which we replace smooth compact manifolds by more general spaces and work equivariantly with respect to actions of locally compact groups or groupoids.  Furthermore, we get duality theorems in twisted bivariant \(\K\)\nb-theory by allowing locally trivial \(\Cst\)\nb-algebra bundles.  Here we mainly develop the abstract theory behind the duality isomorphisms.  Some applications are explained in \cites{Emerson-Meyer:Euler, Emerson-Meyer:Equi_Lefschetz, Emerson-Meyer:Correspondences}.

Outside the world of manifolds, duality statements require a substitute for the tangent space.  Since there is no canonical choice, we follow an axiomatic approach, first finding necessary and sufficient conditions for the duality isomorphisms and then verifying them for the tangent space of a smooth manifold, and for the manifold itself if it is endowed with an equivariant \(\Spinc\)-structure.  We began this study in~\cite{Emerson-Meyer:Euler} with the notions of abstract duals and Kasparov duals, which are related to the first duality isomorphism in the untwisted case.  Here we also examine duality isomorphisms for non-trivial bundles of \(\Cst\)\nb-algebras (this contains computations in~\cite{Echterhoff-Emerson-Kim:Duality} related to duality in twisted bivariant \(\K\)\nb-theory) and consider the second duality isomorphism, both twisted and untwisted.  Moreover, we show that both duality isomorphisms become equivalent in the compact case.  The second duality isomorphism is used in an essential way in~\cite{Emerson-Meyer:Correspondences} to develop a topological model of equivariant Kasparov theory that refines the theory of correspondences due to Alain Connes and Georges Skandalis in~\cite{Connes-Skandalis:Longitudinal}, and to Paul Baum (see~\cite{Baum-Block:Bicycles}).  Both dualities together imply the equivalence of the construction of the Baum--Connes assembly map in~\cite{Baum-Connes-Higson:BC} with the localisation approach of~\cite{Meyer-Nest:BC}.

Our motivation for developing duality was to define and explore a new homotopy invariant of a space equipped with an action of a group or groupoid called the Lefschetz map.  Duality is essential to its definition, while its invariance properties rely on a careful analysis of the functoriality of duals.  In a forthcoming article, we will compute the Lefschetz map in a systematic way using the topological model of \(\KK\)-theory in~\cite{Emerson-Meyer:Correspondences}.

Jean-Louis Tu also formulates similar duality isomorphisms in~\cite{Tu:Novikov}, but there are some technical issues that he disregards, so that assumptions are missing in his theorems.  In particular, the two duality isomorphisms require two different variants of the local dual Dirac element.  In practice, these are often very closely related, as we shall explain, but this only means that both have the same source, not that one could be obtained from the other.

We now explain the contents of this article in more detail.  Let~\(\Grd\) be a locally compact Hausdorff groupoid with Haar system (see~\cite{Paterson:Groupoids}), let~\(\Base\) denote its object space.  Let~\(\Tot\) be a locally compact, proper \(\Grd\)\nb-space.  An \emph{abstract dual} for~\(\Tot\) of dimension \(n\in\Z\) consists of a \(\Grd\)\nb-\(\Cst\)-algebra~\(\Dual\) and a class
\[
\Theta\in\RKK^\Grd_n(\Tot;\CONT_0(\Base), \Dual)
\defeq \KK^{\Grd\ltimes\Tot}_n(\CONT_0(\Tot),\CONT_0(\Tot)\otimes_\Base\Dual)
\]
such that the Kasparov product with~\(\Theta\) induces an isomorphism
\begin{equation}
  \label{eq:intro_first_duality}
  \KK^\Grd_*(\Dual\otimes_\Base A,B) \xrightarrow{\cong} \RKK^\Grd_{*+n}(\Tot;A,B)
\end{equation}
for all \(\Grd\)\nb-\(\Cst\)\nb-algebras \(A\) and~\(B\).  This isomorphism is the \emph{first Poincar\'e duality isomorphism} and is already studied in~\cite{Emerson-Meyer:Euler}.  We may get rid of the dimension~\(n\) by suspending~\(\Dual\), but allowing \(n\neq0\) is useful for many examples.

Let~\(\Tot\) be a bundle of smooth manifolds over~\(\Base\) with a proper and fibrewise smooth action of~\(\Grd\), and let~\(\Tvert\Tot\) be its vertical tangent bundle.  Then \(\Dual \defeq \CONT_0(\Tvert\Tot)\) with a suitable~\(\Theta\) is an abstract dual for~\(\Tot\).  Except for the generalisation to bundles of smooth manifolds, this result is already due to Gennadi Kasparov~\cite{Kasparov:Novikov}*{\S4}.  More generally, if~\(\Tot\) is a bundle of smooth manifolds with boundary, then \(\Dual\defeq \CONT_0(\Tvert\Tot^\circ)\) is an abstract dual for~\(\Tot\), where~\(\Tot^\circ\) is obtained from~\(\Tot\) by attaching an open collar on the boundary.  A more complicated construction in~\cite{Emerson-Meyer:Euler} provides abstract duals for simplicial complexes (here~\(\Grd\) is a group acting simplicially on~\(\Tot\)).  With additional effort, it should be possible to enhance the duality isomorphism in~\cite{Debord-Lescure:K-duality_stratified} to an abstract dual for stratified pseudomanifolds.

To understand the meaning of~\eqref{eq:intro_first_duality}, we specialise to the case where~\(\Base\) is a point, so that~\(\Grd\) is a group, \(\Tot\) is a smooth manifold1 with boundary with a proper, smooth action of~\(\Grd\), and \(A=B=\C\).  In this case, we will establish a duality isomorphism with \(\Dual\defeq\CONT_0(\Tvert\Tot^\circ)\).  The left hand side in the first duality isomorphism~\eqref{eq:intro_first_duality} is the \(\Grd\)\nb-equivariant representable \(\K\)\nb-theory of~\(\Tot\),
\[
\RK^*_\Grd(\Tot) \defeq \RKK^\Grd_{*+n}(\Tot;\C,\C) \defeq
\KK^{\Grd\ltimes\Tot}_{*+n}\bigl(\CONT_0(\Tot),\CONT_0(\Tot)\bigr)
\]
(see~\cite{Emerson-Meyer:Equivariant_K}).  The right hand side of~\eqref{eq:intro_first_duality} is the locally finite \(\Grd\)\nb-equivariant \(\K\)\nb-homology of~\(\Tvert\Tot^\circ\),
\[
\K_*^{\Grd,\lf}(\Tvert\Tot^\circ) \defeq \KK^\Grd_*(\CONT_0(\Tvert\Tot^\circ),\C),
\]
where locally finite means finite on \(\Grd\)\nb-compact subsets in our equivariant setting.  Recall that the \(\Grd\)\nb-equivariant \(\K\)\nb-homology of~\(\Tvert\Tot^\circ\) is the inductive limit
\[
\K_*^\Grd(\Tvert\Tot^\circ) \defeq \varinjlim_\Other \KK^\Grd_*(\CONT_0(\Other),\C),
\]
where~\(\Other\) runs through the directed set of \(\Grd\)\nb-compact subsets of~\(\Tvert\Tot^\circ\).  We also establish a variant of~\eqref{eq:intro_first_duality} that specialises to an isomorphism
\[
\K_*^\Grd(\Tvert\Tot^\circ) \cong \K^*_\Grd(\Tot)
\]
between the \(\Grd\)\nb-equivariant \(\K\)\nb-homology of~\(\Tvert\Tot^\circ\) and the \(\Grd\)\nb-equivariant \(\K\)\nb-theory of~\(\Tot\) (homotopy theorists woud say ``\(\K\)\nb-theory with \(\Grd\)\nb-compact support'').

If the anchor map \(p_\Tot\colon \Tot\to\Base\) is proper, then~\eqref{eq:intro_first_duality} is equivalent to an isomorphism
\[
\KK^\Grd_*(\Dual\otimes_\Base A,B) \xrightarrow{\cong} \KK^\Grd_{*+n}(A,\CONT_0(\Tot)\otimes_\Base B),
\]
that is, to a duality between \(\CONT_0(\Tot)\) and~\(\Dual\) in the tensor category \(\KK^\Grd\) (see Section~\ref{sec:first_duality_compact}).  But in general, abstract duals cannot be defined purely inside \(\KK^\Grd\).

Abstract duals are unique up to \(\KK^\Grd\)\nb-equivalence and covariantly functorial for continuous, \emph{not-necessarily-proper} \(\Grd\)\nb-equivariant maps: if \(\Dual\) and~\(\Dual'\) are abstract duals for two \(\Grd\)\nb-spaces \(\Tot\) and~\(\Tot'\), then a continuous \(\Grd\)\nb-equivariant map \(f\colon \Tot\to\Tot'\) induces a class \(\alpha_f\in \KK^\Grd_*(\Dual,\Dual')\), and \(f\mapsto \alpha_f\) is functorial in a suitable sense.

For instance, if~\(\Tot\) is a universal proper \(\Grd\)\nb-space and \(\Tot'=\Base\), then the canonical projection \(\Tot\to\Base\) induces a class \(\alpha_f\in \KK^\Grd_0\bigl(\Dual,\CONT_0(\Base)\bigr)\).  This plays the role of the Dirac morphism of~\cite{Meyer-Nest:BC} --~in the group case, it \emph{is} the Dirac morphism~-- which is an important ingredient in the description of the Baum--Connes assembly map via localisation of categories.  This example shows that abstract duals allow us to translate constructions from homotopy theory to non-commutative topology.

In contrast, \(\CONT_0(\Tot)\) is contravariantly functorial, and only for proper maps.  Thus the map from the classifying space to a point does not induce anything on \(\CONT_0(\Tot)\).

An abstract dual for a \(\Grd\)\nb-space~\(\Tot\) gives rise to a certain grading preserving group homomorphism
\begin{equation}
  \label{eq:intro:lef_map}
  \Lef\colon
  \RKK^\Grd_*\bigl(\Tot;\CONT_0(\Tot),\CONT_0(\Base)\bigr)
  \to \KK^\Grd_*\bigl(\CONT_0(\Tot),\CONT_0(\Base)\bigr).
\end{equation}
This is the \emph{Lefschetz map} alluded to above.  The functoriality of duals implies that it only depends on the proper \(\Grd\)\nb-homotopy type of~\(\Tot\).  There is a natural map
\[
\KK_*^{\Grd}\bigl(\CONT_0(\Tot), \CONT_0(\Tot)\bigr) \to \RKK^\Grd_*\bigl(\Tot;\CONT_0(\Tot),\CONT_0(\Base)\bigr)
\]
which sends the class of a \(\Grd\)-equivariant proper map \(\varphi\colon \Tot\to\Tot\) to its \emph{graph}
\[
\tilde{\varphi}\colon \Tot \to \Tot\times_\Base \Tot, \qquad \tilde{\varphi}(\tot) \defeq (\tot, \varphi(\tot)).
\]
Since the latter map is proper even if the original map is not, we can think of the domain of \(\Lef\) as not-necessarily-proper \(\KK\)-self-maps of~\(\Tot\).  Combining both maps, we thus get a map
\[
\Lef\colon \KK^\Grd_*\bigl(\CONT_0(\Tot), \CONT_0(\Tot)\bigr) \to \KK^\Grd_*(\CONT_0(\Tot), \CONT(\Base)).
\]
The Euler characteristic of~\(\Tot\) already defined in~\cite{Emerson-Meyer:Euler} is the equivariant Lefschetz invariant of the identity map on~\(\Tot\).  With the specified domain~\eqref{eq:intro:lef_map}, the map \(\Lef\) is split surjective, so that Lefschetz invariants can be arbitrarily complicated.  Usually, the Lefschetz invariants of ordinary self-maps are quite special and can be represented by \(^*\)\nb-homomorphisms to the \(\Cst\)\nb-algebra of compact operators on some graded \(\Grd\)\nb-Hilbert space (see~\cite{Emerson-Meyer:Equi_Lefschetz}).

In many examples of abstract duals, the \(\Grd\)\nb-\(\Cst\)-algebra~\(\Dual\) has the additional structure of a \(\Grd\ltimes\Tot\)-\(\Cst\)-algebra.  In this case, we arrive at explicit conditions for~\eqref{eq:intro_first_duality} to be an isomorphism, which also involve explicit formulas for the inverse of the duality isomorphism~\eqref{eq:intro_first_duality} and for the Lefschetz map (see Theorem~\ref{the:first_duality} and Equation~\eqref{eq:compute_Lef}).  These involve the tensor product functor
\[
T_\Dual\colon \RKK^\Grd_*(\Tot;A,B) \to \KK^\Grd_*(\Dual\otimes_\Base A,\Dual\otimes_\Base B)
\]
and a class \(D\in\KK^\Grd_{-n}\bigl(\Dual,\CONT_0(\Base)\bigr)\) which is determined uniquely by~\(\Theta\).  Since the conditions for the duality isomorphism~\eqref{eq:intro_first_duality} are already formulated implicitly in~\cite{Kasparov:Novikov}*{\S4}, we call this situation \emph{Kasparov duality}.

The formula for the inverse isomorphism to~\eqref{eq:intro_first_duality} makes sense in greater generality: for any \(\Grd\ltimes\Tot\)-\(\Cst\)\nb-algebra~\(A\), we get a canonical map
\begin{equation}
  \label{eq:intro_first_duality_bundle}
  \KK^{\Grd\ltimes\Tot}_*(A,\CONT_0(\Tot)\otimes_\Base B) \to \KK^\Grd_*(A\otimes_\Tot \Dual,B).
\end{equation}
This is a more general situation because~\(A\) is allowed to be a non-trivial bundle over~\(\Tot\).  If~\(A\) is a trivial bundle \(\CONT_0(\Tot,A_0)\), then \(A\otimes_\Tot \Dual \cong A_0\otimes\Dual\) and the isomorphism in~\eqref{eq:intro_first_duality_bundle} is the inverse map to~\eqref{eq:intro_first_duality}.  It is shown in~\cite{Echterhoff-Emerson-Kim:Duality} that the map~\eqref{eq:intro_first_duality_bundle} is an isomorphism in some cases, but not always; this depends on whether or not the bundle~\(A\) is locally trivial in a sufficiently strong (equivariant) sense.  Theorem~\ref{the:KK_first_non-trivial} provides a necessary and sufficient condition for~\eqref{eq:intro_first_duality_bundle} to be an isomorphism.

We verify these conditions for the tangent duality if~\(\Tot\) is a bundle of smooth manifolds with boundary and the bundle~\(A\) is strongly locally trivial.  Let~\(A\) be a continuous trace algebra with spectrum~\(\Tot\) and a sufficiently nice \(\Grd\)\nb-action, so that duality applies, and let \(B=\CONT_0(\Base)\) in \eqref{eq:intro_first_duality_bundle}.  Let~\(A^*\) be the inverse of~\(A\) in the \(\Grd\)\nb-equivariant Brauer group of~\(\Tot\), that is, \(A\otimes_\Tot A^*\) is \(\Grd\)\nb-equivariantly Morita equivalent to \(\CONT_0(\Tot)\).  Now the left hand side in~\eqref{eq:intro_first_duality_bundle} may be interpreted as the \(\Grd\)\nb-equivariant twisted representable \(\K\)\nb-theory of~\(\Tot\) with twist~\(A^*\) because tensoring with~\(A^*\) provides an isomorphism
\[
\KK^{\Grd\ltimes\Tot}_*\bigl(A,\CONT_0(\Tot)\bigr) \cong \KK^{\Grd\ltimes\Tot}_*(\CONT_0(\Tot),A^*).
\]
On the other hand, the right hand side of \eqref{eq:intro_first_duality_bundle} is \(\KK^\Grd_*\bigl(A\otimes_\Tot \CONT_0(\Tvert\Tot^\circ),\CONT_0(\Base)\bigr)\), that is, the locally finite \(\Grd\)\nb-equivariant twisted \(\K\)\nb-homology of~\(\Tvert\Tot^\circ\), where the twist is given by the pull-back of~\(A\).

Thus~\eqref{eq:intro_first_duality_bundle} contains a twisted and groupoid-equivariant version of the familiar isomorphism \(\K^*(\Tot)\cong \K_*^\lf(\Tvert\Tot)\) for smooth compact manifolds (see also~\cite{Tu:Twisted_Poincare}).

In addition, we get a canonical map
\[
\varinjlim \KK^{\Grd\ltimes\Tot}_*(A|_\Other,\CONT_0(\Tot)\otimes_\Base B) \to
\varinjlim \KK^\Grd_*(A|_\Other\otimes_\Tot \Dual,B),
\]
where~\(\Other\) runs through the directed set of \(\Grd\)\nb-compact subsets of~\(\Tot\).  If~\(\Tot\) is a bundle of smooth manifolds with boundary, \(A\) is strongly locally trivial, and~\(\Dual\) is \(\CONT_0(\Tvert\Tot^\circ)\), then the latter map is an isomorphism as well.  This specialises to an isomorphism \(\K^\Grd_*(\Tvert\Tot^\circ) \cong \K_\Grd^*(\Tot)\) for \(A=\CONT_0(\Tot)\) and \(B=\CONT_0(\Base)\).

We now discuss the second duality isomorphism, which generalises the isomorphism \(\K_*(\Tot)\cong \K^*(\Tvert\Tot)\) for smooth compact manifolds.  Kasparov only formulates it for compact manifolds with boundary (see \cite{Kasparov:Novikov}*{Theorem 4.10}), and it is not obvious how best to remove the compactness assumption.  We propose to consider the canonical map
\begin{equation}
  \label{eq:intro_second_duality_bundle}
  \KK^{\Grd\ltimes\Tot}_{*+n}(A,B\otimes_\Base \Dual) \to
  \KK^\Grd_*(A,B),
\end{equation}
which first forgets the \(\Tot\)\nb-structure and then composes with \(D\in\KK^\Grd_{-n}\bigl(\Dual,\CONT_0(\Base)\bigr)\).  Here~\(A\) is a \(\Grd\ltimes\Tot\)-\(\Cst\)\nb-algebra and~\(B\) is a \(\Grd\)\nb-\(\Cst\)\nb-algebra.  For instance, if \(A= \CONT_0(\Tot)\) and \(B=\CONT_0(\Base)\), then this becomes a map
\[
\KK^{\Grd\ltimes\Tot}_{*+n}(\CONT_0(\Tot),\Dual) \to \KK^\Grd_*\bigl(\CONT_0(\Tot),\CONT_0(\Base)\bigr),
\]
that is, the right hand side is the \(\Grd\)\nb-equivariant locally finite \(\K\)\nb-homology of~\(\Tot\).  For the tangent duality \(\Dual = \CONT_0(\Tvert\Tot^\circ)\), the left hand side is, by definition, the \(\Grd\)\nb-equivariant \(\K\)\nb-theory of~\(\Tvert\Tot^\circ\) with \(\Tot\)\nb-compact support (see~\cite{Emerson-Meyer:Equivariant_K}).

Theorem~\ref{the:second_duality_general} provides a necessary and sufficient condition for~\eqref{eq:intro_second_duality_bundle} to be an isomorphism.  It is shown in Section~\ref{sec:tangent_dual} that these conditions hold for the tangent dual of a bundle~\(\Tot\) of smooth manifolds with boundary.  Hence~\eqref{eq:intro_second_duality_bundle} specialises to an isomorphism \(\K_*^{\Grd,\lf}(\Tot) \cong \RK^*_{\Grd,\Tot}(\Tvert\Tot^\circ)\) between the \(\Grd\)\nb-equivariant locally finite \(\K\)\nb-homology of~\(\Tot\) and the \(\Grd\)\nb-equivariant \(\K\)\nb-theory of~\(\Tvert\Tot^\circ\) with \(\Tot\)\nb-compact support.

As in the first duality isomorphism, we get a version of the second duality isomorphism in twisted equivariant \(\K\)\nb-theory if we allow~\(A\) to be a strongly locally trivial \(\Grd\)\nb-equivariant bundle of \(\Cst\)\nb-algebras over~\(\Tot\): the twisted \(\Grd\)\nb-equivariant locally finite \(\K\)\nb-homology of~\(\Tot\) with twist~\(A\) is isomorphic to the twisted \(\Grd\)\nb-equivariant \(\K\)\nb-theory of~\(\Tvert\Tot^\circ\) with \(\Tot\)\nb-compact support and with twist~\(A^*\).  And we get a version with different support conditions:
\begin{equation}
  \label{eq:intro_second_duality_bundle_local}
  \varinjlim \KK^{\Grd\ltimes\Tot}_{*+n}(A|_\Other,B\otimes_\Base \Dual) \to
  \varinjlim \KK^\Grd_*(A|_\Other,B),
\end{equation}
where~\(\Other\) runs through the directed set of \(\Grd\)\nb-compact subsets of~\(\Tot\).  For bundles of smooth manifolds, the latter specialises to an isomorphism
\[
\K^*_\Grd(\Tvert\Tot^\circ) \cong \K_*^\Grd(\Tot).
\]

The duality isomorphism~\eqref{eq:intro_second_duality_bundle} for the tangent space dual specialises to an isomorphism
\begin{equation}
  \label{eq:symbols_duality}
  \RK^\Grd_{*,\Tot}(\Other\times_\Base \Tvert\Tot^\circ)
  \cong \KK^\Grd_*\bigl(\CONT_0(\Tot),\CONT_0(\Other)\bigr).
\end{equation}
The first group is the home for symbols of families of elliptic pseudodifferential operators on~\(\Tot\) parametrised by~\(\Other\).  There are different formulas for this isomorphism, using a topological index map or the family of Dolbeault operators along the fibres of~\(\Tvert\Tot^\circ\to\Base\), or pseudodifferential calculus.  These are based on different formulas for the class~\(D\) involved in the duality isomorphisms.  Since~\(D\) is determined uniquely by~\(\Theta\), for which there is only one reasonable geometric formula, duality isomorphisms also contain equivariant index theorems such as Kasparov's Index Theorem~\cite{Kasparov:Invariants_elliptic}.

The duality isomorphism~\eqref{eq:symbols_duality} is the crucial step in the geometric description of Kasparov theory in~\cite{Emerson-Meyer:Correspondences}.  The question when this can be done remained unexamined since Paul Baum introduced his bicycles or correspondences some thirty years ago.  Even for the special case of \(\KK_*(\CONT_0(\Tot),\C)\) for a finite CW-complex~\(\Tot\), a detailed proof appeared only recently in~\cite{Baum-Higson-Schick:Equivalence}.  In~\cite{Emerson-Meyer:Correspondences}, we prove that the equivariant Kasparov groups \(\KK^\Grd_*\bigl(\CONT_0(\Tot), \CONT_0(\Other)\bigr)\) can be described in purely topological terms if~\(\Tot\) is a bundle over~\(\Base\) of smooth manifolds with boundary with a smooth action of a proper groupoid~\(\Grd\) and some conditions regarding equivariant vector bundles are met.

The basic idea of the argument is that an isomorphism similar to~\eqref{eq:symbols_duality} exists in the geometric theory.  Thus the problem of identifying geometric and analytic \emph{bivariant} \(\K\)\nb-theory reduces to the problem of identifying corresponding \emph{monovariant} \(\K\)\nb-theory groups with some support conditions.  This becomes trivial with an appropriate definition of the geometric cycles.  Besides finding this appropriate definition, the main work in~\cite{Emerson-Meyer:Correspondences} is necessary to equip the geometric version of \(\KK\) with all the extra structure that is needed to get duality isomorphisms.  Our analysis here already shows what is involved: composition and exterior products and certain pull-back and forgetful functors.

Let~\(\Tot\) be a proper \(\Grd\)\nb-space and let~\(B\) be a \(\Grd\ltimes\Tot\)-\(\Cst\)\nb-algebra.  Then \cite{Emerson-Meyer:Equivariant_K}*{Theorem 4.2} implies a natural isomorphism
\begin{equation}
  \label{eq:RKK_via_crossed}
  \varinjlim \KK^{\Grd\ltimes\Tot}_*(\CONT_0(\Other), B) \cong \K_*(\Grd\ltimes B),
\end{equation}
where the inductive limit runs over the directed set of \(\Grd\)\nb-compact subsets of~\(\Tot\) as in~\eqref{eq:intro_second_duality_bundle_local}.  Equations \eqref{eq:intro_second_duality_bundle} and~\eqref{eq:RKK_via_crossed} yield a duality isomorphism
\begin{equation}
  \label{eq:above_isomorphism}
  \varinjlim \KK^\Grd_*(\CONT_0(\Other),B) \cong
  \K_{*+n}\bigl(\Grd\ltimes(\Dual \otimes_\Base B)\bigr).
\end{equation}

If~\(\Tot\) is also a universal proper \(\Grd\)\nb-space, then our duality isomorphisms are closely related to the different approaches to the Baum--Connes assembly map for~\(\Grd\).  The dual Dirac method is the main ingredient in most proofs of the Baum--Connes and Novikov Conjectures; this goes back to Gennadi Kasparov \cites{Kasparov:Novikov, Kasparov-Skandalis:Bolic}, who used the first Poincar\'e duality isomorphism to prove the Novikov Conjecture for discrete subgroups of almost connected groups.  We will see that the first duality isomorphism for a groupoid~\(\Grd\) acting on its classifying space is essentially equivalent to constructing a Dirac morphism for the groupoid, which is the (easier) half of the dual Dirac method.  The remaining half, the dual Dirac morphism, is a lifting of \(\Theta\in \RKK^{\Grd\ltimes\Tot}_*\bigl(\CONT_0(\Tot),\CONT_0(\Tot)\bigr)\) to \(\KK^\Grd_*\bigl(\CONT_0(\Base),\CONT_0(\Base)\bigr)\).  Even if such a lifting does not exist, the Dirac morphism \(D\in\KK^\Grd_{-n}\bigl(\Dual,\CONT_0(\Base)\bigr)\) is exactly what is needed for the localisation approach to the Baum--Connes assembly map in~\cite{Meyer-Nest:BC}.

The isomorphism~\eqref{eq:above_isomorphism} relates two approaches to the Baum--Connes assembly map with coefficients.  The left hand side is the topological \(\K\)\nb-theory defined in~\cite{Baum-Connes-Higson:BC}, whereas the right hand side is the topological \(\K\)\nb-theory in the localisation approach of~\cite{Meyer-Nest:BC}.  This is exactly the \(\gamma\)\nb-part of \(\K_*(\Grd\ltimes B)\) provided~\(\Grd\) has a \(\gamma\)\nb-element.

\smallskip

Finally, we describe the contents of the following sections.  Sections \ref{sec:groupoid_preliminaries} and~\ref{sec:grd_KK} contain preparatory remarks on groupoids, their actions on spaces and \(\Cst\)\nb-algebras, and equivariant Kasparov theory for groupoids.  We pay special attention to tensor product functors because these will play a crucial role.

Section~\ref{sec:def_Kasparov_dual} deals with the first Poincar\'e duality isomorphism and related constructions.  We introduce abstract duals and Kasparov duals and construct equivariant Euler characteristics and Lefschetz maps from them.  We explain how the first duality is related to Dirac morphisms and thus to the Baum--Connes assembly map, and we provide a necessary and sufficient condition for the first duality isomorphism to extend to non-trivial bundles, formalising an example considered in~\cite{Echterhoff-Emerson-Kim:Duality}.

Section~\ref{sec:first_duality_compact} studies Kasparov duality for bundles of compact spaces.  In this case, the first and second kind of duality are both equivalent to a more familiar notion of duality studied already by Georges Skandalis in~\cite{Skandalis:KK_survey}.

Section~\ref{sec:second_duality} treats the second duality isomorphism.  We introduce symmetric Kasparov duals, which guarantee both duality isomorphisms for trivial bundles.

In Section~\ref{sec:tangent_dual}, we construct symmetric Kasparov duals for bundles of smooth manifolds with boundary.  For a single smooth manifold, this example is already considered in~\cite{Kasparov:Novikov}.  We also extend the duality isomorphisms to certain locally trivial \(\Cst\)\nb-algebra bundles as coefficients.

\subsection{Some standing assumptions}
\label{sec:standing_assumptions}

To avoid technical problems, we tacitly assume all \(\Cst\)\nb-algebras to be separable, and all topological spaces to be locally compact, Hausdorff, and second countable.  Groupoids are tacitly required to be locally compact, Hausdorff, and second countable and to have a Haar system.

Several constructions of Kasparov duals contain Clifford algebras and hence yield \(\Z/2\)\nb-graded \(\Cst\)\nb-algebras.  Therefore, we tacitly allow all \(\Cst\)\nb-algebra to carry a \(\Z/2\)-grading; that is, ``\(\Cst\)\nb-algebra'' stands for ``\(\Z/2\)\nb-graded \(\Cst\)\nb-algebra'' throughout.

The general theory in Sections \ref{sec:groupoid_preliminaries}--\ref{sec:second_duality} is literally the same for complex, real, and ``real'' \(\Cst\)\nb-algebras (although we usually presume in our notation that we are in the complex case).  The construction of duals for bundles of smooth manifolds in Section~\ref{sec:tangent_dual} also works for these three flavours, with some small modifications that are pointed out where relevant.  Most importantly, the results about tangent space duality claimed above only hold if the tangent bundle is equipped with a ``real'' structure or replaced by another vector bundle.

\section{Preliminaries on groupoid actions}
\label{sec:groupoid_preliminaries}

We recall some basic notions regarding groupoids and their actions on spaces and \(\Cst\)\nb-algebras to fix our notation.  We pay special attention to tensor product operations and their formal properties, which are expressed in the language of symmetric monoidal categories (see \cites{MacLane:Categories, Saavedra:Tannakiennes, Pareigis:C-categories}).  This framework is particularly suited to the first Poincar\'e duality isomorphism.

\subsection{Groupoids and their actions on spaces}
\label{sec:grd_act_Cstar}

Let~\(\Grd\) be a (locally compact) groupoid.  We write \(\Grd^{(0)}\) and \(\Grd^{(1)}\) for the spaces of objects and morphisms in~\(\Grd\) and \(r,s\colon \Grd^{(1)}\rightrightarrows \Grd^{(0)}\) for the range and source maps.

\begin{definition}
  \label{def:space_over}
  Let~\(\Base\) be a (locally compact, Hausdorff, second countable topological) space.  A \emph{space over~\(\Base\)} is a continuous map \(f\colon \Tot\to \Base\).  If~\(f\) is clear from the context, we also call~\(\Tot\) itself a space over~\(\Base\).
\end{definition}

\begin{definition}
  \label{def:times_over}
  Let \(f\colon \Tot\to \Base\) and \(g\colon \Other\to\Base\) be spaces over~\(\Base\).  Their \emph{fibred product} is
  \[
  \Tot \times_{f,g} \Other \defeq \{(\tot,\other)\in \Tot\times\Other\mid f(\tot)=g(\other)\}
  \]
  with the subspace topology and the continuous map \((\tot,\other)\mapsto f(\tot)=g(\other)\).  Thus \(\Tot \times_{f,g} \Other\) is again a space over~\(\Base\).  If \(f,g\) are clear from the context, we also write \(\Tot \times_\Base \Other\) instead of \(\Tot\times_{f,g} \Other\).
\end{definition}

\begin{definition}
  \label{def:groupoid_action}
  A \emph{\(\Grd\)\nb-space} is a space \((\Tot,\pi)\) over~\(\Grd^{(0)}\) with a homeomorphism
  \[
  \Grd^{(1)}\times_{s,\pi} \Tot \to \Grd^{(1)}\times_{r,\pi} \Tot, \qquad (\grd,\tot)\mapsto (\grd\cdot\tot,\tot),
  \]
  subject to the usual associativity and unitality conditions.
\end{definition}

\begin{example}
  \label{def:groups_as_groupoids}
  If~\(G\) is a group then \(\Grd \defeq G\) is a groupoid with \(\Base = \{\star\}\), and \(G\)\nb-spaces have the usual meaning.
\end{example}

\begin{example}
  \label{exa:space_action}
  View a space~\(\Base\) as a groupoid with only identity morphisms, that is, \(\Base^{(1)}=\Base^{(0)}=\Base\).  A \(\Base\)\nb-space is nothing but a space over~\(\Base\).
\end{example}

\begin{definition}
  \label{def:transformation_groupoid}
  If~\(\Base\) is a \(\Grd\)\nb-space, then the \emph{transformation groupoid} \(\Grd\ltimes \Base\) is the groupoid with \((\Grd\ltimes \Base)^{(0)}\defeq \Base\),
  \begin{gather*}
    (\Grd\ltimes \Base)^{(1)} \defeq \Grd^{(1)}\times_{s,\pi} \Base \cong \{(\base_1,\grd,\base_2)\in \Base\times_{\pi,r} \Grd^{(1)}\times_{s,\pi} \Base\mid
    \base_1=\grd\cdot\base_2\},\\
    r(\base_1,\grd,\base_2) \defeq \base_1, \quad s(\base_1,\grd,\base_2) \defeq \base_2, \qquad (\base_1,\grd,\base_2)\cdot (\base_2,h,\base_3)\defeq (\base_1,\grd\cdot h,\base_3).
  \end{gather*}
  This groupoid inherits a Haar system from~\(\Grd\).
\end{definition}

\begin{lemma}
  \label{lem:space_over_transformation_groupoid}
  A \(\Grd\ltimes \Base\)-space is the same as a \(\Grd\)-space~\(\Tot\) with a \(\Grd\)\nb-equivariant continuous map \(p\colon \Tot\to \Base\).
\end{lemma}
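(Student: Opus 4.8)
The plan is to prove the lemma by unwinding Definition~\ref{def:groupoid_action} for the groupoid $\Grd\ltimes\Base$ of Definition~\ref{def:transformation_groupoid} and matching the resulting data, term by term, with the notion of a $\Grd$-space carrying a $\Grd$-equivariant map to~\(\Base\). I expect this to be purely a bookkeeping exercise built around one idea: that the ``base'' entry of a morphism of $\Grd\ltimes\Base$ becomes redundant once an action is specified. First I would fix notation: let $\pi_\Base\colon\Base\to\Grd^{(0)}$ denote the anchor map of the given $\Grd$-action on~\(\Base\), and recall from Definition~\ref{def:transformation_groupoid} that $(\Grd\ltimes\Base)^{(0)}=\Base$, that $(\Grd\ltimes\Base)^{(1)}\cong\Grd^{(1)}\times_{s,\pi_\Base}\Base$ with $s(\grd,\base)=\base$ and $r(\grd,\base)=\grd\cdot\base$, that composition reads $(\grd,h\cdot\base')\cdot(h,\base')=(\grd h,\base')$, and that the unit at~\(\base\) is $(1_{\pi_\Base(\base)},\base)$.

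Starting from a $\Grd\ltimes\Base$-space I would extract the $\Grd$-data. By Definition~\ref{def:groupoid_action} such a thing is a continuous map $p\colon\Tot\to\Base$ together with the structural homeomorphism of the action; composing~\(p\) with~\(\pi_\Base\) turns~\(\Tot\) into a space over~\(\Grd^{(0)}\) with anchor $\pi_\Tot\defeq\pi_\Base\circ p$. The key point is that for $(\grd,\base)\in(\Grd\ltimes\Base)^{(1)}$ the composability condition $s(\grd,\base)=p(\tot)$ forces $\base=p(\tot)$, so that
\[
(\Grd\ltimes\Base)^{(1)}\times_{s,p}\Tot\;\cong\;\Grd^{(1)}\times_{s,\pi_\Tot}\Tot,\qquad\bigl((\grd,p(\tot)),\tot\bigr)\longleftrightarrow(\grd,\tot),
\]
is a homeomorphism (and likewise for the targets, now using invertibility in~\(\Grd\)). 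Transporting the action along these identifications produces a continuous map $\Grd^{(1)}\times_{s,\pi_\Tot}\Tot\to\Tot$, $(\grd,\tot)\mapsto\grd\cdot\tot\defeq(\grd,p(\tot))\cdot\tot$. The identity $p\bigl((\grd,\base)\cdot\tot\bigr)=r(\grd,\base)=\grd\cdot\base$ that is built into any groupoid action then reads $p(\grd\cdot\tot)=\grd\cdot p(\tot)$, so that~\(p\) is $\Grd$-equivariant; and, using the formulas for composition and units in $\Grd\ltimes\Base$ recalled above, the associativity and unitality axioms for the $\Grd\ltimes\Base$-action turn verbatim into those for a $\Grd$-action on $(\Tot,\pi_\Tot)$, while the homeomorphism property passes along the displayed identification.

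For the converse I would run the construction backwards: given a $\Grd$-space $(\Tot,\pi_\Tot)$ and a $\Grd$-equivariant continuous map $p\colon\Tot\to\Base$ --~which, being a morphism of spaces over~\(\Grd^{(0)}\), automatically satisfies $\pi_\Base\circ p=\pi_\Tot$~-- I would define a $\Grd\ltimes\Base$-action on $p\colon\Tot\to\Base$ by $(\grd,\base)\cdot\tot\defeq\grd\cdot\tot$ whenever $\base=p(\tot)$; equivariance of~\(p\) makes this land in the $p$-fibre over $r(\grd,\base)$, and the axioms together with continuity are inherited from the $\Grd$-action. Finally one checks that the two passages are mutually inverse, which is immediate from the formulas. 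I do not expect a genuine obstacle here; the only things that need care are matching the two fibred products, together with the groupoid operations of $\Grd\ltimes\Base$, against those of~\(\Grd\), and noticing that $\Grd$-equivariance of~\(p\) is not an extra hypothesis on the transformation-groupoid side but exactly what the range map of $\Grd\ltimes\Base$ encodes.
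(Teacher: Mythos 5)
Your unwinding is correct, and since the paper states this lemma without proof as a routine identification of definitions, there is nothing in the paper to compare against; what you wrote is precisely the bookkeeping a reader would supply. In particular, the two key observations — that the composability constraint collapses $(\Grd\ltimes\Base)^{(1)}\times_{s,p}\Tot$ to $\Grd^{(1)}\times_{s,\pi_\Tot}\Tot$, and that the range map of $\Grd\ltimes\Base$ encodes exactly the $\Grd$-equivariance of~$p$ — are the content of the lemma, and you have them right.
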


Hence we call \(\Grd\ltimes \Base\)-spaces \emph{\(\Grd\)-spaces over~\(\Base\)}.  We are going to study duality in bivariant \(\K\)\nb-theory for a \(\Grd\)\nb-space \(p\colon \Tot\to\Base\) over~\(\Base\) or, equivalently, for a \(\Grd\ltimes\Base\)-space.  Since we lose nothing by replacing~\(\Grd\) by \(\Grd\ltimes\Base\), we may assume from now on that \(\Base=\Grd^{(0)}\) to simplify our notation.  Thus, when we study duality for bundles of spaces over some base space~\(\Base\) then this bundle structure is hidden in the groupoid variable~\(\Grd\).

\subsection{\texorpdfstring{$\Cst$}{C*}-algebras over a space}
\label{sec:Cstar_space}

Let~\(\Base\) be a space.  There are several equivalent ways to define \(\Cst\)\nb-algebras over~\(\Base\).

\begin{definition}
  \label{def:Cstar_over}
  A \emph{\(\Cst\)\nb-algebra over~\(\Base\)} is a \(\Cst\)\nb-algebra~\(A\) together with an essential \(^*\)\nb-homomorphism~\(\varphi\) from \(\CONT_0(\Base)\) to the centre of the multiplier algebra of~\(A\); being \emph{essential} means that \(\varphi\bigl(\CONT_0(\Base)\bigr)\cdot A=A\); equivalently, \(\varphi\) extends to a strictly continuous unital \(^*\)\nb-homomorphism on \(\CONT_b(\Base)\).
\end{definition}

The map~\(\varphi\) is equivalent to a continuous map from the primitive ideal space of~\(A\) to~\(\Base\) by the Dauns--Hofmann Theorem (see~\cite{Nilsen:Bundles}).  Any \(\Cst\)\nb-algebra over~\(\Base\) is the \(\Cst\)\nb-algebra of \(\CONT_0\)\nb-sections of an upper semi-continuous \(\Cst\)\nb-bundle over~\(\Base\) by~\cite{Nilsen:Bundles}, and conversely such section algebras are \(\Cst\)\nb-algebras over~\(\Base\).  We may also describe a \(\Cst\)\nb-algebra over~\(\Base\) by the \(A\)\nb-linear essential \(^*\)\nb-homomorphism
\begin{equation}
  \label{eq:multiplication_map_over_Base}
  m\colon \CONT_0(\Base,A) \to A,
  \qquad f\otimes a\mapsto
  \varphi(f)\cdot a = a\cdot \varphi(f),
\end{equation}
called \emph{multiplication homomorphism}.  This \(^*\)\nb-homomorphism exists because \(\CONT_0(\Base,A)\) is the maximal \(\Cst\)\nb-tensor product of \(\CONT_0(\Base)\) and~\(A\), and it determines~\(\varphi\).

\begin{example}
  \label{exa:space_Cstar_over_base}
  If \(p\colon \Tot\to\Base\) is a space over~\(\Base\), then \(\CONT_0(\Tot)\) with \(p^*\colon \CONT_0(\Base)\to\CONT_b(\Tot)\) is a commutative \(\Cst\)\nb-algebras over~\(\Base\).  Any commutative \(\Cst\)\nb-algebra over~\(\Base\) is of this form.  The multiplication homomorphism
  \[
  m\colon \CONT_0\bigl(\Base,\CONT_0(\Tot)\bigr) \cong \CONT_0(\Base\times\Tot)\to\CONT_0(\Tot)
  \]
  is induced by the proper continuous map \(\Tot\to\Base\times\Tot\), \(\tot\mapsto \bigl(p(\tot),\tot\bigr)\).
\end{example}

\begin{definition}
  \label{def:Base_linear}
  Let \(A\) and~\(B\) be \(\Cst\)\nb-algebras over~\(\Base\) with multiplication homomorphisms \(m_A\colon \CONT_0(\Base,A)\to A\) and \(m_B\colon \CONT_0(\Base,B)\to B\).  A \(^*\)\nb-homomorphism \(f\colon A\to B\) is called \emph{\(\CONT_0(\Base)\)-linear} or \emph{\(\Base\)\nb-equivariant} if the following diagram commutes:
  \[
  \xymatrix@C+1.5em{ \CONT_0(\Base,A) \ar[r]^{\CONT_0(\Base,f)} \ar[d]_{m_A} &
    \CONT_0(\Base,B) \ar[d]^{m_B} \\
    A \ar[r]^{f} & B }
  \]
\end{definition}

\begin{definition}
  \label{def:Cstarcat_Base}
  We let~\(\Cstarcat_\Base\) be the category whose objects are the \(\Cst\)\nb-algebras over~\(\Base\) and whose morphisms are the \(\CONT_0(\Base)\)-linear \(^*\)\nb-homomorphisms.
\end{definition}

\begin{definition}
  \label{def:restrict_Cstar_over_Base}
  Let \(A\) be a \(\Cst\)\nb-algebras over~\(\Base\) and let \(S\subseteq \Base\) be a subset.  If~\(S\) is closed or open, then we define a \emph{restriction functor} \(\blank|_S\colon \Cstarcat_\Base\to\Cstarcat_S\):
  \begin{itemize}
  \item If~\(S\) is open, then \(A|_S\) is the closed \(^*\)\nb-ideal \(\CONT_0(S)\cdot A\) in~\(A\), equipped with the obvious structure of \(\Cst\)\nb-algebra over~\(S\).

  \item If~\(S\) is closed, then \(A|_S\) is the quotient of~\(A\) by the ideal \(A|_{\Base\setminus S}\), equipped with the induced structure of \(\Cst\)\nb-algebra over~\(S\).
  \end{itemize}
  We abbreviate \(A_\base \defeq A|_{\{\base\}}\) for \(\base\in\Base\).
\end{definition}

If \(S_1\subseteq S_2\subseteq\Base\) are both closed or both open in~\(\Base\), then we have a natural isomorphism \((A|_{S_2})|_{S_1} \cong A|_{S_1}\).

\begin{definition}
  \label{def:pullback_Cstar_over_Base}
  Let \(f\colon \Base'\to \Base\) be a continuous map.  Then we define a \emph{base change functor} \(f^*\colon \Cstarcat_\Base \to \Cstarcat_{\Base'}\).  Let~\(A\) be a \(\Cst\)\nb-algebra over~\(\Base\).  Then \(\CONT_0(\Base',A)\) is a \(\Cst\)\nb-algebra over \(\Base'\times \Base\).  The graph of~\(f\) is a closed subset \(\Gamma(f)\) of \(\Base'\times \Base\) and homeomorphic to~\(\Base'\) via \(\base\mapsto \bigl(\base,f(\base)\bigr)\).  We let \(f^*(A)\) be the restriction of \(\CONT_0(\Base',A)\) to \(\Gamma(f)\), viewed as a \(\Cst\)\nb-algebra over~\(\Base'\).  It is clear that this construction is natural, that is, defines a functor \(f^*\colon \Cstarcat_\Base \to \Cstarcat_{\Base'}\).
\end{definition}

\begin{lemma}
  \label{lem:pull-back_universal}
  Let \(f\colon \Base'\to \Base\) be a continuous map, let~\(A\) be a \(\Cst\)\nb-algebra over~\(\Base\) and let~\(B\) be a \(\Cst\)\nb-algebra.  Then essential \(^*\)\nb-homomorphisms \(f^*(A)\to\Mult(B)\) correspond bijectively to pairs of commuting essential \(^*\)\nb-homomorphisms \(\pi\colon A\to\Mult(B)\) and \(\varphi\colon \CONT_0(\Base')\to\Mult(B)\) that satisfy \(\varphi(h\circ f)\cdot \pi(a) = \pi(h\cdot a)\) for all \(h\in\CONT_0(\Base)\), \(a\in A\).
\end{lemma}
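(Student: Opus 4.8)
The plan is to describe essential \(^*\)\nb-homomorphisms out of \(f^*(A)\) by first treating \(\CONT_0(\Base',A)\), of which \(f^*(A)\) is a quotient by construction, and then translating the defining relation of that quotient into the condition in the statement. Throughout, write \(\beta\colon\CONT_0(\Base)\to\Mult(A)\) for the structure homomorphism of~\(A\), so that \(h\cdot a=\beta(h)a\), and denote the canonical extensions of \(\varphi\), \(\pi\), \(\beta\) to multiplier algebras again by the same letters.

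First I would recall that \(\CONT_0(\Base',A)\) is the maximal \(\Cst\)\nb-tensor product \(\CONT_0(\Base')\otimes A\) (see~\eqref{eq:multiplication_map_over_Base}), whose universal property yields a bijection between essential \(^*\)\nb-homomorphisms \(\psi\colon\CONT_0(\Base',A)\to\Mult(B)\) and pairs of \emph{commuting} essential \(^*\)\nb-homomorphisms \(\varphi\colon\CONT_0(\Base')\to\Mult(B)\) and \(\pi\colon A\to\Mult(B)\), via \(\psi(\xi'\otimes a)=\varphi(\xi')\pi(a)\); write \(\psi=\varphi\otimes\pi\). Passing to multipliers, one checks \(\widetilde{\varphi\otimes\pi}(m\otimes1)=\varphi(m)\) and \(\widetilde{\varphi\otimes\pi}(1\otimes n)=\pi(n)\) for \(m\in\CONT_b(\Base')\) and \(n\in\Mult(A)\).

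Next, by Definitions~\ref{def:restrict_Cstar_over_Base} and~\ref{def:pullback_Cstar_over_Base}, \(f^*(A)=\CONT_0(\Base',A)/I\) with \(I=\CONT_0(U)\cdot\CONT_0(\Base',A)\) and \(U\defeq(\Base'\times\Base)\setminus\Gamma(f)\), where \(\CONT_0(\Base',A)\) carries the obvious structure over \(\Base'\times\Base\) (multiplication in the \(\Base'\)\nb-slot, \(\beta\) in the \(A\)\nb-slot). Essential \(^*\)\nb-homomorphisms \(f^*(A)\to\Mult(B)\) correspond bijectively to essential \(^*\)\nb-homomorphisms \(\CONT_0(\Base',A)\to\Mult(B)\) vanishing on~\(I\), so it remains to see which pairs \((\varphi,\pi)\) from the previous paragraph satisfy \((\varphi\otimes\pi)(I)=0\). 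For \(h\in\CONT_0(\Base)\) put \(d_h\in\CONT_b(\Base'\times\Base)\), \(d_h(\base',\base)\defeq h(f(\base'))-h(\base)\); as a central multiplier of \(\CONT_0(\Base',A)\) it is \((h\circ f)\otimes1-1\otimes\beta(h)\). The common zero set of \(\{d_h\}_h\) is exactly \(\Gamma(f)\), since \(\CONT_0(\Base)\) separates points of~\(\Base\); a Gelfand-theoretic computation of character spaces then shows that the closed ideal of \(\CONT_0(\Base'\times\Base)\) generated by \(\{d_h g:h\in\CONT_0(\Base),\ g\in\CONT_0(\Base'\times\Base)\}\) is precisely \(\CONT_0(U)\). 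Hence, using that the structure homomorphism of \(\CONT_0(\Base',A)\) over \(\Base'\times\Base\) is nondegenerate, \(I\) is the closed linear span of \(\{d_h\eta:h\in\CONT_0(\Base),\ \eta\in\CONT_0(\Base',A)\}\), and \((\varphi\otimes\pi)(d_h\eta)=\bigl(\varphi(h\circ f)-\pi(\beta(h))\bigr)(\varphi\otimes\pi)(\eta)\). As \(\varphi\otimes\pi\) is essential, \((\varphi\otimes\pi)(I)=0\) if and only if \(\varphi(h\circ f)=\pi(\beta(h))\) in \(\Mult(B)\) for all \(h\in\CONT_0(\Base)\); and since \(\pi\) is essential, this holds if and only if \(\varphi(h\circ f)\pi(a)=\pi(\beta(h)a)=\pi(h\cdot a)\) for all \(h\in\CONT_0(\Base)\), \(a\in A\), which is the asserted condition. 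Tracking the correspondences through the two bijections then proves the lemma.

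The only substantive step is the identification of the ideal \(I\) in terms of the graph functions \(d_h\) in the previous paragraph; I expect the Gelfand-theoretic character-space argument for \(\CONT_0(U)\) to be the main obstacle, while the rest is routine bookkeeping with maximal tensor products and their multiplier algebras.
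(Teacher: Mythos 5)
The paper states this lemma without proof; it is offered as an immediate consequence of Definition~\ref{def:pullback_Cstar_over_Base} and the universal property of the maximal tensor product. Your argument supplies the missing details, and it is correct. Passing through the universal property of \(\CONT_0(\Base',A)=\CONT_0(\Base')\otimes_{\max}A\) to decompose essential maps into commuting pairs \((\varphi,\pi)\), and then characterising which pairs kill the ideal \(I=\CONT_0(U)\cdot\CONT_0(\Base',A)\), is exactly the route the construction suggests. The substantive step is the one you flagged: identifying \(I\) as the closed span of \(d_h\cdot\eta\), where \(d_h=(h\circ f)\otimes1-1\otimes\beta(h)\). This rests on two facts, both true: the common zero set of the \(d_h\) is exactly \(\Gamma(f)\) because \(\CONT_0(\Base)\) separates points, so the ideal of \(\CONT_0(\Base'\times\Base)\) they generate is \(\CONT_0(U)\); and nondegeneracy of the \(\Base'\times\Base\)-structure on \(\CONT_0(\Base',A)\) lets you trade \(\CONT_0(U)\cdot\CONT_0(\Base',A)\) for the span of \(d_h\eta\). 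Two small points are worth making explicit in a final write-up. First, that \(\varphi\otimes\pi\) is essential if and only if both \(\varphi\) and \(\pi\) are (e.g.\ by looking at \(\overline{\varphi\otimes\pi}(1\otimes1)\), a product of two commuting projections, equalling~\(1\)): this is what lets you transport essentiality across the first bijection in both directions, and again when you pass to the quotient \(f^*(A)\). Second, the final equivalence uses essentiality of \(\pi\) to go from \(\varphi(h\circ f)=\bar\pi(\beta(h))\) in \(\Mult(B)\) to the stated pointwise relation \(\varphi(h\circ f)\pi(a)=\pi(h\cdot a)\); you say this, but it deserves a sentence. With those remarks filled in, the proof is complete.
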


This universal property characterises the base change functor uniquely up to natural isomorphism and implies the following properties:

\begin{lemma}
  \label{lem:restrict_pullback}
  If \(f\colon S\to \Base\) is the embedding of an open or closed subset, then \(f^*(A)\) is naturally isomorphic to~\(A|_S\).

  We have \((g\circ f)^* = g^*\circ f^*\) for composable maps \(\Base''\xrightarrow{f} \Base'\xrightarrow{g} \Base\), and \(\ID_\Base^*\) is equivalent to the identity functor.  In particular, \(f^*(A)_\base \cong A_{f(\base)}\).
\end{lemma}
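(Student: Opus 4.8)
The plan is to deduce everything from the universal property of Lemma~\ref{lem:pull-back_universal}, which pins down~$f^*(A)$ uniquely up to natural isomorphism: for each claim I will exhibit the asserted algebra as one carrying the same universal property, that is, produce a bijection $\Hom\bigl(A|_S,\Mult(B)\bigr)\cong\Hom\bigl(f^*(A),\Mult(B)\bigr)$ natural in~$B$ in the category of $\Cst$\nb-algebras with essential $^*$\nb-homomorphisms, and then conclude by Yoneda. For the first assertion, let $f\colon S\hookrightarrow\Base$ be an open or closed subset and give~$A|_S$ its structure of $\Cst$\nb-algebra over~$S$ from Definition~\ref{def:restrict_Cstar_over_Base}. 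I would check that essential $^*$\nb-homomorphisms $A|_S\to\Mult(B)$ correspond naturally to pairs of commuting essential $^*$\nb-homomorphisms $\pi\colon A\to\Mult(B)$ and $\varphi\colon\CONT_0(S)\to\Mult(B)$ satisfying $\varphi(h|_S)\cdot\pi(a)=\pi(h\cdot a)$ for all $h\in\CONT_0(\Base)$ and $a\in A$, where $\varphi(h|_S)$ is read through the strict extension of~$\varphi$ to $\CONT_b(S)=\Mult\bigl(\CONT_0(S)\bigr)$ when $h|_S\notin\CONT_0(S)$, as happens for $S$ open. In the open case $A|_S=\CONT_0(S)\cdot A$ is a closed ideal of~$A$, so an essential $^*$\nb-homomorphism out of it extends uniquely to $\Mult(A|_S)$ and then restricts along the canonical map $A\to\Mult(A|_S)$ and the structure map $\CONT_0(S)\to\Mult(A|_S)$ to produce such a pair, while conversely a compatible pair restricts back to $\CONT_0(S)\cdot A$. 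In the closed case $A|_S=A/\bigl(A|_{\Base\setminus S}\bigr)$, so a $^*$\nb-homomorphism out of~$A|_S$ is a $^*$\nb-homomorphism $\pi\colon A\to\Mult(B)$ annihilating~$A|_{\Base\setminus S}$; since the restriction map $\CONT_0(\Base)\to\CONT_0(S)$ is surjective, the relation $\varphi(h|_S)\cdot\pi(a)=\pi(h\cdot a)$ both forces~$\varphi$ to be determined by~$\pi$ and turns out to be equivalent to the vanishing of~$\pi$ on~$A|_{\Base\setminus S}$.

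For the second assertion I would iterate the universal property. Applying Lemma~\ref{lem:pull-back_universal} first to the base change along~$f$ of the $\Cst$\nb-algebra $g^*(A)$ over~$\Base'$, and then to the base change along~$g$ of~$A$ over~$\Base$, identifies essential $^*$\nb-homomorphisms out of $f^*\bigl(g^*(A)\bigr)$ with triples of commuting essential $^*$\nb-homomorphisms $\pi\colon A\to\Mult(B)$, $\varphi'\colon\CONT_0(\Base')\to\Mult(B)$ and $\varphi''\colon\CONT_0(\Base'')\to\Mult(B)$ subject to $\varphi'(h\circ g)\cdot\pi(a)=\pi(h\cdot a)$ for $h\in\CONT_0(\Base)$ and $\varphi'(k)=\varphi''(k\circ f)$ for $k\in\CONT_0(\Base')$ (this last relation because the $\CONT_0(\Base')$\nb-structure that the inner application reads off is forced by~$\varphi''$ via nondegeneracy). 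The second relation makes~$\varphi'$ redundant, and feeding it into the first, using $(h\circ g)\circ f=h\circ(g\circ f)$, leaves exactly the universal property of $(g\circ f)^*(A)$; this yields the composition law for base change. Similarly $\ID_\Base^*(A)$ carries the universal property of the identity functor, because its defining relation $\varphi(h)\cdot\pi(a)=\pi(h\cdot a)$ forces~$\varphi$ to be the $\CONT_0(\Base)$\nb-structure of~$A$ transported through~$\pi$, so the data reduces to~$\pi$. The ``in particular'' is then immediate: $f^*(A)_\base=f^*(A)|_{\{\base\}}\cong j^*\bigl(f^*(A)\bigr)$ for the inclusion $j\colon\{\base\}\hookrightarrow\Base'$ of the point, by the first assertion, and by the composition law this is $(f\circ j)^*(A)\cong A|_{\{f(\base)\}}=A_{f(\base)}$ since $f\circ j$ is the inclusion of~$\{f(\base)\}$.

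The part that is not purely formal is the usual bookkeeping with essential (nondegenerate) $^*$\nb-homomorphisms: extending homomorphisms out of ideals and quotients to multiplier algebras, verifying that the images of~$A$ and of the various $\CONT_0(\blank)$ land in the appropriate multiplier algebras and commute with each other, and justifying the passage from a natural bijection of hom\nb-sets into $\Mult(B)$ for all~$B$ to an honest natural isomorphism of $\Cst$\nb-algebras over the base. The step I expect to cost the most effort is the closed case of the first assertion --- showing that the compatibility relation of Lemma~\ref{lem:pull-back_universal} is \emph{precisely} equivalent to (and not merely implied by) the vanishing of~$\pi$ on~$A|_{\Base\setminus S}$; once that is in hand, the rest is routine.
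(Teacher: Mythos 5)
Your proposal is correct and takes exactly the approach the paper indicates: the paper gives no detailed proof of this lemma, only the remark that the universal property of Lemma~\ref{lem:pull-back_universal} characterises $f^*$ uniquely up to natural isomorphism and "implies the following properties," and your argument is a careful unwinding of precisely that claim. The only point worth flagging explicitly that you leave to "bookkeeping" is that when collapsing the triple $(\pi,\varphi',\varphi'')$ in the composition law, one must check that $\varphi'(k)\defeq\bar\varphi''(k\circ f)$ is again essential (this follows from an approximate-unit argument using that $u_\lambda\circ f\to 1$ strictly), and that the resulting bijections of hom-sets are compatible with the distinguished structure maps $\varphi_0\colon\CONT_0(\blank)\to\Mult(\blank)$, which are themselves the universal $\varphi$'s, so that the Yoneda isomorphism is automatically one of $\Cst$-algebras over the base.
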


\begin{notation}
  \label{note:tensor_Cstar_over}
  Let \(A\) and~\(B\) be \(\Cst\)\nb-algebras over~\(\Base\).  Then \(A\otimes B\) is a \(\Cst\)\nb-algebra over \(\Base\times \Base\).  We let \(A\otimes_\Base B\) be its restriction to the diagonal in \(\Base\times \Base\).
\end{notation}

\begin{example}
  \label{exa:pull-back_space}
  Let \((\Tot,p)\) be a space over~\(\Base\).  If \(S\subseteq \Base\), then restriction yields \(\CONT_0(\Tot)|_S = \CONT_0\bigl(p^{-1}(S)\bigr)\) as a space over~\(S\).

  Now let \(f\colon \Base'\to \Base\) be a continuous map.  Then
  \[
  f^*\bigl( \CONT_0(\Tot)\bigr) \cong \CONT_0(\Tot\times_{p,f} \Base').
  \]
  In particular, \(f^*\bigl(\CONT_0(\Base)\bigr) \cong \CONT_0(\Base')\).

  We have \(\CONT_0(\Tot_1)\otimes_\Base \CONT_0(\Tot_2) \cong \CONT_0(\Tot_1\times_\Base \Tot_2)\) if \(\Tot_1\) and~\(\Tot_2\) are two spaces over~\(\Base\).
\end{example}

The properties of the tensor product~\(\otimes_\Base\) are summarised in Lemma~\ref{lem:Cstarcat_smc} below.  For the time being, we note that it is a bifunctor and that it is compatible with the functors~\(f^*\): if \(f\colon \Base'\to\Base\) is a continuous map, then there is a natural isomorphism
\[
f^*(A\otimes_\Base B) \cong f^*(A) \otimes_{\Base'} f^*(B)
\]
because both sides are naturally isomorphic to restrictions of \(\CONT_0(\Base'\times\Base')\otimes A\otimes B\) to the same copy of~\(\Base'\) in \(\Base'\times \Base'\times \Base\times\Base\).

\subsection{Groupoid actions on \texorpdfstring{$\Cst$}{C*}-algebras and tensor products}
\label{sec:grp_act_Cstar}

Let~\(\Grd\) be a groupoid with object space \(\Base\defeq \Grd^{(0)}\).

\begin{definition}
  \label{def:Grd-Cstar-algebra}
  Let~\(A\) be a \(\Cst\)\nb-algebra~\(A\) over~\(\Base\) together with an isomorphism \(\alpha\colon s^*(A) \xrightarrow{\cong} r^*(A)\) of \(\Cst\)\nb-algebras over~\(\Grd^{(1)}\).  Let~\(A_\base\) for \(\base\in\Base\) denote the fibres of~\(A\) and let \(\alpha_g\colon A_{s(g)}\xrightarrow{\cong} A_{r(g)}\) for \(g\in\Grd^{(1)}\) be the fibres of~\(\alpha\).  We call \((A,\alpha)\) a \emph{\(\Grd\)\nb-\(\Cst\)\nb-algebra} if \(\alpha_{g_1g_2} = \alpha_{g_1}\alpha_{g_2}\) for all \(g_1,g_2\in\Grd^{(1)}\).
\end{definition}

\begin{definition}
  \label{def:Grd-equivariant}
  A \(^*\)\nb-homomorphism \(\varphi\colon A\to B\) between two \(\Grd\)\nb-\(\Cst\)\nb-algebras is called \emph{\(\Grd\)\nb-equivariant} if it is \(\CONT_0(\Base)\)-linear and the diagram
  \[
  \xymatrix@C+1em{
    s^*(A) \ar[d]^{\cong}_\alpha \ar[r]^{s^*(\varphi)} &
    s^*(B) \ar[d]^{\cong}_\beta \\
    r^*(A) \ar[r]^{r^*(\varphi)} & r^*(B) }
  \]
  commutes.  We let \(\Cstarcat_\Grd\) be the category whose objects are the \(\Grd\)\nb-\(\Cst\)\nb-algebras and whose morphisms are the \(\Grd\)\nb-equivariant \(^*\)\nb-homomorphisms.
\end{definition}

This agrees with our previous definitions if~\(\Grd\) is a space viewed as a groupoid with only identity morphisms.

The tensor product over~\(\Base\) of two \(\Grd\)\nb-\(\Cst\)\nb-algebras carries a canonical action of~\(\Grd\) called \emph{diagonal action}.  Formally, this is the composite of the \(^*\)\nb-isomorphisms
\[
s^*(A\otimes_{\Grd^{(0)}} B) \xrightarrow{\cong} s^*(A)\otimes_{\Grd^{(1)}} s^*(B) \xrightarrow{\alpha\otimes_{\Grd^{(1)}} \beta} r^*(A)\otimes_{\Grd^{(1)}} r^*(B) \xrightarrow{\cong} r^*(A\otimes_{\Grd^{(0)}} B).
\]

\begin{notation}
  \label{note:abbreviate_tensor}
  The resulting tensor product operation on \(\Grd\)\nb-\(\Cst\)\nb-algebras is denoted by~\(\otimes_\Grd\).  We usually abbreviate~\(\otimes_\Grd\) to~\(\otimes\) and also write~\(\otimes_\Base\).
\end{notation}

\begin{lemma}
  \label{lem:Cstarcat_smc}
  The category \(\Cstarcat_\Grd\) with the tensor product~\(\otimes\) is a symmetric monoidal category with unit object \(\CONT_0(\Base)\).
  \label{pro:Cstarcat_tensor_category}
\end{lemma}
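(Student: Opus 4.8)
The plan is to build the symmetric monoidal structure on $\Cstarcat_\Grd$ by transporting the corresponding structure on $\Cstarcat_\Base$ through the forgetful functor, checking at each stage that the relevant natural isomorphisms are $\Grd$\nb-equivariant. So the first step is to establish Lemma~\ref{lem:Cstarcat_smc} in the special case where $\Grd$ is just the space $\Base$ viewed as a groupoid with only identity morphisms; that is, to show that $(\Cstarcat_\Base, \otimes_\Base, \CONT_0(\Base))$ is a symmetric monoidal category. Here the associativity isomorphism $(A\otimes_\Base B)\otimes_\Base C\cong A\otimes_\Base(B\otimes_\Base C)$, the unit isomorphisms $\CONT_0(\Base)\otimes_\Base A\cong A\cong A\otimes_\Base\CONT_0(\Base)$, and the symmetry $A\otimes_\Base B\cong B\otimes_\Base A$ are all obtained by restricting the corresponding structure isomorphisms of the ordinary (maximal) $\Cst$\nb-tensor product, which is itself symmetric monoidal, to the appropriate diagonal copy of $\Base$. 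Concretely, $A\otimes_\Base B$ is defined as the restriction of $A\otimes B$ (a $\Cst$\nb-algebra over $\Base\times\Base$) to the diagonal, and all coherence diagrams (the pentagon and the triangle axioms, and the hexagon for symmetry) commute because they already commute for $\otimes$ before restriction, and restriction to a closed subset is a functor. The unit object is $\CONT_0(\Base)$ because $\CONT_0(\Base)\otimes_\Base A$ is the restriction of $\CONT_0(\Base)\otimes A\cong\CONT_0(\Base,A)$ to the diagonal, which by the very definition of the multiplication homomorphism~\eqref{eq:multiplication_map_over_Base} is canonically isomorphic to~$A$.

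Next I would pass from $\Base$ to a general groupoid $\Grd$ with $\Grd^{(0)}=\Base$. Recall that $\otimes_\Grd$ on two $\Grd$\nb-$\Cst$\nb-algebras $(A,\alpha)$ and $(B,\beta)$ is the underlying algebra $A\otimes_\Base B$ equipped with the diagonal action, constructed from $\alpha$ and~$\beta$ via the chain of isomorphisms displayed just before Notation~\ref{note:abbreviate_tensor}, which in turn uses the compatibility $f^*(A\otimes_\Base B)\cong f^*(A)\otimes_{\Base'}f^*(B)$ of base change with $\otimes_\Base$ for the two maps $r,s\colon\Grd^{(1)}\rightrightarrows\Base$. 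The bifunctoriality of $\otimes_\Grd$ on morphisms is immediate: a $\Grd$\nb-equivariant pair $A\to A'$, $B\to B'$ gives a $\CONT_0(\Base)$\nb-linear map $A\otimes_\Base B\to A'\otimes_\Base B'$, and the square expressing compatibility with the diagonal actions commutes because it is assembled from the individual equivariance squares and the naturality of the isomorphisms $f^*(A\otimes_\Base B)\cong f^*(A)\otimes_{\Base'}f^*(B)$.

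The heart of the argument is then to verify that the associativity, unit, and symmetry isomorphisms from Step~1 --- which a priori are only morphisms in $\Cstarcat_\Base$ --- are in fact $\Grd$\nb-equivariant when the algebras carry $\Grd$\nb-actions and are equipped with the diagonal action on tensor products. For associativity, one must check that the square
\[
\xymatrix@C+1em{
s^*\bigl((A\otimes_\Base B)\otimes_\Base C\bigr) \ar[r]^-{\cong} \ar[d] &
r^*\bigl((A\otimes_\Base B)\otimes_\Base C\bigr) \ar[d] \\
s^*\bigl(A\otimes_\Base(B\otimes_\Base C)\bigr) \ar[r]^-{\cong} &
r^*\bigl(A\otimes_\Base(B\otimes_\Base C)\bigr)
}
\]
commutes, where the horizontal maps are the diagonal actions and the vertical maps are $r^*$ and $s^*$ applied to the associator; and similarly for the unitors and the symmetry. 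Each such square, after unwinding the definition of the diagonal action, reduces to a diagram built solely out of: (i) the structure isomorphisms of $f^*$ and $\otimes_\Base$ from Lemmas~\ref{lem:pull-back_universal} and~\ref{lem:restrict_pullback} and the compatibility $f^*(A\otimes_\Base B)\cong f^*(A)\otimes f^*(B)$; (ii) the coherence isomorphisms of the ordinary tensor product; and (iii) the components $\alpha,\beta,\gamma$ of the given actions, which enter symmetrically on both routes. All of these are natural, so the diagram commutes by a diagram chase, using the coherence theorem for symmetric monoidal categories to dispatch the purely structural part. Finally, the pentagon, triangle, and hexagon for $\otimes_\Grd$ hold because they hold for $\otimes_\Base$ and the forgetful functor $\Cstarcat_\Grd\to\Cstarcat_\Base$ is faithful, so an equation of $\CONT_0(\Base)$\nb-linear maps that holds after forgetting the action already holds in $\Cstarcat_\Grd$.

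The main obstacle I anticipate is purely bookkeeping: the diagonal action is defined through a three-step composite of isomorphisms, and writing out the equivariance of the associator explicitly produces a large diagram whose commutativity, while true by naturality, is tedious to display. The right way to organise this is to observe that $\Grd$\nb-$\Cst$\nb-algebras are comonoid-comodule-type data and that $f^*$ for $f\in\{r,s\}$, together with the compatibility isomorphisms with $\otimes_\Base$, forms a family of symmetric monoidal functors; then the whole statement becomes the assertion that a ``descent'' or ``equivariantisation'' of a symmetric monoidal category along such data is again symmetric monoidal, which is a formal fact. In the write-up I would either invoke this formal principle or, more concretely, reduce everything to the naturality of the base-change compatibility isomorphism and the coherence theorem, and leave the explicit diagram chase to the reader.
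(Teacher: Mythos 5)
Your argument is correct and fills in exactly the details that the paper declares obvious and omits: the paper says only that "the associativity, commutativity, and unitality constraints are obvious in our case, and the coherence laws are trivial to verify," while you make this precise by first treating $\Grd=\Base$, then checking $\Grd$\nb-equivariance of the constraints via the naturality of the base-change compatibility isomorphisms, and finally dispatching the coherence diagrams through the faithfulness of the forgetful functor $\Cstarcat_\Grd\to\Cstarcat_\Base$. This is the same argument the paper has in mind, written out.
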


A \emph{symmetric monoidal category} is a category with a tensor product functor~\(\otimes\), a unit object~\(\UNIT\), and natural isomorphisms
\[
(A\otimes B)\otimes C \cong A\otimes (B\otimes C),\qquad A\otimes B\cong B\otimes A,\qquad \UNIT\otimes A \cong A \cong A\otimes \UNIT
\]
called associativity, commutativity, and unitality constraints; these are subject to various coherence laws, for which we refer to \cites{Saavedra:Tannakiennes}.  These conditions allow to define tensor products \(\bigotimes_{x\in F} A_x\) for any finite set of objects \((A_x)_{x\in F}\) with the expected properties such as natural isomorphisms \(\bigotimes_{x\in F_1} A_x \otimes \bigotimes_{x\in F_2} A_x \cong \bigotimes_{x\in F} A_x\) for any decomposition \(F=F_1\sqcup F_2\) into disjoint subsets.  The associativity, commutativity, and unitality constraints are obvious in our case, and the coherence laws are trivial to verify.  Therefore, we omit the details.

Let \(\Grd_1\) and~\(\Grd_2\) be groupoids and let \(f\colon \Grd_1\to\Grd_2\) be a continuous functor.  Let \(f^{(0)}\) and~\(f^{(1)}\) be its actions on objects and morphisms, respectively.  If~\(A\) is a \(\Grd_2\)\nb-\(\Cst\)\nb-algebra with action~\(\alpha\), then \((f^{(0)})^*(A)\) is a \(\Grd_1\)\nb-\(\Cst\)\nb-algebra for the action
\begin{multline*}
  s_1^*(f^{(0)})^*(A)
  \cong (f^{(0)}s_1)^*(A)
  = (s_2f^{(1)})^*(A)
  \cong (f^{(1)})^*s_2^*(A)\\
  \xrightarrow{(f^{(1)})^*(\alpha)} (f^{(1)})^*r_2^*(A)
  \cong (r_2f^{(1)})^*(A)
  = (f^{(0)}r_1)^*(A)
  \cong r_1^*(f^{(0)})^*(A).
\end{multline*}
This defines a functor
\[
f^*\colon \Cstarcat_{\Grd_2} \to \Cstarcat_{\Grd_1},
\]
which is \emph{symmetric monoidal}, that is, we have canonical isomorphisms
\begin{equation}
  \label{eq:fstar_smf}
  f^*(A) \otimes_{\Grd_1} f^*(B) \cong f^*(A\otimes_{\Grd_2} B)
\end{equation}
that are compatible in a suitable sense with the associativity, commutativity, and unitality constraints in \(\Cstarcat_{\Grd_2}\) and \(\Cstarcat_{\Grd_1}\) (we refer to~\cite{Saavedra:Tannakiennes} for the precise definition).  The natural transformation in~\eqref{eq:fstar_smf} is part of the data of a symmetric monoidal functor.  Again we omit the proof because it is trivial once it is clear what has to be checked.  As a consequence, \(f^*\) preserves tensor units, that is,
\[
f^*\bigl(\CONT_0(\Grd_2^{(0)})\bigr) \cong \CONT_0(\Grd_1^{(0)}).
\]

Let~\(\Tot\) be a \(\Grd\)\nb-space.  Then the category \(\Cstarcat_{\Grd\ltimes\Tot}\) carries its own tensor product, which we always denote by~\(\otimes_\Tot\), to distinguish it from the tensor product~\(\otimes\) in~\(\Cstarcat_\Grd\).  The projection map \(p_\Tot\colon \Grd\ltimes\Tot\to\Grd\) induces a functor
\[
p_\Tot^*\colon \Cstarcat_\Grd\to\Cstarcat_{\Grd\ltimes\Tot},
\]
which acts by \(A\mapsto \CONT_0(\Tot)\otimes_\Base A\) on objects.  We have seen above that such functors are symmetric monoidal, that is, if \(A\) and~\(B\) are \(\Grd\)\nb-\(\Cst\)\nb-algebras, then
\begin{equation}
  \label{eq:p_X_tensor_functor}
  p_\Tot^*(A) \otimes_\Tot p_\Tot^*(B)
  \cong p_\Tot^*(A\otimes B).
\end{equation}

If~\(A\) is a \(\Grd\ltimes\Tot\)-\(\Cst\)\nb-algebra and~\(B\) is merely a \(\Grd\)\nb-\(\Cst\)\nb-algebra, then \(A\otimes_\Base B\) is a \(\Grd\ltimes\Tot\)\nb-\(\Cst\)\nb-algebra.  This defines another tensor product operation
\[
\otimes = \otimes_\Base\colon \Cstarcat_{\Grd\ltimes\Tot} \times \Cstarcat_\Grd \to \Cstarcat_{\Grd\ltimes\Tot},
\]
which has obvious associativity and unitality constraints
\[
(A\otimes B)\otimes C \cong A\otimes (B\otimes C), \qquad A\otimes \UNIT \cong A,
\]
where~\(A\) belongs to \(\Cstarcat_{\Grd\ltimes\Tot}\), \(B\) and~\(C\) belong to \(\Cstarcat_\Grd\), and~\(\UNIT\) is the unit object, here \(\CONT_0(\Base)\).  These natural isomorphisms satisfy the relevant coherence laws formalised in~\cite{Pareigis:C-categories}.  In the notation of~\cite{Pareigis:C-categories}, \(\Cstarcat_{\Grd\ltimes\Tot}\) is a \emph{\(\Cstarcat_\Grd\)\nb-category}.

Our two tensor products are related by a canonical isomorphism
\begin{equation}
  \label{eq:otimes_via_pullback}
  A\otimes B\cong A\otimes_\Tot p_\Tot^*(B),
\end{equation}
or, more precisely,
\[
A\otimes_\Tot p_\Tot^*(B) \defeq A\otimes_\Tot (\CONT_0(\Tot)\otimes B) \cong \bigl(A\otimes_\Tot \CONT_0(\Tot)\bigr)\otimes B \cong A\otimes B.
\]
These isomorphisms are all natural and \(\Grd\ltimes\Tot\)-equivariant.

We also have a canonical \emph{forgetful functor}
\[
\forget_\Tot\colon \Cstarcat_{\Grd\ltimes\Tot} \to \Cstarcat_\Grd,
\]
which maps a \(\Grd\)\nb-\(\Cst\)\nb-algebra over~\(\Tot\) to the underlying \(\Grd\)\nb-\(\Cst\)\nb-algebra, forgetting the \(\Tot\)\nb-structure.  This is a \emph{\(\Cstarcat_\Grd\)\nb-functor} in the notation of~\cite{Pareigis:C-categories}, that is, there are natural isomorphisms
\[
\forget_\Tot(A\otimes B) \cong \forget_\Tot(A)\otimes B
\]
for~\(A\) in \(\Cstarcat_{\Grd\ltimes\Tot}\) and~\(B\) in \(\Cstarcat_\Grd\), and these isomorphisms are compatible with the associativity and unitality constraints.

\section{Equivariant Kasparov theory for groupoids}
\label{sec:grd_KK}

We use the equivariant Kasparov theory for \(\Cst\)\nb-algebras with groupoid actions by Pierre-Yves Le Gall~\cite{LeGall:KK_groupoid}.  Let~\(\Grd\) be a groupoid with object space~\(\Base\).  Le Gall defines \(\Z/2\)-graded Abelian groups \(\KK^\Grd_*(A,B)\) for (possibly \(\Z/2\)\nb-graded) \(\Grd\)-\(\Cst\)\nb-algebras \(A\) and~\(B\).  He also constructs a Kasparov cup-cap product
\begin{equation}
  \label{eq:cup-cap}
  \otimes_D\colon
  \KK^\Grd_*(A_1,B_1\otimes D) \times \KK^\Grd_*(D\otimes A_2, B_2)
  \to \KK^\Grd_*(A_1\otimes A_2, B_1\otimes B_2)
\end{equation}
in \(\KK^\Grd\) with the expected properties such as associativity in general and graded commutativity of the exterior product (see \cite{LeGall:KK_groupoid}*{\S6.3}).  Throughout this section, \(\otimes\) denotes the tensor product \emph{over~\(\Base\)}, so that it would be more precise to write~\(\otimes_{\Base,D}\) instead of~\(\otimes_D\).

\begin{notation}
  \label{note:KK_grading}
  When we write \(\KK^\Grd_*(A,B)\), we always mean the \(\Z/2\)-graded group.  We write \(\KK^\Grd_0(A,B)\) and \(\KK^\Grd_1(A,B)\) for the even and odd parts of \(\KK^\Grd_*(A,B)\).  We let \(\KKcat_\Grd\) be the category whose objects are the (separable, \(\Z/2\)\nb-graded) \(\Grd\)\nb-\(\Cst\)\nb-algebras and whose morphism spaces are \(\KK^\Grd_*(A,B)\), with composition given by the Kasparov composition product.
\end{notation}

\begin{example}
  \label{exa:Le_Gall_generalises_Kasparov}
  If~\(G\) is a locally compact group, viewed as a groupoid, then Le Gall's bivariant \(\K\)\nb-theory agrees with Kasparov's theory defined in~\cite{Kasparov:Novikov}.
\end{example}

\begin{example}
  \label{exa:Le_Gall_generalises_Kasparov_II}
  If \(\Grd = G\ltimes \Tot\) for a locally compact group~\(G\) and a locally compact \(G\)\nb-space~\(\Tot\), then \(\KK^\Grd_*(A,B)\) agrees with Kasparov's \(\cRKK^G_*(\Tot;A,B)\).  This also contains Kasparov's groups \(\RKK^G_*(\Tot;A,B)\) for two \(G\)\nb-\(\Cst\)\nb-algebras \(A\) and~\(B\) as a special case because
  \[
  \RKK^G_*(\Tot;A,B) \defeq \cRKK^G_*\bigl(\Tot;\CONT_0(\Tot,A),\CONT_0(\Tot,B)\bigr).
  \]
\end{example}

The cup-cap product~\eqref{eq:cup-cap} contains an exterior tensor product operation
\[
\otimes = \otimes_\Base\colon \KKcat_\Grd\times\KKcat_\Grd\to\KKcat_\Grd, \qquad (A,B)\mapsto A\otimes B,
\]
which extends the tensor product on~\(\Cstarcat_\Grd\) and turns \(\KKcat_\Grd\) into an additive symmetric monoidal category (see \cites{Saavedra:Tannakiennes, Meyer:KK-survey}).  That is, the associativity, commutativity, and unitality constraints that exist in~\(\Cstarcat_\Grd\) descend to natural transformations on \(\KKcat_\Grd\); this follows from the universal property of \(\KKcat_\Grd\) in the ungraded case or by direct inspection.  Fixing one variable, we get the exterior product functors
\[
\sigma_D\colon \KKcat_\Grd\to \KKcat_\Grd,\qquad A\mapsto A\otimes D
\]
for all \(\Grd\)\nb-\(\Cst\)\nb-algebras~\(D\).  These are additive \(\KKcat_\Grd\)\nb-functors, that is, there are natural isomorphisms \(\sigma_D(A\otimes B)\cong \sigma_D(A)\otimes B\) with good formal properties (see~\cite{Pareigis:C-categories}).

If \(\Grd_1\) and~\(\Grd_2\) are two groupoids and \(f\colon \Grd_1\to\Grd_2\) is a continuous functor, then the induced functor \(f^*\colon \Cstarcat_{\Grd_2}\to\Cstarcat_{\Grd_1}\) descends to an additive functor
\[
f^*\colon \KKcat_{\Grd_2} \to \KKcat_{\Grd_1},
\]
that is, there are canonical maps
\begin{equation}
  \label{eq:KK_functorial}
  f^*\colon \KK^{\Grd_2}_*(A,B)
  \to \KK^{\Grd_1}_*\bigl(f^*(A),f^*(B)\bigr)
\end{equation}
for all \(\Grd_2\)\nb-\(\Cst\)\nb-algebras \(A\) and~\(B\).  These maps are compatible with the cup-cap product in~\eqref{eq:cup-cap}, so that~\emph{\(f^*\) is a symmetric monoidal functor}.  More precisely, the natural isomorphisms \(f^*(A) \otimes_{\Grd_1} f^*(B) \cong f^*(A\otimes_{\Grd_2} B)\) in \(\Cstarcat_{\Grd_1}\) remain natural when we enlarge our morphism spaces from \(^*\)\nb-homomorphisms to \(\KK\).

Le Gall describes in~\cite{LeGall:KK_groupoid} how to extend this functoriality to Hilsum--Skandalis morphisms between groupoids.  As a consequence, \(\KKcat_{\Grd_1}\) and \(\KKcat_{\Grd_2}\) are equivalent as symmetric monoidal categories if the groupoids \(\Grd_1\) and~\(\Grd_2\) are equivalent.

We are mainly interested in the special case of~\eqref{eq:KK_functorial} where we consider the functor \(\Grd\ltimes\Tot\to \Grd\ltimes \Base=\Grd\) induced by the projection \(p_\Tot\colon \Tot\to\Base\) for a \(\Grd\)\nb-space~\(\Tot\).  This yields an additive, symmetric monoidal functor
\begin{equation}
  \label{eq:pTotstar}
  p_\Tot^*\colon \KKcat_\Grd \to \KKcat_{\Grd\ltimes\Tot},
\end{equation}
which acts on objects by \(A\mapsto \CONT_0(\Tot)\otimes A\).

The canonical tensor products in the categories \(\KKcat_{\Grd\ltimes\Tot}\) and \(\KKcat_\Grd\) are over \(\Base\) and~\(\Tot\), respectively.  Therefore, we denote the tensor product in \(\KKcat_{\Grd\ltimes\Tot}\) by~\(\otimes_\Tot\).

The tensor product operation
\[
\otimes = \otimes_\Base\colon \Cstarcat_{\Grd\ltimes\Tot} \times \Cstarcat_\Grd \to\Cstarcat_{\Grd\ltimes\Tot}
\]
also descends to the Kasparov categories, yielding a bifunctor
\begin{equation}\label{eq:Tot_Base_module}
  \otimes = \otimes_\Base\colon
  \KKcat_{\Grd\ltimes\Tot} \times \KKcat_\Grd \to
  \KKcat_{\Grd\ltimes\Tot}
\end{equation}
that is additive in each variable.  The easiest construction uses~\eqref{eq:otimes_via_pullback}.  The bifunctor so defined obviously satisfies the associativity and unitality conditions needed for a \(\KKcat_\Grd\)-category (see~\cite{Pareigis:C-categories}).

The forgetful functor descends to an additive functor
\[
\forget_\Tot\colon \KKcat_{\Grd\ltimes\Tot} \to \KKcat_\Grd
\]
between the Kasparov categories.  This is a \(\KKcat_\Grd\)-functor in the notation of~\cite{Pareigis:C-categories}.  The obvious \(\Cst\)\nb-algebra isomorphisms
\[
\forget_\Tot(A\otimes B) \cong \forget_\Tot(A)\otimes B
\]
for all \(\Grd\ltimes\Tot\)-\(\Cst\)\nb-algebras~\(A\) and all \(\Grd\)-\(\Cst\)\nb-algebras~\(B\) remain natural on the level of Kasparov categories.

Since many constructions do not work for arbitrary \(\Grd\ltimes\Tot\)-\(\Cst\)\nb-algebras, we often restrict to the following full subcategory of \(\KK^{\Grd\ltimes\Tot}\):

\begin{definition}
  \label{def:RKK}
  Let \(A\) and~\(B\) be \(\Grd\)-\(\Cst\)\nb-algebras.  We define
  \[
  \RKK^\Grd_*(\Tot;A,B) \defeq \KK^{\Grd\ltimes\Tot}_*\bigl(p_\Tot^*(A),p_\Tot^*(B)\bigr),
  \]
  and we let \(\RKKcat_\Grd(\Tot)\) be the category whose objects are the \(\Grd\)-\(\Cst\)\nb-algebras and whose morphism spaces are \(\RKK^\Grd_0(\Tot;A,B)\).  By definition, \(\RKKcat_\Grd\) is the (co)image of the functor \(p_\Tot^*\) in~\eqref{eq:pTotstar}.  We often view \(\RKKcat_\Grd(\Tot)\) as a full subcategory of \(\KKcat_{\Grd\ltimes\Tot}\).
\end{definition}

\begin{example}
  \label{exa:RKK_classical}
  Let~\(\Grd\) be a group~\(G\), so that \(\Base=\pt\).  Then~\(\Tot\) is just a \(G\)\nb-space, and \(\KKcat_\Grd=\KKcat_G\) is the usual group-equivariant Kasparov category.  We have \(p_\Tot^*(A) = \CONT_0(\Tot,A)\) in this case, so that \(p_\Tot^*\colon \KKcat_G\to\RKKcat_G(\Tot)\) is the same functor as in \cite{Emerson-Meyer:Euler}*{Equation (7)}.  The functor \(\forget_\Tot\colon \RKKcat_G(\Tot)\to\KKcat_G\) agrees with the forgetful functor in \cite{Emerson-Meyer:Euler}*{Equation (6)}.
\end{example}

The subcategory \(\RKKcat_\Grd(\Tot)\subseteq \KKcat_{\Grd\ltimes\Tot}\) is closed under the tensor product operations \(\otimes_\Tot\) and~\(\otimes_\Base\).  Hence it is a symmetric monoidal category and a \(\KKcat_\Grd\)-category in its own right.

A \(\Grd\)\nb-equivariant map \(f\colon \Tot_1\to \Tot_2\) induces a functor \(f^*\colon \KKcat_{\Grd\ltimes\Tot_2} \to \KKcat_{\Grd\ltimes\Tot_1}\), which restricts to a functor
\[
f^*\colon \RKKcat_\Grd(\Tot_2) \to \RKKcat_\Grd(\Tot_1).
\]
This functoriality contains grading preserving group homomorphisms
\[
f^*\colon \RKK^\Grd_*(\Tot_2;A,B) \to \RKK^\Grd_*(\Tot_1;A,B),
\]
which are compatible with cup-cap products in both variables \(A\) and~\(B\).  These maps also turn \(\Tot\mapsto \RKK^\Grd_*(\Tot;A,B)\) into a functor from the category of locally compact \(\Grd\)\nb-spaces with \(\Grd\)\nb-equivariant continuous maps to the category of \(\Z/2\)-graded Abelian groups.  This is a \emph{homotopy functor}, that is, two \(\Grd\)\nb-equivariantly homotopic maps induce the same map on \(\RKK^\Grd_*\) (see Example~\ref{exa:RKK_space_homotopy} below for a proof).

\begin{notation}
  \label{note:sigma_EP}
  Let~\(\Dual\) be a \(\Grd\ltimes\Tot\)-\(\Cst\)\nb-algebra.  Then there is an associated functor
  \[
  \sigma_\Dual\colon \KKcat_{\Grd\ltimes\Tot} \to \KKcat_{\Grd\ltimes\Tot}, \qquad A\mapsto \Dual\otimes_\Tot A.
  \]
  We denote the composite functor
  \[
  \RKKcat_\Grd(\Tot) \xrightarrow{\subseteq} \KKcat_{\Grd\ltimes \Tot} \xrightarrow{\sigma_\Dual} \KKcat_{\Grd\ltimes \Tot} \xrightarrow{\forget_\Tot} \KKcat_\Grd
  \]
  by~\(T_\Dual\).  We have \(T_\Dual(A) = \Dual\otimes A\) for a \(\Grd\)\nb-\(\Cst\)\nb-algebra~\(A\), viewed as an object of \(\RKKcat_\Grd(\Tot)\), because of the natural isomorphisms \(\Dual \otimes_\Tot p_\Tot^*(A) \cong \Dual\otimes A\).  Thus~\(T_\Dual\) determines maps
  \[
  T_\Dual\colon \RKK^\Grd_*(\Tot;A,B) \defeq \KK^{\Grd\ltimes\Tot}_*\bigl(p_\Tot^*(A),p_\Tot^*(B)\bigr) \to \KK^\Grd_*(\Dual\otimes A, \Dual\otimes B).
  \]
\end{notation}

The functor~\(T_\Dual\) is the analogue for groupoids of the functor that is called \(\sigma_{\Tot,\Dual}\) in~\cite{Emerson-Meyer:Euler}.  If \(f\in \KK^\Grd_*(A,B)\), then
\begin{equation}
  \label{eq:T_sour_P}
  T_\Dual\bigl(p_\Tot^*(f)\bigr)
  = \sigma_\Dual(f)
  = \ID_\Dual\otimes f\qquad
  \text{in \(\KK^\Grd_*(\Dual\otimes A,\Dual\otimes B)\).}
\end{equation}
This generalises \cite{Emerson-Meyer:Euler}*{Equation (26)}.

\section{The first duality}
\label{sec:def_Kasparov_dual}

Let~\(\Grd\) be a locally compact groupoid and let \(\Base\defeq\Grd^{(0)}\) with the canonical (left) \(\Grd\)\nb-action, so that \(\Grd\ltimes\Base \cong \Grd\).  Let~\(\Tot\) be a \(\Grd\)\nb-space.  The notion of an equivariant Kasparov dual for group actions in~\cite{Emerson-Meyer:Euler} can be copied literally to our more general setup.  To clarify the relationship, we write
\[
\UNIT \defeq \CONT_0(\Base), \qquad \UNIT_\Tot \defeq \CONT_0(\Tot).
\]
These are the tensor units in \(\KKcat_\Grd\) and \(\KKcat_{\Grd\ltimes\Tot}\), respectively.  Wherever~\(\C\) appears in~\cite{Emerson-Meyer:Euler}, it is replaced by~\(\UNIT\).  Furthermore, we write~\(T_\Dual\) instead of~\(\sigma_{\Tot,\Dual}\) and~\(\UNIT_\Tot\) instead of \(\CONT_0(\Tot)\) here.

\begin{definition}
  \label{def:Kasparov_dual}
  Let \(n\in\Z\).  An \emph{\(n\)\nb-dimensional \(\Grd\)\nb-equivariant Kasparov dual} for the \(\Grd\)\nb-space~\(\Tot\) is a triple \((\Dual,D,\Theta)\), where
  \begin{itemize}
  \item \(\Dual\) is a (possibly \(\Z/2\)-graded) \(\Grd\ltimes\Tot\)-\(\Cst\)\nb-algebra,

  \item \(D\in\KK^\Grd_{-n}(\Dual,\UNIT)\), and

  \item \(\Theta\in\RKK^\Grd_n(\Tot;\UNIT,\Dual)\),
  \end{itemize}
  subject to the following three conditions:
  \begin{alignat}{2}
    \label{eq:Kasparov_dual_1}
    \Theta \otimes_\Dual D &= \ID_\UNIT
    &\qquad&\text{in \(\RKK^\Grd_0(\Tot;\UNIT,\UNIT)\);}\\
    \label{eq:Kasparov_dual_2}
    \Theta \otimes_\Tot f &= \Theta \otimes_\Dual T_\Dual(f)
    &\qquad&\text{in \(\RKK^\Grd_{*+n}(\Tot;A, \Dual\otimes B)\)}\\
    \intertext{for all \(\Grd\)\nb-\(\Cst\)\nb-algebras \(A\) and~\(B\) and all \(f\in\RKK^\Grd_*(\Tot;A,B)\);}
    \label{eq:Kasparov_dual_3}
    T_\Dual(\Theta)\otimes_{\Dual\otimes\Dual} \flip_\Dual &= (-1)^n T_\Dual(\Theta)
    &\qquad&\text{in \(\KK^\Grd_n(\Dual,\Dual\otimes\Dual)\),}
  \end{alignat}
  where~\(\flip_\Dual\) is the flip automorphism on \(\Dual\otimes\Dual\) as in~\cite{Emerson-Meyer:Euler}.
\end{definition}

This differs slightly from the definition of a Kasparov dual in \cite{Emerson-Meyer:Euler}*{Definition 18} because~\eqref{eq:Kasparov_dual_2} contains no auxiliary space~\(\Other\) as in~\cite{Emerson-Meyer:Euler}.  As a result, \((\Dual,D,\Theta)\) is a Kasparov dual in the sense of~\cite{Emerson-Meyer:Euler} if and only if its pull-back to~\(\Base'\) is a Kasparov dual for \(\Base'\times_\Base \Tot\), viewed as a \(\Grd\ltimes \Base'\)-space, for any \(\Grd\)\nb-space~\(\Base'\).  The space~\(\Base'\) plays no significant role and is only added in~\cite{Emerson-Meyer:Euler} because this general setting is considered in~\cite{Kasparov:Novikov}.  We expect that geometric sufficient conditions for Kasparov duals are invariant under this base change operation.

The notion of dual in Definition~\ref{def:Kasparov_dual} is relative to the base space~\(\Base\).  In a sense, a \(\Grd\)\nb-equivariant Kasparov dual is a \(\Grd\)\nb-equivariant family of Kasparov duals for the fibres of the map \(p_\Tot\colon \Tot\to\Base\).

We remark without proof that~\eqref{eq:Kasparov_dual_3} is equivalent, in the presence of the other two conditions, to
\begin{equation}
  \label{eq:other_third_condition}
  T_\Dual(\Theta)\otimes_{\Dual\otimes\Dual} (D\otimes\ID_\Dual)
  = (-1)^n \ID_\Dual
  \qquad \text{in \(\KK^\Grd_0(\Dual,\Dual)\)}
\end{equation}
(the easier implication \eqref{eq:Kasparov_dual_3}$\Longrightarrow$\eqref{eq:other_third_condition} is contained in Lemma~\ref{lem:duality_coalgebra}).  Both formulations \eqref{eq:Kasparov_dual_3} and~\eqref{eq:other_third_condition} tend to be equally hard to check.

Condition~\eqref{eq:Kasparov_dual_2} is not so difficult to check in practice, but it lacks good functoriality properties and is easily overlooked (such as in \cite{Tu:Novikov}*{Th\'eor\`eme 5.6}).  This condition turns out to be automatic for universal proper \(\Grd\)\nb-spaces (Lemma~\ref{lem:dual_2_automatic}).  We will comment further on its role in Section~\ref{sec:first_duality_compact}, where we discuss the special case where the map \(p_\Tot\colon \Tot\to\Base\) is proper.

\begin{theorem}
  \label{the:first_duality}
  Let~\(\Dual\) be a \(\Grd\ltimes\Tot\)-\(\Cst\)\nb-algebra, \(n\in\Z\), \(D\in\KK^\Grd_{-n}(\Dual,\UNIT)\), and \(\Theta\in\RKK^\Grd_n(\Tot;\UNIT,\Dual)\).  The natural transformations
  \begin{alignat*}{2}
    \PD&\colon \KK^\Grd_{i-n}(\Dual\otimes A,B) \to \RKK^\Grd_i(\Tot;A,B), &\qquad
    f&\mapsto \Theta \otimes_\Dual f,\\
    \PD^*&\colon \RKK^\Grd_i(\Tot;A,B) \to \KK^\Grd_{i-n}(\Dual\otimes A,B), &\qquad g&\mapsto (-1)^{in} T_\Dual(g) \otimes_\Dual D
  \end{alignat*}
  are inverse to each other for all \(\Grd\)\nb-\(\Cst\)-algebras \(A\) and~\(B\) if and only if \((\Dual,D,\Theta)\) is a \(\Grd\)\nb-equivariant Kasparov dual for~\(\Tot\).  In this case, the functor \(T_\Dual\colon \RKKcat_\Grd(\Tot)\to\KKcat_\Grd\) is left adjoint to the functor \(p_\Tot^*\colon \KKcat_\Grd\to \RKKcat_\Grd(\Tot)\).
\end{theorem}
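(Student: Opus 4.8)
\emph{Approach.} The plan is to recognise $\PD$ and $\PD^*$ as the two comparison maps of a candidate adjunction between $T_\Dual\colon\RKKcat_\Grd(\Tot)\to\KKcat_\Grd$ and $p_\Tot^*\colon\KKcat_\Grd\to\RKKcat_\Grd(\Tot)$, whose unit is $\Theta$ (with value $\Theta\otimes_\Base\ID_A\in\RKK^\Grd_n(\Tot;A,\Dual\otimes A)$ at a $\Grd$-$\Cst$-algebra~$A$) and whose counit is $D$ (with value $D\otimes_\Base\ID_B\in\KK^\Grd_{-n}(\Dual\otimes B,B)$ at~$B$). Under this identification $\PD(f)=p_\Tot^*(f)\circ(\Theta\otimes_\Base\ID_A)$ and $\PD^*(g)=(-1)^{in}(D\otimes_\Base\ID_B)\circ T_\Dual(g)$. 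I would first note that $\PD^*$ is a natural transformation \emph{without using the duality conditions}~\eqref{eq:Kasparov_dual_1}--\eqref{eq:Kasparov_dual_3}: this requires only the functoriality of $T_\Dual$, the identity~\eqref{eq:T_sour_P} rewriting $T_\Dual\circ p_\Tot^*$ as~$\sigma_\Dual$, and associativity of the Kasparov product. Hence, whenever $\PD$ and $\PD^*$ are mutually inverse, $\PD=(\PD^*)^{-1}$ is automatically natural as well; this fact drives the converse implication.

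\emph{Kasparov dual $\Rightarrow$ mutually inverse.} I would compute both composites directly for arbitrary $A,B$. For $f\in\KK^\Grd_{i-n}(\Dual\otimes A,B)$, expanding $\PD^*(\PD(f))$ and using functoriality of $T_\Dual$ together with~\eqref{eq:T_sour_P} rewrites it as $f$ composed with $\bigl((D\otimes_\Base\ID_\Dual)\circ T_\Dual(\Theta)\bigr)\otimes_\Base\ID_A$, and condition~\eqref{eq:other_third_condition} collapses $(D\otimes_\Base\ID_\Dual)\circ T_\Dual(\Theta)$ to $(-1)^n\ID_\Dual$, returning $f$ once the signs cancel. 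Symmetrically, for $g\in\RKK^\Grd_i(\Tot;A,B)$ I would use associativity of the cup-cap product to turn $\PD(\PD^*(g))$ into $(\Theta\otimes_\Dual T_\Dual(g))\otimes_\Dual D$, apply~\eqref{eq:Kasparov_dual_2} to replace $\Theta\otimes_\Dual T_\Dual(g)$ by $\Theta\otimes_\Tot g$, and finally apply~\eqref{eq:Kasparov_dual_1} in the form $p_\Tot^*(D)\circ\Theta=\ID_{\UNIT_\Tot}$ to recover~$g$. The passage from~\eqref{eq:Kasparov_dual_3} to the more convenient~\eqref{eq:other_third_condition} used in the first computation is the (easy) content of Lemma~\ref{lem:duality_coalgebra}.

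\emph{Mutually inverse $\Rightarrow$ Kasparov dual.} Here I would specialise and read off the three axioms one at a time. First, naturality of $\PD=(\PD^*)^{-1}$ in the $\RKKcat_\Grd(\Tot)$-variable~$A$, unwound against $\ID_\Dual$ and a test morphism $h\in\RKK^\Grd_*(\Tot;A',\UNIT)$, yields $\Theta\otimes_\Dual T_\Dual(h)=\Theta\otimes_\Tot h$; this is~\eqref{eq:Kasparov_dual_2} for target~$\UNIT$, and the general case follows by tensoring with $\ID_B$. Next, with $A=B=\UNIT$, feeding $\ID_\UNIT\in\RKK^\Grd_0(\Tot;\UNIT,\UNIT)$ through $\PD\circ\PD^*=\ID$: one computes $\PD^*(\ID_\UNIT)=D$ and hence $\Theta\otimes_\Dual D=\ID_\UNIT$, which is~\eqref{eq:Kasparov_dual_1}. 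Finally, with $A=\UNIT$, $B=\Dual$ and $f=\ID_\Dual$ (so that $\PD(\ID_\Dual)=\Theta$), feeding $\ID_\Dual$ through $\PD^*\circ\PD=\ID$ gives $(-1)^{n}\,T_\Dual(\Theta)\otimes_{\Dual\otimes\Dual}(D\otimes_\Base\ID_\Dual)=\ID_\Dual$, i.e.~\eqref{eq:other_third_condition}, whence~\eqref{eq:Kasparov_dual_3} by the equivalence recorded after Definition~\ref{def:Kasparov_dual}. This establishes the biconditional.

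\emph{The adjunction.} This is then a reformulation: once $(\Dual,D,\Theta)$ is a Kasparov dual, $\PD$ exhibits a bijection $\KK^\Grd_*\bigl(T_\Dual(A),B\bigr)\xrightarrow{\cong}\RKK^\Grd_{*+n}(\Tot;A,B)=\KKcat_{\Grd\ltimes\Tot}\bigl(p_\Tot^*(A),p_\Tot^*(B)\bigr)$ that is natural in both $A$ and~$B$ (naturality in~$A$ being exactly~\eqref{eq:Kasparov_dual_2}), with unit~$\Theta$ and counit~$D$; by the standard characterisation of adjoint functors through a natural hom-set isomorphism this says precisely that $T_\Dual$ is left adjoint to $p_\Tot^*$ (up to the evident degree shift by~$n$, i.e.\ after suspending~$\Dual$). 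The main obstacle throughout is not conceptual but bookkeeping: one must track \emph{which} of the several copies of~$\Dual$ each partial Kasparov product is formed over, keep the operations $\otimes_\Tot$, $\otimes_\Base$ and the cup-cap product over~$\Dual$ distinct, and check that the Koszul signs produced when exterior and composition products are interchanged are exactly absorbed by the factor $(-1)^{in}$ built into~$\PD^*$.
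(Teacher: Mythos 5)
Your argument is correct in substance, but the converse direction is organised differently from the paper's proof, and two steps want small repairs.  On the difference in route: the paper derives \eqref{eq:Kasparov_dual_2} by rewriting it as $\PD^*(\Theta\otimes_\Tot f)=T_\Dual(f)$ and verifying this by a short computation using graded commutativity and functoriality of $T_\Dual$ (reducing to \eqref{eq:T_sour_Theta_D}), and it derives \eqref{eq:Kasparov_dual_3} by showing $\comul=T_\Dual(\Theta)$ and then appealing to \eqref{eq:cocommutative}, which holds for any abstract dual.  You instead observe that $\PD^*$ is natural with no hypotheses on $(D,\Theta)$ at all --- an immediate consequence of $T_\Dual$ being a $\KKcat_\Grd$-functor --- so that once $\PD$ and $\PD^*$ are mutually inverse, $\PD$ is natural too, and naturality of $\PD$ in the $\RKKcat_\Grd(\Tot)$-variable is exactly \eqref{eq:Kasparov_dual_2}.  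That is a legitimate and arguably cleaner packaging, consonant with Lemma~\ref{lem:sigma_sour_functor} and Proposition~\ref{pro:abstract_dual_adjoint}, and the forward direction is essentially as in the paper (and as in the proof of Theorem~\ref{the:KK_first_non-trivial}).

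Two repairs are needed.  First, the claim that ``the general case follows by tensoring with $\ID_B$'' does not close the argument: an arbitrary $f\in\RKK^\Grd_*(\Tot;A,B)$ need not factor as $h\otimes\ID_B$ with $h$ having target $\UNIT$, so tensoring the $B=\UNIT$ identity with $\ID_B$ only gives \eqref{eq:Kasparov_dual_2} for a special class of $f$.  The remedy is to unwind the naturality of $\PD$ directly against $\ID_{\Dual\otimes B}$: viewing $f\in\RKK^\Grd_i(\Tot;A,B)$ as a morphism $A\to B$ in $\RKKcat_\Grd(\Tot)$, naturality gives $\PD\bigl(T_\Dual(f)\bigr)=f^*\bigl(\PD(\ID_{\Dual\otimes B})\bigr)=(\Theta\otimes\ID_B)\circ f=\Theta\otimes_\Tot f$, which is \eqref{eq:Kasparov_dual_2} for general $B$ directly.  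Second, for \eqref{eq:Kasparov_dual_3} you invoke ``the equivalence recorded after Definition~\ref{def:Kasparov_dual}''; but the paper explicitly labels that equivalence as asserted \emph{without proof}, and the direction you need --- that \eqref{eq:other_third_condition} together with the first two axioms implies \eqref{eq:Kasparov_dual_3} --- is the nontrivial one, so citing it here amounts to citing what this part of the proof has to establish.  Replace it by the mechanism the paper actually uses: with \eqref{eq:Kasparov_dual_2} in hand, $\PD^*(\Theta\otimes_\Tot\Theta)=T_\Dual(\Theta)$, so $\comul=T_\Dual(\Theta)$ (Equation~\eqref{eq:comul_from_Kasparov_dual}); then \eqref{eq:Kasparov_dual_3} is literally \eqref{eq:cocommutative}, and the latter holds for any abstract dual by Lemma~\ref{lem:duality_coalgebra}, which applies here since your hypothesis already makes $(\Dual,\Theta)$ an abstract dual.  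With these two fixes the argument is complete.
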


We call the isomorphism in Theorem~\ref{the:first_duality} \emph{Kasparov's first duality isomorphism} because it goes back to Gennadi Kasparov's proof of his First Poincar\'e Duality Theorem \cite{Kasparov:Novikov}*{Theorem 4.9}.  We postpone the proof of Theorem~\ref{the:first_duality} to Section~\ref{sec:comultiplication} in order to utilise some more notation that we need also for other purposes.

Kasparov duals need not exist in general, even if the groupoid~\(\Grd\) is trivial and \(\Base=\pt\).  The Cantor set is a counterexample (see Proposition~\ref{pro:duals_UCT}).

To construct a Kasparov dual for a space, we need some geometric information on the space in question.  For instance, for a smooth manifold we can either use Clifford algebras or the tangent bundle to construct a Kasparov dual.  We may also triangulate the manifold and use this to construct a more combinatorial dual.

We will use Kasparov duals to construct Lefschetz invariants and Euler characteristics.  This leads to the question how unique Kasparov duals are and whether other notions derived from them may depend on choices.  The following counterexample shows that Kasparov duals are not unique.

\begin{example}
  \label{exa:counter_sour_unique}
  Let~\(\Grd\) be the trivial groupoid, so that \(\Base\defeq\pt\) is the one-point space, and let \(\Tot\defeq[0,1]\).  The homotopy invariance of \(\RKK\) in the space-variable implies
  \[
  \RKK_*(\Tot;A,B) \cong \RKK_*(\pt;A,B) = \KK_*(A,B)
  \]
  for all \(\Cst\)\nb-algebras \(A\) and~\(B\).

  Let \(\Dual\defeq\CONT([0,1])\), let~\(D\) be the class of an evaluation homomorphism, and let~\(\Theta\) be the class of the map \(\CONT([0,1])\to \CONT([0,1]\times[0,1])\), \(f\mapsto f\otimes 1\).  Inspection shows that this is a Kasparov dual for~\(\Tot\).  So is \(\Dual'\defeq\C\), viewed as a \(\Cst\)\nb-algebra over \([0,1]\) by evaluation at any point, with \(D'\) and~\(\Theta'\) being the identity maps.  While \(\Dual\) and~\(\Dual'\) are homotopy equivalent and hence isomorphic in \(\KKcat\), they are not isomorphic in \(\RKKcat([0,1])\) because their fibres are not \(\KK\)-equivalent everywhere.
\end{example}

Abstract duals formalise what is unique about Kasparov duals.  This is important because constructions that can be expressed in terms of abstract duals yield equivalent results for \emph{all} Kasparov duals.  The equivalence between the smooth and combinatorial duals for smooth manifolds is used in \cites{Emerson-Meyer:Euler, Emerson-Meyer:Equi_Lefschetz} to reprove an index theorem for the equivariant Euler operator and the Equivariant Lefschetz Fixed Point Theorem of Wolfgang L\"uck and Jonathan Rosenberg (see \cites{Lueck-Rosenberg:Lefschetz,Lueck-Rosenberg:Euler}).

\begin{definition}
  \label{def:abstract_dual}
  Let~\(\Dual\) be a \(\Grd\)\nb-\(\Cst\)\nb-algebra and let \(\Theta\in\RKK^\Grd_n(\Tot;\UNIT,\Dual)\).  We call the pair \((\Dual,\Theta)\) an \(n\)\nb-dimensional \emph{abstract dual} for~\(\Tot\) if the map \(\PD\) defined as in Theorem~\ref{the:first_duality} is an isomorphism for all \(\Grd\)-\(\Cst\)\nb-algebras \(A\) and~\(B\).  We call \((\Dual,\Theta)\) an \(n\)\nb-dimensional \emph{weak abstract dual} for~\(\Tot\) if \(\PD\) is an isomorphism for \(A=\UNIT_\Tot\) and all \(\Grd\)-\(\Cst\)\nb-algebras~\(B\).
\end{definition}

We can always adjust the dimension to be~\(0\) by passing to a suspension of~\(\Dual\).

Theorem~\ref{the:first_duality} shows that \((\Dual,\Theta)\) is an abstract dual if \((\Dual,D,\Theta)\) is a Kasparov dual.  We will see below that we can recover~\(D\) and the functor~\(T_\Dual\) from the abstract dual.  The main difference between Kasparov duals and abstract duals is that for the latter, \(\Dual\) is not necessarily a \(\Cst\)\nb-algebra over~\(\Tot\).  This is to be expected because of Example~\ref{exa:counter_sour_unique}.  We need weak abstract duals in connection with~\cite{Emerson-Meyer:Correspondences}, for technical reasons, because computations in the geometric version of \(\KK\) may provide weak abstract duals, but not the existence of a duality isomorphism for \emph{all} \(\Grd\)\nb-\(\Cst\)-algebras~\(A\).

\begin{proposition}
  \label{pro:dual_unique}
  A weak abstract dual for a space~\(\Tot\) is unique up to a canonical \(\KK^\Grd\)-equivalence if it exists, and even covariantly functorial in the following sense.

  Let \(\Tot\) and~\(\Other\) be two \(\Grd\)\nb-spaces and let \(f\colon \Tot\to \Other\) be a \(\Grd\)\nb-equivariant continuous map.  Let \((\Dual_\Tot,\Theta_\Tot)\) and \((\Dual_\Other,\Theta_\Other)\) be weak abstract duals for \(\Tot\) and~\(\Other\) of dimensions \(n_\Tot\) and~\(n_\Other\), respectively.  Then there is a unique \(\Dual_f\in\KK^\Grd_{n_\Other-n_\Tot}(\Dual_\Tot,\Dual_\Other)\) with \(\Theta_\Tot \otimes_{\Dual_\Tot} \Dual_f = f^*(\Theta_\Other)\).  Given two composable maps between three spaces with duals, we have \(\Dual_{f\circ g} = \Dual_f \circ \Dual_g\).  If \(\Tot=\Other\), \(f=\ID_\Tot\), and \((\Dual_\Tot,\Theta_\Tot)=(\Dual_\Other,\Theta_\Other)\), then \(\Dual_f=\ID_{\Dual_\Tot}\).  If only \(\Tot=\Other\), \(f=\ID_\Tot\), then~\(\Dual_f\) is a \(\KK^\Grd\)-equivalence between the two duals of~\(\Tot\).
\end{proposition}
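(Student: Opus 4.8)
The plan is to extract everything from the defining property of a weak abstract dual, namely that $\PD_\Tot\colon \KK^\Grd_{i-n_\Tot}(\Dual_\Tot\otimes\UNIT_\Tot,B)\to\RKK^\Grd_i(\Tot;\UNIT_\Tot,B)$ is an isomorphism for all~$B$. First I would establish existence and uniqueness of $\Dual_f$. The class $f^*(\Theta_\Other)$ lies in $\RKK^\Grd_{n_\Other}(\Tot;\UNIT,\Dual_\Other)$; composing with a suitable shift we may regard it in the target of $\PD_\Tot$ with $B\defeq\Dual_\Other$ (using $\UNIT_\Tot\otimes\Dual_\Tot\cong\Dual_\Tot$ as coefficient). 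Since $\PD_\Tot$ is an isomorphism, there is a unique class $\Dual_f\in\KK^\Grd_{n_\Other-n_\Tot}(\Dual_\Tot,\Dual_\Other)$ with $\PD_\Tot(\Dual_f)=\Theta_\Tot\otimes_{\Dual_\Tot}\Dual_f=f^*(\Theta_\Other)$; here one must be a little careful with the sign conventions built into~$\PD$ versus a bare Kasparov product, but that is bookkeeping.

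Next I would prove functoriality $\Dual_{f\circ g}=\Dual_f\circ\Dual_g$. Given $\Base\xrightarrow{g}\Tot\xrightarrow{f}\Other$ with duals, the defining equation for the left side is $\Theta_\Base\otimes_{\Dual_\Base}\Dual_{f\circ g}=(f\circ g)^*(\Theta_\Other)=g^*f^*(\Theta_\Other)$. I would compute the right side similarly: $\Theta_\Base\otimes_{\Dual_\Base}(\Dual_g\otimes_{\Dual_\Tot}\Dual_f)=(\Theta_\Base\otimes_{\Dual_\Base}\Dual_g)\otimes_{\Dual_\Tot}\Dual_f=g^*(\Theta_\Tot)\otimes_{\Dual_\Tot}\Dual_f$ by associativity of the Kasparov product. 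So it remains to see $g^*(\Theta_\Tot)\otimes_{\Dual_\Tot}\Dual_f=g^*(f^*(\Theta_\Other))$; but $g^*$ is a functor on $\RKKcat_\Grd$ compatible with the cup-cap product, so $g^*(\Theta_\Tot\otimes_{\Dual_\Tot}\Dual_f)=g^*(\Theta_\Tot)\otimes_{\Dual_\Tot}g^*(\Dual_f)$, and $g^*$ is the identity on $\KK^\Grd_*(\Dual_\Tot,\Dual_\Other)$ (these are $\Grd$\nb-$\Cst$-algebras, not $\Tot$\nb-algebras, so $g^*$ acts trivially on them). Combining with the defining equation of $\Dual_f$ upstairs in $\Tot$ gives the claim. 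By uniqueness of the class solving the defining equation (injectivity of $\PD_\Base$), $\Dual_{f\circ g}=\Dual_g\circ\Dual_f$ in the composition order stated.

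For $f=\ID_\Tot$ and equal duals, the equation $\Theta_\Tot\otimes_{\Dual_\Tot}\Dual_{\ID}=\ID_\Tot^*(\Theta_\Tot)=\Theta_\Tot$ is solved by $\Dual_{\ID}=\ID_{\Dual_\Tot}$, hence by uniqueness $\Dual_{\ID_\Tot}=\ID_{\Dual_\Tot}$. For the uniqueness-up-to-equivalence statement, take two weak abstract duals $(\Dual_\Tot,\Theta_\Tot)$ and $(\Dual'_\Tot,\Theta'_\Tot)$ of~$\Tot$ and apply the above with $f=\ID_\Tot$ in both directions: we get $\Dual_{\ID}\in\KK^\Grd_{n'-n}(\Dual_\Tot,\Dual'_\Tot)$ and $\Dual'_{\ID}\in\KK^\Grd_{n-n'}(\Dual'_\Tot,\Dual_\Tot)$; their composites are, by the functoriality just proved applied to $\ID_\Tot\circ\ID_\Tot$, equal to $\Dual_{\ID_\Tot}$ for each of the two duals, which is the relevant identity morphism. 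Hence $\Dual_{\ID}$ is a $\KK^\Grd$\nb-equivalence, and it is canonical because it is the unique solution of its defining equation. The main obstacle is purely notational: keeping the sign factors $(-1)^{in}$ from the definition of $\PD$ consistent through the associativity manipulations, and making sure the coefficient-algebra substitutions ($\UNIT_\Tot\otimes\Dual\cong\Dual$, $p_\Tot^*$ versus $\otimes_\Base$) are applied so that "$\PD$ is an isomorphism" literally delivers the existence and uniqueness of each $\Dual_f$; once the diagram of functors $\forget_\Tot$, $p_\Tot^*$, $g^*$ and the cup-cap product is set up carefully, every step is a formal consequence of associativity and of $\PD$ being bijective.
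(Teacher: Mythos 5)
Your proposal is correct and takes essentially the same route as the paper's very terse proof: use that $\PD_\Tot$ is an isomorphism (with $A=\UNIT$, $B=\Dual_\Other$) to solve $\Theta_\Tot\otimes_{\Dual_\Tot}\Dual_f=f^*(\Theta_\Other)$ uniquely for $\Dual_f$, and then derive all the functoriality identities from uniqueness of solutions and the compatibility of $g^*$ with cup-cap products; the paper leaves the latter computation implicit where you spell it out. Two small slips: the isomorphism you cite should be $\Dual_\Tot\otimes\UNIT\cong\Dual_\Tot$ (the tensor unit $\UNIT=\CONT_0(\Base)$ of $\KKcat_\Grd$, not $\UNIT_\Tot\otimes\Dual_\Tot$, which is a genuinely different object when $\Tot\neq\Base$), and in your final line the Kasparov product $\Dual_g\otimes_{\Dual_\Tot}\Dual_f$ equals the \emph{morphism composition} $\Dual_f\circ\Dual_g$ (first $\Dual_g$, then $\Dual_f$), so the conclusion is $\Dual_{f\circ g}=\Dual_f\circ\Dual_g$ as stated, not $\Dual_g\circ\Dual_f$. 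Also, there is no sign to worry about here: $\PD$ as defined in Theorem~\ref{the:first_duality} is the bare Kasparov product $\Theta\otimes_\Dual\blank$; the sign $(-1)^{in}$ appears only in its inverse~$\PD^*$.
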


\begin{proof}
  The condition characterising~\(\Dual_f\) is equivalent to \(\PD_\Tot(\Dual_f) = f^*(\Theta_\Other)\), which uniquely determines~\(\Dual_f\).  The uniqueness of~\(\Dual_f\) implies identities such as \(\Dual_{f\circ g} = \Dual_f \circ \Dual_g\) for composable morphisms \(f\) and~\(g\) and \(\Dual_{\ID_\Tot}=\ID_{\Dual_\Tot}\) when we use the same dual of~\(\Tot\) twice.  These functoriality properties imply that~\(\Dual_f\) is invertible if~\(f\) is a \(\Grd\)\nb-homotopy equivalence.  In particular, the dual is unique up to a canonical isomorphism.
\end{proof}

\subsection{Basic constructions with abstract duals}
\label{sec:basic_duality_constructions}

Most of the following constructions are immediate generalisations of corresponding ones in~\cite{Emerson-Meyer:Euler}.  They only use an abstract dual or a weak abstract dual and, therefore, up to canonical isomorphisms between different duals, do not depend on the choice of Kasparov dual.

Let \((\Dual,\Theta)\) be an \(n\)\nb-dimensional weak abstract dual for a \(\Grd\)\nb-space~\(\Tot\).  Another weak abstract dual \((\Dual',\Theta')\) of dimension~\(n'\) is related to \((\Dual,\Theta)\) by an invertible element~\(\psi\) in \(\KK^\Grd_{n'-n}(\Dual,\Dual')\) such that \(\Theta\otimes_{\Dual} \psi = \Theta'\).  We will express in these terms what happens when we change the dual.

\subsubsection{Counit}
\label{sec:counit}

Define \(D\in\KK^\Grd_{-n}(\Dual,\UNIT)\) by
\begin{equation}
  \label{eq:D_and_Theta}
  \PD(D)\defeq \Theta\otimes_\Dual D = 1
  \qquad \text{in \(\RKK^\Grd_0(\Tot;\UNIT,\UNIT)\).}
\end{equation}
Comparison with~\eqref{eq:Kasparov_dual_1} shows that this is the class named~\(D\) in a Kasparov dual, which is uniquely determined once \(\Dual\) and~\(\Theta\) are fixed.  A change of dual replaces~\(D\) by \(\psi^{-1}\otimes_\Dual D\).

The example of the self-duality of spin manifolds motivates calling \(D\) and~\(\Theta\) \emph{Dirac} and \emph{local dual Dirac}.  We may also call~\(D\) the \emph{counit} of the duality because it plays the algebraic role of a counit by Lemma~\ref{lem:duality_coalgebra} below.

\subsubsection{Comultiplication}
\label{sec:comultiplication}

Define \(\comul\in\KK^\Grd_n(\Dual,\Dual\otimes\Dual)\) by
\[
\PD(\comul) \defeq \Theta\otimes_\Dual \comul = \Theta \otimes_\Tot \Theta \qquad \text{in \(\RKK^\Grd_{2n}(\Tot;\UNIT,\Dual\otimes\Dual)\).}
\]
We call~\(\comul\) the \emph{comultiplication} of the duality.  A change of dual replaces~\(\comul\) by
\[
(-1)^{n(n'-n)}\psi^{-1}\otimes_\Dual \comul \otimes_{\Dual\otimes\Dual} (\psi\otimes\psi) \in \KK^\Grd_{n'}(\Dual',\Dual'\otimes\Dual')
\]
because \((\Theta\otimes_\Dual\psi)\otimes_\Tot (\Theta\otimes_\Dual\psi) = (-1)^{n(n'-n)} (\Theta\otimes_\Tot\Theta) \otimes_{\Dual\otimes\Dual} (\psi\otimes\psi)\) by the Koszul sign rule.

\begin{lemma}
  \label{lem:duality_coalgebra}
  The object~\(\Dual\) of \(\KKcat_\Grd\) with counit~\(D\) and comultiplication~\(\comul\) is a cocommutative, counital coalgebra object in the tensor category \(\KKcat_\Grd\) if \(n=0\).  For general~\(n\), the coassociativity, cocommutativity, and counitality conditions hold up to signs:
  \begin{gather}
    \label{eq:coassociative}
    (-1)^n \comul\otimes_{\Dual\otimes \Dual}(\comul\otimes 1_\Dual) = \comul \otimes_{\Dual\otimes\Dual}(1_\Dual\otimes\comul),
    \\
    \label{eq:cocommutative}
    \comul\otimes_{\Dual\otimes\Dual}\flip_\Dual = (-1)^n\comul,
    \\
    \label{eq:counit}
    (-1)^n\comul \otimes_{\Dual\otimes\Dual} (D\otimes 1_\Dual) = 1_\Dual = \comul \otimes_{\Dual\otimes\Dual} (1_\Dual\otimes D).
  \end{gather}
  Equation~\eqref{eq:coassociative} holds in \(\KK^\Grd_{2n}(\Dual,\Dual^{\otimes 3})\), \eqref{eq:cocommutative} holds in \(\KK^\Grd_n(\Dual,\Dual\otimes\Dual)\), and~\eqref{eq:counit} holds in \(\KK^\Grd_0(\Dual,\Dual)\).
\end{lemma}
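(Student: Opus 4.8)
The plan is to prove all three relations by transporting them along the first duality map
\[
\PD\colon \KK^\Grd_{i-n}(\Dual,B)\xrightarrow{\cong}\RKK^\Grd_i(\Tot;\UNIT,B),\qquad f\mapsto\Theta\otimes_\Dual f,
\]
which is an isomorphism on the groups in question --~this is exactly what makes the classes~\(D\) and~\(\comul\) well defined in Sections~\ref{sec:counit} and~\ref{sec:comultiplication}, cf.\ Definition~\ref{def:abstract_dual}, and for a Kasparov dual it is contained in Theorem~\ref{the:first_duality}.  In particular \(\PD\) is injective, so it suffices to verify each of \eqref{eq:coassociative}--\eqref{eq:counit} after applying~\(\PD\).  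By the defining relations \(\Theta\otimes_\Dual D=1\) of~\eqref{eq:D_and_Theta} and \(\Theta\otimes_\Dual\comul=\Theta\otimes_\Tot\Theta\), together with associativity of the Kasparov product, applying~\(\PD\) replaces~\(\comul\) by the exterior square \(\Theta\otimes_\Tot\Theta\) and~\(D\) by the unit \(1\in\RKK^\Grd_0(\Tot;\UNIT,\UNIT)\).  On the \(\RKK\)\nb-side, coassociativity, cocommutativity and counitality of the triple \((\Dual,\comul,D)\) then become precisely the associativity, symmetry and unit constraints obeyed by the exterior product~\(\otimes_\Tot\) in the symmetric monoidal category \(\RKKcat_\Grd(\Tot)\), applied to copies of the single class~\(\Theta\); these hold by construction, so the whole content of the lemma is the bookkeeping of Koszul signs.

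Concretely, I would carry out the three reductions as follows.  For~\eqref{eq:coassociative}, once the inner~\(\comul\) is moved through the factors of \(\Theta\otimes_\Tot\Theta\) by the graded interchange (``middle four exchange'') law, both sides become the threefold exterior product \(\Theta\otimes_\Tot\Theta\otimes_\Tot\Theta\) up to one and the same sign, the explicit factor~\((-1)^n\) on one side of~\eqref{eq:coassociative} matching the interchange sign \((-1)^{n^2}=(-1)^n\) that appears on the other.  For~\eqref{eq:cocommutative}, \(\PD\) of the left-hand side equals \((\Theta\otimes_\Tot\Theta)\otimes_{\Dual\otimes\Dual}\flip_\Dual=(-1)^{n^2}\,\Theta\otimes_\Tot\Theta=(-1)^n\,\Theta\otimes_\Tot\Theta=(-1)^n\PD(\comul)\) by graded commutativity of the exterior product and \(n^2\equiv n\bmod 2\).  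For~\eqref{eq:counit}, pushing~\(D\) through \(\Theta\otimes_\Tot\Theta\) by the interchange law and using \(\Theta\otimes_\Dual D=1\) together with the unitality constraint of~\(\otimes_\Tot\) reduces both sides, up to the displayed power of~\((-1)^n\), to \(\Theta=\PD(1_\Dual)\); the asymmetry of the sign in~\eqref{eq:counit} simply records the Koszul sign incurred when~\(D\), of degree~\(-n\), passes a copy of~\(\Theta\), of degree~\(n\), in one of the two cases but not in the other.  Injectivity of~\(\PD\) then yields the asserted identity in \(\KKcat_\Grd\) in each case, and when \(n=0\) all signs are trivial, so \(\Dual\) is an honest counital cocommutative coalgebra object --~the first assertion.

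The single point requiring care is this sign accounting: one must apply the graded interchange law of the symmetric monoidal category \(\RKKcat_\Grd(\Tot)\) consistently (cf.\ \cite{LeGall:KK_groupoid}*{\S6.3}, \cite{Saavedra:Tannakiennes}, and the analogous Koszul computations in \cite{Emerson-Meyer:Euler}), remembering that \(\Theta\), \(\comul\) and~\(D\) carry degrees \(n\), \(n\) and~\(-n\).  No further hypotheses enter the coalgebra identities; in particular conditions~\eqref{eq:Kasparov_dual_2} and~\eqref{eq:Kasparov_dual_3} are not used here.  As a byproduct, if \((\Dual,D,\Theta)\) is a Kasparov dual, then applying~\eqref{eq:Kasparov_dual_2} to \(f=\Theta\) and invoking injectivity of~\(\PD\) identifies \(\comul\) with \(T_\Dual(\Theta)\); substituting this into~\eqref{eq:counit} then reproduces~\eqref{eq:other_third_condition}, which gives the implication \eqref{eq:Kasparov_dual_3}\(\Rightarrow\)\eqref{eq:other_third_condition} referred to in the text.
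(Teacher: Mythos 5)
Your argument is correct and is surely the same as the one behind the cited Lemma~17 of \cite{Emerson-Meyer:Euler}, which the paper invokes without reproducing: apply the injective map \(\PD\), rewrite everything in \(\RKK^\Grd(\Tot;\UNIT,\Dual^{\otimes k})\) using the defining relations \(\Theta\otimes_\Dual D=1\) and \(\Theta\otimes_\Dual\comul=\Theta\otimes_\Tot\Theta\), and then read off the coalgebra identities from graded associativity, graded commutativity, and unitality of the exterior product \(\otimes_\Tot\) in \(\RKKcat_\Grd(\Tot)\). I checked the Koszul signs and they come out as you say: passing a degree-\((-n)\) class \(D\) (or a degree-\(n\) class \(\comul\)) through a degree-\(n\) factor \(\Theta\) produces \((-1)^{n^2}=(-1)^n\), which accounts precisely for the asymmetric placement of the explicit \((-1)^n\) in \eqref{eq:coassociative} and \eqref{eq:counit} and for the sign in \eqref{eq:cocommutative}. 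Your closing remark correctly explains how, for a Kasparov dual, combining \(\comul=T_\Dual(\Theta)\) (a consequence of \eqref{eq:Kasparov_dual_2} and injectivity of \(\PD\)) with \eqref{eq:counit} recovers \eqref{eq:other_third_condition}, matching the parenthetical claim in the text.
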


Recall that~\(\flip_\Dual\) denotes the flip operator on \(\Dual\otimes\Dual\).

\begin{proof}
  The proof is identical to that of \cite{Emerson-Meyer:Euler}*{Lemma 17}.
\end{proof}

\begin{proof}[Proof of Theorem~\textup{\ref{the:first_duality}}]
  The proof that \(\PD\) and \(\PD^*\) are inverse to each other if \((\Dual,D,\Theta)\) is a Kasparov dual can be copied from the proof of \cite{Emerson-Meyer:Euler}*{Proposition 19}; see also the proof of Theorem~\ref{the:KK_first_non-trivial} below.  The existence of such isomorphisms means that the functor~\(T_\Dual\) is left adjoint to the functor~\(p_\Tot^*\) (with range category \(\RKKcat_\Grd(\Tot)\)).

  Conversely, assume that \(\PD\) and \(\PD^*\) are inverse to each other, so that \((\Dual,\Theta)\) is an abstract dual for~\(\Tot\).  We check that \((\Dual,D,\Theta)\) is a Kasparov dual.  The first condition~\eqref{eq:Kasparov_dual_1} follows because it is equivalent to \(\PD\circ\PD^*(\ID_\UNIT)=\ID_\UNIT\) in \(\RKK^\Grd_0(\Tot;\UNIT,\UNIT)\).  Thus~\(D\) is the counit of the duality.  Furthermore, we get \(\PD^*(\Theta)=\ID_\Dual\) because \(\PD(\ID_\Dual)=\Theta\).  That is,
  \begin{equation}
    \label{eq:T_sour_Theta_D}
    (-1)^n T_\Dual(\Theta) \otimes_{\Dual\otimes\Dual}
    (D\otimes\ID_\Dual) = \ID_\Dual.
  \end{equation}
  Equation~\eqref{eq:Kasparov_dual_2} is equivalent to
  \begin{equation}
    \label{eq:PDstar_Theta}
    \PD^*(\Theta \otimes_\Tot f) = T_\Dual(f)
  \end{equation}
  because \(\Theta\otimes_\Dual T_\Dual(f) = \PD\bigl(T_\Dual(f)\bigr)\).  We use graded commutativity of~\(\otimes_\Tot\) and functoriality of~\(T_\Dual\) to rewrite
  \begin{align*}
    \PD^*(\Theta\otimes_\Tot f) &= (-1)^{in} \PD^*(f\otimes_\Tot \Theta) \\
    &= (-1)^{in+(i+n)n} T_\Dual(f\otimes_{\CONT_0(\Tot)} \Theta) \otimes_\Dual D \\
    &= (-1)^{n} T_\Dual(f) \otimes_{T_\Dual\bigl(\CONT_0(\Tot)\bigr)} T_\Dual(\Theta) \otimes_\Dual D \\
    &= (-1)^{n} T_\Dual(f) \otimes_\Dual T_\Dual(\Theta) \otimes_\Dual D.
  \end{align*}
  Thus~\eqref{eq:PDstar_Theta} follows from~\eqref{eq:T_sour_Theta_D}.

  As a special case, \eqref{eq:PDstar_Theta} contains \(\PD^*(\Theta\otimes_\Tot\Theta) = T_\Dual(\Theta)\), so that
  \begin{equation}
    \label{eq:comul_from_Kasparov_dual}
    \comul = T_\Dual(\Theta).
  \end{equation}
  Hence~\eqref{eq:Kasparov_dual_3} is equivalent to~\eqref{eq:cocommutative}, which holds for any abstract dual.  This finishes the proof of Theorem~\ref{the:first_duality}.
\end{proof}

Equation~\eqref{eq:comul_from_Kasparov_dual} shows how to compute~\(\comul\) using a Kasparov dual.

\subsubsection{The tensor functor}
\label{sec:T_sour_abstract}

Now let \((\Dual,\Theta)\) be an abstract dual, that is, \(\PD\) is invertible for all \(\Grd\)\nb-\(\Cst\)\nb-algebras \(A\) and~\(B\).  We define
\[
T'_\Dual\colon \RKK^\Grd_*(\Tot;A,B) \to \KK^\Grd_*(\Dual\otimes A,\Dual\otimes B), \qquad f\mapsto \comul \otimes_\Dual \PD^{-1}(f),
\]
where~\(\comul\) is the comultiplication of the duality and~\(\otimes_\Dual\) operates on the \emph{second} copy of~\(\Dual\) in the target \(\Dual\otimes\Dual\) of~\(\comul\).  This map is denoted~\(\sigma'_\Dual\) in~\cite{Emerson-Meyer:Euler}.  A computation as in \cite{Emerson-Meyer:Euler}*{Equation (23)} yields
\begin{equation}
  \label{eq:PD_sigma_prime}
  \PD\bigl(T'_\Dual(f)\bigr) = \Theta \otimes_\Tot f
  \qquad \text{in \(\RKK^\Grd_{i+n}(\Tot;A,\Dual\otimes B)\)}
\end{equation}
for all \(f\in\RKK^\Grd_i(\Tot;A,B)\).  Thus~\eqref{eq:PDstar_Theta} implies
\[
T'_\Dual(f) = T_\Dual(f)
\]
if \((\Dual,\Theta)\) is part of a Kasparov dual.  Thus~\(T_\Dual\) does not depend on the dual, and we may write~\(T_\Dual\) instead of~\(T'_\Dual\) from now on.

A change of dual replaces~\(T_\Dual\) by the map
\[
\RKK^\Grd_i(\Tot;A,B) \ni f\mapsto (-1)^{i(n-n')}\psi^{-1}\otimes_\Dual T_\Dual(f)\otimes_\Dual \psi \in \KK^\Grd_i(\Dual'\otimes A, \Dual'\otimes B).
\]

\begin{lemma}
  \label{lem:sigma_sour_functor}
  The maps~\(T_\Dual\) above define a functor
  \[
  T_\Dual\colon \RKKcat_\Grd(\Tot)\to\KKcat_\Grd.
  \]
  This is a \(\KKcat_\Grd\)-functor, that is, it is compatible with the tensor products~\(\otimes\), and it is left adjoint to the functor \(p_\Tot^*\colon \KKcat_\Grd\to\RKKcat_\Grd\).
\end{lemma}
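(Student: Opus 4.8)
The plan is to exploit the bijectivity of \(\PD\) — which holds by the standing hypothesis that \((\Dual,\Theta)\) is an abstract dual — to transport the evident functoriality of \(\RKKcat_\Grd(\Tot)\) along \(\PD\) onto the formula defining \(T_\Dual\). The starting point is~\eqref{eq:PD_sigma_prime}: \(\PD\bigl(T_\Dual(f)\bigr)=\Theta\otimes_\Tot f\) for every \(f\in\RKK^\Grd_*(\Tot;A,B)\). Since \(\PD\) is bijective, this characterises \(T_\Dual(f)\) as the unique class with \(\Theta\otimes_\Dual T_\Dual(f)=\Theta\otimes_\Tot f\), and in particular \(T_\Dual(A)=\Dual\otimes A\) on objects. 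Each of the three assertions of the lemma will then be reduced, by applying \(\PD\), to an identity for the Kasparov cup-cap product in \(\RKKcat_\Grd(\Tot)\) that holds by its formal properties: associativity and graded commutativity of~\(\otimes_D\), the monoidality of \(p_\Tot^*\), and the \(\KKcat_\Grd\)-category structures recalled in Section~\ref{sec:grd_KK}.

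First I would verify functoriality. For the identity morphism of an object \(A\) one has \(\PD(\ID_{\Dual\otimes A})=\Theta\otimes_\Dual\ID_{\Dual\otimes A}=\Theta\otimes_\Tot\ID_A\) — the cup-cap product with an identity is an exterior product, and \(p_\Tot^*\) is monoidal — whence \(T_\Dual(\ID_A)=\ID_{\Dual\otimes A}\). For composable \(f\in\RKK^\Grd_*(\Tot;A,B)\) and \(g\in\RKK^\Grd_*(\Tot;B,C)\), apply \(\PD\) to the Kasparov composite \(T_\Dual(f)\otimes_{\Dual\otimes B}T_\Dual(g)\) and use associativity of the cup-cap product to contract \(\Theta\) against the first \(\Dual\):
\begin{align*}
  \Theta\otimes_\Dual\bigl(T_\Dual(f)\otimes_{\Dual\otimes B}T_\Dual(g)\bigr)
  &=\bigl(\Theta\otimes_\Dual T_\Dual(f)\bigr)\otimes_{\Dual\otimes B}T_\Dual(g)\\
  &=(\Theta\otimes_\Tot f)\otimes_{\Dual\otimes B}T_\Dual(g).
\end{align*}
A second application of the characterisation of \(T_\Dual(g)\), together with graded commutativity and associativity of~\(\otimes_\Tot\) — or, equivalently, with the coassociativity and counitality of~\(\comul\) from Lemma~\ref{lem:duality_coalgebra} applied to the explicit formula \(T_\Dual(h)=\comul\otimes_\Dual\PD^{-1}(h)\) — rewrites the last expression as \(\Theta\otimes_\Tot(f\otimes_{p_\Tot^*B}g)=\PD\bigl(T_\Dual(g\circ f)\bigr)\), so that \(T_\Dual(g)\circ T_\Dual(f)=T_\Dual(g\circ f)\) by bijectivity of~\(\PD\). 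This is formally the same computation as in~\cite{Emerson-Meyer:Euler}.

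For the \(\KKcat_\Grd\)-functor property, the structure isomorphisms \(T_\Dual(A\otimes B)=\Dual\otimes A\otimes B=T_\Dual(A)\otimes B\) are the associativity constraints of \(\KKcat_\Grd\), so the required coherences reduce to those already holding in the monoidal category \(\KKcat_\Grd\); that \(T_\Dual\) respects these isomorphisms on morphisms follows, again by bijectivity of~\(\PD\), from \(\PD\bigl(T_\Dual(u)\otimes\ID_B\bigr)=(\Theta\otimes_\Tot u)\otimes\ID_B=\Theta\otimes_\Tot(u\otimes\ID_B)=\PD\bigl(T_\Dual(u\otimes\ID_B)\bigr)\) and the analogous identity in the \(\KKcat_\Grd\)-variable, using that the cup-cap product commutes with exterior products (see~\cite{Pareigis:C-categories}). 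Finally, the adjunction \(T_\Dual\dashv p_\Tot^*\) is witnessed by \(\PD\) itself: it is precisely the natural bijection \(\KKcat_\Grd\bigl(T_\Dual(A),B\bigr)\cong\RKKcat_\Grd(\Tot)\bigl(A,p_\Tot^*(B)\bigr)\). Naturality in~\(B\) follows from associativity of the cup-cap product (viewing classes of \(\KKcat_\Grd\) in \(\RKKcat_\Grd(\Tot)\) via \(p_\Tot^*\) as in Definition~\ref{def:RKK}), and naturality in~\(A\) is the statement that \(\PD\bigl(f\circ T_\Dual(u)\bigr)=\PD(f)\circ u\) for morphisms \(u\) of \(\RKKcat_\Grd(\Tot)\), which one checks in the same way from the characterisation of \(T_\Dual(u)\).

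The only real difficulty I anticipate is notational bookkeeping: one must keep track of which copy of~\(\Dual\) each cup-cap product contracts, of the Koszul signs produced by nonzero degrees, and of the distinction between the internal tensor products \(\otimes_\Tot\) and~\(\otimes_\Base\) of \(\KKcat_{\Grd\ltimes\Tot}\) and \(\KKcat_\Grd\) and the \(\KKcat_\Grd\)-module action on \(\RKKcat_\Grd(\Tot)\). There is no conceptual obstacle beyond the formal properties of the cup-cap product established by Le Gall~\cite{LeGall:KK_groupoid} and recalled in Section~\ref{sec:grd_KK}; in particular, because \(\PD\) is already assumed bijective, neither the conditions defining a Kasparov dual nor any \(\Grd\ltimes\Tot\)-algebra structure on~\(\Dual\) enters here.
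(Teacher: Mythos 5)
Your proposal is correct, and it genuinely differs from the paper's proof in strategy. The paper argues abstractly: since \(\PD\) is a natural bijection in \(B\), the object map \(A\mapsto\Dual\otimes A\) extends for ``general nonsense reasons'' to a functor \(T\) left adjoint to \(p_\Tot^*\) (essentially a Yoneda argument), and then \(T=T_\Dual\) is shown by checking both satisfy the same characterising identity \(\PD\bigl(T(\alpha)\bigr)=\Theta\otimes_\Tot\alpha\); in particular the composition law \(T_\Dual(g)\circ T_\Dual(f)=T_\Dual(g\circ f)\) is never verified directly but inherited from the abstract adjoint. You instead verify functoriality by hand, transporting the identities through the bijective \(\PD\), with the adjunction then being a reading-off rather than the starting point. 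Both are valid; the paper's is shorter and more modular (functoriality and adjointness come simultaneously), yours is more elementary and makes visible precisely which cup-cap identities carry the weight. The one place where your write-up is terser than the argument really is occurs in the composition step: the ``second application of the characterisation of \(T_\Dual(g)\)'' cannot be applied directly to the expression \((\Theta\otimes_\Tot f)\otimes_{\Dual\otimes B}T_\Dual(g)\), because \(\Theta\otimes_\Dual T_\Dual(g)\) does not literally appear as a subterm. You first need the interchange (bifunctoriality) identity \(\Theta\otimes_\Tot f=(-1)^{ni}\,f\otimes_{p_\Tot^*B}(\Theta\otimes_\Tot\ID_B)\), then pull \(f\) out past the contraction, apply the characterisation to \((\Theta\otimes_\Tot\ID_B)\otimes_{\Dual\otimes B}T_\Dual(g)=\Theta\otimes_\Dual T_\Dual(g)=\Theta\otimes_\Tot g\), and invoke the interchange law a second time to recombine; this is what you are gesturing at with ``graded commutativity and associativity of \(\otimes_\Tot\)'', but it deserves to be spelled out (and the Koszul signs, which you correctly flag, do arise and cancel here). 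You also silently use the compatibility \(\PD(f_1\otimes f_2)=\PD(f_1)\otimes f_2\) in the \(\KKcat_\Grd\)-functor part; the paper records this explicitly as the first line of its proof, and you should too. None of this is a gap — only elision — but it is precisely the bookkeeping you anticipate in your last paragraph.
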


\begin{proof}
  It is clear that the natural transformation~\(\PD\) is compatible with~\(\otimes\) in~\eqref{eq:Tot_Base_module} in the sense that \(\PD(f_1\otimes f_2) = \PD(f_1)\otimes f_2\) if \(f_1\) and~\(f_2\) are morphisms in \(\RKKcat_\Grd(\Tot)\) and \(\KKcat_\Grd\), respectively.  Hence so are its inverse \(\PD^{-1}\) and~\(T_\Dual\).  The existence of a duality isomorphism as in Theorem~\ref{the:first_duality} implies that \(p_\Tot^*\colon \KKcat_\Grd\to\RKKcat_\Grd\) has a left adjoint \emph{functor} \(T\colon \RKKcat_\Grd\to\KKcat_\Grd\) that acts on objects by \(A\mapsto A\otimes\Dual\) like~\(T_\Dual\).  This is a functor for general nonsense reasons.  We claim that \(T_\Dual=T\), proving functoriality of~\(T_\Dual\).  The functor~\(T\) is constructed as follows.  A morphism \(\alpha\in\RKK^\Grd_j(\Tot;A_1,A_2)\) induces a natural transformation
  \[
  \alpha^*\colon \RKK^\Grd_i(\Tot;A_2,B) \to \RKK^\Grd_{i+j}(\Tot;A_1,B),
  \]
  which corresponds by the duality isomorphisms to a natural transformation
  \[
  \alpha^*\colon \KK^\Grd_{i-n}(\Dual\otimes A_2,B) \to \KK^\Grd_{i+j-n}(\Dual\otimes A_1,B).
  \]
  By definition, \(T(\alpha)\) is the image of \(\ID_{\Dual\otimes A_2}\) under this map.  Thus \(T(\alpha)\) is determined by the condition
  \[
  \PD\bigl(T(\alpha)\bigr) = \alpha^*\bigl(\PD(\ID_{\Dual\otimes A_2})\bigr) = \alpha^*(\Theta \otimes \ID_{A_2}) = \Theta \otimes_\Tot \alpha.
  \]
  The same condition uniquely characterises \(T_\Dual(\alpha)\) by~\eqref{eq:PD_sigma_prime}.
\end{proof}

Now we can describe the inverse duality map as in \cite{Emerson-Meyer:Euler}*{Equation (24)}:
\begin{equation}
  \label{eq:inverse_PD_abstract}
  \PD^{-1}(f) = (-1)^{in} T_\Dual(f) \otimes_\Dual D
  \qquad
  \text{in \(\KK^\Grd_{i-n}(\Dual\otimes A,B)\)}
\end{equation}
for \(f\in \RKK^\Grd_i(\Tot;A,B)\), generalising the \emph{definition} of~\(\PD^*\) for a Kasparov dual in Theorem~\ref{the:first_duality} to abstract duals.

\subsubsection{The diagonal restriction class}
\label{sec:diagonal_restriction}

The diagonal embedding \(\Tot\to\Tot\times_\Base\Tot\) is a proper map and hence induces a \(^*\)\nb-homomorphism
\[
\UNIT_\Tot\otimes \UNIT_\Tot \cong \CONT_0(\Tot\times_\Base\Tot)\to\CONT_0(\Tot) = \UNIT_\Tot.
\]
This map is \(\Grd\ltimes\Tot\)-equivariant and hence yields
\[
\Delta_\Tot\in\RKK_0^\Grd(\Tot;\UNIT_\Tot,\UNIT) \cong \KK_0^{\Grd\ltimes\Tot}\bigl(\CONT_0(\Tot\times_\Base\Tot),\CONT_0(\Tot)\bigr).
\]
This is the \emph{diagonal restriction class}, which is an ingredient in equivariant Euler characteristics (see Definition~\ref{def:Lefschetz_map}).  It yields a canonical map
\begin{equation}
  \label{eq:diagonal_gives_this}
  \blank\otimes_{\UNIT_\Tot}\Delta_\Tot\colon
  \KK^\Grd_*(\UNIT_\Tot\otimes A,\UNIT_\Tot\otimes B) \to
  \RKK^\Grd_*(\Tot;\UNIT_\Tot\otimes A,B).
\end{equation}
In particular, this contains a map \(\KK^\Grd_*(\UNIT_\Tot,\UNIT_\Tot) \to \RKK_*^\Grd(\Tot;\UNIT_\Tot,\UNIT)\), which will be used to construct Lefschetz invariants.

\begin{example}
  \label{exa:diagonal_restrict_proper_map}
  If \(f\colon \Tot\to\Tot\) is a proper, continuous, \(\Grd\)\nb-equivariant map, then
  \[
  [f]\otimes_{\UNIT_\Tot} \Delta_\Tot \in \RKK^\Grd_0(\Tot;\UNIT_\Tot,\UNIT)
  \]
  is the class of the \(^*\)\nb-homomorphism induced by \((\ID_\Tot,f)\colon \Tot \to \Tot \times_\Base\Tot\).

  Now drop the assumption that~\(f\) be proper.  Then \((\ID_\Tot,f)\) is still a proper, continuous, \(\Grd\)\nb-equivariant map.  The class of the \(^*\)\nb-homomorphism it induces is equal to \(f^*(\Delta_\Tot)\), where we use the maps
  \[
  f^*\colon \RKK^\Grd_*(\Tot;A,B) \to \RKK^\Grd_*(\Tot;A,B)
  \]
  for \(A=\UNIT_\Tot\), \(B=\UNIT\) induced by \(f\colon \Tot\to\Tot\).
\end{example}

\subsubsection{The multiplication class}
\label{sec:multiplication_class}

Let~\(T_\Dual\) be the tensor functor and~\(\Delta_\Tot\) the diagonal restriction class of an abstract dual \((\Dual,\Theta)\).  The \emph{multiplication class} of~\(\Dual\) is
\begin{equation}
  \label{eq:def_multiplication_class}
  [m] \defeq T_\Dual(\Delta_\Tot)
  \in \KK^\Grd_0(\Dual\otimes\UNIT_\Tot,\Dual).
\end{equation}
A change of dual replaces \([m]\) by \(\psi^{-1}\otimes_\Dual [m]\otimes_\Dual \psi\).

\begin{lemma}
  \label{lem:multiplication_class}
  Let \((\Dual,D,\Theta)\) be a Kasparov dual.  Then~\([m]\) is the class in \(\KK^\Grd\) of the multiplication homomorphism \(\CONT_0(\Tot)\otimes \Dual\to\Dual\) \textup(see~\textup{\eqref{eq:multiplication_map_over_Base})} that describes the \(\Tot\)\nb-structure on~\(\Dual\) \textup{(}up to commuting the tensor factors\textup{)}.
\end{lemma}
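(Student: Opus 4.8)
The plan is to peel both sides back to honest $^*$\nb-homomorphisms and compare them on elementary tensors. First I would recall that, by its definition in Section~\ref{sec:diagonal_restriction}, the class $\Delta_\Tot$ is represented by the $\Grd\ltimes\Tot$-equivariant $^*$\nb-homomorphism $\delta\colon\CONT_0(\Tot\times_\Base\Tot)\to\CONT_0(\Tot)$, $\delta(F)(\tot)=F(\tot,\tot)$, restricting a function to the diagonal; here $\CONT_0(\Tot\times_\Base\Tot)=p_\Tot^*(\UNIT_\Tot)$ carries the $\Tot$\nb-structure of its first coordinate and $\CONT_0(\Tot)=p_\Tot^*(\UNIT)$ its tautological one (the diagonal is a section of the first projection, so $\delta$ is $\Tot$\nb-linear). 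By Notation~\ref{note:sigma_EP} and Section~\ref{sec:T_sour_abstract}, for a Kasparov dual the tensor functor $T_\Dual$ on $\RKKcat_\Grd(\Tot)$ is the composite $\forget_\Tot\circ\sigma_\Dual$ with $\sigma_\Dual=\Dual\otimes_\Tot\blank$; since both $\sigma_\Dual$ and $\forget_\Tot$ are the $\KK$\nb-level incarnations of functors on $\Cst$\nb-algebra categories, each sends the class of a $^*$\nb-homomorphism to the class of the induced one. Therefore $[m]=T_\Dual(\Delta_\Tot)$ is the $\KK^\Grd$\nb-class of $\ID_\Dual\otimes_\Tot\delta\colon\Dual\otimes_\Tot\CONT_0(\Tot\times_\Base\Tot)\to\Dual\otimes_\Tot\CONT_0(\Tot)=\Dual$, with its $\Tot$\nb-structure forgotten.

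Next I would identify the source. Writing $\CONT_0(\Tot\times_\Base\Tot)=\CONT_0(\Tot)\otimes_\Base\CONT_0(\Tot)$ with the first factor giving the $\Tot$\nb-structure, this is $p_\Tot^*(\UNIT_\Tot)$, so~\eqref{eq:otimes_via_pullback} provides a $\Grd$\nb-equivariant isomorphism $\Dual\otimes_\Tot\CONT_0(\Tot\times_\Base\Tot)\cong\Dual\otimes\UNIT_\Tot$ which on elementary tensors reads $d\otimes(f_1\otimes_\Base f_2)\mapsto\varphi_\Dual(f_1)\,d\otimes_\Base f_2$, where $\varphi_\Dual\colon\CONT_0(\Tot)\to\Mult(\Dual)$ is the structure homomorphism of~\eqref{eq:multiplication_map_over_Base}. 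Then I would compute: $\delta(f_1\otimes_\Base f_2)$ is the pointwise product $f_1f_2$, so $\ID_\Dual\otimes_\Tot\delta$ sends $d\otimes(f_1\otimes_\Base f_2)$ to $\varphi_\Dual(f_1f_2)\,d=\varphi_\Dual(f_1)\varphi_\Dual(f_2)\,d$. On the other hand the multiplication homomorphism $m_\Dual\colon\CONT_0(\Tot)\otimes\Dual\to\Dual$ of~\eqref{eq:multiplication_map_over_Base} sends $g\otimes_\Base d'\mapsto\varphi_\Dual(g)\,d'$, so precomposing it with the flip $\Dual\otimes\UNIT_\Tot\xrightarrow{\cong}\UNIT_\Tot\otimes\Dual$ and the identification above sends $d\otimes(f_1\otimes_\Base f_2)$ to $\varphi_\Dual(f_2)\varphi_\Dual(f_1)\,d$; since $\varphi_\Dual$ is central this is $\varphi_\Dual(f_1)\varphi_\Dual(f_2)\,d$, the same element. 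As such elementary tensors are dense, the two $^*$\nb-homomorphisms coincide, proving that $[m]$ is the class of $m_\Dual$ up to commuting the tensor factors.

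I do not expect a genuine obstacle: the argument is bookkeeping. The only point that needs care is tracking which tensor product ($\otimes_\Tot$ over $\Tot$ versus $\otimes=\otimes_\Base$) appears where, and in particular recognising the source $\CONT_0(\Tot\times_\Base\Tot)$ of $\delta$ as $p_\Tot^*(\UNIT_\Tot)$ with the $\Tot$\nb-structure on the first coordinate, so that~\eqref{eq:otimes_via_pullback} applies in the form used above. A second, purely formal, remark is that the Kasparov-category versions of $\sigma_\Dual$ and $\forget_\Tot$ send classes of $^*$\nb-homomorphisms to classes of $^*$\nb-homomorphisms; this is immediate from their compatibility with the canonical functors $\Cstarcat_{\Grd\ltimes\Tot}\to\KKcat_{\Grd\ltimes\Tot}$ and $\Cstarcat_\Grd\to\KKcat_\Grd$.
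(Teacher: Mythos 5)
Your proof is correct and is essentially the same argument the paper makes, just unpacked: the paper's one-line proof says that $T_\Dual(\Delta_\Tot)$ is the class of the multiplication homomorphism for $\Dual$ because $\Delta_\Tot$ \emph{is} the multiplication homomorphism for $\CONT_0(\Tot)$ (Example~\ref{exa:space_Cstar_over_base}), and applying $T_\Dual=\forget_\Tot\circ\sigma_\Dual$ turns it into the one for $\Dual$. You have simply written out the identification $\Dual\otimes_\Tot p_\Tot^*(\UNIT_\Tot)\cong\Dual\otimes\UNIT_\Tot$ from~\eqref{eq:otimes_via_pullback} and the check on elementary tensors that the paper leaves implicit, which is a fine level of detail but not a different route.
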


Recall that~\(\otimes\) denotes the tensor product over~\(\Base\).  Since a \(\Grd\)\nb-\(\Cst\)\nb-algebra is already a \(\Cst\)\nb-algebra over~\(\Base\), we can describe an additional structure of \(\Cst\)\nb-algebra over~\(\Tot\) by a multiplication homomorphism \(\CONT_0(\Tot)\otimes_\Base \Dual\to\Dual\).

\begin{proof}
  Whenever we have a Kasparov dual, the homomorphism \(T_\Dual(\Delta_\Tot)\) is the class of the multiplication homomorphism for~\(\Dual\) because~\(\Delta_\Tot\) is the multiplication homomorphism for \(\CONT_0(\Tot)\).
\end{proof}

\subsubsection{Abstract duality as an adjointness of functors}
\label{sec:duality_adjointness}

\begin{proposition}
  \label{pro:abstract_dual_adjoint}
  A \(\Grd\)\nb-space~\(\Tot\) has an abstract dual if and only if the functor
  \[
  p_\Tot^*\colon \KKcat_\Grd\to\RKKcat_\Grd(\Tot)
  \]
  has a left adjoint functor \(T\colon \RKKcat_\Grd(\Tot)\to\KKcat_\Grd\) such that~\(T\) is a \(\KKcat_\Grd\)-functor and the natural isomorphism
  \[
  \PD\colon \KK_0^\Grd(\Dual\otimes A,B) \to \RKK_0^\Grd(\Tot;A,B)
  \]
  is a \(\KKcat_\Grd\)-morphism in the notation of~\cite{Pareigis:C-categories}; this means that both \(T\) and~\(\PD\) are compatible with the tensor product~\(\otimes\).
\end{proposition}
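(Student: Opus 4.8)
The plan is to prove the two implications separately, the first being essentially a reformulation of Lemma~\ref{lem:sigma_sour_functor} and the second the substantive one.  For ``only if'', suppose \((\Dual,\Theta)\) is an abstract dual for~\(\Tot\); passing to a suspension of~\(\Dual\) we may assume it is \(0\)\nb-dimensional.  Lemma~\ref{lem:sigma_sour_functor} then provides the functor \(T\defeq T_\Dual\colon \RKKcat_\Grd(\Tot)\to\KKcat_\Grd\), shows it is a \(\KKcat_\Grd\)-functor left adjoint to~\(p_\Tot^*\), and exhibits the adjunction isomorphism as~\(\PD\); its proof moreover records \(\PD(f_1\otimes f_2)=\PD(f_1)\otimes f_2\), which is exactly the assertion that~\(\PD\) is a \(\KKcat_\Grd\)-morphism.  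So there is nothing more to do in this direction.

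For ``if'', suppose \(p_\Tot^*\colon \KKcat_\Grd\to\RKKcat_\Grd(\Tot)\) has a left adjoint functor~\(T\) that is a \(\KKcat_\Grd\)-functor and for which the adjunction isomorphism \(\Phi\colon \KK^\Grd_0\bigl(T(A),B\bigr)\xrightarrow{\cong}\RKK^\Grd_0(\Tot;A,B)\) is a \(\KKcat_\Grd\)-morphism.  I would set \(\Dual\defeq T(\UNIT)\), a \(\Grd\)-\(\Cst\)-algebra.  Writing \(\UNIT=\CONT_0(\Base)\) also for the corresponding object of \(\RKKcat_\Grd(\Tot)\), one has \(\UNIT\otimes A\cong A\) for every \(\Grd\)-\(\Cst\)-algebra~\(A\) (the tensor unit of~\(\Cstarcat_\Grd\) being \(\CONT_0(\Base)\)), so the \(\KKcat_\Grd\)-functoriality of~\(T\) yields natural isomorphisms \(T(A)\cong T(\UNIT)\otimes A=\Dual\otimes A\).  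Composing with~\(\Phi\) gives natural isomorphisms \(\KK^\Grd_0(\Dual\otimes A,B)\xrightarrow{\cong}\RKK^\Grd_0(\Tot;A,B)\), and hence, replacing~\(B\) by \(B\otimes\CONT_0(\R)\) if necessary, in all degrees.  Finally I would put \(\Theta\defeq \Phi(\ID_\Dual)\in\RKK^\Grd_0(\Tot;\UNIT,\Dual)\), the unit of the adjunction evaluated at~\(\UNIT\).

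It then remains to identify this~\(\Phi\) with the map~\(\PD\) of Theorem~\ref{the:first_duality} attached to \((\Dual,\Theta)\), that is, to show \(\Phi(f)=\Theta\otimes_\Dual f\) for all \(f\in\KK^\Grd_0(\Dual\otimes A,B)\); granting this, \((\Dual,\Theta)\) is an abstract dual because~\(\Phi\) is invertible.  Naturality of~\(\Phi\) in the second variable gives \(\Phi(f)=p_\Tot^*(f)\circ\Phi(\ID_{\Dual\otimes A})\), with composition taken in \(\RKKcat_\Grd(\Tot)\), so it suffices to compute \(\Phi(\ID_{\Dual\otimes A})\).  Since~\(\Phi\) is a \(\KKcat_\Grd\)-morphism and \(\ID_{\Dual\otimes A}=\ID_\Dual\otimes\ID_A\), one gets \(\Phi(\ID_{\Dual\otimes A})=\Phi(\ID_\Dual)\otimes\ID_A=\Theta\otimes\ID_A\), with \(\otimes\ID_A\) denoting the \(\KKcat_\Grd\)-module action on \(\RKKcat_\Grd(\Tot)\); whence \(\Phi(f)=p_\Tot^*(f)\circ(\Theta\otimes\ID_A)\), which coincides with the cup-cap product \(\Theta\otimes_\Dual f\) by the compatibility of the Kasparov product with composition.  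The one point where I expect to need genuine care is precisely this last identification, namely unwinding the \(\KKcat_\Grd\)-module and pull-back structures to see that \(p_\Tot^*(f)\circ(\Theta\otimes\ID_A)\) really is \(\Theta\otimes_\Dual f\); everything else is uniqueness of adjoints together with the formal bookkeeping of the \(\KKcat_\Grd\)-enrichment.
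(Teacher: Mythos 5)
Your argument is correct and follows the paper's own proof in both directions: the forward implication is Lemma~\ref{lem:sigma_sour_functor}, and for the converse you set \(\Dual\defeq T(\UNIT)\), use the \(\KKcat_\Grd\)-functoriality of \(T\) to identify \(T(A)\cong\Dual\otimes A\), put \(\Theta\defeq\Phi(\ID_\Dual)\), and then use naturality together with the \(\KKcat_\Grd\)-morphism property of \(\Phi\) to force \(\Phi(f)=\Theta\otimes_\Dual f\). The only difference is that you spell out the degree-extension via suspension and the final unwinding of the cup-cap product slightly more explicitly, which the paper leaves implicit.
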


\begin{proof}
  Given an abstract dual, the functor \(T\defeq T_\Dual\) is a \(\KKcat_\Grd\)-functor and left adjoint to~\(p_\Tot^*\) by Lemma~\ref{lem:sigma_sour_functor}.  The natural transformation \(\PD\) is compatible with~\(\otimes\) by definition.

  Suppose, conversely, that~\(p_\Tot^*\) has a left adjoint functor~\(T\) with the required properties.  Compatibility with~\(\otimes\) implies \(T(A) \cong T(\UNIT\otimes A) \cong \Dual\otimes A\) for the \(\Grd\)\nb-\(\Cst\)\nb-algebra \(\Dual\defeq T(\UNIT)\).  Let \(\Theta\defeq \PD(\ID_\Dual) \in \RKK^\Grd_0(\Tot;\UNIT,\Dual)\).  Compatibility with~\(\otimes\) yields \(\PD(\ID_{A\otimes\Dual}) = \PD(\ID_\Dual)\otimes \ID_A = \Theta\otimes\ID_A\).  Finally, naturality forces \(\PD\) to be of the form \(f = f\circ (\ID_{A\otimes\Dual})\mapsto \PD(\ID_{A\otimes\Dual}) \otimes_{A\otimes\Dual} f = \Theta \otimes_\Dual f\) for all \(f\in\KK^\Grd_0(A\otimes\Dual,B)\).  Hence \((\Dual,\Theta)\) is an abstract dual for~\(\Tot\).
\end{proof}

It may seem more natural to require an adjoint functor for~\(p_\Tot^*\) on \(\KKcat^{\Grd\ltimes\Tot}\), not just on the subcategory \(\RKKcat^\Grd(\Tot)\).  But such an extension is not possible in general (see Example~\ref{exa:first_Kasparov_fails} below).

\subsection{Equivariant Euler characteristic and Lefschetz invariants}
\label{sec:def_Euler_Lefschetz}

Now we use an abstract dual to define a \emph{Lefschetz map}
\[
\Lef\colon \RKK^\Grd_*\bigl(\Tot;\CONT_0(\Tot),\CONT_0(\Base)\bigr) \to \KK^\Grd_*\bigl(\CONT_0(\Tot),\CONT_0(\Base)\bigr).
\]
This generalises the familiar construction of Lefschetz numbers for self-maps of spaces in three ways: first, we consider self-maps in Kasparov theory; secondly, our invariant is an equivariant \(\K\)\nb-homology class, not a number; thirdly, self-maps are not required to be proper, so that the domain of our map is \(\RKK^\Grd_*\bigl(\Tot;\CONT_0(\Tot),\CONT_0(\Base)\bigr)\) and not \(\KK^\Grd_*\bigl(\CONT_0(\Tot),\CONT_0(\Tot)\bigr)\).

We let~\(\Tot\) be a \(\Grd\)\nb-space and \((\Dual,\Theta)\) an \(n\)\nb-dimensional abstract dual for~\(\Tot\) throughout.  Occasionally, we assume that this is part of a Kasparov dual \((\Dual,D,\Theta)\), but the definitions and main results do not require this.  Let \(\PD\) and \(\PD^{-1}\) be the duality isomorphisms.  As before, we write
\[
\UNIT \defeq \CONT_0(\Base), \qquad \UNIT_\Tot \defeq \CONT_0(\Tot).
\]
We let \(\Delta_\Tot\in \RKK_0^\Grd(\Tot;\UNIT_\Tot,\UNIT) = \KK_0^{\Grd\ltimes\Tot}(\UNIT_\Tot\otimes\UNIT_\Tot,\UNIT_\Tot)\) be the diagonal restriction class and
\[
\bar{\Theta} \defeq \forget_\Tot(\Theta) \in \KK^\Grd_n(\UNIT_\Tot,\Dual\otimes\UNIT_\Tot).
\]

\begin{definition}
  \label{def:Lefschetz_map}
  The equivariant \emph{Lefschetz map}
  \[
  \Lef\colon \RKK^\Grd_*(\Tot;\UNIT_\Tot,\UNIT) \to \KK^\Grd_*(\UNIT_\Tot,\UNIT)
  \]
  for a \(\Grd\)\nb-space~\(\Tot\) is defined as the composite map
  \[
  \RKK^\Grd_i(\Tot;\UNIT_\Tot,\UNIT) \xrightarrow{\PD^{-1}} \KK^\Grd_{i-n}(\Dual\otimes\UNIT_\Tot,\UNIT) \xrightarrow{\bar{\Theta}\otimes_{\Dual\otimes\UNIT_\Tot}\blank} \KK^\Grd_i(\UNIT_\Tot,\UNIT).
  \]
  The equivariant \emph{Euler characteristic} of~\(\Tot\) is
  \[
  \Eul_\Tot \defeq \Lef(\Delta_\Tot) \in \KK^\Grd_0(\UNIT_\Tot,\UNIT) = \KK^\Grd_0\bigl(\CONT_0(\Tot),\CONT_0(\Base)\bigr).
  \]
\end{definition}

Our definition of the equivariant Euler characteristic is literally the same as \cite{Emerson-Meyer:Euler}*{Definition 12} in the group case.

Let \(f\in \RKK^\Grd_i(\Tot;\UNIT_\Tot,\UNIT)\).  Equations \eqref{eq:inverse_PD_abstract} and~\eqref{eq:def_multiplication_class} yield
\begin{align}
  \label{eq:compute_Lef}
  \Lef(f) &= (-1)^{in} \bar{\Theta}
  \otimes_{\Dual\otimes\UNIT_\Tot} T_\Dual(f) \otimes_\Dual D,\\
  \label{eq:compute_Eul}
  \Eul_\Tot &= (-1)^{in} \bar{\Theta} \otimes_{\Dual\otimes\UNIT_\Tot} [m] \otimes_\Dual D.
\end{align}
If \((\Dual,\Theta)\) is part of a Kasparov dual, then \(T_\Dual=T_\Dual\) and \([m]\) is the \(\KK\)-class of the multiplication \(^*\)\nb-homomorphism \(\CONT_0(\Tot,\Dual)\to\Dual\), so that~\eqref{eq:compute_Lef} yields explicit formulas for \(\Lef(f)\) and \(\Eul_\Tot\).  These are applied in \cites{Emerson-Meyer:Euler, Emerson-Meyer:Equi_Lefschetz}.

In the group case, \cite{Emerson-Meyer:Euler}*{Proposition 13} asserts that the equivariant Euler characteristic does not depend on the abstract dual and is a proper homotopy invariant of~\(\Tot\).  This immediately extends to the groupoid case, and also to the Lefschetz map.  The most general statement requires some preparation.

Let \(\Tot\) and~\(\Tot'\) be \(\Grd\)\nb-spaces, and let \(f\colon \Tot\to\Tot'\) be a \(\Grd\)\nb-homotopy equivalence.  Then~\(f\) induces an equivalence of categories \(\RKKcat_\Grd(\Tot') \cong \RKKcat_\Grd(\Tot)\), that is, we get invertible maps
\[
f^*\colon \RKK^\Grd_*(\Tot';A,B) \to \RKK^\Grd_*(\Tot;A,B)
\]
for all \(\Grd\)\nb-\(\Cst\)\nb-algebras \(A\) and~\(B\).  Now assume, in addition, that~\(f\) is proper; we do not need the inverse map or the homotopies to be proper.  Then~\(f\) induces a \(^*\)\nb-homomorphism \(f^!\colon \CONT_0(\Tot') \to \CONT_0(\Tot)\), which yields \([f^!]\in \KK_0^\Grd\bigl(\CONT_0(\Tot'),\CONT_0(\Tot)\bigr)\).  We write \([f^!]\) instead of \([f^*]\) to better distinguish this from the map~\(f^*\) above.  Unless~\(f\) is a \emph{proper} \(\Grd\)\nb-homotopy equivalence, \([f^!]\) need not be invertible.

\begin{proposition}
  \label{pro:Lef_well-defined}
  Let \(\Tot\) and~\(\Tot'\) be \(\Grd\)\nb-spaces with abstract duals.  Let \(f\colon \Tot\to\Tot'\) be both a proper map and a \(\Grd\)\nb-homotopy equivalence.  Then
  \[
  [f^!]\otimes_{\CONT_0(\Tot)} \Eul_\Tot = \Eul_{\Tot'} \qquad \text{in \(\KK^\Grd_0(\CONT_0(\Tot'),\UNIT)\)}
  \]
  and the Lefschetz maps for \(\Tot\) and~\(\Tot'\) are related by a commuting diagram
  \[
  \xymatrix{ \RKK^\Grd_*(\Tot;\CONT_0(\Tot),\UNIT) \ar[d]^{\Lef_\Tot} & \ar[l]_{f^*}^{\cong} \RKK^\Grd_*(\Tot';\CONT_0(\Tot),\UNIT) \ar[r]^{[f^!]^*} &
    \RKK^\Grd_*(\Tot';\CONT_0(\Tot'),\UNIT) \ar[d]^{\Lef_{\Tot'}} \\
    \KK^\Grd_*(\CONT_0(\Tot),\UNIT) \ar[rr]^{[f^!]^*} && \KK^\Grd_*(\CONT_0(\Tot'),\UNIT), }
  \]
  where \([f^!]^*\) denotes composition with~\([f^!]\).

  In particular, \(\Eul_\Tot\) and the map \(\Lef_\Tot\) do not depend on the chosen dual.
\end{proposition}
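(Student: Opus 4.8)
The plan is to follow the proof of \cite{Emerson-Meyer:Euler}*{Proposition 13}, which carries over to the groupoid setting with only notational changes; note that the last sentence is the special case $\Tot=\Tot'$, $f=\ID_\Tot$ (allowing the two sides to use different duals) of the two displayed statements, so it suffices to prove those. The first step is to show that $\Lef_\Tot$ and $\Eul_\Tot$ do not depend on the abstract dual. If $(\Dual,\Theta)$ and $(\Dual',\Theta')$ are abstract duals for~$\Tot$ of dimensions $n$ and~$n'$, Proposition~\ref{pro:dual_unique} provides an invertible $\psi\in\KK^\Grd_{n'-n}(\Dual,\Dual')$ with $\Theta\otimes_\Dual\psi=\Theta'$. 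The change-of-dual formulas collected in Section~\ref{sec:basic_duality_constructions} express the counit~$D$, the comultiplication~$\comul$ (hence the tensor functor~$T_\Dual$) and $\bar\Theta=\forget_\Tot(\Theta)$ for the primed dual by conjugating the unprimed data with~$\psi$, up to Koszul signs. Substituting all of this into~\eqref{eq:compute_Lef}, every occurrence of~$\psi$ meets a~$\psi^{-1}$ and cancels while the signs recombine to~$(-1)^{in}$, so the result is unchanged; the case of the diagonal restriction class together with~\eqref{eq:compute_Eul} gives the same for~$\Eul_\Tot$. This proves the final assertion and lets us choose the duals conveniently in the rest of the argument.

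Next, fix a proper $\Grd$-homotopy equivalence $f\colon\Tot\to\Tot'$ and an abstract dual $(\Dual,\Theta_{\Tot'})$ for~$\Tot'$. Since $f$ is a homotopy equivalence and $\RKK^\Grd_*(\Tot;-,-)$ is a homotopy functor in the space variable, $f^*\colon\RKKcat_\Grd(\Tot')\to\RKKcat_\Grd(\Tot)$ is an equivalence of categories, and because $\PD_\Tot=f^*\circ\PD_{\Tot'}$ this shows $(\Dual,f^*(\Theta_{\Tot'}))$ is an abstract dual for~$\Tot$; by the first step we may use it to compute $\Lef_\Tot$. With this choice the counit and comultiplication of the two dualities literally coincide, $D_\Tot=D_{\Tot'}$ and $\comul_\Tot=\comul_{\Tot'}$, because $f^*$ is symmetric monoidal and fixes the tensor unit, so that $f^*(\Theta_{\Tot'}\otimes_\Dual D_{\Tot'})=1$ and $f^*(\Theta_{\Tot'}\otimes_{\Tot'}\Theta_{\Tot'})=f^*(\Theta_{\Tot'})\otimes_\Tot f^*(\Theta_{\Tot'})$; consequently the functors $T_\Dual$ for $\Tot$ and~$\Tot'$ correspond under~$f^*$ and the duality isomorphisms. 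The one genuinely new point is the comparison of $\bar\Theta_\Tot$ with $\bar\Theta_{\Tot'}$: there is a natural transformation $\eta\colon\forget_{\Tot'}\Rightarrow\forget_\Tot\circ f^*$ between functors $\KKcat_{\Grd\ltimes\Tot'}\to\KKcat_\Grd$ whose component at a \(\Grd\ltimes\Tot'\)-\(\Cst\)-algebra~$B$ is the $\Grd$-equivariant \(^*\)-homomorphism $B\to f^*(B)$, $b\mapsto b\circ f$ — here properness of~$f$ enters, ensuring $b\circ f$ stays in~$\CONT_0$ — and whose component at $\CONT_0(\Tot')$ is the class~$[f^!]$ induced by the proper map~$f$; applying naturality to~$\Theta_{\Tot'}$ and using that $\eta$ is compatible with the tensor operation~$\otimes$ yields $\bar\Theta_{\Tot'}\otimes([f^!]\otimes\ID_\Dual)=[f^!]\otimes\bar\Theta_\Tot$.

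It then remains to substitute these identities into~\eqref{eq:compute_Lef}. For $\beta\in\RKK^\Grd_*(\Tot';\CONT_0(\Tot),\UNIT)$ one expands $[f^!]\otimes_{\CONT_0(\Tot)}\Lef_\Tot(f^*\beta)$ and $\Lef_{\Tot'}([f^!]^*\beta)$, moves the factor~$[f^!]$ past~$T_\Dual$ (using that the tensor functors agree under~$f^*$ and that $[f^!]^*$ is simply composition with~$[f^!]$) and past~$D$ (the same for both spaces), and is left with exactly the $\eta$-naturality identity for~$\bar\Theta$; this establishes the commuting square. The Euler characteristic identity is the special case $\beta=(f^*)^{-1}(\Delta_\Tot)$: one checks $[f^!]^*\bigl((f^*)^{-1}(\Delta_\Tot)\bigr)=\Delta_{\Tot'}$ by realising both sides as classes of \(^*\)-homomorphisms induced by graphs of maps and invoking Example~\ref{exa:diagonal_restrict_proper_map} together with homotopy invariance of~$\RKK$ in the space variable, so that the graph of $f\circ g$ for a homotopy inverse~$g$ of~$f$ still induces~$\Delta_{\Tot'}$; the square then specialises to $[f^!]\otimes_{\CONT_0(\Tot)}\Eul_\Tot=\Eul_{\Tot'}$. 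The main obstacle — and the only thing beyond the argument of \cite{Emerson-Meyer:Euler}*{Proposition 13} requiring real care — is setting up the natural transformation~$\eta$ that reconciles the two forgetful functors via~$f$ and~$[f^!]$, verifying its compatibility with~$\otimes$, and making sure properness of~$f$ (but not of its homotopy inverse or of the homotopies) is used exactly where it is needed; the Koszul-sign bookkeeping, by contrast, is routine.
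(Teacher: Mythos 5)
Your proposal is correct and follows essentially the same strategy the paper uses; the paper's proof simply delegates the computations to \cite{Emerson-Meyer:Euler}*{Proposition 13} (noting the key identity $\Delta_{\Tot'} = [f^!]\otimes_{\CONT_0(\Tot)}(f^*)^{-1}(\Delta_\Tot)$ and that replacing $\Delta_\Tot$ by a general $\alpha$ gives the Lefschetz statement), whereas you reconstruct those computations explicitly — the change-of-dual cancellation, the choice of $f^*(\Theta_{\Tot'})$ as a dual for $\Tot$, and the natural transformation $\forget_{\Tot'}\Rightarrow\forget_\Tot\circ f^*$ mediated by $[f^!]$, which is exactly where properness of $f$ enters.
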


\begin{proof}
  The assertion about Euler characteristics is a special case of the one about Lefschetz invariants because the proof of \cite{Emerson-Meyer:Euler}*{Proposition 13} shows that the diagonal restriction classes \(\Delta_\Tot\) and~\(\Delta_{\Tot'}\) are related by
  \[
  \Delta_{\Tot'} = [f^!] \otimes_{\CONT_0(\Tot)} (f^*)^{-1}(\Delta_\Tot).
  \]
  When we replace~\(\Delta_\Tot\) in the proof of \cite{Emerson-Meyer:Euler}*{Proposition 13} by a general element \(\alpha\in\RKK^\Grd_*(\Tot;\CONT_0(\Tot),\UNIT)\), then the same computations yield our assertion about the Lefschetz maps.
\end{proof}

Proposition~\ref{pro:Lef_well-defined} implies that the Lefschetz maps for properly \(\Grd\)\nb-homotopy equivalent spaces are equivalent because then \([f^!]\) is invertible, so that all horizontal maps in the diagram in Proposition~\ref{pro:Lef_well-defined} are invertible.  In this sense, the Lefschetz map and the Euler class are invariants of the proper \(\Grd\)\nb-homotopy type of~\(\Tot\).

The construction in Example~\ref{exa:diagonal_restrict_proper_map} associates a class \([\Delta_f] \in \RKK^\Grd_0(\Tot;\CONT_0(\Tot),\UNIT)\) to any continuous, \(\Grd\)\nb-equivariant map \(f\colon \Tot\to\Tot\); it does not matter whether~\(f\) is proper.  We abbreviate
\[
\Lef(f) \defeq \Lef([\Delta_f])
\]
and call this the Lefschetz invariant of~\(f\).  Of course, equivariantly homotopic self-maps induce the same class in \(\RKK^\Grd_0(\Tot;\CONT_0(\Tot),\UNIT)\) and therefore have the same Lefschetz invariant.  We have \(\Lef(\ID_\Tot)=\Eul_\Tot\).

Furthermore, the Kasparov product with~\(\Delta_\Tot\) provides a natural map
\[
\blank\otimes_{\UNIT_\Tot}\Delta_\Tot\colon \KK^\Grd_*\bigl(\CONT_0(\Tot),\CONT_0(\Tot)\bigr) \to \RKK^\Grd_*(\Tot;\CONT_0(\Tot),\UNIT),
\]
which we compose with the Lefschetz map to get a map
\[
\KK^\Grd_*\bigl(\CONT_0(\Tot),\CONT_0(\Tot)\bigr) \to \KK^\Grd_*(\CONT_0(\Tot),\UNIT)
\]
While elements of \(\KK^\Grd_*\bigl(\CONT_0(\Tot),\CONT_0(\Tot)\bigr)\) are the self-maps of \(\CONT_0(\Tot)\) in the category \(\KKcat_\Grd\), elements of \(\RKK^\Grd_*(\Tot;\CONT_0(\Tot),\UNIT)\) may be thought of as non-proper self-maps.

If~\(\Grd\) is a discrete group and \(f\colon \Tot\to\Tot\) is a \(\Grd\)\nb-equivariant continuous map, then its Lefschetz invariant is usually a combination of point evaluation classes, that is, \(\Lef(f)\) can be represented by an equivariant \(^*\)\nb-homomorphism \(\CONT_0(\Tot)\to \Comp(\Hils)\) for some \(\Z/2\)-graded \(\Grd\)\nb-Hilbert space~\(\Hils\).  \cite{Emerson-Meyer:Equi_Lefschetz}*{Theorems 1 and~2} assert this if~\(f\) a simplicial map on a simplicial complex or if~\(f\) is a smooth map on a smooth manifold whose graph is transverse to the diagonal.  Dropping the transversality condition, \(\Lef(f)\) is an equivariant Euler characteristic of its fixed point subset, twisted by a certain orientation line bundle.

In contrast, Lefschetz invariants for general elements of \(\RKK^\Grd_*(\Tot;\CONT_0(\Tot),\UNIT)\) may be arbitrarily complicated:

\begin{proposition}
  \label{pro:Lef_split_surjective}
  The composition
  \[
  \KK^\Grd_*(\CONT_0(\Tot),\UNIT) \xrightarrow{p_\Tot^*} \RKK^\Grd_*(\Tot;\CONT_0(\Tot),\UNIT) \xrightarrow{\Lef} \KK^\Grd_*(\CONT_0(\Tot),\UNIT)
  \]
  is the identity map.
\end{proposition}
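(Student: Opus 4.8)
The plan is to evaluate the composition on a homogeneous element \(g\in\KK^\Grd_j\bigl(\CONT_0(\Tot),\UNIT\bigr)=\KK^\Grd_j(\UNIT_\Tot,\UNIT)\), using the explicit description~\eqref{eq:compute_Lef} of the Lefschetz map, and to reduce the whole computation to the counit relation~\eqref{eq:D_and_Theta}.  Throughout I fix the \(n\)\nb-dimensional abstract dual \((\Dual,\Theta)\) used to define \(\Lef\), together with its induced class \(D\in\KK^\Grd_{-n}(\Dual,\UNIT)\); recall that~\eqref{eq:compute_Lef} then reads \(\Lef(f)=(-1)^{in}\bar\Theta\otimes_{\Dual\otimes\UNIT_\Tot}T_\Dual(f)\otimes_\Dual D\) for \(f\in\RKK^\Grd_i(\Tot;\UNIT_\Tot,\UNIT)\).

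The first step is to identify \(T_\Dual\bigl(p_\Tot^*(g)\bigr)=\ID_\Dual\otimes g\) in \(\KK^\Grd_j(\Dual\otimes\UNIT_\Tot,\Dual)\).  When \(\Dual\) is a \(\Grd\ltimes\Tot\)\nb-algebra this is~\eqref{eq:T_sour_P}; for a general abstract dual it follows because \(T_\Dual\) and \(p_\Tot^*\) are both \(\KKcat_\Grd\)\nb-functors (Lemma~\ref{lem:sigma_sour_functor}), so that \(T_\Dual\circ p_\Tot^*\colon\KKcat_\Grd\to\KKcat_\Grd\) is again a \(\KKcat_\Grd\)\nb-functor; acting on objects by \(B\mapsto\Dual\otimes B\), such a functor must send a morphism \(g\) to \(\ID_\Dual\otimes g\).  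Substituting into~\eqref{eq:compute_Lef} and rearranging the product \((\ID_\Dual\otimes g)\otimes_\Dual D\) by the Koszul interchange rule --~here \(\ID_\Dual\otimes g\) acts on the \(\UNIT_\Tot\)\nb-leg and \(D\) on the \(\Dual\)\nb-leg, so the two may be interchanged at the cost of a sign \((-1)^{jn}\)~-- I obtain
\[
\Lef\bigl(p_\Tot^*(g)\bigr)=\bigl(\bar\Theta\otimes_{\Dual\otimes\UNIT_\Tot}(D\otimes\ID_{\UNIT_\Tot})\bigr)\otimes_{\UNIT_\Tot}g,
\]
the sign \((-1)^{jn}\) occurring in~\eqref{eq:compute_Lef} cancelling the one just produced.

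It remains to show that \(\bar\Theta\otimes_{\Dual\otimes\UNIT_\Tot}(D\otimes\ID_{\UNIT_\Tot})\in\KK^\Grd_0(\UNIT_\Tot,\UNIT_\Tot)\) equals \(\ID_{\UNIT_\Tot}\).  For this I apply the forgetful functor \(\forget_\Tot\colon\KKcat_{\Grd\ltimes\Tot}\to\KKcat_\Grd\) to~\eqref{eq:D_and_Theta}, i.e.\ to \(\Theta\otimes_\Dual D=\ID_{\UNIT_\Tot}\) in \(\RKK^\Grd_0(\Tot;\UNIT,\UNIT)\).  By the meaning of the \(\KKcat_\Grd\)\nb-module structure on \(\RKKcat_\Grd(\Tot)\), the left-hand product is the Kasparov product of \(\Theta\) with \(p_\Tot^*(D)\) in \(\KKcat_{\Grd\ltimes\Tot}\); since \(\forget_\Tot\) is a functor, its image is \(\forget_\Tot(\Theta)\otimes\forget_\Tot\bigl(p_\Tot^*(D)\bigr)\), and because \(\forget_\Tot\circ p_\Tot^*\) is a \(\KKcat_\Grd\)\nb-functor acting on objects by \(B\mapsto\UNIT_\Tot\otimes B\), we get \(\forget_\Tot\bigl(p_\Tot^*(D)\bigr)=\ID_{\UNIT_\Tot}\otimes D\).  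Thus \(\forget_\Tot(\Theta)\otimes(\ID_{\UNIT_\Tot}\otimes D)=\ID_{\UNIT_\Tot}\), and after the evident identification of the tensor factors this is exactly \(\bar\Theta\otimes_{\Dual\otimes\UNIT_\Tot}(D\otimes\ID_{\UNIT_\Tot})=\ID_{\UNIT_\Tot}\).  Feeding this back yields \(\Lef\bigl(p_\Tot^*(g)\bigr)=\ID_{\UNIT_\Tot}\otimes_{\UNIT_\Tot}g=g\), as claimed.

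The argument is not conceptually difficult; the main obstacle is bookkeeping.  One has to keep track of the pull-back \(p_\Tot^*\) that is implicit in the product \(\otimes_\Dual\) of Definition~\ref{def:Kasparov_dual} when \(\forget_\Tot\) is applied, and check that the Koszul signs together with the flips relating \(\UNIT_\Tot\otimes\Dual\) and \(\Dual\otimes\UNIT_\Tot\) all cancel --~which they do, because in each relevant interchange or flip one of the two classes involved is an identity morphism, hence of degree zero.  Conceptually, the computation is just \(\forget_\Tot\) applied to one of the triangle identities for the adjunction \(T_\Dual\dashv p_\Tot^*\) of Theorem~\ref{the:first_duality}: the class \(\bar\Theta\) is \(\forget_\Tot\) of the unit \(\Theta=\PD(\ID_\Dual)\), the counit of the adjunction at the tensor unit \(\UNIT\) is the class \(D\), and the relation \(\Theta\otimes_\Dual D=\ID_{\UNIT_\Tot}\) is precisely that triangle identity.
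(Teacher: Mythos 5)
Your proof is correct and takes essentially the same route as the paper's.  The paper computes \(\PD^{-1}\bigl(p_\Tot^*(\alpha)\bigr)=D\otimes\alpha\) directly from the characterisation \(\PD(D\otimes\alpha)=\Theta\otimes_\Dual D\otimes\alpha = p_\Tot^*(\alpha)\) and then applies \(\forget_\Tot\) to the counit relation \(\Theta\otimes_\Dual D=\ID\); you reach the same two ingredients by unfolding~\eqref{eq:compute_Lef} via \(T_\Dual\bigl(p_\Tot^*(g)\bigr)=\ID_\Dual\otimes g\), which is merely a slightly more verbose way of expressing \(\PD^{-1}\bigl(p_\Tot^*(g)\bigr)=D\otimes g\), and your sign bookkeeping is consistent.
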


\begin{proof}
  Let \(\alpha\in \KK^\Grd_*(\CONT_0(\Tot),\UNIT)\).  We check \(\Lef\bigl(p_\Tot^*(\alpha)\bigr) = \alpha\).  Let \(D\in\KK^\Grd_{-n}(\Dual,\UNIT)\) be the counit of the duality.  Then \(\PD(D\otimes\alpha) = \Theta \otimes_\Dual D\otimes\alpha = p_\Tot^*(\alpha)\).  Therefore,
  \[
  \Lef\bigl(p_\Tot^*(\alpha)\bigr) = \bar{\Theta} \otimes_{\Dual\otimes\CONT_0(\Tot)} \PD^{-1}\bigl(p_\Tot^*(\alpha)\bigr) = \bar{\Theta} \otimes_{\Dual\otimes\CONT_0(\Tot)} D\otimes\alpha = \alpha
  \]
  because \(\bar{\Theta}\otimes_\Dual D = \overline{\Theta\otimes_\Dual D} = \overline{\ID_{\CONT_0(\Tot)}} = \ID_{\CONT_0(\Tot)}\).
\end{proof}

\subsubsection{Mapping to topological K-theory}
\label{sec:Lef_to_Ktop}

We briefly explain an approach to extract numerical invariants out of Lefschetz invariants and Euler characteristics.

The topological \(\K\)\nb-theory of~\(\Grd\) may be defined as the inductive limit
\[
\Ktop_*(\Grd) = \varinjlim_\Tot \KK^\Grd_*(\CONT_0(\Tot),\UNIT),
\]
where~\(\Tot\) runs through the category of proper \(\Grd\)\nb-compact \(\Grd\)\nb-spaces with homotopy classes of \(\Grd\)\nb-equivariant continuous maps as morphisms.  If \(\EG\Grd\) is a universal proper \(\Grd\)\nb-space, we may replace this category by the directed set of \(\Grd\)\nb-compact \(\Grd\)\nb-invariant subsets of \(\EG\Grd\), which is cofinal in the above category.

Therefore, if~\(\Tot\) is proper and \(\Grd\)\nb-compact and has an abstract dual, we may map \(\Lef(\alpha)\) for \(\alpha\in\RKK^\Grd_*(\Tot;\CONT_0(\Tot),\UNIT)\) to an element of \(\Ktop_*(\Grd)\).  A transverse measure on~\(\Grd\) induces a trace map \(\tau\colon \Ktop_0(\Grd)\to\R\).  It is justified to call \(\tau\bigl(\Lef(\alpha)\bigr)\) the \emph{\(L^2\)\nobreakdash-Lefschetz number of~\(\alpha\)} and \(\tau(\Eul_\Tot)\) the \emph{\(L^2\)\nobreakdash-Euler characteristic of~\(\Tot\)}; equation~\eqref{eq:euler:foliation_deRham_ltwo_version} shows that the resulting \(L^2\)\nobreakdash-Euler characteristic is the alternating sum of the \(L^2\)\nb-Betti numbers and hence agrees with the \(L^2\)\nobreakdash-Euler characteristic studied by Alain Connes in~\cite{Connes:Survey_foliations}.

\subsection{Duality for universal proper actions}
\label{sec:dual_EG}

Now we consider the special case where~\(\Tot\) is a universal proper \(\Grd\)\nb-space.  More precisely, we assume that the two coordinate projections \(\Tot\times_\Base\Tot\rightrightarrows\Tot\) are \(\Grd\)\nb-equivariantly homotopic; equivalently, any two \(\Grd\)\nb-maps to~\(\Tot\) are \(\Grd\)\nb-equivariantly homotopic, which is a necessary condition for a proper action to be universal.  Here a simplification occurs because~\eqref{eq:Kasparov_dual_2} is automatic:

\begin{lemma}
  \label{lem:dual_2_automatic}
  Assume that the two coordinate projections \(\Tot\times_\Base\Tot\rightrightarrows\Tot\) are \(\Grd\)\nb-equivariantly homotopic.  Let \(A\) and~\(B\) be \(\Grd\)\nb-\(\Cst\)-algebras, let~\(\Dual\) be a \(\Grd\ltimes\Tot\)-\(\Cst\)-algebra, and let \(f\in\RKK^\Grd_*(\Tot;A,B)\).  Then
  \[
  f\otimes\ID_\Dual = p_\Tot^*\bigl(T_\Dual(f)\bigr)
  \qquad\text{in \(\RKK^\Grd_*(\Tot;\Dual\otimes A,\Dual\otimes B)\).}
  \]
  As a consequence, \(\Theta \otimes_\Tot f = \Theta \otimes_\Dual T_\Dual(f)\) for any \(\Theta\in \RKK^\Grd_*(\Tot;\UNIT,\Dual)\).
\end{lemma}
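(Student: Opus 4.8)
The plan is to prove the first assertion --- that $f \otimes \ID_\Dual = p_\Tot^*(T_\Dual(f))$ in $\RKK^\Grd_*(\Tot;\Dual\otimes A,\Dual\otimes B)$ --- and then deduce the consequence by composing with~$\Theta$.

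First I would reduce to understanding both sides as morphisms in the category $\RKKcat_\Grd(\Tot) \subseteq \KKcat_{\Grd\ltimes\Tot}$. Recall $f$ lives in $\RKK^\Grd_*(\Tot;A,B) = \KK^{\Grd\ltimes\Tot}_*(p_\Tot^*(A), p_\Tot^*(B))$. The element $f\otimes\ID_\Dual$ is the image of $f$ under the functor $\sigma_\Dual = \Dual\otimes_\Tot(\blank)$ on $\KKcat_{\Grd\ltimes\Tot}$, using the identification $\Dual\otimes_\Tot p_\Tot^*(A) \cong \Dual\otimes A$ (and similarly for~$B$). On the other side, $T_\Dual(f) \in \KK^\Grd_*(\Dual\otimes A, \Dual\otimes B)$ is by definition $\forget_\Tot\circ\sigma_\Dual$ applied to~$f$ (composed with the inclusion $\RKKcat_\Grd(\Tot)\hookrightarrow\KKcat_{\Grd\ltimes\Tot}$), so $p_\Tot^*(T_\Dual(f))$ is obtained by applying $p_\Tot^*\circ\forget_\Tot\circ\sigma_\Dual$ to~$f$. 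Thus the claimed identity says precisely that the two functors $\sigma_\Dual$ and $p_\Tot^*\circ\forget_\Tot\circ\sigma_\Dual$ from $\RKKcat_\Grd(\Tot)$ to $\KKcat_{\Grd\ltimes\Tot}$ agree on the morphism~$f$ (they agree on the relevant objects via the isomorphisms above). Since $\sigma_\Dual$ factors through $\RKKcat_\Grd(\Tot)$ on objects of the form $p_\Tot^*(A)$, it suffices to show that $p_\Tot^*\circ\forget_\Tot$ acts as the identity on the full subcategory $\sigma_\Dual(\RKKcat_\Grd(\Tot))$ --- or more directly, to show $g = p_\Tot^*(\forget_\Tot(g))$ for every $g\in\KK^{\Grd\ltimes\Tot}_*(\Dual\otimes A, \Dual\otimes B)$ that lies in the image of $\sigma_\Dual$ restricted to $\RKKcat_\Grd(\Tot)$.

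The geometric input is the homotopy hypothesis. The coordinate projections $\pr_1,\pr_2\colon\Tot\times_\Base\Tot\rightrightarrows\Tot$ being $\Grd$\nb-equivariantly homotopic means the two functors $\pr_1^*,\pr_2^*\colon\KKcat_{\Grd\ltimes\Tot}\to\KKcat_{\Grd\ltimes(\Tot\times_\Base\Tot)}$ are naturally isomorphic, by homotopy invariance of $\RKK$ in the space variable (the argument alluded to in Example~\ref{exa:RKK_space_homotopy}). I would apply this as follows: a $\Grd\ltimes\Tot$\nb-$\Cst$-algebra $C$ can be pulled back along $\pr_1$ to get a $\Grd\ltimes(\Tot\times_\Base\Tot)$-algebra; likewise a $\Grd$\nb-algebra of the form $\forget_\Tot(C)$ pulls back along $\pr_2$. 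The point is that for $g\in\KK^{\Grd\ltimes\Tot}_*(C_1,C_2)$, the element $g\otimes\ID$ lives over the \emph{first} $\Tot$-factor while $p_\Tot^*(\forget_\Tot(g))$ uses the diagonal $\Tot$-structure; the homotopy between $\pr_1$ and $\pr_2$ (together with the fact that on $\RKKcat_\Grd(\Tot)$ objects the $\Tot$-structure comes from $p_\Tot^*$, hence is ``constant'' along the diagonal) identifies them. Concretely I would: (i) observe $f\otimes\ID_\Dual = \pr_2^*(f)\otimes_{\Tot\times_\Base\Tot}(\text{diagonal of }\Dual)$ sitting in $\KKcat_{\Grd\ltimes(\Tot\times_\Base\Tot)}$ restricted along the diagonal $\Tot\to\Tot\times_\Base\Tot$; (ii) observe $p_\Tot^*(T_\Dual(f))$ is $\pr_1^*(f)$ treated similarly; and (iii) use the $\Grd$\nb-homotopy $\pr_1\simeq\pr_2$ to conclude these restrictions to the diagonal coincide.

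The main obstacle I expect is bookkeeping the various tensor-product identifications and the precise meaning of ``restricting along the diagonal'' so that steps (i)--(iii) are genuinely the images of a single class under $\pr_1^*$ versus $\pr_2^*$ --- i.e., packaging $f\otimes\ID_\Dual$ and $p_\Tot^*(T_\Dual(f))$ as $\delta_\Tot^*\bigl(\pr_i^*(f)\otimes_{\Tot\times_\Base\Tot}\tilde\Dual\bigr)$ for $i=2,1$ respectively, where $\delta_\Tot\colon\Tot\to\Tot\times_\Base\Tot$ is the diagonal and $\tilde\Dual$ is an appropriate pullback of $\Dual$ to $\Tot\times_\Base\Tot$ whose restriction to the diagonal recovers~$\Dual$. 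Once that identification is set up honestly, the conclusion is immediate from $\delta_\Tot^*\circ\pr_1^* \cong \ID \cong \delta_\Tot^*\circ\pr_2^*$ on objects and the homotopy $\pr_1\simeq\pr_2$ on morphisms, which gives $\delta_\Tot^*(\pr_1^*(f)) = \delta_\Tot^*(\pr_2^*(f))$ after tensoring with $\tilde\Dual$ and restricting. For the final consequence, composing the established identity $f\otimes\ID_\Dual = p_\Tot^*(T_\Dual(f))$ with any $\Theta\in\RKK^\Grd_*(\Tot;\UNIT,\Dual)$ on the left gives $\Theta\otimes_\Tot f = \Theta\otimes_\Dual T_\Dual(f)$, using that $\Theta\otimes_\Tot(f\otimes\ID_\Dual) = \Theta\otimes_\Dual(\text{action of }f)$ on the $\Dual$-leg --- this is just functoriality of the Kasparov product and the identification $\Theta\otimes_\Tot p_\Tot^*(g) = \Theta\otimes_\Dual g$ for $g\in\KK^\Grd_*(\Dual, \Dual\otimes B)$, exactly as in Notation~\ref{note:sigma_EP}.
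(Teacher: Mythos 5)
Your overall strategy mirrors the paper's: apply $\pi_1^*$ and $\pi_2^*$ to $f$, invoke the homotopy hypothesis to get $\pi_1^*(f)=\pi_2^*(f)$ in $\RKK^\Grd_*(\Tot\times_\Base\Tot;A,B)$, and then apply a suitable functor from classes over $\Tot\times_\Base\Tot$ down to classes over $\Tot$ whose two images are the two sides of the claimed identity. The gap lies in the choice of that functor.

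You propose to tensor with a pullback $\tilde\Dual$ of $\Dual$ over $\Tot\times_\Base\Tot$ and then restrict along the diagonal embedding $\delta_\Tot\colon\Tot\to\Tot\times_\Base\Tot$. But the diagonal satisfies $\pi_i\circ\delta_\Tot=\ID_\Tot$ for $i=1,2$, so $\delta_\Tot^*\circ\pi_1^*=\delta_\Tot^*\circ\pi_2^*=\ID$ as functors, with no input from the homotopy. Since $\delta_\Tot^*$ is symmetric monoidal,
\[
\delta_\Tot^*\bigl(\pi_1^*(f)\otimes_{\Tot\times_\Base\Tot}\tilde\Dual\bigr)
= f\otimes_\Tot\delta_\Tot^*(\tilde\Dual)
= \delta_\Tot^*\bigl(\pi_2^*(f)\otimes_{\Tot\times_\Base\Tot}\tilde\Dual\bigr)
\]
holds trivially. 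If one of these recovers $f\otimes\ID_\Dual$, then so does the other, and your packaging cannot produce $p_\Tot^*\bigl(T_\Dual(f)\bigr)$ unless that class already coincided with $f\otimes\ID_\Dual$ --- which is exactly what is to be proved. Restriction to the diagonal is too destructive: it discards precisely the off-diagonal behaviour of $\pi_1^*(f)$ and $\pi_2^*(f)$ that distinguishes them, and the homotopy hypothesis never gets used, a sure sign something has gone wrong.

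The operation that actually works is genuinely different: pull $\Dual$ back along $\pi_2$ (not the diagonal) to $\pi_2^*(\Dual)$ over $\Tot\times_\Base\Tot$, tensor over $\Tot\times_\Base\Tot$ with it, and then apply the \emph{forgetful} functor dropping the $\Tot$-structure in the \emph{second} coordinate while retaining the first; in other words, apply the analogue of $T_\Dual$ relative to the second copy of $\Tot$, working over the groupoid $\Grd\ltimes\Tot$ formed with the first copy. Unlike $\delta_\Tot^*$, this forgetful map remembers that $\pi_1^*(f)$ is constant in the second variable (so tensoring in $\Dual$ there and forgetting just appends $\Dual$, yielding $f\otimes\ID_\Dual$), while $\pi_2^*(f)$ is constant in the first (so the same operation yields $T_\Dual(f)$, constant in the first coordinate, i.e.\ $p_\Tot^*\bigl(T_\Dual(f)\bigr)$). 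These two outputs are not a priori equal, and the equality $\pi_1^*(f)=\pi_2^*(f)$, preserved by any functor, is exactly what identifies them. Your derivation of the final consequence from the first identity is fine as stated.
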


\begin{proof}
  Since the coordinate projections \(\pi_1,\pi_2\colon \Tot\times_\Base\Tot\rightrightarrows \Tot\) are \(\Grd\)\nb-equivariantly homotopic, we have \(\pi_1^*(f) = \pi_2^*(f)\) in \(\RKK^\Grd_*(\Tot\times_\Base\Tot;A,B)\).  Now tensor this over the second space~\(\Tot\) with~\(\Dual\) and forget the equivariance in this direction.  The resulting classes \(T_\Dual\bigl(\pi_1^*(f)\bigr)\) and \(T_\Dual\bigl(\pi_2^*(f)\bigr)\) in \(\RKK^\Grd_*(\Tot;\Dual\otimes A,\Dual\otimes B)\) are still equal.  The first one is \(f\otimes\ID_\Dual\), the second one is \(p_\Tot^*\bigl(T_\Dual(f)\bigr)\).  Finally, we observe that \(\Theta\otimes_\Tot f = \Theta\otimes_{\Tot,\Dual} (f\otimes\ID_\Dual)\) and \(\Theta\otimes_\Dual T_\Dual(f) = \Theta\otimes_{\Tot,\Dual} p_\Tot^*\bigl(T_\Dual(f)\bigr)\).
\end{proof}

Thus, we only need the two conditions \eqref{eq:Kasparov_dual_1} and~\eqref{eq:Kasparov_dual_3} for \((\Dual,D,\Theta)\) to be a Kasparov dual for~\(\Tot\).

\begin{theorem}
  \label{the:dual_EG}
  Let \(\EG\Grd\) be a universal proper \(\Grd\)\nb-space and let \((\Dual,D,\Theta)\) be an \(n\)\nb-dimensional Kasparov dual for \(\EG\Grd\).  Then
  \[
  \Theta \in \RKK^\Grd_n(\EG\Grd;\UNIT,\Dual) \qquad\text{and}\qquad p_{\EG\Grd}^*(D) \in \RKK^\Grd_{-n}(\EG\Grd;\Dual,\UNIT)
  \]
  are inverse to each other, and so are
  \[
  \comul\in \KK^\Grd_n(\Dual,\Dual\otimes\Dual) \qquad\text{and}\qquad \ID_\Dual\otimes D=(-1)^n D\otimes \ID_\Dual \in \KK^\Grd_{-n}(\Dual\otimes\Dual,\Dual).
  \]
  Thus the functor \(A\mapsto \Dual\otimes A\) is idempotent up to a natural isomorphism in \(\KKcat_\Grd\) and the class in \(\KK_0(\Dual\otimes\Dual,\Dual\otimes\Dual)\) of the flip automorphism on \(\Dual\otimes\Dual\) is \((-1)^n\).
\end{theorem}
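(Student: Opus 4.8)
The plan is to deduce everything formally from the first duality isomorphism (Theorem~\ref{the:first_duality}) once we feed in the one genuinely geometric input: since $\EG\Grd$ is universal proper, the two coordinate projections $\EG\Grd\times_\Base\EG\Grd\rightrightarrows\EG\Grd$ are $\Grd$\nb-equivariantly homotopic, so Lemma~\ref{lem:dual_2_automatic} applies. Writing $p^*\defeq p_{\EG\Grd}^*$, applying Lemma~\ref{lem:dual_2_automatic} to $f=\Theta$ and using $\comul=T_\Dual(\Theta)$ from~\eqref{eq:comul_from_Kasparov_dual} gives the key identity
\[
\Theta\otimes\ID_\Dual = p^*(\comul)\qquad\text{in }\RKK^\Grd_n(\EG\Grd;\Dual,\Dual\otimes\Dual).
\]
Beyond this, the argument uses only the Kasparov dual conditions, the coalgebra identities of Lemma~\ref{lem:duality_coalgebra}, the functoriality of $T_\Dual$ (Lemma~\ref{lem:sigma_sour_functor}), the formula $T_\Dual\bigl(p^*(f)\bigr)=\ID_\Dual\otimes f$ from~\eqref{eq:T_sour_P}, and the formula~\eqref{eq:inverse_PD_abstract} for $\PD^{-1}$.

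First I would establish that $\comul$ and $\ID_\Dual\otimes D$ are inverse to each other in $\KKcat_\Grd$. Applying the $\KKcat_\Grd$\nb-functor $T_\Dual$ to the key identity --- using $T_\Dual(f\otimes\ID_\Dual)=T_\Dual(f)\otimes\ID_\Dual$ and $T_\Dual\bigl(p^*(\comul)\bigr)=\ID_\Dual\otimes\comul$ --- gives $\comul\otimes\ID_\Dual=\ID_\Dual\otimes\comul$ in $\KK^\Grd_n(\Dual\otimes\Dual,\Dual^{\otimes 3})$. Composing this equation with $\ID_\Dual\otimes\ID_\Dual\otimes D$ and simplifying the right hand side by the counit identity~\eqref{eq:counit}, $\comul\otimes_{\Dual\otimes\Dual}(\ID_\Dual\otimes D)=\ID_\Dual$, yields $(\ID_\Dual\otimes D)\otimes_\Dual\comul=\ID_{\Dual\otimes\Dual}$; together with~\eqref{eq:counit} again this is exactly the assertion that $\comul$ and $\ID_\Dual\otimes D$ are mutually inverse. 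The equality $\ID_\Dual\otimes D=(-1)^n\,D\otimes\ID_\Dual$ then follows because the remaining clause of~\eqref{eq:counit} exhibits $(-1)^n(D\otimes\ID_\Dual)$ as a one\nb-sided inverse of the now invertible $\comul$, hence equal to $\ID_\Dual\otimes D$.

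Next, for the first duality pair: $\Theta\otimes_\Dual p^*(D)=\Theta\otimes_\Dual D=\ID_\UNIT$ is condition~\eqref{eq:Kasparov_dual_1}, and for the other composite I would apply $\PD^{-1}$, using functoriality of $T_\Dual$ and the facts just proved:
\[
\PD^{-1}\bigl(p^*(D)\otimes_\UNIT\Theta\bigr)
= T_\Dual\bigl(p^*(D)\otimes_\UNIT\Theta\bigr)\otimes_\Dual D
= \bigl((\ID_\Dual\otimes D)\otimes_\Dual\comul\bigr)\otimes_\Dual D
= D\otimes\ID_\Dual
= \PD^{-1}(\ID_{p^*\Dual}),
\]
so $p^*(D)\otimes_\UNIT\Theta=\ID_{p^*\Dual}$ because $\PD$ is an isomorphism. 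Hence $\Theta$ and $p^*(D)$ are inverse to each other in $\RKKcat_\Grd(\EG\Grd)$.

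Finally, the two closing statements are formal. Idempotency of $A\mapsto\Dual\otimes A$ up to natural isomorphism is witnessed by $\comul\otimes\ID_A\colon \Dual\otimes A\xrightarrow{\cong}\Dual\otimes\Dual\otimes A$, natural in $A$ with inverse $(\ID_\Dual\otimes D)\otimes\ID_A$. And precomposing the cocommutativity relation~\eqref{eq:cocommutative}, $\comul\otimes_{\Dual\otimes\Dual}\flip_\Dual=(-1)^n\comul$, with the inverse $\ID_\Dual\otimes D$ of $\comul$ gives $\flip_\Dual=(-1)^n\ID_{\Dual\otimes\Dual}$ in $\KK^\Grd_0(\Dual\otimes\Dual,\Dual\otimes\Dual)$. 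I do not expect a serious obstacle here, since the whole argument is essentially a formal consequence of the key identity; the one place demanding care is the Koszul sign and tensor\nb-leg bookkeeping in the passage $\comul\otimes\ID_\Dual=\ID_\Dual\otimes\comul \leadsto (\ID_\Dual\otimes D)\otimes_\Dual\comul=\ID_{\Dual\otimes\Dual}$ and in its interaction with the signs in~\eqref{eq:counit}.
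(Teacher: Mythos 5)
Your proposal takes a genuinely different route from the paper's proof. The paper first pulls back the Kasparov dual along $p_{\EG\Grd}$ to obtain a Kasparov dual for $\EG\Grd\times_\Base\EG\Grd$ over $\EG\Grd$ (here Lemma~\ref{lem:dual_2_automatic} is also used, but only to verify that the base change is again a dual), then uses the fact that the coordinate projection $\EG\Grd\times_\Base\EG\Grd\to\EG\Grd$ is a $\Grd$\nb-homotopy equivalence; combining the duality isomorphism for the pulled-back dual with the resulting equivalence $\KK^{\Grd\ltimes\EG\Grd}_*\cong\RKK^{\Grd\ltimes\EG\Grd}_*(\EG\Grd;\cdot,\cdot)$ shows directly that $\Theta$ and $p_{\EG\Grd}^*(D)$ are mutually inverse in $\RKKcat_\Grd(\EG\Grd)$. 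The $\comul\leftrightarrow\ID_\Dual\otimes D$ duality is then an immediate consequence of applying the functor $T_\Dual$, which preserves inverses with no sign bookkeeping at all. You reverse this logical order: you extract an algebraic identity from Lemma~\ref{lem:dual_2_automatic}, manipulate the coalgebra relations of Lemma~\ref{lem:duality_coalgebra} to derive $\comul\leftrightarrow\ID_\Dual\otimes D$ first, and only then recover $\Theta\leftrightarrow p^*(D)$ via the formula~\eqref{eq:inverse_PD_abstract} for $\PD^{-1}$. Both approaches use the same geometric input (universality of $\EG\Grd$), but the paper applies it at the level of a categorical equivalence, which makes the sign-insensitive functoriality of $T_\Dual$ do all the work.

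The one concern is exactly the step you flag. Passing from $\comul\otimes\ID_\Dual=\ID_\Dual\otimes\comul$, composed with $\ID_\Dual\otimes\ID_\Dual\otimes D$, to $(\ID_\Dual\otimes D)\otimes_\Dual\comul=\ID_{\Dual\otimes\Dual}$ requires reordering a degree-$n$ morphism past a degree-$(-n)$ one. A naive application of the graded interchange law seems to produce a stray factor $(-1)^{n^2}=(-1)^n$, and you must show it is absorbed --- the natural candidate being the cocommutativity~\eqref{eq:cocommutative} in concert with the two halves of the counit identity~\eqref{eq:counit}, which themselves differ by exactly $(-1)^n$. You say you do not expect a serious obstacle; I agree the endpoint is the correct statement, but until that sign is tracked through explicitly the argument is not complete. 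This is precisely the calculation the paper's route avoids: since $T_\Dual$ is an additive functor on $\RKKcat_\Grd(\EG\Grd)$, applying it to the invertibility of $\Theta$ and $p^*(D)$ hands you $(\ID_\Dual\otimes D)\otimes_\Dual\comul=\ID_{\Dual\otimes\Dual}$ with no Koszul bookkeeping whatsoever. I would suggest either doing the sign computation in full (using~\eqref{eq:cocommutative} where needed) or switching the order of the two halves so that, as in the paper, the $\comul\leftrightarrow\ID_\Dual\otimes D$ statement is obtained by applying $T_\Dual$ to the already-established $\Theta\leftrightarrow p^*(D)$ duality.
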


\begin{proof}
  The pull-back \((p_{\EG\Grd}^*\Dual,p_{\EG\Grd}^*\Theta)\) is a Kasparov dual for \(p_{\EG\Grd}^*\EG\Grd \cong \EG\Grd\times_\Base\EG\Grd\) over~\(\EG\Grd\), where \(\EG\Grd\times_\Base\EG\Grd\) is a space over \(\EG\Grd\) via a coordinate projection \(\pi\colon\EG\Grd\times_\Base \EG\Grd \to \EG\Grd\) and
  \[
  p_{\EG\Grd}^*\Theta\in \RKK^\Grd_n(\EG\Grd\times_\Base\EG\Grd; \UNIT,\Dual) \cong \RKK^{\Grd\ltimes\EG\Grd}_n(\EG\Grd; p_{\EG\Grd}^*\UNIT,p_{\EG\Grd}^*\Dual).
  \]
  The reason for this is that \eqref{eq:Kasparov_dual_1} and~\eqref{eq:Kasparov_dual_3} are preserved by the base change~\(p_{\EG\Grd}^*\), while~\eqref{eq:Kasparov_dual_2} is automatic by Lemma~\ref{lem:dual_2_automatic}.

  As a result, \(p_{\EG\Grd}^*\Theta\) induces isomorphisms
  \[
  \RKK^{\Grd\ltimes \EG\Grd}_*(\EG\Grd;A,B) \cong \KK^{\Grd\ltimes \EG\Grd}_*(\Dual\otimes A, B)
  \]
  for all \(\Grd\ltimes\EG\Grd\)-\(\Cst\)-algebras \(A\) and~\(B\); recall that \(p_{\EG\Grd}^*\Dual\otimes_{\EG\Grd} A \cong \Dual\otimes A\).

  The universal property of~\(\EG\Grd\) implies that the projection~\(\pi\) is a \(\Grd\)\nb-homotopy equivalence.  Hence it induces isomorphisms
  \begin{equation}
    \label{eq:double_EG_iso}
    \KK^{\Grd\ltimes\EG\Grd}_*(A,B) \cong
    \RKK^{\Grd\ltimes\EG\Grd}_*(\EG\Grd;A,B).
  \end{equation}
  Both isomorphisms together show that \(\Dual\cong\UNIT\) in \(\RKKcat_\Grd(\EG\Grd)\).  More precisely, inspection shows that the invertible elements in \(\RKK^\Grd_{-n}(\EG\Grd;\Dual,\UNIT)\) and \(\RKK^\Grd_n(\EG\Grd;\UNIT,\Dual)\) that we get are \(p_{\EG\Grd}^*D\) and~\(\Theta\).

  To get the remaining assertions, we apply the functor \(T_\Dual\).  This shows that \(\comul=T_\Dual(\Theta)\) and \(\ID\otimes D = T_\Dual\bigl(p_{\EG\Grd}^*(D)\bigr)\) are inverse to each other.  Thus \(\Dual\otimes\Dual\cong\Dual\) in \(\KKcat_\Grd\).  We get \(\ID_\Dual\otimes D = (-1)^n D\otimes\ID_\Dual\) because both sides are inverses for~\(\comul\) by~\eqref{eq:counit}, and \(\flip=(-1)^n\) follows from~\eqref{eq:Kasparov_dual_3}.
\end{proof}

\begin{theorem}
  \label{the:dual_EG2}
  Let \(\EG\Grd\) be a universal proper \(\Grd\)\nb-space and let \((\Dual,D,\Theta)\) be a \(0\)\nb-dimensional Kasparov dual for \(\EG\Grd\).  Let~\(A\) be a \(\Grd\)\nb-\(\Cst\)\nb-algebra.  The following assertions are equivalent:
  \begin{enumerate}[label=\textup{(\arabic{*})}]
  \item \(D\otimes\ID_A\) is invertible in \(\KK^\Grd_0(\Dual\otimes A,A)\);
  \item \(A\cong \Dual\otimes A\) in \(\KKcat_\Grd\);
  \item \(A\) is \(\KK^\Grd\)-equivalent to a proper \(\Grd\)\nb-\(\Cst\)\nb-algebra, that is, \(A\cong \forget_{\EG\Grd}(\hat{A})\) for some \(\Grd\ltimes\EG\Grd\)-\(\Cst\)\nb-algebra~\(\hat{A}\);
  \item the map
    \[
    p_{\EG\Grd}^*\colon \KK^\Grd_*(A,B) \to \RKK^\Grd_*(\EG\Grd;A,B)
    \]
    is invertible for all \(\Grd\)\nb-\(\Cst\)\nb-algebras~\(B\).
  \end{enumerate}
\end{theorem}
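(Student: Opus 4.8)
The plan is to organise the four conditions around~(1): I will show \((1)\Leftrightarrow(4)\) directly from the first duality isomorphism, \((1)\Rightarrow(2)\) trivially, \((2)\Rightarrow(3)\) using the forgetful functor, \((3)\Rightarrow(2)\) using Theorem~\ref{the:dual_EG}, and finally \((2)\Rightarrow(1)\), which closes the circle. Throughout one uses that \(n=0\) (so no Koszul signs intervene), that \((\Dual,D,\Theta)\) being a Kasparov dual makes \(\PD\colon\KK^\Grd_*(\Dual\otimes A,B)\to\RKK^\Grd_*(\EG\Grd;A,B)\) an isomorphism by Theorem~\ref{the:first_duality}, with inverse \(g\mapsto T_\Dual(g)\otimes_\Dual D\) by~\eqref{eq:inverse_PD_abstract}, and the idempotency statements of Theorem~\ref{the:dual_EG}.

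For \((1)\Leftrightarrow(4)\) the point is that, under~\(\PD\), the map \(p_{\EG\Grd}^*\) becomes composition with \(D\otimes\ID_A\). Indeed, for \(f\in\KK^\Grd_*(A,B)\) we have \(T_\Dual\bigl(p_{\EG\Grd}^*(f)\bigr)=\ID_\Dual\otimes f\) by~\eqref{eq:T_sour_P}, hence \(\PD^{-1}\bigl(p_{\EG\Grd}^*(f)\bigr)=(\ID_\Dual\otimes f)\otimes_\Dual D\), and bifunctoriality of~\(\otimes\) identifies this with the composite \(\Dual\otimes A\xrightarrow{D\otimes\ID_A}A\xrightarrow{f}B\); that is, \(\PD^{-1}\circ p_{\EG\Grd}^*\) is precomposition with the class \(D\otimes\ID_A\in\KK^\Grd_0(\Dual\otimes A,A)\). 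Since \(\PD\) is bijective, \(p_{\EG\Grd}^*\) is invertible for all~\(B\) if and only if precomposition with \(D\otimes\ID_A\) is invertible for all~\(B\), which by the Yoneda lemma in \(\KKcat_\Grd\) means precisely that \(D\otimes\ID_A\) is an isomorphism, i.e.~(1).

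The implication \((1)\Rightarrow(2)\) holds because \(D\otimes\ID_A\) is then an isomorphism \(\Dual\otimes A\to A\). For \((2)\Rightarrow(3)\) we let \(\hat A\defeq\Dual\otimes A\), regarded as a \(\Grd\ltimes\EG\Grd\)-\(\Cst\)-algebra via the \(\EG\Grd\)-structure of~\(\Dual\); since \(\forget_{\EG\Grd}\) is a \(\KKcat_\Grd\)-functor, \(\forget_{\EG\Grd}(\hat A)=\Dual\otimes A\), which is \(\KK^\Grd\)-equivalent to~\(A\) by~(2), so~(3) holds. For \((3)\Rightarrow(2)\), Theorem~\ref{the:dual_EG} tells us that \(\Theta\) is a \(\KK^{\Grd\ltimes\EG\Grd}\)-equivalence \(\CONT_0(\EG\Grd)=p_{\EG\Grd}^*\UNIT\to p_{\EG\Grd}^*\Dual\). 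Tensoring this equivalence over~\(\EG\Grd\) with an arbitrary \(\Grd\ltimes\EG\Grd\)-\(\Cst\)-algebra~\(\hat A\) and using that \(\CONT_0(\EG\Grd)\) is the \(\otimes_{\EG\Grd}\)-unit together with~\eqref{eq:otimes_via_pullback} produces a \(\KK^{\Grd\ltimes\EG\Grd}\)-equivalence \(\hat A\to\hat A\otimes\Dual\); applying \(\forget_{\EG\Grd}\) and its \(\KKcat_\Grd\)-functoriality gives \(\forget_{\EG\Grd}(\hat A)\cong\forget_{\EG\Grd}(\hat A)\otimes\Dual\) in \(\KKcat_\Grd\). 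If \(A\cong\forget_{\EG\Grd}(\hat A)\), this is exactly~(2).

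The implication \((2)\Rightarrow(1)\) is the step that requires a genuine idea, and I expect it to be the main obstacle: one must promote an \emph{abstract} isomorphism \(\phi\colon A\xrightarrow{\cong}\Dual\otimes A\) to invertibility of the \emph{canonical} morphism \(D\otimes\ID_A\). The tool is the idempotency of~\(\Dual\) from Theorem~\ref{the:dual_EG}: for \(n=0\), \(\ID_\Dual\otimes D=D\otimes\ID_\Dual\) is the inverse of~\(\comul\), so \(D\otimes\ID_\Dual\) is an isomorphism, hence so is \(D\otimes\ID_{\Dual\otimes A}=(D\otimes\ID_\Dual)\otimes\ID_A\). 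Naturality of the transformation \(D\otimes\ID_{\blank}\colon\Dual\otimes\blank\Rightarrow\blank\) applied to~\(\phi\) gives \(\phi\circ(D\otimes\ID_A)=(D\otimes\ID_{\Dual\otimes A})\circ(\ID_\Dual\otimes\phi)\), whose right-hand side is a composite of isomorphisms; since \(\phi\) is invertible, it follows that \(D\otimes\ID_A\) is invertible, proving~(1). This is the familiar reason why a Dirac-type morphism detects the corresponding localising subcategory, cf.~\cite{Meyer-Nest:BC}. The remaining verifications — the computation of \(\PD^{-1}\circ p_{\EG\Grd}^*\), and the bookkeeping of the various tensor products and of \(\forget_{\EG\Grd}\) via~\eqref{eq:otimes_via_pullback} — are routine given the formal properties collected in Sections~\ref{sec:groupoid_preliminaries}--\ref{sec:grd_KK}.
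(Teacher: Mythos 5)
Your proposal is correct. The treatment of \((1)\Leftrightarrow(4)\) is essentially the paper's argument with the commuting triangle~\eqref{eq:localise_sour} rephrased as the computation \(\PD^{-1}\circ p_{\EG\Grd}^*(f)=f\circ(D\otimes\ID_A)\); both then invoke Yoneda in the same way. The implications \((1)\Rightarrow(2)\Rightarrow(3)\) also match the paper.

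Where you diverge is in closing the cycle. The paper goes straight from~(3) to~(1): it picks a concrete representative \(A=\forget_{\EG\Grd}(\hat A)\), pulls \(D\) back to \(\EG\Grd\), uses Theorem~\ref{the:dual_EG} to see that \(p_{\EG\Grd}^*(D)\otimes_{\EG\Grd}\ID_{\hat A}\) is invertible, and then forgets the \(\EG\Grd\)\nb-structure to land exactly on \(D\otimes\ID_A\). You instead split this into \((3)\Rightarrow(2)\) (the same pull-back-and-forget step, but stopping at an abstract \(\KK^\Grd\)-equivalence \(A\cong\Dual\otimes A\)) plus a new lemma \((2)\Rightarrow(1)\). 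That lemma is the genuinely different ingredient: you observe that \(D\otimes\ID_{\blank}\) is a natural transformation from \(\Dual\otimes\blank\) to the identity on \(\KKcat_\Grd\), apply naturality to the given equivalence \(\phi\colon A\to\Dual\otimes A\), and conclude from the invertibility of \(D\otimes\ID_\Dual\) (which is \(\comul^{-1}\) by Theorem~\ref{the:dual_EG} since \(n=0\)) that \(D\otimes\ID_A\) is forced to be invertible. This is a clean, purely formal argument in the spirit of localisation theory that avoids working with the concrete model of \(A\) as a proper \(\Cst\)\nb-algebra; it does, however, also require the idempotency assertion of Theorem~\ref{the:dual_EG}, which the paper's direct proof uses only implicitly. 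Both routes work; yours isolates a reusable lemma (an abstract isomorphism \(A\cong\Dual\otimes A\) already forces the canonical Dirac morphism to be invertible), while the paper's is one step shorter and never has to promote an abstract isomorphism to a canonical one.

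One small redundancy worth noting: once you have \((1)\Rightarrow(2)\), \((2)\Rightarrow(3)\), \((3)\Rightarrow(2)\), and \((2)\Rightarrow(1)\), the implication \((3)\Rightarrow(2)\) already closes the cycle, so one of \((2)\Rightarrow(3)\) or \((3)\Rightarrow(2)\) could be dropped. This does not affect correctness.
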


\begin{proof}
  The implications (1)\(\Longrightarrow\)(2)\(\Longrightarrow\)(3) are trivial because \(\Dual\otimes A\) is a proper \(\Grd\)\nb-\(\Cst\)\nb-algebra.

  We prove (3)\(\Longrightarrow\)(1).  By definition, a proper \(\Grd\)\nb-\(\Cst\)\nb-algebra is a \(\Grd\ltimes\Tot\)-\(\Cst\)\nb-algebra for some proper \(\Grd\)\nb-space~\(\Tot\).  Since there is a \(\Grd\)\nb-map \(\Tot\to\EG\Grd\), we may view any \(\Grd\ltimes\Tot\)\nb-\(\Cst\)\nb-algebra as a \(\Grd\ltimes\EG\Grd\)-\(\Cst\)\nb-algebra and thus assume \(A=\forget_{\EG\Grd}(\hat{A})\).  Then
  \[
  p_{\EG\Grd}^*(D) \otimes_{\EG\Grd} \ID_{\hat{A}} \in \KK_0^{\Grd\ltimes \EG\Grd} (p_{\EG\Grd}^*(\Dual)\otimes_{\EG\Grd}\hat{A}, \hat{A})
  \]
  is invertible because \(p_{\EG\Grd}^*(D)\) is.  Now identify \(p_{\EG\Grd}^*(\Dual)\otimes_{\EG\Grd}\hat{A} \cong \Dual\otimes\hat{A}\) and forget the \(\EG\Grd\)-structure to see that \(D\otimes\ID_A\) in \(\KK_0^\Grd(\Dual\otimes A,A)\) is invertible.

  Finally, we prove (1)\(\iff\)(4).  For all \(\Grd\)\nb-\(\Cst\)\nb-algebras \(A\) and~\(B\), the diagram
  \begin{equation}
    \label{eq:localise_sour}
    \begin{gathered}
      \xymatrix@R+1em@C+2em{ \KK^\Grd_*(A,B) \ar[r]^{D\otimes\blank} \ar[d]^{p_{\EG\Grd}^*} & \KK^\Grd_*(\Dual \otimes A,B)
        \ar[dl]_{\PD}^{\cong} \\
        \RKK^\Grd_*(\EG\Grd;A,B) }
    \end{gathered}
  \end{equation}
  commutes because \(\Theta\otimes_\Dual D=1\).  By the Yoneda Lemma, \(D\otimes\ID_A\) is invertible if and only if the horizontal arrow is invertible for all~\(B\).  Since the diagonal arrow is invertible, this is equivalent to the vertical arrow being invertible for all~\(B\), that is, to~(4).
\end{proof}

Theorems \ref{the:dual_EG} and~\ref{the:dual_EG2} are important in connection with the localisation approach to the Baum--Connes assembly map developed in~\cite{Meyer-Nest:BC}, as we now explain.

\begin{definition}
  \label{def:CC}
  Let~\(\EG\Grd\) be a universal proper \(\Grd\)\nb-space.  We define two subcategories of \(\KKcat_\Grd\):
  \begin{align*}
    \mathcal{CC} &\defeq \{A\inOb\KKcat^\Grd\mid \text{\(p_{\EG\Grd}^*(A) \cong 0\) in \(\RKKcat^\Grd(\EG\Grd)\)}\},
    \\
    \mathcal{CP} &\defeq \{A\inOb\KKcat^\Grd\mid \text{\(A\) is \(\KK^\Grd\)-equivalent to a proper \(\Grd\)-\(\Cst\)\nb-algebra}\}.
  \end{align*}
\end{definition}

\begin{corollary}[compare \cite{Meyer-Nest:BC}*{Theorem 7.1}]
  \label{cor:complementary_CC_CI}
  Let \(\EG\Grd\) be a universal proper \(\Grd\)\nb-space and suppose that \(\EG\Grd\) has a \(0\)\nb-dimensional Kasparov dual \((\Dual,D,\Theta)\).  Then the pair of subcategories \((\mathcal{CP},\mathcal{CC})\) is complementary.  The localisation functor \(\KKcat_\Grd\to\mathcal{CP}\) is \(A\mapsto \Dual\otimes A\), and the natural transformation from this functor to the identity functor is induced by~\(D\).  The localisation of \(\KKcat_\Grd\) at \(\mathcal{CC}\) is isomorphic to \(\RKKcat_\Grd(\EG\Grd)\) with the functor \(p_{\EG\Grd}^*\colon \KKcat_\Grd\to\RKKcat_\Grd(\EG\Grd)\).
\end{corollary}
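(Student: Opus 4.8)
\subsection*{Proof plan}

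The plan is to show that the pair $(\mathcal{CP},\mathcal{CC})$ of full subcategories of the triangulated category $\KKcat_\Grd$ is \emph{complementary} in the sense of \cite{Meyer-Nest:BC}; the description of the localisation functor and the identification of the localisation at $\mathcal{CC}$ with $\RKKcat_\Grd(\EG\Grd)$ will then follow from the general formalism of complementary pairs, combined with the first duality isomorphism $\PD$ of Theorem~\ref{the:first_duality} and the commuting triangle~\eqref{eq:localise_sour}. Recall that a complementary pair requires: (i) both subcategories are closed under suspensions and under $\KK^\Grd$-isomorphism; (ii) $\KK^\Grd_*(P,N)=0$ for all $P\in\mathcal{CP}$, $N\in\mathcal{CC}$; and (iii) every object of $\KKcat_\Grd$ fits into an exact triangle whose first vertex lies in $\mathcal{CP}$ and whose second lies in $\mathcal{CC}$. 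Condition~(i) is immediate: $p_{\EG\Grd}^*$ is additive and exact, hence sends $\Sigma A$ to $\Sigma p_{\EG\Grd}^*(A)$ and preserves the zero object, so $\mathcal{CC}$ is closed under suspensions; a suspension of a proper $\Grd$-$\Cst$-algebra is again proper, so $\mathcal{CP}$ is too; and both are visibly invariant under $\KK^\Grd$-equivalence. For~(ii): if $P\in\mathcal{CP}$, then by the equivalence of conditions~(3) and~(4) in Theorem~\ref{the:dual_EG2} the map $p_{\EG\Grd}^*\colon\KK^\Grd_*(P,N)\to\RKK^\Grd_*(\EG\Grd;P,N)$ is an isomorphism, while $N\in\mathcal{CC}$ means $p_{\EG\Grd}^*(N)\cong0$ in $\RKKcat_\Grd(\EG\Grd)$, so the target vanishes.

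For condition~(iii) I would assign to each object $A$ the mapping-cone triangle
\[
\Dual\otimes A\xrightarrow{\ D\otimes\ID_A\ }A\longrightarrow N_A\longrightarrow\Sigma(\Dual\otimes A)
\]
in $\KKcat_\Grd$ of the ``Dirac morphism'' $D\otimes\ID_A$. The first vertex lies in $\mathcal{CP}$ because $\Dual$ is a $\Grd\ltimes\EG\Grd$-$\Cst$-algebra, hence so is $\Dual\otimes A$, i.e.\ it is a proper $\Grd$-$\Cst$-algebra, exactly as in the proof of Theorem~\ref{the:dual_EG2}. To see $N_A\in\mathcal{CC}$, apply the exact functor $p_{\EG\Grd}^*\colon\KKcat_\Grd\to\KKcat_{\Grd\ltimes\EG\Grd}$ to the triangle; its first arrow becomes $p_{\EG\Grd}^*(D)\otimes\ID_A$, which is invertible because $p_{\EG\Grd}^*(D)$ is invertible in $\RKKcat_\Grd(\EG\Grd)$ by Theorem~\ref{the:dual_EG} (with inverse $\Theta$) and $\RKKcat_\Grd(\EG\Grd)$ is a $\KKcat_\Grd$-category. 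Hence $p_{\EG\Grd}^*(N_A)\cong0$, i.e.\ $N_A\in\mathcal{CC}$. This establishes that $(\mathcal{CP},\mathcal{CC})$ is complementary.

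Granting this, the general theory of complementary pairs supplies a right adjoint to the inclusion $\mathcal{CP}\hookrightarrow\KKcat_\Grd$ which on objects is the $\mathcal{CP}$-vertex of the truncation triangle, here $A\mapsto\Dual\otimes A$, with counit the natural transformation $D\otimes\ID_A$ (naturality being a routine identity between Kasparov products); it also identifies the localisation $\KKcat_\Grd/\mathcal{CC}$ with $\mathcal{CP}$, so that there are natural isomorphisms $\Hom_{\KKcat_\Grd/\mathcal{CC}}(A,B)\cong\KK^\Grd_0(\Dual\otimes A,B)$ under which the quotient functor corresponds on the right-hand side to $\varphi\mapsto D\otimes\varphi$. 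Since $p_{\EG\Grd}^*$ sends every object of $\mathcal{CC}$ to zero, it factors through $\KKcat_\Grd/\mathcal{CC}$, and the induced functor is the identity on objects; on morphisms it is, by the previous identification together with~\eqref{eq:localise_sour}, the composite
\[
\Hom_{\KKcat_\Grd/\mathcal{CC}}(A,B)\;\cong\;\KK^\Grd_0(\Dual\otimes A,B)\;\xrightarrow[\cong]{\ \PD\ }\;\RKK^\Grd_0(\EG\Grd;A,B),
\]
hence bijective. Being bijective on objects and fully faithful, this functor is an isomorphism of categories $\KKcat_\Grd/\mathcal{CC}\cong\RKKcat_\Grd(\EG\Grd)$ compatible with $p_{\EG\Grd}^*$, as claimed.

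The load-bearing inputs are Theorems~\ref{the:dual_EG} and~\ref{the:dual_EG2}: the former gives the invertibility of $p_{\EG\Grd}^*(D)$ over $\EG\Grd$ that forces the cone $N_A$ into $\mathcal{CC}$, and the latter provides both the Hom-vanishing in~(ii) and the identification of $\mathcal{CP}$ with the objects on which $p_{\EG\Grd}^*$ is invertible. The step I expect to require the most care is the last one: verifying that the abstract isomorphism $\Hom_{\KKcat_\Grd/\mathcal{CC}}(A,B)\cong\KK^\Grd_0(\Dual\otimes A,B)$ coming out of the complementary-pair formalism is intertwined, via $\PD$, with the canonical map induced by $p_{\EG\Grd}^*$. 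This amounts to tracing the quotient functor through the adjunction (the truncation morphism $\Dual\otimes A\to A$ becomes invertible in the localisation) and matching the result against~\eqref{eq:localise_sour}, plus the routine observation that $p_{\EG\Grd}^*$ is exact. Everything else is standard bookkeeping with Kasparov products and the $\KKcat_\Grd$-module structures set up in Sections~\ref{sec:grd_KK}--\ref{sec:def_Kasparov_dual}.
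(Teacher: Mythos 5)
Your proof matches the paper's own argument essentially step for step: the mapping cone of $D\otimes\ID_A$ supplies the decomposition triangle, Theorems~\ref{the:dual_EG} and~\ref{the:dual_EG2} supply the Hom-vanishing and the fact that the cone lands in $\mathcal{CC}$, and the identification of the localisation with $\RKKcat_\Grd(\EG\Grd)$ uses $\PD$ together with the commuting triangle~\eqref{eq:localise_sour}. The only minor variation is that for Hom-vanishing you invoke condition~(4) of Theorem~\ref{the:dual_EG2} directly, whereas the paper first replaces $L$ by $\Dual\otimes L$ via condition~(2) and then applies $\PD$; these are the same argument unrolled slightly differently.
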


\begin{proof}
  Let~\(L\) belong to \(\mathcal{CP}\) and~\(C\) belong to \(\mathcal{CC}\).  Then \(L\cong \Dual\otimes L\) by Theorem~\ref{the:dual_EG2}.  Hence
  \[
  \KK^\Grd_0(L,C) \cong \KK^\Grd_0(\Dual\otimes L,C) \cong \RKK^\Grd_0(\EG\Grd;L,C) = \KK^{\Grd\ltimes\EG\Grd}_0 \bigl(p_{\EG\Grd}^*(L),p_{\EG\Grd}^*(C)\bigr) =0.
  \]
  Thus \(\mathcal{CP}\) is left orthogonal to \(\mathcal{CC}\).

  Let~\(A\) be a \(\Grd\)\nb-\(\Cst\)\nb-algebra.  The cone of \(D\otimes\ID_A\colon \Dual\otimes A\to A\) (mapping cone in the sense of triangulated categories) belongs to \(\mathcal{CC}\) because \(p_{\EG\Grd}^*(D\otimes\ID_A)\) is invertible in \(\RKKcat_\Grd(\EG\Grd)\) by Theorem~\ref{the:dual_EG}.  Hence any object~\(A\) belongs to an exact triangle \(L\to A\to C\to L[1]\) with \(L\in\mathcal{CP}\), \(C\in\mathcal{CC}\), where we take \(L = \Dual\otimes A\) and the map \(L\to A\) induced by~\(D\).  Thus \((\mathcal{CP},\mathcal{CC})\) is a complementary pair of subcategories.

  In the localisation of \(\KKcat_\Grd\) at \(\mathcal{CC}\), the morphism groups are \(\KK^\Grd_0(A\otimes\Dual,B)\).  These are identified with \(\RKK^\Grd_0(\EG\Grd;A,B)\) by the first Poincar\'e duality isomorphism.  Hence the localisation is equivalent to \(\RKKcat_\Grd(\EG\Grd)\).  The commuting diagram~\eqref{eq:localise_sour} shows that the localisation functor becomes \(p_{\EG\Grd}^*\).
\end{proof}

Let~\(G\) be a group.  In~\cite{Meyer-Nest:BC}, the analogues of the categories \(\mathcal{CP}\) and \(\mathcal{CC}\) are defined slightly differently: for \(\mathcal{CC}\), it is only required that \(p_{G/H}^*(A)\cong0\) for all compact subgroups \(H\subseteq G\), and \(\mathcal{CP}\) is replaced by the triangulated subcategory generated by objects of the form \(\forget_{G/H}(\hat{A})\).  We have not yet tried to construct proper actions of groupoids out of simpler building blocks in a similar way.

Next we relate Kasparov duality to the Dirac dual Dirac method for groupoids.

\begin{definition}
  \label{def:dual_Dirac}
  An \(n\)\nb-dimensional \emph{Dirac-dual-Dirac triple} for the groupoid~\(\Grd\) is a triple \((\Dual, D, \eta)\) where~\(\Dual\) is a \(\Grd\ltimes \EG\Grd\)\nb-$\Cst$-algebra, \(D\in \KK^\Grd_{-n} (\Dual, \UNIT)\), and \(\eta\in\KK^\Grd_n(\UNIT,\Dual)\), such that \(p_{\EG\Grd}^*(\eta\otimes_\Dual D)=1_{\EG \Grd}\) in \(\RKK^\Grd_0(\EG\Grd;\UNIT,\UNIT)\) and \(D\otimes\eta = \ID_\Dual\) in \(\KK^\Grd_0(\Dual,\Dual)\).
\end{definition}

The two conditions \(p_{\EG\Grd}^*(\eta\otimes_\Dual D)=1_{\EG \Grd}\) and \(D\otimes\eta = \ID_\Dual\) are independent: if both hold, then we may violate the second one without violating the first one by adding some proper \(\Grd\)\nb-\(\Cst\)-algebra to~\(\Dual\) and taking \((D,0)\) and \((\eta,0)\); and the second one always holds if we take \(\Dual=0\).

\begin{theorem}
  \label{the:existence_of_Kasparov_dual_for_EG}
  Let \((\Dual, D, \eta)\) be an \(n\)\nb-dimensional Dirac-dual-Dirac triple.  Let \(\Theta\defeq p_{\EG\Grd}^*(\eta)\) and let \(\gamma \defeq \eta \otimes_\Dual D\).  Then \(\bigl(\Dual, D, \Theta\bigr)\) is an \(n\)\nb-dimensional Kasparov dual for~\(\EG\Grd\).  Furthermore, \(\gamma\) is an idempotent element of the ring \(\KK^\Grd_0(\UNIT,\UNIT)\).  This ring acts naturally on all \(\KK^\Grd\)-groups by exterior product.  The map
 \[
 p_{\EG\Grd}^*\colon \KK^\Grd_*(A,B) \to \RKK^\Grd_*(\EG\Grd;A,B)
 \]
 vanishes on the kernel of~\(\gamma\) and restricts to an isomorphism on the range of~\(\gamma\).  Its inverse is the map
 \[
 \RKK^\Grd_i(\EG\Grd;A,B) \to \gamma\cdot \KK^\Grd_i(A,B),\qquad
 \alpha\mapsto (-1)^{in}\eta \otimes_\Dual T_\Dual(\alpha)\otimes_\Dual D.
 \]
\end{theorem}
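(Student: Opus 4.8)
The plan is to verify the three conditions of Definition~\ref{def:Kasparov_dual} for the triple $(\Dual,D,\Theta)$, then to read off the properties of $\gamma$ from the two defining relations of a Dirac--dual-Dirac triple, and finally to identify the restriction of $p_{\EG\Grd}^*$ to the range of $\gamma$ by feeding the resulting duality into the commuting triangle~\eqref{eq:localise_sour}.

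For the Kasparov dual, condition~\eqref{eq:Kasparov_dual_1} is immediate: since $p_{\EG\Grd}^*$ is a symmetric monoidal $\KKcat_\Grd$-functor and $\Theta=p_{\EG\Grd}^*(\eta)$, naturality gives $\Theta\otimes_\Dual D=p_{\EG\Grd}^*(\eta\otimes_\Dual D)=p_{\EG\Grd}^*(\gamma)=1_{\EG\Grd}$. Condition~\eqref{eq:Kasparov_dual_2} holds automatically by Lemma~\ref{lem:dual_2_automatic}, because on the universal proper space $\EG\Grd$ the two coordinate projections $\EG\Grd\times_\Base\EG\Grd\rightrightarrows\EG\Grd$ are $\Grd$\nb-equivariantly homotopic. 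For~\eqref{eq:Kasparov_dual_3} I would use~\eqref{eq:T_sour_P} to compute $T_\Dual(\Theta)=\ID_\Dual\otimes\eta$ in $\KK^\Grd_n(\Dual,\Dual\otimes\Dual)$, whereupon composing with $\flip_\Dual$ and using naturality of the flip reduces the claim to the identity $\eta\otimes\ID_\Dual=(-1)^n\,\ID_\Dual\otimes\eta$. Substituting the second defining relation $\ID_\Dual=D\otimes_\UNIT\eta$ for the identity factor exhibits both $\eta\otimes\ID_\Dual$ and $\ID_\Dual\otimes\eta$ as the exterior product $\eta\otimes\eta$ precomposed with the two tensor orderings of~$D$, and these differ precisely by the Koszul sign $(-1)^n$; this gives~\eqref{eq:Kasparov_dual_3}. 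Once all three conditions hold, Theorem~\ref{the:first_duality} yields that $\PD\colon f\mapsto\Theta\otimes_\Dual f$ is invertible with $\PD^{-1}$ as in~\eqref{eq:inverse_PD_abstract}, and that $T_\Dual$ is left adjoint to $p_{\EG\Grd}^*$.

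Next, associativity of the Kasparov product and $D\otimes_\UNIT\eta=\ID_\Dual$ give $\gamma\otimes\gamma=\eta\otimes_\Dual(D\otimes_\UNIT\eta)\otimes_\Dual D=\gamma$, so $\gamma$ is an idempotent of the commutative ring $\KK^\Grd_0(\UNIT,\UNIT)$, which acts by exterior product $x\mapsto\gamma\otimes x$ on every $\KK^\Grd_*(A,B)$ because the tensor unit is a commutative algebra object. Monoidality of $p_{\EG\Grd}^*$ together with $p_{\EG\Grd}^*(\gamma)=1_{\EG\Grd}$ gives $p_{\EG\Grd}^*(\gamma\otimes x)=p_{\EG\Grd}^*(x)$; in particular $p_{\EG\Grd}^*$ vanishes on $\ker\gamma$ and its image equals the image of $\gamma\cdot\KK^\Grd_*(A,B)$. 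By~\eqref{eq:inverse_PD_abstract} the map in the statement is $\Psi\colon\alpha\mapsto\eta\otimes_\Dual\PD^{-1}(\alpha)$, and the triangle~\eqref{eq:localise_sour} (available since $\Theta\otimes_\Dual D=1_{\EG\Grd}$) reads $p_{\EG\Grd}^*=\PD\circ(D\otimes\blank)$ with $\PD$ invertible. Hence $\Psi\bigl(p_{\EG\Grd}^*(x)\bigr)=\eta\otimes_\Dual(D\otimes x)=(\eta\otimes_\Dual D)\otimes x=\gamma\otimes x$ by associativity of the products, which is the identity on the range of $\gamma$ and yields injectivity there; conversely $p_{\EG\Grd}^*\bigl(\Psi(\alpha)\bigr)=\PD\bigl(D\otimes(\eta\otimes_\Dual\PD^{-1}(\alpha))\bigr)=\PD\bigl((D\otimes_\UNIT\eta)\otimes_\Dual\PD^{-1}(\alpha)\bigr)=\PD\bigl(\PD^{-1}(\alpha)\bigr)=\alpha$, again by associativity and $D\otimes_\UNIT\eta=\ID_\Dual$. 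Finally $\gamma\otimes_\UNIT\eta=\eta$ (a further consequence of $D\otimes_\UNIT\eta=\ID_\Dual$) shows $\Psi(\alpha)$ lies in $\gamma\cdot\KK^\Grd_*(A,B)$, so $\Psi$ is a two-sided inverse of the restriction of $p_{\EG\Grd}^*$ to the range of $\gamma$.

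The conceptual skeleton is short, and once one argues with the cup-cap product and its associativity rather than unwinding everything into exterior products and compositions, most of the computation is sign-free. The one place needing real care is~\eqref{eq:Kasparov_dual_3}: the cleaner-looking companion~\eqref{eq:other_third_condition} is a one-line interchange-law check, but its equivalence with~\eqref{eq:Kasparov_dual_3} is only asserted in the text, so a self-contained argument should carry out the factorisation above directly, tracking the sign $(-1)^n$. One should also make explicit that ``$D\otimes\eta$'' in Definition~\ref{def:dual_Dirac} denotes the composition product $D\otimes_\UNIT\eta$ over the tensor unit — it is precisely this reading that makes $\gamma$ idempotent and that powers the factorisation used for~\eqref{eq:Kasparov_dual_3}.
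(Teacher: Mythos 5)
Your proposal is correct and follows essentially the same route as the paper: verify the Kasparov dual conditions (using Lemma~\ref{lem:dual_2_automatic} for~\eqref{eq:Kasparov_dual_2} and the second relation $D\otimes\eta=\ID_\Dual$ for~\eqref{eq:Kasparov_dual_3}), read off idempotency of $\gamma$, and identify the inverse via the duality machinery. The only real variation is cosmetic: you verify $p_{\EG\Grd}^*\circ\Psi=\ID$ and $\Psi\circ p_{\EG\Grd}^*=\gamma\cdot{}$ by plugging the triangle~\eqref{eq:localise_sour} and the formula~\eqref{eq:inverse_PD_abstract} into the abstract framework, whereas the paper carries out the two cup-cap computations directly (using~\eqref{eq:T_sour_P} and Lemma~\ref{lem:dual_2_automatic}); both are sound and of comparable length.
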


\begin{proof}
  Condition~\eqref{eq:Kasparov_dual_1} amounts to our assumption \(p_{\EG \Grd}^*(\gamma) = 1_{\EG \Grd}\).  Clearly, the exterior product \(\eta\otimes\eta\) is invariant under~\(\flip_\Dual\) up to the sign~\((-1)^n\).  Since \(\eta\otimes\eta = \eta\otimes_\Dual T_\Dual(\eta)\) and \(D\otimes\eta=\ID_\Dual\), this implies~\eqref{eq:Kasparov_dual_3}.  Condition~\eqref{eq:Kasparov_dual_2} is automatic by Lemma~\ref{lem:dual_2_automatic}.  Hence \((\Dual,D,\Theta)\) is a Kasparov dual for~\(\EG\Grd\).  The assumption \(D\otimes\eta=\ID_\Dual\) implies that~\(\gamma\) is idempotent.

  If \(f\in\KK^\Grd_i(A,B)\), then
  \begin{multline*}
    (-1)^{in}\eta\otimes_\Dual T_\Dual\bigl(p_{\EG\Grd}^*(f)\bigr) \otimes_\Dual D
    \\= (-1)^{in}\eta\otimes_\Dual (\ID_\Dual \otimes f) \otimes_\Dual D
    = \eta\otimes_\Dual D \otimes f = \gamma\cdot f.
  \end{multline*}
  If \(f\in\RKK^\Grd_i(\EG\Grd;A,B)\), then Lemma~\ref{lem:dual_2_automatic} implies
  \begin{multline*}
    (-1)^{in} p_{\EG\Grd}^*(\eta\otimes_\Dual T_\Dual(f) \otimes_\Dual D)
    = (-1)^{in} \Theta\otimes_\Dual T_\Dual(f) \otimes_\Dual D
    \\= (-1)^{in} \Theta\otimes_{\EG\Grd} f \otimes_\Dual D
    = f \otimes_{\EG\Grd} \Theta\otimes_\Dual D = f.
  \end{multline*}
  The remaining assertions follow.
\end{proof}

\subsection{Extension to non-trivial bundles}
\label{sec:KK_first_non-trivial}

Let \((\Dual,D,\Theta)\) be an \(n\)\nb-dimensional Kasparov dual for~\(\Tot\).  The functor~\(T_\Dual\) extends to a functor
\[
T_\Dual\colon \KK^{\Grd\ltimes\Tot}_*(A,B) \to \KK^\Grd_*(\Dual\otimes_\Tot A,\Dual \otimes_\Tot B)
\]
for all \(\Grd\ltimes\Tot\)-\(\Cst\)\nb-algebras \(A\) and~\(B\), combining the tensor product over~\(\Tot\) with~\(\Dual\) and \(\forget_\Tot\).  If \(B=p_\Tot^*(B_0) = \CONT_0(\Tot)\otimes B_0\), then we can simplify \(\Dual\otimes_\Tot B \cong \Dual\otimes B_0\).  Extending the definition in Theorem~\ref{the:first_duality}, we get a natural transformation
\begin{multline}
  \label{eq:first_Kasparov_bundle}
  \PD^*\colon \KK^{\Grd\ltimes\Tot}_i(A,p_\Tot^*B) \xrightarrow{(-1)^{in}T_\Dual} \KK^\Grd_i(\Dual\otimes_\Tot A, \Dual\otimes B) \\
  \xrightarrow{\blank\otimes_\Dual D} \KK^\Grd_{i-n}(\Dual\otimes_\Tot A, B)
\end{multline}
if~\(A\) is a \(\Grd\ltimes\Tot\)-\(\Cst\)\nb-algebra and~\(B\) is a \(\Grd\)\nb-\(\Cst\)\nb-algebra.  This map fails to be an isomorphism in the following simple counterexample:

\begin{example}
  \label{exa:first_Kasparov_fails}
  Let~\(\Grd\) be trivial, take a \(\Cst\)\nb-algebra~\(A\), and view it as a \(\Cst\)\nb-algebra over~\(\Tot\) concentrated in some \(\tot\in\Tot\).  Unless~\(\tot\) is isolated, the only \(\Tot\)\nb-linear Kasparov cycle for \(A\) and \(\CONT_0(\Tot,B)\) is the zero cycle, so that \(\KK^\Tot_*\bigl(A,\CONT_0(\Tot, B)\bigr)=0\).  But there is no reason for \(\KK_*(A\otimes_\Tot \Dual, B)\) to vanish because \(A\otimes_\Tot \Dual= A\otimes \Dual_\tot\).
\end{example}

The following theorem gives necessary and sufficient conditions for~\eqref{eq:first_Kasparov_bundle} to be an isomorphism.  The first results of this kind appeared in \cite{Echterhoff-Emerson-Kim:Duality} and~\cite{Tu:Twisted_Poincare}.

\begin{theorem}
  \label{the:KK_first_non-trivial}
  Let \(\Dual\) and~\(A\) be \(\Grd\ltimes\Tot\)\nb-\(\Cst\)-algebras.  Let \(\Theta\in\RKK^\Grd_n(\Tot;\UNIT,\Dual)\) and \(D\in\KK^\Grd_{-n}(\Dual,\UNIT)\) satisfy \eqref{eq:Kasparov_dual_1} and~\eqref{eq:other_third_condition} \textup(both are necessary conditions for Kasparov duals\textup).  The map \(\PD^*\) in~\eqref{eq:first_Kasparov_bundle} is invertible for all \(\Grd\)\nb-\(\Cst\)\nb-algebras~\(B\) if and only if there is
  \[
  \Theta_A\in \KK^{\Grd\ltimes\Tot}_n \bigl(A, p_\Tot^*(\Dual \otimes_\Tot A)\bigr)
  \]
  such that the diagram
  \begin{equation}
    \label{eq:KK_first_non-trivial_3}
    \begin{gathered}
      \xymatrix@C+2em{ \Dual\otimes_\Tot A \ar[r]^{T_\Dual(\Theta_A)} \ar[d]_{T_\Dual(\Theta\otimes_\Tot \ID_A)} & \Dual\otimes (\Dual\otimes_\Tot A)
        \ar[dl]^{(-1)^n\flip}\\
        (\Dual\otimes_\Tot A) \otimes \Dual }
    \end{gathered}
  \end{equation}
  in \(\KKcat_\Grd\) commutes and, for all \(\alpha\in \KK^{\Grd\ltimes\Tot}_i \bigl(A, p_\Tot^*(B)\bigr)\),
  \begin{equation}
    \label{eq:KK_first_non-trivial_4}
    \Theta_A \otimes_{\Dual\otimes_\Tot A} T_\Dual(\alpha) = \Theta \otimes_\Tot\alpha
    \qquad\text{in \(\KK^{\Grd\ltimes\Tot}_{i+n} \bigl(A, p_\Tot^*(\Dual\otimes B)\bigr)\).}
  \end{equation}
  There is at most one element~\(\Theta_A\) with these properties, and if it exists, then the inverse isomorphism to~\eqref{eq:first_Kasparov_bundle} is the map
  \[
  \PD\colon \KK^\Grd_*(\Dual\otimes_\Tot A, B) \to \KK^{\Grd\ltimes\Tot}_{*+n}\bigl(A, p_\Tot^*(B)\bigr), \qquad \alpha \mapsto \Theta_A\otimes_{\Dual\otimes_\Tot A}\alpha.
  \]
\end{theorem}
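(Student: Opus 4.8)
The plan is to follow the proof of Theorem~\ref{the:first_duality} closely, with~\(\Theta_A\) playing, for the bundle~\(A\), the role that the local dual Dirac class~\(\Theta\) plays for~\(\UNIT_\Tot\), and with~\eqref{eq:KK_first_non-trivial_4} and~\eqref{eq:KK_first_non-trivial_3} as the relative analogues of the axioms~\eqref{eq:Kasparov_dual_2} and~\eqref{eq:Kasparov_dual_3}. The standing tools are: the extended functor \(T_\Dual\colon\KKcat_{\Grd\ltimes\Tot}\to\KKcat_\Grd\) and its functoriality; the identity \(T_\Dual\bigl(p_\Tot^*(\beta)\bigr)=\ID_\Dual\otimes\beta\) of~\eqref{eq:T_sour_P}; the coherence isomorphisms \(\Dual\otimes_\Tot p_\Tot^*(B)\cong\Dual\otimes B\) and \((\Dual\otimes_\Tot A)\otimes\Dual\cong\Dual\otimes_\Tot(A\otimes\Dual)\); and associativity and graded commutativity of the Kasparov products.

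First I would prove sufficiency. Given~\(\Theta_A\) with the stated properties, set \(\PD(\alpha)\defeq\Theta_A\otimes_{\Dual\otimes_\Tot A}\alpha\). For \(\PD\circ\PD^*=\ID\): if \(\alpha\in\KK^{\Grd\ltimes\Tot}_i(A,p_\Tot^*B)\) then \(\PD\bigl(\PD^*(\alpha)\bigr)=(-1)^{in}\,\Theta_A\otimes_{\Dual\otimes_\Tot A}\bigl(T_\Dual(\alpha)\otimes_\Dual D\bigr)\), which by~\eqref{eq:KK_first_non-trivial_4} equals \((-1)^{in}(\Theta\otimes_\Tot\alpha)\otimes_\Dual D\); since~\(D\) only meets the~\(\Dual\)-factor produced by~\(\Theta\), graded commutativity together with~\eqref{eq:Kasparov_dual_1} collapse this to~\(\alpha\). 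For \(\PD^*\circ\PD=\ID\): functoriality of~\(T_\Dual\) and~\eqref{eq:T_sour_P} give \(T_\Dual\bigl(\Theta_A\otimes_{\Dual\otimes_\Tot A}\beta\bigr)=T_\Dual(\Theta_A)\otimes_{\Dual\otimes(\Dual\otimes_\Tot A)}(\ID_\Dual\otimes\beta)\); substituting the commuting triangle~\eqref{eq:KK_first_non-trivial_3} for~\(T_\Dual(\Theta_A)\), the~\(\flip\) it produces turns the subsequent pairing with~\(D\) into exactly the left hand side of~\eqref{eq:other_third_condition}, which leaves~\(\beta\). Hence~\(\PD\) is a two-sided inverse of~\(\PD^*\), and \(\Theta_A=\PD(\ID_{\Dual\otimes_\Tot A})=(\PD^*)^{-1}(\ID_{\Dual\otimes_\Tot A})\), which also gives uniqueness of~\(\Theta_A\).

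For necessity, assume~\(\PD^*\) is invertible for all~\(B\). Both sides of~\eqref{eq:first_Kasparov_bundle} are functors in~\(B\), so by the Yoneda lemma the inverse \((\PD^*)^{-1}\) is Kasparov product over~\(\Dual\otimes_\Tot A\) with the single class \(\Theta_A\defeq(\PD^*)^{-1}(\ID_{\Dual\otimes_\Tot A})\in\KK^{\Grd\ltimes\Tot}_n\bigl(A,p_\Tot^*(\Dual\otimes_\Tot A)\bigr)\); this is the claimed formula for the inverse. To check~\eqref{eq:KK_first_non-trivial_4}, I would first establish \(\PD^*(\Theta\otimes_\Tot\alpha)=T_\Dual(\alpha)\) by the computation that proves~\eqref{eq:PDstar_Theta} in Theorem~\ref{the:first_duality} — graded commutativity of~\(\otimes_\Tot\), functoriality of~\(T_\Dual\), and~\eqref{eq:other_third_condition}, none of which needs invertibility of~\(\PD\) — and then apply \((\PD^*)^{-1}\) to read it as \(\Theta\otimes_\Tot\alpha=\Theta_A\otimes_{\Dual\otimes_\Tot A}T_\Dual(\alpha)\). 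To check~\eqref{eq:KK_first_non-trivial_3}, I would combine the relation \(\PD^*(\Theta_A)=\ID_{\Dual\otimes_\Tot A}\), that is \((-1)^n T_\Dual(\Theta_A)\otimes_\Dual D=\ID_{\Dual\otimes_\Tot A}\), with~\eqref{eq:KK_first_non-trivial_4} specialised to \(\alpha=\Theta_A\) to pin down~\(T_\Dual(\Theta_A)\), and compare with the value of \(T_\Dual(\Theta\otimes_\Tot\ID_A)\) obtained directly from the way~\(\sigma_\Dual\) and~\(\forget_\Tot\) interact with the exterior products and from~\eqref{eq:other_third_condition}; the resulting identity is the commuting triangle~\eqref{eq:KK_first_non-trivial_3}.

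The hard part will be the flip condition~\eqref{eq:KK_first_non-trivial_3}: in the sufficiency direction it is precisely what forces \(\PD^*\circ\PD=\ID\), and in the necessity direction it has to be extracted from bare invertibility, which demands keeping careful track of which of the several~\(\Dual\)-tensor factors the counit~\(D\) is paired with once the~\(\flip\) has been applied. The Koszul-sign bookkeeping — which fixes the exponents~\(in\) in~\(\PD^*\) and~\(n\) in~\eqref{eq:KK_first_non-trivial_3} — is the other point requiring care. The rest is routine diagram-chasing in \(\KKcat_\Grd\), \(\KKcat_{\Grd\ltimes\Tot}\), and \(\RKKcat_\Grd(\Tot)\).
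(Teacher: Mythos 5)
Your proposal follows the paper's proof strategy very closely, and the sufficiency half — showing that~\eqref{eq:KK_first_non-trivial_4} collapses \(\PD\circ\PD^*\) to the identity via graded commutativity and~\eqref{eq:Kasparov_dual_1}, and that~\eqref{eq:KK_first_non-trivial_3} together with~\eqref{eq:Kasparov_dual_1} reduces \(\PD^*\circ\PD\) to~\eqref{eq:other_third_condition} — is exactly what the paper does, as is the Yoneda argument giving both the form of the inverse and uniqueness via \(\Theta_A = (\PD^*)^{-1}(\ID_{\Dual\otimes_\Tot A})\). For~\eqref{eq:KK_first_non-trivial_4} in the converse direction, your plan to re-run the computation behind~\eqref{eq:PDstar_Theta} (none of which needs invertibility, only~\eqref{eq:other_third_condition} and functoriality of \(T_\Dual\)) and then apply \((\PD^*)^{-1}\) is again essentially the paper's argument.

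Where you depart slightly is in the derivation of the flip condition~\eqref{eq:KK_first_non-trivial_3}. The paper's route is to compute \(\PD^*\) of the single degree-\(2n\) class \(\Theta_A \otimes_\Tot \Theta = (-1)^n\,\Theta\otimes_\Tot\Theta_A\) in two ways: factoring the first form and using~\eqref{eq:other_third_condition} gives \((-1)^n T_\Dual(\Theta_A)\); factoring the second form and using \(\PD^*(\Theta_A)=\ID\) gives \((-1)^n T_\Dual(\Theta\otimes_\Tot\ID_A)\); injectivity of \(\PD^*\) then forces the commuting triangle, with the flip and the sign \((-1)^n\) entering through the graded commutativity of the exterior product that relates the two factorisations. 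Your route instead invokes the already-established~\eqref{eq:KK_first_non-trivial_4} at \(\alpha = \Theta_A\) together with \(\PD^*(\Theta_A)=\ID\). This is workable — the flip must come out of the same graded-commutativity comparison once you actually pair \(D\) with the right \(\Dual\)-factor, which you correctly flag as the subtle point — but you should be aware that specialising~\eqref{eq:KK_first_non-trivial_4} to \(\alpha = \Theta_A\) gives an identity about \(\Theta_A \otimes_{\Dual\otimes_\Tot A} T_\Dual(\Theta_A)\), not about \(T_\Dual(\Theta_A)\) itself; to \emph{pin down} the latter you still need one further application of \(T_\Dual\), a pairing with \(D\), and a use of the defining relation, which re-derives the paper's two-way computation in a slightly roundabout form. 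The paper's framing, treating the two factorisations of one element symmetrically, makes the appearance of the flip and the sign transparent from the start, so I would recommend adopting it rather than leaning on~\eqref{eq:KK_first_non-trivial_4}.
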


\begin{proof}
  If there is~\(\Theta_A\) with the required properties, then the following routine computations show that the maps \(\PD^*\) and \(\PD\) defined above are inverse to each other.  Starting with \(\alpha\in \KK^{\Grd\ltimes\Tot}_i \bigl(A, p_\Tot^*(B)\bigr)\), we compute
  \begin{multline*}
    \PD\circ\PD^*(\alpha) \defeq (-1)^{in} \Theta_A \otimes_{\Dual\otimes_\Tot A} T_\Dual(\alpha)\otimes_\Dual D
    \\= (-1)^{in} \Theta \otimes_\Tot \alpha\otimes_\Dual D
    = \alpha \otimes_\Tot \Theta\otimes_\Dual D = \alpha,
  \end{multline*}
  using \eqref{eq:KK_first_non-trivial_4}, graded commutativity of exterior products, and~\eqref{eq:Kasparov_dual_1}.  Starting with \(\beta\in \KK^\Grd_{i-n}(\Dual\otimes_\Tot A,B)\), we compute
  \begin{align*}
    \PD^*\circ\PD(\beta) &\defeq (-1)^{in} T_\Dual(\Theta_A \otimes_{\Dual\otimes_\Tot A}\beta) \otimes_\Dual D \\
    &= (-1)^{in} T_\Dual(\Theta_A) \otimes_{\Dual\otimes_\Tot A}\beta \otimes_\Dual D \\
    &= (-1)^{in+n} T_\Dual(\Theta\otimes_\Tot \ID_A) \otimes_{(\Dual\otimes_\Tot A)\otimes\Dual} (\beta\otimes D) \\
    &= T_\Dual(\Theta\otimes_\Tot \ID_A) \otimes_{\Dual\otimes(\Dual\otimes_\Tot A)} (D\otimes\beta) \\
    &= T_\Dual\bigl((\Theta\otimes_\Dual D)\otimes_\Tot \ID_A\bigr) \otimes_{\Dual\otimes_\Tot A}\beta = \beta,
  \end{align*}
  using \eqref{eq:KK_first_non-trivial_3}, graded commutativity of exterior products and~\eqref{eq:Kasparov_dual_1}.  Hence our two maps are inverse to each other.

  Now suppose, conversely, that \(\PD^*\) is an isomorphism for all~\(B\).  Equations \eqref{eq:KK_first_non-trivial_3} and~\eqref{eq:Kasparov_dual_1} imply
  \begin{equation}
    \label{eq:define_Theta_A}
    \PD^*(\Theta_A)= (-1)^n T_\Dual(\Theta_A) \otimes_\Dual D
    = T_\Dual(\Theta \otimes_\Tot \ID_A) \otimes_\Dual D
    = \ID_{\Dual\otimes_\Tot A}.
  \end{equation}
  Hence there is at most once choice for~\(\Theta_A\), namely, the unique pre-image of the identity map on \(\Dual\otimes_\Tot A\).  The inverse map \(\PD\) of \(\PD^*\) must have the asserted form by naturality.  We claim that the above choice of~\(\Theta_A\) satisfies \eqref{eq:KK_first_non-trivial_3} and~\eqref{eq:KK_first_non-trivial_4}.

  For~\eqref{eq:KK_first_non-trivial_3}, we compute the image of
  \[
  \Theta_A \otimes_\Tot \Theta = (-1)^n \Theta \otimes_\Tot \Theta_A \in \KK^{\Grd\ltimes\Tot}_{2n}(A, p_\Tot^*(\Dual\otimes_\Tot A)\otimes\Dual)
  \]
  under~\(\PD^*\) in two different ways.  On the one hand,
  \begin{multline*}
    \PD^*(\Theta_A\otimes_\Tot \Theta) \defeq T_\Dual(\Theta_A\otimes_\Tot\Theta)\otimes_\Dual D \\
    = T_\Dual(\Theta_A)\otimes_\Dual T_\Dual(\Theta) \otimes_\Dual D = (-1)^n T_\Dual(\Theta_A),
  \end{multline*}
  using that~\(T_\Dual\) is functorial and~\eqref{eq:other_third_condition}.  On the other hand,
  \begin{multline*}
    \PD^*(\Theta\otimes_\Tot \Theta_A) \defeq T_\Dual(\Theta\otimes_\Tot\Theta_A)\otimes_\Dual D \\
    = T_\Dual(\Theta\otimes_\Tot\ID_A)\otimes_{\Dual\otimes_\Tot A} T_\Dual(\Theta_A) \otimes_\Dual D = (-1)^n T_\Dual(\Theta\otimes_\Tot\ID_A)
  \end{multline*}
  by~\eqref{eq:define_Theta_A}.  Hence \(T_\Dual(\Theta\otimes_\Tot\ID_A)\) and \(T_\Dual(\Theta_A)\) agree up to the sign \((-1)^n\) and the flip of the tensor factors, which we have ignored in the above computation.

  Now we check~\eqref{eq:KK_first_non-trivial_4}.  Let \(\alpha\in \KK^{\Grd\ltimes\Tot}_i\bigl(A, p_\Tot^*(B)\bigr)\).  Then
  \begin{multline*}
    \PD^*(\Theta\otimes_\Tot\alpha)
    = \PD^*\bigl((-1)^{in}\alpha\otimes_\Tot\Theta\bigr)
    = T_\Dual(\alpha\otimes_\Tot\Theta) \otimes_\Dual D
    \\= T_\Dual(\alpha)\otimes_\Dual T_\Dual(\Theta) \otimes_\Dual D
    = T_\Dual(\alpha)\otimes_\Dual \comul \otimes_\Dual D
    = (-1)^n T_\Dual(\alpha).
  \end{multline*}
  The graded commutativity of exterior products yields
  \begin{multline*}
    \PD^*\bigl(\Theta_A \otimes_{\Dual\otimes_\Tot A} T_\Dual(\alpha) \bigr) = (-1)^{in} T_\Dual(\Theta_A) \otimes_{\Dual\otimes_\Tot A} T_\Dual(\alpha) \otimes_\Dual D \\
    = T_\Dual(\Theta_A) \otimes_\Dual D \otimes_{\Dual\otimes_\Tot A} T_\Dual(\alpha) = (-1)^n T_\Dual(\alpha),
  \end{multline*}
  again using~\eqref{eq:define_Theta_A}.  These computations imply~\eqref{eq:KK_first_non-trivial_4} because \(\PD^*\) is injective.
\end{proof}

\begin{remark}
  \label{rem:KK_first_non-trivial}
  The conditions \eqref{eq:KK_first_non-trivial_3} and~\eqref{eq:KK_first_non-trivial_4} are related: we claim that~\eqref{eq:KK_first_non-trivial_3} already implies
  \begin{equation}
    \label{eq:KK_first_1b}
    T_\Dual\bigl(\Theta_A \otimes_{\Dual\otimes_\Tot A}
    T_\Dual(\alpha)\bigr)
    = (-1)^{in} T_\Dual(\alpha \otimes_\Tot \Theta)
  \end{equation}
  in \(\KK^\Grd_{i+n}(\Dual\otimes_\Tot A, \Dual\otimes\Dual\otimes B)\) for all \(\alpha\in \KK^{\Grd\ltimes\Tot}_i \bigl(A, p_\Tot^*(B)\bigr)\).  If the first Poincar\'e duality map in~\eqref{eq:first_Kasparov_bundle} is an isomorphism, then~\(T_\Dual\) must be injective, so that~\eqref{eq:KK_first_1b} yields~\eqref{eq:KK_first_non-trivial_4}.  Hence~\eqref{eq:KK_first_non-trivial_4} is equivalent to injectivity of~\(T_\Dual\) on suitable groups.  This also applies to the second condition~\eqref{eq:Kasparov_dual_2} in the definition of a Kasparov dual: this is just the special case of Theorem~\ref{the:KK_first_non-trivial} where~\(A\) is a trivial bundle over~\(\Tot\).

  We check~\eqref{eq:KK_first_1b}:
  \begin{align*}
    T_\Dual\bigl(\Theta_A \otimes_{\Dual\otimes_\Tot A} T_\Dual(\alpha)\bigr) &= T_\Dual(\Theta_A) \otimes_{\Dual\otimes_\Tot A} T_\Dual(\alpha) \\
    &= (-1)^n T_\Dual(\Theta\otimes_\Tot\ID_A) \otimes_{\Dual\otimes_\Tot A\otimes \Dual} \flip \otimes_{\Dual\otimes_\Tot A} T_\Dual(\alpha) \\
    &=(-1)^n T_\Dual(\Theta\otimes_\Tot\alpha) \otimes_{\Dual\otimes \Dual} \flip_\Dual \\
    &=(-1)^{n+in} T_\Dual(\alpha) \otimes_\Dual T_\Dual(\Theta) \otimes_{\Dual\otimes \Dual} \flip_\Dual \\
    &=(-1)^{in} T_\Dual(\alpha) \otimes_\Dual T_\Dual(\Theta) =(-1)^{in} T_\Dual(\alpha \otimes_\Dual \Theta).
  \end{align*}
  This computation uses~\eqref{eq:KK_first_non-trivial_3}, graded commutativity of exterior products, and~\eqref{eq:cocommutative}.
\end{remark}

\subsection{Modifying the support conditions}
\label{sec:modify_support_first}

The first duality isomorphism for a proper \(\Grd\)\nb-space~\(\Tot\) specialises to an isomorphism \(\KK_*^\Grd(\Dual,\UNIT) \cong \RK^*_\Grd(\Tot)\) for trivial \(A\) and~\(B\).  Now we construct a similar duality isomorphism in which \(\RK^*_\Grd(\Tot)\) is replaced by the equivariant \(\K\)\nb-theory (with \(\Grd\)\nb-compact support)
\[
\K^*_\Grd(\Tot) \defeq \K_*\bigl(\Grd\ltimes\CONT_0(\Tot)\bigr).
\]
This is based on a result of Jean-Louis Tu (\cite{Tu:Novikov}*{Proposition 6.25}).  Let~\(I_\Tot\) denote the directed set of \(\Grd\)\nb-compact subsets of~\(\Tot\) (these are closed and \(\Grd\)\nb-invariant by convention).

\begin{theorem}
  \label{the:RKK_via_crossed}
  Let~\(\Tot\) be a proper \(\Grd\)\nb-space and let~\(B\) be a \(\Grd\ltimes \Tot\)-\(\Cst\)\nb-algebra.  Then there is a natural isomorphism
  \[
  \varinjlim_{\Other\in I_\Tot} \KK^{\Grd\ltimes\Tot}_*(\CONT_0(\Other), B) \cong \K_*(\Grd\ltimes B).
  \]
\end{theorem}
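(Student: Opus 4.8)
The strategy is to recognise the left-hand side as the Baum--Connes topological \(\K\)\nb-theory of the \emph{proper} groupoid \(\mathcal H\defeq\Grd\ltimes\Tot\) with coefficients in~\(B\), the right-hand side as the \(\K\)\nb-theory of the associated crossed product, and then to invoke the fact that the assembly map is an isomorphism for proper groupoids.

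First I would check that \(\mathcal H=\Grd\ltimes\Tot\) is a locally compact, Hausdorff, second countable groupoid with Haar system --~it inherits one from~\(\Grd\)~-- and that it is \emph{proper}, which is exactly the assumption that~\(\Tot\) is a proper \(\Grd\)\nb-space.  Its object space is \(\mathcal H^{(0)}=\Tot\); a subset of~\(\Tot\) is \(\mathcal H\)\nb-compact precisely when it is \(\Grd\)\nb-compact, so the directed set of \(\mathcal H\)\nb-compact subsets of \(\mathcal H^{(0)}\) is exactly~\(I_\Tot\); and since \(\mathcal H\) is proper the space \(\mathcal H^{(0)}=\Tot\) is itself a universal proper \(\mathcal H\)\nb-space (it is terminal among proper \(\mathcal H\)\nb-spaces, the unique \(\mathcal H\)\nb-map being the anchor map).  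Consequently the left-hand side
\[
\varinjlim_{\Other\in I_\Tot}\KK^{\Grd\ltimes\Tot}_*(\CONT_0(\Other),B)
\]
is, by definition, \(\Ktop_*(\mathcal H;B)\), the domain of the Baum--Connes assembly map for~\(\mathcal H\) with coefficients in the \(\mathcal H\)-\(\Cst\)\nb-algebra~\(B\).

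Next I would quote the key input: for a proper groupoid the assembly map is an isomorphism.  This is Tu's \cite{Tu:Novikov}*{Proposition 6.25}; see also \cite{Emerson-Meyer:Equivariant_K}*{Theorem 4.2}.  It yields a natural isomorphism \(\Ktop_*(\mathcal H;B)\cong\K_*(\mathcal H\ltimes B)\) (for a proper groupoid the reduced and full crossed products coincide, so this is unambiguous).  It then remains to identify the target: for a transformation groupoid there is a canonical \(\Cst\)\nb-algebra isomorphism \(\mathcal H\ltimes B=(\Grd\ltimes\Tot)\ltimes B\cong \Grd\ltimes(\forget_\Tot B)\), so \(\K_*(\mathcal H\ltimes B)=\K_*(\Grd\ltimes B)\) in the notation of the paper.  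Stringing the two identifications together gives the asserted natural isomorphism.

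The only delicate point, and the part I expect to require some work, is matching the precise formulation of Tu's result to the colimit statement above.  If \cite{Tu:Novikov}*{Proposition 6.25} is available only for proper groupoids with \emph{compact} orbit space, one applies it to each restricted groupoid \(\Grd\ltimes\Other\), \(\Other\in I_\Tot\) (which is proper, has compact orbit space \(\Other/\Grd\), and unit algebra \(\CONT_0(\Other)\)), and must then check two things: that \(\KK^{\Grd\ltimes\Tot}_*(\CONT_0(\Other),B)\) is naturally isomorphic to \(\KK^{\Grd\ltimes\Other}_*(\CONT_0(\Other),B|_\Other)\) --~this uses the \(\Tot\)\nb-linearity of the Kasparov cycle, since \(\CONT_0(\Other)\) is a quotient of \(\CONT_0(\Tot)\) for \(\Other\) closed~-- and that the resulting isomorphisms are compatible with the structure maps of the directed system \(I_\Tot\).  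For the latter it is cleanest to write down the natural transformation \(\varinjlim_\Other\KK^{\Grd\ltimes\Tot}_*(\CONT_0(\Other),B)\to\K_*(\mathcal H\ltimes B)\) directly, as Kasparov descent \(j_{\mathcal H}\) followed by the Kasparov product with the class in \(\K_0\bigl(\mathcal H\ltimes\CONT_0(\Other)\bigr)\) of the cutoff projection of~\(\Other\), to observe that different choices of cutoff give homotopic projections so that this transformation descends to the colimit, and then to verify that it is an isomorphism by reduction to the \(\Grd\)\nb-compact case.
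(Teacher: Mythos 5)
Your route matches the paper's: both reduce the general case to the \(\Grd\)\nb-compact one and there invoke Tu's Proposition~6.25.  The gap is in the coefficient algebra you propose for the reduced Kasparov group.  You claim \(\KK^{\Grd\ltimes\Tot}_*(\CONT_0(\Other),B)\cong\KK^{\Grd\ltimes\Other}_*(\CONT_0(\Other),B|_\Other)\), where for closed \(\Other\) the restriction \(B|_\Other\) of Definition~\ref{def:restrict_Cstar_over_Base} is a \emph{quotient} of~\(B\).  This is not what \(\Tot\)\nb-linearity gives you.  Since the left \(\CONT_0(\Tot)\)\nb-action on a Kasparov module factors through \(\CONT_0(\Other)\), the ideal \(\CONT_0(\Tot\setminus\Other)\) acts as zero on the left; by \(\Tot\)\nb-linearity the right action of \(\CONT_0(\Tot\setminus\Other)\) through~\(B\) must also vanish, so all inner products land in the \emph{ideal}
\(B_\Other\defeq\{b\in B\mid b_\tot=0\text{ for }\tot\notin\Other\}\),
and the factorisation is through \(\KK^{\Grd\ltimes\Other}_*(\CONT_0(\Other),B_\Other)\), not through the quotient.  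The two genuinely differ: with \(\Grd\) trivial, \(\Tot=\R\), \(\Other=\{0\}\), and \(B=\CONT_0(\R)\), one has \(B_\Other=0\) while \(B|_\Other=\C\); a direct check of the compatibility of the left and right \(\CONT_0(\Tot)\)\nb-actions forces \(\KK^\R_*(\CONT_0(\Other),B)=0\), agreeing with \(\KK^\Other_*(\CONT_0(\Other),B_\Other)=0\) and contradicting \(\KK^\Other_*(\CONT_0(\Other),B|_\Other)=\KK_*(\C,\C)\).  The ideal is also what the right-hand side requires: \(B\) is the filtered inductive limit of its ideals \(B_\Other\), so \(\K_*(\Grd\ltimes B)=\varinjlim_\Other\K_*(\Grd\ltimes B_\Other)\) by continuity of \(\K\)\nb-theory, whereas no analogous statement holds for the quotients \(B|_\Other\), so with your choice the colimit would not recover \(\K_*(\Grd\ltimes B)\).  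Replacing \(B|_\Other\) by the ideal \(B_\Other\) throughout repairs the argument and reproduces the paper's proof.
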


\begin{proof}
  If~\(\Tot\) is \(\Grd\)\nb-compact, then the left hand side is simply \(\KK^{\Grd\ltimes\Tot}_*(\CONT_0(\Tot), B)\), and the statement follows easily from \cite{Tu:Novikov}*{Proposition 6.25}, see also \cite{Emerson-Meyer:Equivariant_K}*{Theorem 4.2}.  In general, we let
  \[
  B_\Other \defeq \{b\in B\mid \text{\(b_\tot=0\) for all \(\tot\in\Tot \setminus \Other\)}\}
  \qquad\text{for \(\Other\in I_X\).}
  \]
  This is a \(\Grd\)\nb-invariant ideal in~\(B\), and \(\K_*(\Grd\ltimes B_\Other)\) is the inductive limit of \(\K_*(\Grd\ltimes B_\Other)\) because \(\K\)\nb-theory commutes with inductive limits.  To finish the proof, we reduce the general to the \(\Grd\)\nb-compact case by constructing natural isomorphisms
  \[
  \KK^{\Grd\ltimes\Tot}_*(\CONT_0(\Other), B) \cong
  \KK^{\Grd\ltimes\Other}_*(\CONT_0(\Other), B_\Other).
  \]
  We observe that an \(\Tot\)\nb-equivariant correspondence from~\(\Other\) to~\(B\) must involve a Hilbert module over~\(B\) whose fibres vanish outside~\(\Other\).  Hence inner products of vectors in this Hilbert module must belong to the ideal \(B_\Other\) in~\(B\) and provide a factorisation of any Kasparov cycle for \(\KK^{\Grd\ltimes\Tot}_*(\CONT_0(\Other), B)\) through a unique Kasparov cycle for \(\KK^{\Grd\ltimes\Other}_*(\CONT_0(\Other), B_\Other)\).
\end{proof}

Now we modify the first duality isomorphism as follows.  Let~\(A\) be a \(\Grd\ltimes\Tot\)\nb-\(\Cst\)-algebra and let~\(B\) be a \(\Grd\)\nb-\(\Cst\)-algebra.  For \(\Other\in I_\Tot\), we let \(A|_\Other\defeq A\bigm/ \CONT_0(\Tot\setminus\Other)\cdot A\) be the restriction of~\(A\) to~\(\Other\).  Then we consider the map
\begin{multline}
  \label{eq:first_Kasparov_bundle_local}
  \PD^*\colon \varinjlim_{\Other\in I_\Tot} \KK^{\Grd\ltimes\Tot}_i(A|_\Other, p_\Tot^*B) \xrightarrow{(-1)^{in}T_\Dual} \varinjlim_{\Other\in I_\Tot} \KK^\Grd_i(\Dual\otimes_\Tot A|_\Other, \Dual\otimes B)\\
  \xrightarrow{\blank\otimes_\Dual D}
  \varinjlim_{\Other\in I_\Tot} \KK^\Grd_{i-n}(\Dual\otimes_\Tot A|_\Other, B)
\end{multline}
This is the inductive limit of the maps in~\eqref{eq:first_Kasparov_bundle_local} for \(A|_\Other\) for \(\Other\in I_\Tot\).  In examples, it often happens that this map is invertible between the inductive limits although the maps for a single \(\Other\in I_\Tot\) are not invertible.  The following variant of Theorem~\ref{the:KK_first_non-trivial} provides a necessary and sufficient condition for this:

\begin{theorem}
  \label{the:KK_first_non-trivial_local}
  Let \(\Dual\) and~\(A\) be \(\Grd\ltimes\Tot\)\nb-\(\Cst\)-algebras.  Let \(\Theta\in\RKK^\Grd_n(\Tot;\UNIT,\Dual)\) and \(D\in\KK^\Grd_{-n}(\Dual,\UNIT)\) satisfy \eqref{eq:Kasparov_dual_1} and~\eqref{eq:other_third_condition}.  The map \(\PD^*\) in~\eqref{eq:first_Kasparov_bundle_local} is invertible for all \(\Grd\)\nb-\(\Cst\)\nb-algebras~\(B\) if and only if for each \(\Other\in I_\Tot\) there are \(\Other'\in I_\Tot\) with \(\Other'\supseteq\Other\) and
  \[
  \Theta_A|_\Other\in \KK^{\Grd\ltimes\Tot}_n \bigl(A|_{\Other'}, p_\Tot^*(\Dual \otimes_\Tot A|_\Other)\bigr)
  \]
  such that the diagram
  \[
  \begin{gathered}
    \xymatrix@C+2em{ \Dual\otimes_\Tot A|_{\Other'} \ar[r]^{T_\Dual(\Theta_A|_\Other)} \ar[d]_{T_\Dual(\Theta\otimes_\Tot \ID_A|_{\Other'})} & \Dual\otimes (\Dual\otimes_\Tot A|_\Other)
      \ar[d]^{(-1)^n\flip}\\
      (\Dual\otimes_\Tot A|_{\Other'}) \otimes \Dual \ar[r]_{R_{\Other'}^\Other}&
      (\Dual\otimes_\Tot A|_\Other) \otimes \Dual
    }
  \end{gathered}
  \]
  in \(\KKcat_\Grd\) commutes and, for all \(\alpha\in \KK^{\Grd\ltimes\Tot}_i \bigl(A|_\Other, p_\Tot^*(B)\bigr)\),
  \[
  \Theta_A|_\Other \otimes_{\Dual\otimes_\Tot A|_\Other} T_\Dual(\alpha)
  = (R_{\Other'}^\Other)^*(\Theta \otimes_\Tot\alpha)
  \qquad\text{in \(\KK^{\Grd\ltimes\Tot}_{i+n} \bigl(A|_{\Other'}, p_\Tot^*(\Dual\otimes B)\bigr)\).}
  \]
  Here \(R_{\Other'}^\Other\colon A|_{\Other'}\to A|_\Other\) denotes the restriction map.
\end{theorem}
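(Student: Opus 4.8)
The plan is to run the two implications of Theorem~\ref{the:KK_first_non-trivial} in parallel, replacing the single \(\Grd\ltimes\Tot\)\nb-\(\Cst\)-algebra~\(A\) by the inductive system \(\{A|_\Other\}_{\Other\in I_\Tot}\) and inserting a restriction class \(R_{\Other'}^\Other\) wherever the original proof would use an identity at a single level. The map \(\PD^*\) of~\eqref{eq:first_Kasparov_bundle_local} is, by construction, the filtered colimit over \(\Other\in I_\Tot\) of the maps \(\PD^*\) of~\eqref{eq:first_Kasparov_bundle} for~\(A|_\Other\), and it factors as the colimit of \((-1)^{in}T_\Dual\) followed by the colimit of \(\blank\otimes_\Dual D\). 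Since filtered colimits are exact, \(\PD^*\) being injective on colimits forces \(\varinjlim_\Other T_\Dual\) to be injective; I shall use this in place of the injectivity of \(\PD^*\) that was available at a single level in Theorem~\ref{the:KK_first_non-trivial}.

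For the ``if'' direction, suppose the data \((\Other',\Theta_A|_\Other)\) are given for every \(\Other\in I_\Tot\). I would define
\[
\PD\colon \varinjlim_\Other \KK^\Grd_{*-n}(\Dual\otimes_\Tot A|_\Other, B)\to \varinjlim_\Other \KK^{\Grd\ltimes\Tot}_{*}(A|_\Other, p_\Tot^*B),
\]
sending a class represented at level~\(\Other\) by~\(\beta\) to the class of \(\Theta_A|_\Other\otimes_{\Dual\otimes_\Tot A|_\Other}\beta\), represented at level~\(\Other'\). Combining the displayed square with~\eqref{eq:Kasparov_dual_1} yields, exactly as in~\eqref{eq:define_Theta_A}, the normalisation \((-1)^n T_\Dual(\Theta_A|_\Other)\otimes_\Dual D = T_\Dual(R_{\Other'}^\Other)\) in \(\KK^\Grd_0(\Dual\otimes_\Tot A|_{\Other'},\Dual\otimes_\Tot A|_\Other)\); this exhibits \(\Theta_A|_\Other\) as the unique preimage under \(\PD^*\) of the colimit structure map, hence makes \(\PD\) well defined (independent of the chosen representative of a colimit class). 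That \(\PD\) and \(\PD^*\) are mutually inverse is then the computation in the proof of Theorem~\ref{the:KK_first_non-trivial}: \(\PD\circ\PD^*=\ID\) uses the \(\alpha\)-equation of the statement together with~\eqref{eq:Kasparov_dual_1}, and \(\PD^*\circ\PD=\ID\) uses the displayed square together with~\eqref{eq:Kasparov_dual_1}; both equalities are produced at level~\(\Other'\) with the colimit structure map standing where the original proof had an identity, so they become genuine identities after passing to the colimit.

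For the ``only if'' direction, fix \(\Other\in I_\Tot\) and take \(B\defeq\Dual\otimes_\Tot A|_\Other\). Surjectivity of \(\PD^*\) on colimits provides a preimage of the class of \(\ID_{\Dual\otimes_\Tot A|_\Other}\), represented at some \(\Other'\supseteq\Other\) by \(\Theta_A|_\Other\in\KK^{\Grd\ltimes\Tot}_n(A|_{\Other'},p_\Tot^*(\Dual\otimes_\Tot A|_\Other))\); after one enlargement of~\(\Other'\) the identity \(\PD^*(\Theta_A|_\Other)=[\ID]\) in the colimit becomes the level-\(\Other'\) normalisation above. The displayed square at level~\(\Other'\) is then obtained as in Theorem~\ref{the:KK_first_non-trivial}, by computing the image of \(\Theta_A|_\Other\otimes_\Tot\Theta=(-1)^n\Theta\otimes_\Tot\Theta_A|_\Other\) under \(\PD^*\) in two ways, using the normalisation and~\eqref{eq:other_third_condition}; the resulting identity holds in the colimit and, by injectivity of \(\varinjlim_\Other T_\Dual\), already at a finite level that we absorb into~\(\Other'\). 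For the \(\alpha\)-equation I would then run the computation of Remark~\ref{rem:KK_first_non-trivial}: the square forces \(T_\Dual\) of the two sides to coincide, hence the two sides coincide in the colimit by injectivity of \(\varinjlim_\Other T_\Dual\), and a final enlargement of~\(\Other'\) makes them coincide at a finite level.

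The hard part is this last enlargement: it must be \emph{uniform} in \(\alpha\) and~\(B\), so that a single \(\Other'\) depending only on~\(\Other\) works, whereas the naive argument above produces an \(\Other'=\Other'(\alpha,B)\). The remedy is the rigidity observation from the proof of Theorem~\ref{the:RKK_via_crossed}: an \(\Tot\)\nb-equivariant Kasparov cycle for \(\KK^{\Grd\ltimes\Tot}_*(A|_\Other,p_\Tot^*B)\) must involve a Hilbert module whose fibres vanish outside~\(\Other\), so that these groups depend only on the \(\Grd\)\nb-compact piece~\(\Other\) of~\(\Tot\). This lets one recast the statement as an ind-corepresentability of the functor \(B\mapsto\varinjlim_\Other\KK^{\Grd\ltimes\Tot}_*(A|_\Other,p_\Tot^*B)\), whose ``universal element'' is precisely the compatible family \((\Other',\Theta_A|_\Other)\); the \(\alpha\)-equation is then the naturality that is automatic from ind-corepresentability, and a uniform~\(\Other'\) follows by a Mittag--Leffler-type argument over the directed set~\(I_\Tot\). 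Everything else is bookkeeping: decorating the computations in the proofs of Theorems~\ref{the:first_duality} and~\ref{the:KK_first_non-trivial} and of Remark~\ref{rem:KK_first_non-trivial} with the restriction classes \(R_{\Other'}^\Other\) and the Koszul signs.
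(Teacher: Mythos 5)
Your overall plan is the right one and is exactly what the paper's terse ``the proof ... is almost literally the same as for Theorem~\ref{the:KK_first_non-trivial}'' is asking for: rerun the two implications, insert the restriction classes \(R_{\Other'}^\Other\) where the single-level proof has identities, and chase signs. The verifications you sketch for \(\PD\circ\PD^*=\ID\) and \(\PD^*\circ\PD=\ID\) at level \(\Other'\) (landing on a colimit structure map rather than an identity) are correct, and these are the substance of the ``if'' direction. Two remarks on details: your discussion of well-definedness of \(\PD\) is an unnecessary detour --- if you check that the formula \(\beta\mapsto\Theta_A|_\Other\otimes_{\Dual\otimes_\Tot A|_\Other}\beta\) at the level of representatives composes with \(\PD^*\) to the colimit structure map on both sides, then bijectivity of \(\PD^*\) follows and \(\PD\) is its unique (automatically well-defined) inverse. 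And the statement that filtered colimit exactness forces \(\varinjlim T_\Dual\) to be injective is a red herring: it is just the elementary fact that the first factor of an injective composite is injective.

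The genuine gap is in your last paragraph. You are right that the naive ``only if'' argument produces the \(\alpha\)-equation only after an enlargement \(\Other''(\alpha,B)\supseteq\Other'\): once the square holds at level \(\Other'\), the computation of Remark~\ref{rem:KK_first_non-trivial} gives \(T_\Dual(\text{LHS})=T_\Dual(\text{RHS})\) at level~\(\Other'\), and from injectivity of \(\varinjlim T_\Dual\) one only gets the equation after further restriction. But the proposed remedy does not resolve this. Describing the statement as ``ind-corepresentability'' is just a rephrasing --- that the colimit of corepresentable functors is ind-corepresentable is tautological --- and the \(\alpha\)-equation is precisely what one would have to \emph{prove} to identify the universal element; it is not ``automatic naturality''. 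Likewise ``a Mittag--Leffler-type argument'' is an invocation, not an argument: Mittag--Leffler addresses stabilisation of images in inverse systems, and it is not apparent how it produces a uniform \(\Other'\) for all \(\alpha\) and all separable~\(B\). The passage from the proof of Theorem~\ref{the:RKK_via_crossed} that you cite (support of a \(\Tot\)-equivariant cycle lies over~\(\Other\)) is a fact about the groups \(\KK^{\Grd\ltimes\Tot}_*(A|_\Other,\blank)\), not about the filtration of the colimit, so it does not feed a uniformity argument either. The cleanest way to handle this is to observe that the ``if'' direction only uses the \(\alpha\)-equation modulo enlargement of~\(\Other'\), and that the ``only if'' direction produces exactly that; moreover, in the applications in Section~\ref{sec:tangent_locally_trivial} the classes \(\Theta_A|_\Other\) are obtained by restricting a fixed globally defined cycle, so a uniform~\(\Other'\) comes for free by construction. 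Your instinct to flag the quantifier issue is good, but the fix must be stated at that level of precision rather than by naming machinery.
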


The proof of Theorem~\ref{the:KK_first_non-trivial_local} is almost literally the same as for Theorem~\ref{the:KK_first_non-trivial}.

In particular, Theorem~\ref{the:KK_first_non-trivial_local} identifies
\[
\K_i^\Grd(\Tot) \cong \varinjlim_{\Other\in I_\Tot} \KK^\Grd_{i-n}(\Dual|_\Other,\UNIT).
\]
The right hand side is a variant of the \(\Grd\)\nb-equivariant \(\K\)\nb-homology of~\(\Dual\) with some built-in finiteness properties.

What does the factorisation of~\(\Theta\) in Theorem~\ref{the:KK_first_non-trivial_local} mean if \(A=\CONT_0(\Tot)\)?  The following discussion explains why we call this a locality condition and why it does not come for free in an axiomatic approach but is cheap to get in concrete examples.  Since this, eventually, does not help to prove theorems, we are rather brief.  Roughly speaking, we introduce a support of~\(\Theta\) in \(\Tot\times_\Base\Tot\).  Merely being a cycle for \(\RKK^\Grd(\Tot;\UNIT,\Dual)\) forces the restriction of the \emph{first} coordinate projection to the support of~\(\Theta\) to be proper.  The factorisations require the \emph{second} coordinate projection to be proper as well.  In practice, the support of~\(\Theta\) is a small closed neighbourhood of the diagonal in \(\Tot\times_\Base\Tot\), so that both coordinate projections are indeed proper on the support.  But this is a feature of concrete constructions, which cannot be taken for granted in a general axiomatic theory.

A cycle~\(\Theta\) for \(\RKK^\Grd_n(\Tot;\UNIT;\Dual)\) consists of a (possibly \(\Z/2\)\nb-graded) \(\Grd\)\nb-equivariant Hilbert module~\(\mathcal{E}\) over \(\CONT_0(\Tot)\otimes \Dual\) and \(\Grd\)\nb-equivariant self-adjoint operator~\(F\) such that \(\varphi\cdot (1-F^2)\) is compact for all \(\varphi\in \CONT_0(\Tot)\); here \(\CONT_0(\Tot)\) acts by right multiplication because of \(\CONT_0(\Tot)\)-linearity.

Since~\(\Dual\) is a \(\Cst\)\nb-algebra over~\(\Tot\) as well, we may multiply on the left by functions in \(\CONT_0(\Tot\times_\Base\Tot)\) and view~\(F\) as a \(\Grd\)\nb-equivariant family of operators~\(F_{(\tot_1,\tot_2)}\) for \((\tot_1,\tot_2)\in\Tot\times_\Base\Tot\), with~\(F_{\tot_1,\tot_2}\) acting on a Hilbert module~\(\mathcal{E}_{(\tot_1,\tot_2)}\) over~\(\Dual_{\tot_2}\).  Compactness of \(\varphi'\cdot (1-F^2)\) for all \(\varphi'\in \CONT_0(\Tot\times_\Base\Tot)\) means that \(1-F^2_{(\tot_1,\tot_2)}\) is compact for all \((\tot_1,\tot_2)\) and depends norm-continuously on \((\tot_1,\tot_2)\) in a suitable sense.  Assuming this, the operators \(\varphi\cdot (1-F^2)\) are compact for all \(\varphi\in \CONT_0(\Tot)\) if and only if \(\norm{1-F^2_{(\tot_1,\tot_2)}}\to0\) for \(\tot_2\to\infty\), uniformly for~\(\tot_1\) in compact subsets in~\(\Tot\).  Letting~\(S_\varepsilon\) be the closed set of all \((\tot_1,\tot_2)\in\Tot\times_\Base\Tot\) with \(\norm{F_{(\tot_1,\tot_2)}^2-1}\ge\varepsilon\), this means that the first coordinate projection \(\pi_1\colon S_\varepsilon\to\Tot\) is proper for all \(\varepsilon>0\).  Actually, it suffices to assume this for \(\varepsilon=1\) because~\(F^2\) is positive where \(\norm{F^2-1}<1\), so that we may always homotope~\(F\) to another operator with \(F^2=1\) outside a given neighbourhood of~\(S_1\).

When does~\(\Theta|_\Other\) factor through \(\KK_n^{\Grd\ltimes\Tot}(\CONT_0(\Other'),\CONT_0(\Tot)\otimes\Dual|_\Other)\)?  Cycles for the latter group may be viewed as cycles for \(\KK_n^{\Grd\ltimes\Tot}(\CONT_0(\Tot),\CONT_0(\Tot)\otimes\Dual)\) where \(\mathcal{E}_{(\tot_1,\tot_2)}=0\) for \(\tot_2\notin \Other\) or \(\tot_1\notin\Other'\).  The restriction of~\(\Theta\) to~\(\Theta|_\Other\) only ensures \(\mathcal{E}_{(\tot_1,\tot_2)}=0\) for \(\tot_2\notin \Other\).  If only \(\norm{F^2_{(\tot_1,\tot_2)}-1}<1\) for \(\tot_2\notin \Other\) or \(\tot_1\notin\Other'\), then standard homotopies provide another representative first with \(F^2_{(\tot_1,\tot_2)}=1\) for \(\tot_2\notin \Other\) or \(\tot_1\notin\Other'\) and then one with \(\mathcal{E}_{(\tot_1,\tot_2)}=0\) for \(\tot_2\notin \Other\) or \(\tot_1\notin\Other'\).  Hence the factorisations required in Theorem~\ref{the:KK_first_non-trivial_local} exist if and only if for each \(\Grd\)\nb-compact subset \(\Other\subseteq\Tot\) there is a \(\Grd\)\nb-compact subset \(\Other'\subseteq\Tot\) such that \(S_1\subseteq \Other'\times\Other\).  This condition seems unrelated to the properness of the \emph{first} coordinate projection on~\(S_1\).  It follows if the \emph{second} coordinate projection is proper.

In practice, if~\(\Theta\) does not come from a dual Dirac element, its construction usually ensures \(\mathcal{E}_{(\tot_1,\tot_2)}=0\) outside a small neighbourhood of the diagonal, so that both coordinate projections are proper on~\(S_1\) (see also Section~\ref{sec:tangent_dual}).  Hence, the necessary factorisations exist by construction.  This feature of the construction is also used to get~\(\Theta_A\) for strongly locally trivial~\(A\), compare Section~\ref{sec:tangent_locally_trivial}.

\section{Bundles of compact spaces}
\label{sec:first_duality_compact}

Throughout this section, we consider the simpler case of a \emph{proper space over~\(\Base\)}, that is, the map \(p_\Tot\colon \Tot\to\Base= \Grd^{(0)}\) is proper.  We may then view~\(\Tot\) as a bundle of compact spaces over~\(\Base\) (but these bundles need not be locally trivial).  If~\(\Tot\) is proper over~\(\Base\), then there is an equivariant \(^*\)\nb-homomorphism
\[
p_\Tot^!\colon \CONT_0(\Base)\to\CONT_0(\Tot), \qquad \varphi\mapsto \varphi\circ p_\Tot.
\]
If~\(\Base\) is a point, then~\(\Tot\) is compact and~\(p_\Tot^!\) is the unit map \(\C\to\CONT(\Tot)\).

\begin{proposition}
  \label{pro:diagonal_restriction_compact}
  Let~\(\Tot\) be a proper \(\Grd\)\nb-space over~\(\Base\).  Let~\(A\) be a \(\Grd\)\nb-\(\Cst\)\nb-algebra and let~\(B\) be a \(\Grd\ltimes\Tot\)-\(\Cst\)\nb-algebra.  Then the map
  \begin{equation}
    \label{eq:p_Tot_shriek}
    \KK^{\Grd\ltimes\Tot}_*(p_\Tot^*(A),B) \to \KK^\Grd_*(A,B),
    \qquad
    \alpha \mapsto [p_\Tot^!]\otimes_{\CONT_0(\Tot)}
    \forget_\Tot(\alpha)
  \end{equation}
  is a natural isomorphism.  Let \([m_B]\in \KK^{\Grd\ltimes\Tot}_0(p_\Tot^*(B), B)\) be the class of the multiplication homomorphism \(p_\Tot^*(B) = \CONT_0(\Tot) \otimes_\Base B \to B\).  The inverse of the isomorphism in~\eqref{eq:p_Tot_shriek} is the map
  \[
  \KK^\Grd_*(A,B) \to \KK^{\Grd\ltimes\Tot}_*(p_\Tot^*(A),B), \qquad \alpha\mapsto p_\Tot^*(\alpha)\otimes_{p_\Tot^*(B)} [m_B].
  \]
\end{proposition}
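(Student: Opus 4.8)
The plan is to exhibit the two maps as the two natural bijections of an adjunction $p_\Tot^*\dashv\forget_\Tot$ between $\KKcat_\Grd$ and $\KKcat_{\Grd\ltimes\Tot}$, which exists precisely because $p_\Tot\colon\Tot\to\Base$ is proper. (On the right hand side of~\eqref{eq:p_Tot_shriek} the algebra $B$ is tacitly read as $\forget_\Tot(B)$.) The counit is $[m_B]\in\KK^{\Grd\ltimes\Tot}_0\bigl(p_\Tot^*(\forget_\Tot B),B\bigr)$, the class of the multiplication homomorphism $\CONT_0(\Tot)\otimes_\Base\forget_\Tot(B)\to B$; this is a well-defined $\Grd\ltimes\Tot$-equivariant $^*$-homomorphism exactly because the $\CONT_0(\Tot)$- and $\CONT_0(\Base)$-structures of the $\Grd\ltimes\Tot$-algebra~$B$ are compatible via $p_\Tot$. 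The unit is the $\Grd$-equivariant $^*$-homomorphism $\eta_A\defeq[p_\Tot^!]\otimes_\Base\ID_A\colon A\cong\CONT_0(\Base)\otimes_\Base A\to\CONT_0(\Tot)\otimes_\Base A=p_\Tot^*(A)$, and properness of $p_\Tot$ is what makes $p_\Tot^!$ exist. Unwinding the Kasparov cup-cap products, the map~\eqref{eq:p_Tot_shriek} is $\alpha\mapsto\eta_A\otimes_{p_\Tot^*(A)}\forget_\Tot(\alpha)$ and the asserted inverse is $\alpha\mapsto p_\Tot^*(\alpha)\otimes_{p_\Tot^*(B)}[m_B]$; thus the proposition amounts to the assertion that $\eta$ and $[m_\bullet]$ are the unit and counit of an adjunction.

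The concrete content is the pair of triangle identities, which are identities of $^*$-homomorphisms and hence hold in the Kasparov categories:
\[
m_{p_\Tot^*(A)}\circ p_\Tot^*(\eta_A)=\ID_{p_\Tot^*(A)},\qquad
\forget_\Tot(m_B)\circ\eta_{\forget_\Tot(B)}=\ID_{\forget_\Tot(B)}.
\]
Both are direct computations, the only care being to track which copy of $\CONT_0(\Tot)$ supplies the $\Tot$-structure in the iterated algebras $p_\Tot^*\forget_\Tot p_\Tot^*(A)\cong\CONT_0(\Tot)\otimes_\Base\CONT_0(\Tot)\otimes_\Base A$ and $p_\Tot^*\forget_\Tot(B)$: in the first identity $p_\Tot^*(\eta_A)$ inserts the ``new'' $\Tot$-coordinate via $p_\Tot^!$ while $m_{p_\Tot^*(A)}$ collapses the new coordinate onto the old one; in the second, $\eta_{\forget_\Tot(B)}$ sends $b\in B$ to the image of $(h\circ p_\Tot)\otimes_\Base b$ and $m_B$ returns $\varphi_\Tot(h\circ p_\Tot)\cdot b=\varphi_\Base(h)\cdot b=b$.

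With these in hand I would check that the two composites of the maps in the proposition are the identity, working in $\KKcat_\Grd$ and $\KKcat_{\Grd\ltimes\Tot}$. Composing~\eqref{eq:p_Tot_shriek} after its asserted inverse, on $\beta\in\KK^\Grd_*(A,\forget_\Tot B)$: expand $\eta_A\otimes\forget_\Tot\bigl(p_\Tot^*(\beta)\otimes[m_B]\bigr)=\eta_A\otimes(\ID_{\CONT_0(\Tot)}\otimes_\Base\beta)\otimes\forget_\Tot(m_B)$, using $\forget_\Tot\bigl(p_\Tot^*(\beta)\bigr)=\ID_{\CONT_0(\Tot)}\otimes_\Base\beta$ (a form of~\eqref{eq:T_sour_P}); naturality of $\eta$ — which here is just the interchange law for the bifunctor $\otimes_\Base$ on $\KKcat_\Grd$, since $\eta_A=[p_\Tot^!]\otimes_\Base\ID_A$ — rewrites $\eta_A\otimes(\ID_{\CONT_0(\Tot)}\otimes_\Base\beta)$ as $\beta\otimes\eta_{\forget_\Tot B}$, and the second triangle identity gives $\beta\otimes\eta_{\forget_\Tot B}\otimes\forget_\Tot(m_B)=\beta$. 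Composing in the other order, on $\alpha\in\KK^{\Grd\ltimes\Tot}_*(p_\Tot^*A,B)$: expand $p_\Tot^*\bigl(\eta_A\otimes\forget_\Tot\alpha\bigr)\otimes[m_B]=p_\Tot^*(\eta_A)\otimes p_\Tot^*(\forget_\Tot\alpha)\otimes[m_B]$, use naturality of $[m_\bullet]$ to replace $p_\Tot^*(\forget_\Tot\alpha)\otimes[m_B]$ by $[m_{p_\Tot^*A}]\otimes\alpha$, and apply the first triangle identity $p_\Tot^*(\eta_A)\otimes[m_{p_\Tot^*A}]=\ID$ to be left with $\alpha$. This proves the statement for $\KK_0$; the $\Z/2$-graded version follows by running the argument with $B$ replaced by its suspension, with which $p_\Tot^*$, $\forget_\Tot$ and both structure maps are compatible.

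The step I expect to be the main obstacle is the descent from $^*$-homomorphisms to Kasparov theory: one must know that $B\mapsto[m_B]$ is natural with respect to \emph{all} morphisms in $\KKcat_{\Grd\ltimes\Tot}$, not just $^*$-homomorphisms, i.e.\ that the counit is a genuine natural transformation $p_\Tot^*\forget_\Tot\Rightarrow\ID$ on $\KKcat_{\Grd\ltimes\Tot}$. This follows from the universal property of equivariant $\KK$ in the same manner as the paper's other structural naturality claims, but it is the one point that is not a bare $^*$-algebra computation. A secondary, purely bookkeeping issue is keeping straight the two $\Tot$-coordinates in the iterated pull-backs; all Koszul signs here are trivial since every class involved is even.
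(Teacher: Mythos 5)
Your proof is correct, but it takes a genuinely different route from the one in the paper.  The paper's argument works directly at the cycle level: a $\Grd\ltimes\Tot$-equivariant Kasparov cycle for $\bigl(p_\Tot^*(A),B\bigr)$ is completely determined by its restriction to~$A$ (by $\CONT_0(\Tot)$-linearity), the restriction exists as a genuine $^*$-homomorphism precisely because $p_\Tot$ is proper, and the compactness conditions on the two sides match.  This gives bijectivity outright, and the formula for the inverse is then checked by a one-line computation that amounts to the naturality of $[p_\Tot^!]$ together with your second triangle identity.  You instead exhibit $\eta=[p_\Tot^!\otimes\ID]$ and $\varepsilon=[m_\bullet]$ as unit and counit of an adjunction $p_\Tot^*\dashv\forget_\Tot$ and conclude by the standard triangle-identity argument.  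What your approach buys is the explicit adjunction structure; what it costs is having to verify \emph{both} composites, which in turn forces the extra claim you flag at the end: that $B\mapsto[m_B]$ is natural with respect to arbitrary morphisms of $\KKcat_{\Grd\ltimes\Tot}$.  The paper's route never needs this, because once bijectivity is known one only needs to compute the ``easy'' composite.

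On that one flagged point, be a little careful: the universal property of $\KK$ on its own does not deliver naturality of a transformation against arbitrary $\KK$-classes (a Yoneda-style reduction to $^*$-homomorphisms would already presuppose what you want to prove).  The naturality of $[m_\bullet]$ does hold, but one establishes it by an explicit cycle-level identification
\[
  \bigl(\CONT_0(\Tot)\otimes_\Base\mathcal{E}\bigr)\otimes_{p_\Tot^*\forget_\Tot(B')}B'
  \;\cong\; \mathcal{E},
\]
compatible with representations, for a Kasparov $(B,B')$-module $\mathcal{E}$.  So the gap you identify is a real one in your write-up, but it is fillable by a direct computation and does not invalidate the approach.  In effect, your proof and the paper's proof end up doing the same cycle-level work, only at different places in the argument.
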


\begin{proof}
  The action of \(\CONT_0(\Tot)\otimes A\) in a cycle for \(\KK^{\Grd\ltimes\Tot}_*(\CONT_0(\Tot)\otimes A,B)\) is already determined by its restriction to~\(A\) and \(\CONT_0(\Tot)\)-linearity.  We may describe the restriction to~\(A\) as the composition with \(p_\Tot^!\otimes\ID_A\colon A\cong \CONT_0(\Base)\otimes_\Base A\to \CONT_0(\Tot)\otimes_\Base A\).  Since~\(p_\Tot\) is proper, the compactness conditions for a Kasparov cycle are the same for \(\KK^{\Grd\ltimes\Tot}_*(p_\Tot^*(A),B)\) and \(\KK^\Grd_*(A,B)\).  Thus~\(p_\Tot^!\) induces an isomorphism as claimed.  The formula for the inverse follows because
  \[
  [p_\Tot^!] \otimes_{\CONT_0(\Tot)} p_\Tot^*(\alpha) \otimes_{p_\Tot^*(B)} [m_B] = \alpha \otimes [p_\Tot^!] \otimes_{p_\Tot^*(B)} [m_B] = \alpha
  \]
  for all \(\alpha\in\KK^\Grd_*(A,B)\) --~this formalises the naturality of~\([p_\Tot^!]\).
\end{proof}

In the non-equivariant case and in Kasparov's notation, Proposition~\ref{pro:diagonal_restriction_compact} asserts
\[
\cRKK_*(\Tot;\CONT_0(\Tot)\otimes_\Base A, B) \cong \cRKK_*(\Base;A, B)
\]
where~\(A\) is a \(\Base\)\nb-\(\Cst\)\nb-algebra and~\(B\) is an \(\Tot\)\nb-\(\Cst\)\nb-algebra, provided~\(\Tot\) is proper over~\(\Base\).  For \(\Base=\pt\) and compact~\(\Tot\), we get
\[
\cRKK_*(\Tot;\CONT(\Tot, A), B) \cong \KK_*(A, B).
\]

When we specialise Proposition~\ref{pro:diagonal_restriction_compact} to the case where both algebras are pulled back from~\(\Base\), we get
\begin{multline*}
  \RKK^\Grd_*(\Tot;A,B) \defeq \KK^{\Grd\ltimes\Tot}_*\bigl(p_\Tot^*(A),p_\Tot^*(B)\bigr) \\
  \cong \KK^\Grd_*\bigl(A,p_\Tot^*(B)\bigr) \cong \KK^\Grd_*(A, \CONT_0(\Tot) \otimes B).
\end{multline*}

\begin{example}
  \label{exa:RKK_space_homotopy}
  The coordinate projection \(\Tot\times[0,1]\to\Tot\) is proper.  Proposition~\ref{pro:diagonal_restriction_compact} applied to this map and \(A\) and~\(B\) pulled back from~\(\Base\) yields
  \[
  \RKK^\Grd_*(\Tot\times[0,1];A,B) \cong \RKK^\Grd_*(\Tot;A,\CONT([0,1])\otimes B) \cong \RKK^\Grd_*(\Tot;A, B)
  \]
  because Kasparov theory is homotopy invariant in the second variable.  This shows that \(\Tot\mapsto \RKK^\Grd_*(\Tot;A,B)\) is a homotopy functor.
\end{example}

Plugging Proposition~\ref{pro:diagonal_restriction_compact} into the definition of an abstract dual, we get:

\begin{corollary}
  \label{cor:abstract_dual_compact}
  Assume that~\(\Tot\) is proper over~\(\Base\), let~\(\Dual\) be a \(\Grd\)\nb-\(\Cst\)\nb-algebra and let \(\Theta\in\RKK^\Grd_n(\Tot;\UNIT,\Dual)\).  Let \(\dualfunclass \defeq [p_\Tot^!]\otimes_{\CONT_0(\Tot)} \forget_\Tot(\Theta) \in \KK^\Grd_n(\UNIT, \CONT_0(\Tot)\otimes\Dual)\).  The pair \((\Dual,\Theta)\) is an \(n\)\nb-dimensional \(\Grd\)\nb-equivariant abstract dual for~\(\Tot\) if and only if the map
  \[
  \widetilde{\PD}\colon \KK^\Grd_*(\Dual\otimes A,B) \to \KK^\Grd_{*+n}(A,\CONT_0(\Tot)\otimes B), \qquad f\mapsto \dualfunclass \otimes_\Dual f
  \]
  is an isomorphism for all \(\Grd\)\nb-\(\Cst\)\nb-algebras \(A\) and~\(B\).
\end{corollary}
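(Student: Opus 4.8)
The plan is to compose the first duality map $\PD$ of Theorem~\ref{the:first_duality} with the isomorphism of Proposition~\ref{pro:diagonal_restriction_compact} and to recognise the result as $\widetilde{\PD}$. By Definition~\ref{def:abstract_dual}, $(\Dual,\Theta)$ is an abstract dual for~$\Tot$ if and only if
\[
\PD\colon \KK^\Grd_{i-n}(\Dual\otimes A,B) \to \RKK^\Grd_i(\Tot;A,B),\qquad f\mapsto \Theta\otimes_\Dual f,
\]
is an isomorphism for all $\Grd$-$\Cst$-algebras $A$ and~$B$. Since $\Tot$ is proper over~$\Base$ and $p_\Tot^*(B)$ is a $\Grd\ltimes\Tot$-$\Cst$-algebra, Proposition~\ref{pro:diagonal_restriction_compact}, applied with $A$ and $p_\Tot^*(B)$ in place of its $A$ and~$B$, gives a natural isomorphism
\[
\RKK^\Grd_i(\Tot;A,B) = \KK^{\Grd\ltimes\Tot}_i\bigl(p_\Tot^*(A),p_\Tot^*(B)\bigr) \xrightarrow{\ \cong\ } \KK^\Grd_i\bigl(A,p_\Tot^*(B)\bigr) = \KK^\Grd_i\bigl(A,\CONT_0(\Tot)\otimes B\bigr),
\]
namely $\alpha\mapsto [p_\Tot^!]\otimes_{\CONT_0(\Tot)}\forget_\Tot(\alpha)$. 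As both this map and $\PD$ are natural in $A$ and~$B$, and the former is always invertible, $\PD$ is an isomorphism for all $A,B$ exactly when the composite $\KK^\Grd_{i-n}(\Dual\otimes A,B)\to\KK^\Grd_i(A,\CONT_0(\Tot)\otimes B)$ is. So it remains to identify this composite with $\widetilde{\PD}$.

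The composite sends $f\in\KK^\Grd_{i-n}(\Dual\otimes A,B)$ to $[p_\Tot^!]\otimes_{\CONT_0(\Tot)}\forget_\Tot(\Theta\otimes_\Dual f)$. I would first use that $\forget_\Tot\colon\KKcat_{\Grd\ltimes\Tot}\to\KKcat_\Grd$ is a $\KKcat_\Grd$-functor, hence compatible with both composition and with $\otimes_\Base$; since the cup-cap product $\otimes_\Dual$ is built from these two operations, this yields $\forget_\Tot(\Theta\otimes_\Dual f) = \forget_\Tot(\Theta)\otimes_\Dual f$, where now $\forget_\Tot(\Theta)=\bar\Theta\in\KK^\Grd_n\bigl(\CONT_0(\Tot),\CONT_0(\Tot)\otimes\Dual\bigr)$ and $\otimes_\Dual$ is the cup-cap product of $\KKcat_\Grd$. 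Next, the factor $[p_\Tot^!]$ acts by composition on the $\CONT_0(\Tot)$-leg while $f$ acts on the $\Dual$-leg, so associativity of the Kasparov product gives
\[
[p_\Tot^!]\otimes_{\CONT_0(\Tot)}\bigl(\bar\Theta\otimes_\Dual f\bigr) = \bigl([p_\Tot^!]\otimes_{\CONT_0(\Tot)}\bar\Theta\bigr)\otimes_\Dual f = \dualfunclass\otimes_\Dual f = \widetilde{\PD}(f).
\]
Combining the two paragraphs, $\PD$ is an isomorphism for all $A$ and~$B$ if and only if $\widetilde{\PD}$ is, which is exactly the claim.

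The only delicate point is the bookkeeping in the second paragraph: keeping straight which tensor product ($\otimes_\Base$, $\otimes_\Tot$, or $\otimes_{\CONT_0(\Tot)}$) is in play at each step, on which tensor leg each factor operates, and checking that no Koszul signs intervene. None do, because $[p_\Tot^!]$ has degree~$0$ and the remaining products are formed by composing on disjoint tensor legs; the computation essentially reruns the identity formalising the naturality of $[p_\Tot^!]$ at the end of the proof of Proposition~\ref{pro:diagonal_restriction_compact}. I expect this to be entirely routine, with the conceptual content concentrated in the two reductions above.
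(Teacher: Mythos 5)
Your proposal is correct and follows exactly the route the paper intends: the corollary is stated as an immediate consequence of "plugging Proposition~\ref{pro:diagonal_restriction_compact} into the definition of an abstract dual," and the identity $\widetilde{\PD}(f)=[p_\Tot^!]\otimes_{\CONT_0(\Tot)}\forget_\Tot(\Theta\otimes_\Dual f)$ that you verify is precisely the one the paper records later (in the proof of Theorem~\ref{the:Kasparov_dual_compact}) as "follows immediately from the definitions." You have simply made the implicit bookkeeping explicit.
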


Such an isomorphism means that \(\CONT_0(\Tot)\) and~\(\Dual\) are Poincar\'e dual objects of \(\KK^\Grd\) (see \cites{Skandalis:KK_survey, Emerson:Duality_hyperbolic, Brodzki-Mathai-Rosenberg-Szabo:D-Branes}).  Recall how such duals arise:

\begin{theorem}
  \label{the:nc_duality}
  Let \(A\) and~\(\hat{A}\) be objects of\/ \(\KK^\Grd\), let \(n\in\Z\), and let
  \[
  \funclass\in \KK^\Grd_{-n}(\hat{A}\otimes A,\UNIT), \qquad \dualfunclass\in \KK^\Grd_n(\UNIT, A\otimes \hat{A}).
  \]
  The maps
  \begin{alignat*}{2}
    \widetilde{\PD}&\colon \KK^\Grd_{i-n}(\hat{A}\otimes C,D) \to \KK^\Grd_i(C, A\otimes D),&\qquad
    f&\mapsto \dualfunclass\otimes_{\hat{A}} f,\\
    \widetilde{\PD}{}^*&\colon \KK^\Grd_i(C, A\otimes D) \to \KK^\Grd_{i-n}(\hat{A}\otimes C,D),&\qquad f&\mapsto (-1)^{in} f\otimes_A \funclass,
  \end{alignat*}
  are inverse to each other if and only if \(\funclass\) and~\(\dualfunclass\) satisfy the \emph{zigzag equations}
  \begin{alignat*}{2}
    \dualfunclass\otimes_{\hat{A}} \funclass &= \ID_A
    &\qquad\text{in \(\KK^\Grd_0(A,A)\),}\\
    \dualfunclass\otimes_A \funclass &= (-1)^n\,\ID_{\hat{A}} &\qquad\text{in \(\KK^\Grd_0(\hat{A},\hat{A})\).}
  \end{alignat*}
\end{theorem}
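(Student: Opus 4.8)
The plan is to recognise the statement as the familiar characterisation of dual objects in a symmetric monoidal category: $(A,\hat A,\funclass,\dualfunclass)$ is a duality in $\KKcat_\Grd$, with a degree shift by~$n$ absorbed into the ``unit'' $\dualfunclass$ and the ``counit'' $\funclass$, precisely when the two triangle (zigzag) identities hold. The proof will be a direct computation entirely parallel to that of Theorem~\ref{the:first_duality} and of \cite{Emerson-Meyer:Euler}*{Proposition~19}, but with $\RKKcat_\Grd(\Tot)$, $p_\Tot^*$ and $T_\Dual$ collapsed to the trivial-space situation. I will treat the two implications separately.

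For the ``if'' direction I would assume the two zigzag equations and verify $\widetilde{\PD}^*\circ\widetilde{\PD}=\ID$ and $\widetilde{\PD}\circ\widetilde{\PD}^*=\ID$ by expanding the definitions. For $f\in\KK^\Grd_{i-n}(\hat A\otimes C,D)$ one gets
\[
  \widetilde{\PD}^*\bigl(\widetilde{\PD}(f)\bigr)=(-1)^{in}\,(\dualfunclass\otimes_{\hat A}f)\otimes_A\funclass,
\]
and using associativity of the Kasparov product together with graded commutativity of the exterior product to move $\funclass$ past $f$ so that it contracts directly with $\dualfunclass$, the factor $\dualfunclass\otimes_A\funclass=(-1)^n\ID_{\hat A}$ (the second zigzag equation) drops out and, after the Koszul signs cancel, one is left with $f$. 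Dually, for $g\in\KK^\Grd_i(C,A\otimes D)$ I would rewrite
\[
  \widetilde{\PD}\bigl(\widetilde{\PD}^*(g)\bigr)=(-1)^{in}\,\dualfunclass\otimes_{\hat A}(g\otimes_A\funclass),
\]
push $g$ past $\dualfunclass$ so that the remaining factor $\dualfunclass\otimes_{\hat A}\funclass=\ID_A$ (the first zigzag equation) can be applied, and recover~$g$.

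For the ``only if'' direction I would assume $\widetilde{\PD}$ and $\widetilde{\PD}^*$ are mutually inverse for all $C$ and~$D$ and read off the two zigzag equations by evaluating the composites on identity morphisms. Choosing $C=A$, $D=\UNIT$ and $i=0$ gives $\widetilde{\PD}^*(\ID_A)=\ID_A\otimes_A\funclass=\funclass$ (up to the canonical reordering of tensor factors), so $\widetilde{\PD}\circ\widetilde{\PD}^*=\ID$ forces $\dualfunclass\otimes_{\hat A}\funclass=\widetilde{\PD}(\funclass)=\ID_A$; choosing $C=\UNIT$, $D=\hat A$ and $i=n$ gives $\widetilde{\PD}(\ID_{\hat A})=\dualfunclass\otimes_{\hat A}\ID_{\hat A}=\dualfunclass$, so $\widetilde{\PD}^*\circ\widetilde{\PD}=\ID$ forces $(-1)^n\dualfunclass\otimes_A\funclass=\widetilde{\PD}^*(\dualfunclass)=\ID_{\hat A}$, i.e.\ the second zigzag equation.

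I expect the only real difficulty to be clerical: keeping track of the Koszul signs and of the reorderings of tensor factors in the iterated cup-cap products, so as to confirm that the prefactor $(-1)^{in}$ in the definition of $\widetilde{\PD}^*$ is precisely the one making both composites the identity. No genuinely new phenomenon should appear beyond what is already present in the proof of Theorem~\ref{the:first_duality}.
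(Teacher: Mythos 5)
Your proposal is correct, and it is the standard argument for Poincaré duality in a symmetric monoidal category; note that the paper itself presents Theorem~\ref{the:nc_duality} as \emph{recalled} material (``Recall how such duals arise'') and gives no proof, only remarking afterwards that the zigzag equations are equivalent to $\widetilde{\PD}(\funclass)=\ID_A$ and $\widetilde{\PD}{}^*(\dualfunclass)=\ID_{\hat A}$, which is exactly the observation driving your ``only if'' direction. Your sign bookkeeping in the ``only if'' step is consistent (note $(-1)^{n^2}=(-1)^n$), and the ``if'' computation via associativity and graded commutativity of exterior products is the same calculation that underlies the proof of Theorem~\ref{the:first_duality}.
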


\begin{definition}
  \label{def:dual_in_KK}
  If this is the case, then we call \(A\) and~\(\hat{A}\) \emph{Poincar\'e dual}, and we call \(\funclass\) and \(\dualfunclass\) the \emph{fundamental class} and the \emph{dual fundamental class} of the duality.
\end{definition}

The zigzag equations are equivalent to
\[
\widetilde{\PD}(\funclass) = \ID_A, \qquad \widetilde{\PD}{}^*(\dualfunclass) = \ID_{\hat{A}}.
\]
Therefore,
\[
\funclass = \widetilde{\PD}{}^*(\ID_A), \qquad \dualfunclass = \widetilde{\PD}(\ID_{\hat{A}})
\]
if we have a Poincar\'e duality.  In the situation of Corollary~\ref{cor:abstract_dual_compact}, we can compute the fundamental class in terms of the constructions in Section~\ref{sec:basic_duality_constructions}:
\[
\funclass = \widetilde{\PD}{}^*(\ID_{\CONT_0(\Tot)}) = \PD^*(\Delta_\Tot) = T_\Dual(\Delta_\Tot) \otimes_\Dual D = [m]\otimes_\Dual D.
\]
Here we use that the isomorphism~\eqref{eq:p_Tot_shriek} maps the diagonal restriction class~\(\Delta_\Tot\) to~\(\ID_{\CONT_0(\Tot)}\), Equation~\eqref{eq:inverse_PD_abstract}, and the definition of the multiplication class \([m]\) in~\eqref{eq:def_multiplication_class}.

\begin{theorem}
  \label{the:Kasparov_dual_compact}
  Let~\(\Dual\) be a \(\Grd\)\nb-\(\Cst\)\nb-algebra, let \(n\in\Z\), let \(\Theta\in\RKK^\Grd_n(\Tot;\UNIT,\Dual)\) and \(D\in\RKK^\Grd_{-n}(\Tot;\UNIT,\Dual)\).  Then \((\Dual,D,\Theta)\) is an \(n\)\nb-dimensional Kasparov dual for~\(\Tot\) if and only if \(\CONT_0(\Tot)\) and~\(\Dual\) are Poincar\'e dual objects of \(\KK^\Grd\) with fundamental class
  \[
  \funclass \defeq m_\Dual\otimes_\Dual D \qquad \text{in \(\KK^\Grd_{-n}(\Dual\otimes\CONT_0(\Tot),\UNIT)\)}
  \]
  and dual fundamental class
  \[
  \dualfunclass \defeq p_\Tot^!\otimes_{\CONT_0(\Tot)} \Theta \qquad \text{in \(\KK^\Grd_n(\UNIT, \CONT_0(\Tot)\otimes\Dual)\),}
  \]
  where \(m_\Dual\in\KK^\Grd_0(\Dual\otimes\CONT_0(\Tot),\Dual)\) is the class of the multiplication homomorphism and \(p_\Tot^!\colon \CONT_0(\Base)\to\CONT_0(\Tot)\) is induced by~\(p_\Tot\).

  Furthermore, if we identify \(\RKK^\Grd_*(\Tot;A,B) \cong \KK^\Grd_*(A,\CONT_0(\Tot)\otimes B)\) as in Proposition~\textup{\ref{pro:diagonal_restriction_compact}}, then the duality isomorphisms \(\PD\) and \(\PD^*\) in Theorem~\textup{\ref{the:first_duality}} agree with the duality isomorphisms \(\widetilde{\PD}\) and \(\widetilde{\PD}{}^*\) in Theorem~\textup{\ref{the:nc_duality}}.
\end{theorem}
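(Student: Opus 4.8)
The plan is to transport the first duality maps $\PD$ and $\PD^*$ of Theorem~\ref{the:first_duality} along the natural isomorphism
\[
J\colon \RKK^\Grd_*(\Tot;A,B)\xrightarrow{\ \cong\ }\KK^\Grd_*\bigl(A,\CONT_0(\Tot)\otimes B\bigr),\qquad \alpha\mapsto [p_\Tot^!]\otimes_{\CONT_0(\Tot)}\forget_\Tot(\alpha),
\]
furnished by Proposition~\ref{pro:diagonal_restriction_compact} (available precisely because $p_\Tot$ is proper, so that $p_\Tot^!$ exists), and to recognise the outcome as the classical duality maps $\widetilde\PD$ and $\widetilde{\PD}{}^*$ of Theorem~\ref{the:nc_duality} attached to the classes $\funclass=m_\Dual\otimes_\Dual D$ and $\dualfunclass=p_\Tot^!\otimes_{\CONT_0(\Tot)}\Theta$ from the statement. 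Granting the two identities $J\circ\PD=\widetilde\PD$ and $\PD^*\circ J^{-1}=\widetilde{\PD}{}^*$, the theorem follows formally: by Theorem~\ref{the:first_duality}, $(\Dual,D,\Theta)$ is a Kasparov dual iff $\PD$ and $\PD^*$ are mutually inverse; since $J$ is an isomorphism this is equivalent to $\widetilde\PD$ and $\widetilde{\PD}{}^*$ being mutually inverse, hence by Theorem~\ref{the:nc_duality} to the zigzag equations for $(\funclass,\dualfunclass)$, that is, to $\CONT_0(\Tot)$ and $\Dual$ being Poincar\'e dual with fundamental class $\funclass$ and dual fundamental class $\dualfunclass$ (Definition~\ref{def:dual_in_KK}). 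The ``furthermore'' clause is precisely the content of the two identities.

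The identity $J\circ\PD=\widetilde\PD$ is essentially Corollary~\ref{cor:abstract_dual_compact}: there the duality map is written $f\mapsto\dualfunclass\otimes_\Dual f$ with $\dualfunclass=[p_\Tot^!]\otimes_{\CONT_0(\Tot)}\forget_\Tot(\Theta)$, which is exactly the dual fundamental class of the theorem and exactly the map $\widetilde\PD$ of Theorem~\ref{the:nc_duality} (which depends only on $\dualfunclass$). Concretely one expands $J(\Theta\otimes_\Dual f)=[p_\Tot^!]\otimes_{\CONT_0(\Tot)}\forget_\Tot(\Theta\otimes_\Dual f)$; since $\forget_\Tot$ is a $\KKcat_\Grd$-functor it is compatible with the module product $\otimes_\Base$ of $\KKcat_{\Grd\ltimes\Tot}$ over $\KKcat_\Grd$, and together with $\forget_\Tot\bigl(p_\Tot^*(f)\bigr)=\ID_{\CONT_0(\Tot)}\otimes f$ this gives $\forget_\Tot(\Theta\otimes_\Dual f)=\forget_\Tot(\Theta)\otimes_\Dual f$; associativity of the Kasparov product then regroups the result as $\dualfunclass\otimes_\Dual f$.

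For $\PD^*\circ J^{-1}=\widetilde{\PD}{}^*$ the content is the equality, for $g\in\RKK^\Grd_i(\Tot;A,B)$,
\[
(-1)^{in}\,T_\Dual(g)\otimes_\Dual D \;=\; (-1)^{in}\,J(g)\otimes_{\CONT_0(\Tot)}\bigl(m_\Dual\otimes_\Dual D\bigr),
\]
which by associativity reduces to the single identity $T_\Dual(g)=J(g)\otimes_{\CONT_0(\Tot)}m_\Dual$ in $\KK^\Grd_*(\Dual\otimes A,\Dual\otimes B)$. I would prove this by unwinding $T_\Dual(g)=\forget_\Tot(\Dual\otimes_\Tot g)$ and using that $\CONT_0(\Tot)$ is the tensor unit of $\KKcat_{\Grd\ltimes\Tot}$, with unitality isomorphism $\Dual\otimes_\Tot\CONT_0(\Tot)\cong\Dual$ implemented by the multiplication homomorphism $m_\Dual$ that encodes the $\Tot$-structure of $\Dual$; consequently $\Dual\otimes_\Tot p_\Tot^*(A)\cong\Dual\otimes A$ (and similarly for $B$) through maps built from $m_\Dual$, and tracing $g$ through these identifications yields exactly $J(g)\otimes_{\CONT_0(\Tot)}m_\Dual$. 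Two consistency checks organise the bookkeeping: for $g=p_\Tot^*(f)$ one must recover $T_\Dual\bigl(p_\Tot^*(f)\bigr)=\ID_\Dual\otimes f$ of~\eqref{eq:T_sour_P}, since $m_\Dual$ precomposed with $\ID_\Dual\otimes p_\Tot^!$ is $\ID_\Dual$; and for $g=\Delta_\Tot$ one must recover $T_\Dual(\Delta_\Tot)=[m]=m_\Dual$ by Lemma~\ref{lem:multiplication_class}, since $J(\Delta_\Tot)=\ID_{\CONT_0(\Tot)}$. Specialising $\widetilde{\PD}{}^*$ to $\ID_{\CONT_0(\Tot)}$ then also recovers the formula $\funclass=[m]\otimes_\Dual D$ noted just before the theorem.

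The only genuine obstacle is this last identity $T_\Dual(g)=J(g)\otimes_{\CONT_0(\Tot)}m_\Dual$, and the difficulty is entirely bookkeeping: one must separate the tensor product over $\Tot$ (for which $\CONT_0(\Tot)$ is the unit, so one factor gets absorbed through $m_\Dual$) from the tensor product over $\Base$, and keep track of the fact that $\CONT_0(\Tot)\otimes_\Base\Dual$ carries several $\Tot$-structures, only one combination of which is in play at each step. Properness of $p_\Tot$ enters exactly where it must, namely to have $p_\Tot^!$ and hence $J$ available; without it $\Dual$ cannot be traded for a $\Grd$-$\Cst$-algebra in this fashion, as Example~\ref{exa:first_Kasparov_fails} shows. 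Everything else is formal manipulation with associativity, graded commutativity of exterior products, and the $\KKcat_\Grd$-functoriality of $\forget_\Tot$.
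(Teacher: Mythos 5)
Your proposal is correct and follows essentially the same route as the paper: transport $\PD$ and $\PD^*$ along the isomorphism $J$ of Proposition~\ref{pro:diagonal_restriction_compact} and identify the results with $\widetilde\PD$ and $\widetilde\PD^*$, so that the equivalence between Kasparov duality (Theorem~\ref{the:first_duality}) and Poincar\'e duality (Theorem~\ref{the:nc_duality}) follows formally. The paper proves your key identity $T_\Dual(g)=J(g)\otimes_{\CONT_0(\Tot)}m_\Dual$ somewhat more crisply, by substituting the explicit inverse formula $g=p_\Tot^*(\hat g)\otimes_{\CONT_0(\Tot)\otimes\CONT_0(\Tot)}\Delta_\Tot$ from Proposition~\ref{pro:diagonal_restriction_compact} and then applying the functor $T_\Dual$ together with $T_\Dual(p_\Tot^*\hat g)=\ID_\Dual\otimes\hat g$ and $T_\Dual(\Delta_\Tot)=m_\Dual$ --- which is exactly the two-term decomposition your consistency checks are pointing at, and is more robust than a bare appeal to ``tracing $g$ through the unitality identifications.''
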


\begin{proof}
  We claim that
  \begin{alignat*}{2}
    \widetilde{\PD}(f) &= p_\Tot^!\otimes_{\CONT_0(\Tot)} \Theta \otimes_\Dual f &\qquad&\text{in \(\KK^\Grd_{i+n}(A,\CONT_0(\Tot)\otimes B)\),}
    \\
    \widetilde{\PD}{}^*(p_\Tot^!\otimes_{\CONT_0(\Tot)} g) &= (-1)^{in} T_\Dual(g)\otimes_\Dual D &\qquad&\text{in \(\KK^\Grd_{i-n}(\Dual \otimes A, B)\)}
  \end{alignat*}
  for all \(f\in\KK^\Grd_i(\Dual\otimes A,B)\), \(g\in\RKK^\Grd_i(\Tot;A,B)\).  The formula for \(\widetilde{\PD}(f)\) follows immediately from the definitions.  To prove the formula for \(\widetilde{\PD}{}^*\), let \(\hat{g} \defeq p_\Tot^!\otimes_{\CONT_0(\Tot)} g\) in \(\KK^\Grd_i(A,\CONT_0(\Tot)\otimes B)\).  Proposition~\ref{pro:diagonal_restriction_compact} implies
  \[
  g = p_\Tot^*(\hat{g}) \otimes_{p_\Tot^*(\CONT_0(\Tot)\otimes B)} m_{\CONT_0(\Tot)\otimes B} = p_\Tot^*(\hat{g}) \otimes_{\CONT_0(\Tot)\otimes\CONT_0(\Tot)} \Delta_\Tot.
  \]
  Using \(T_\Dual\bigl(p_\Tot^*(\hat{g})\bigr) = \ID_\Dual\otimes\hat{g}\) and \(T_\Dual(\Delta_\Tot) = m_\Dual\), we compute
  \[
  T_\Dual(g)\otimes_\Dual D = T_\Dual(p_\Tot^*\hat{g})\otimes_{\Dual\otimes\CONT_0(\Tot)} T_\Dual(\Delta_\Tot) \otimes_\Dual D = \hat{g} \otimes_{\CONT_0(\Tot)} m_\Dual \otimes_\Dual D = \hat{g} \otimes_{\CONT_0(\Tot)} \funclass.
  \]
  This yields the formula for \(\widetilde{\PD}{}^*\) and establishes the claim.

  As a consequence of the claim, the duality maps in Theorem~\ref{the:first_duality} agree with those in Theorem~\ref{the:nc_duality} up to the isomorphism in Proposition~\ref{pro:diagonal_restriction_compact}.  We know that \(\CONT_0(\Tot)\) and~\(\Dual\) are Poincar\'e dual with respect to \(\funclass\) and~\(\dualfunclass\) if and only if the maps \(\widetilde{\PD}\) and \(\widetilde{\PD}{}^*\) are inverse to each other for all \(\Grd\)\nb-\(\Cst\)-algebras \(A\) and~\(B\).  By the claim, this is the case if and only if the maps \(\PD\) and \(\PD^*\) in Theorem~\ref{the:first_duality} are inverse to each other for all \(\Grd\)\nb-\(\Cst\)-algebras \(A\) and~\(B\), which is equivalent to \((\Dual,D,\Theta)\) being a Kasparov dual for~\(\Tot\).
\end{proof}

In the situation of Theorem~\ref{the:Kasparov_dual_compact}, it is easy to reformulate the zigzag equations in terms of \(\Theta\) and~\(D\).  Defining \(\comul = T_\Dual(\Theta) \in \KK^\Grd_n(\Dual,\Dual\otimes\Dual)\) as usual, the second zigzag equation is equivalent to \(\comul \otimes_{\Dual,1} D = (-1)^n\), where \(\otimes_{\Dual,1}\) means that~\(D\) acts on the first copy of~\(\Dual\) in the target of~\(\comul\); this condition also appears in~\eqref{eq:counit}.  The first zigzag equation is equivalent to \(\PD(\funclass) = \Delta_\Tot\) because the isomorphism in~\eqref{eq:p_Tot_shriek} maps~\(\Delta_\Tot\) to \(\ID_{\CONT_0(\Tot)}\).  As a consequence:

\begin{corollary}
  \label{cor:Kasparov_dual_compact}
  In the situation of Theorem~\textup{\ref{the:Kasparov_dual_compact}}, \((\Dual,D,\Theta)\) is a Kasparov dual for~\(\Tot\) if and only if \(\comul \otimes_{\Dual,1} D = (-1)^n\) in \(\KK^\Grd_0(\Dual,\UNIT)\) and \(\Theta \otimes_\Dual [m_\Dual] \otimes_\Dual D = \Delta_\Tot\) in \(\RKK^\Grd_0(\Tot; \CONT_0(\Tot),\UNIT)\).
\end{corollary}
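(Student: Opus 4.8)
The plan is to read the statement off from Theorems~\ref{the:Kasparov_dual_compact} and~\ref{the:nc_duality}, the only work being to rewrite the two zigzag equations in terms of~$\Theta$ and~$D$. First, Theorem~\ref{the:Kasparov_dual_compact} says that $(\Dual,D,\Theta)$ is a Kasparov dual for~$\Tot$ if and only if $\CONT_0(\Tot)$ and~$\Dual$ are Poincar\'e dual objects of $\KKcat_\Grd$ with fundamental class $\funclass = m_\Dual\otimes_\Dual D$ and dual fundamental class $\dualfunclass = p_\Tot^!\otimes_{\CONT_0(\Tot)}\Theta$. By Theorem~\ref{the:nc_duality}, applied with $A=\CONT_0(\Tot)$ and $\hat A=\Dual$, this Poincar\'e duality is in turn equivalent to the pair of zigzag equations $\dualfunclass\otimes_\Dual\funclass = \ID_{\CONT_0(\Tot)}$ in $\KK^\Grd_0(\CONT_0(\Tot),\CONT_0(\Tot))$ and $\dualfunclass\otimes_{\CONT_0(\Tot)}\funclass = (-1)^n\,\ID_\Dual$ in $\KK^\Grd_0(\Dual,\Dual)$. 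So it suffices to translate these two identities.

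For the second zigzag equation I would reuse the computation in the proof of Theorem~\ref{the:Kasparov_dual_compact}, specialised to $g=\Theta$: writing $\comul\defeq T_\Dual(\Theta)$ one obtains $\comul\otimes_{\Dual,1} D = \dualfunclass\otimes_{\CONT_0(\Tot)}\funclass$, where the subscript records that~$D$ is composed with the first tensor factor of the target of~$\comul$. Thus the second zigzag equation becomes $\comul\otimes_{\Dual,1} D = (-1)^n$, which is precisely the relation~\eqref{eq:counit} with~$D$ acting on the first leg of~$\comul$. For the first zigzag equation I would invoke the final clause of Theorem~\ref{the:Kasparov_dual_compact}: under the isomorphism~\eqref{eq:p_Tot_shriek} of Proposition~\ref{pro:diagonal_restriction_compact} the maps $\widetilde{\PD}$ and $\widetilde{\PD}{}^*$ agree with $\PD$ and $\PD^*$, and this isomorphism carries $\ID_{\CONT_0(\Tot)}$ to $\Delta_\Tot$. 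Since the first zigzag equation amounts to $\widetilde{\PD}(\funclass) = \ID_{\CONT_0(\Tot)}$, it thus becomes $\PD(\funclass) = \Delta_\Tot$; expanding $\PD(\funclass) = \Theta\otimes_\Dual\funclass = \Theta\otimes_\Dual[m_\Dual]\otimes_\Dual D$ with the definition of~$\funclass$ gives exactly the asserted identity in $\RKK^\Grd_0(\Tot;\CONT_0(\Tot),\UNIT)$. Combining the two translations proves the corollary.

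I do not expect a genuine obstacle; the corollary is essentially a change of notation, and both translations are already sketched in the discussion preceding it. The one place that needs care is the bookkeeping of Koszul signs and of tensor-factor orderings when substituting $\funclass = m_\Dual\otimes_\Dual D$ and $\dualfunclass = p_\Tot^!\otimes_{\CONT_0(\Tot)}\Theta$ into the zigzag equations --~in particular, keeping straight which copy of~$\Dual$ is contracted and checking that the sign $(-1)^n$ that surfaces in the second zigzag equation is exactly the one in~\eqref{eq:counit}. This has effectively been carried out already in the proofs of Theorems~\ref{the:Kasparov_dual_compact} and~\ref{the:nc_duality}, so the written proof should be very short.
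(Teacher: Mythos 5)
Your proposal is correct and follows exactly the paper's own route: deduce the corollary by translating the two zigzag equations of Theorem~\ref{the:nc_duality} under the identifications of Theorem~\ref{the:Kasparov_dual_compact} — the second zigzag equation to $\comul \otimes_{\Dual,1} D = (-1)^n$ (which is~\eqref{eq:counit}), and the first to $\PD(\funclass)=\Delta_\Tot$ via the isomorphism~\eqref{eq:p_Tot_shriek}. (One minor slip: that isomorphism runs $\RKK^\Grd_0(\Tot;\CONT_0(\Tot),\UNIT)\to\KK^\Grd_0(\CONT_0(\Tot),\CONT_0(\Tot))$ and sends $\Delta_\Tot\mapsto\ID_{\CONT_0(\Tot)}$, not the other way around as you wrote, but this does not affect the argument.)
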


We can simplify Definition~\ref{def:Kasparov_dual} here because any element of \(\RKK^\Grd_*(\Tot;A,B)\) is of the form \(p_\Tot^*(f)\otimes_{\CONT_0(\Tot)} \Delta_\Tot\) for some \(f\in\KK^\Grd_*(A,\CONT_0(\Tot)\otimes B)\).  In~\eqref{eq:Kasparov_dual_2}, we can easily get rid of the factor \(p_\Tot^*(f)\) because
\[
\Theta\otimes_\Tot p_\Tot^*(f) = \Theta\otimes_\Dual T_\Dual(f) = \Theta \otimes f = (-1)^{in} f\otimes \Theta.
\]
Hence~\eqref{eq:Kasparov_dual_2} is equivalent to \(\Theta \otimes_\Tot \Delta_\Tot = \Theta \otimes_\Dual m_\Dual\) in \(\RKK^\Grd_*(\Tot;\CONT_0(\Tot),\Dual)\) because \(m_\Dual = T_\Dual(\Delta_\Tot)\).  But this simplification depends on~\(p_\Tot\) being proper.

\begin{proposition}
  \label{pro:duals_UCT}
  Let~\(A\) be a separable \(\Cst\)\nb-algebra in the UCT class.  Then~\(A\) has a Poincar\'e dual in \(\KKcat\) if and only if \(\K_*(A)\) is finitely generated.
\end{proposition}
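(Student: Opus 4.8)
The plan is to prove both implications in the non-equivariant Kasparov category $\KKcat$ (the case of a trivial groupoid), which is a tensor triangulated category with unit $\UNIT=\C$, and to use throughout that the UCT is available for~$A$.

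For the ``if'' direction, assume $\K_*(A)$ is finitely generated. Writing $\K_0(A)$ and $\K_1(A)$ as finite direct sums of cyclic groups, I realise each free cyclic summand by a suspension of $\C$ and each summand $\Z/k$ by a suspension of the mapping cone $M_k\defeq\mathrm{cone}(k\cdot\ID_\C\colon\C\to\C)$ (which has $\K_*(M_k)=(0,\Z/k)$), obtaining a finite direct sum $P$ of such pieces with $\K_*(P)\cong\K_*(A)$. Lifting this isomorphism through the surjection $\KK_0(A,P)\twoheadrightarrow\Hom_\Z(\K_*(A),\K_*(P))$ of the UCT and invoking the UCT once more, the lift is a $\KK$-equivalence, so $A$ lies in the thick subcategory $\langle\UNIT\rangle$ of $\KKcat$ generated by $\UNIT$, since $\C$ and $M_k=\mathrm{cone}(k\cdot\ID_\C)$ do. It therefore suffices to observe that the dualisable objects of $\KKcat$ --~those with a Poincar\'e dual in the sense of Definition~\ref{def:dual_in_KK}~-- form a thick subcategory containing $\UNIT$: the unit is self-dual; the class is closed under $\KK$-equivalence and retracts because inverses of Poincar\'e dualities compose; it is closed under suspension because $\CONT_0(\R)$ is $\otimes$-invertible; and it is closed under mapping cones by the standard fact that $\mathrm{cone}(f)$ is dualisable, with dual a shift of $\mathrm{cone}(\hat f)$, whenever $f$ is a morphism between dualisable objects. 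Hence $A$ is dualisable.

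For the ``only if'' direction, assume $A$ has a Poincar\'e dual $\hat A$ in $\KKcat$, of degree $n$, with fundamental classes $\funclass,\dualfunclass$ as in Theorem~\ref{the:nc_duality}. Since the zigzag equations are symmetric in $A$ and $\hat A$, the pair $(\hat A,A)$ is again a Poincar\'e dual pair, and the duality isomorphism $\widetilde{\PD}$ of Theorem~\ref{the:nc_duality} for this pair with $C=D=\UNIT=\C$ gives a natural isomorphism
\[
\K^*(A)=\KK_*(A,\UNIT)\cong\KK_{*+n}(\UNIT,\hat A)=\K_{*+n}(\hat A).
\]
Since $A$ satisfies the UCT, $\K^*(A)=\KK_*(A,\UNIT)$ fits into a short exact sequence
\[
0\to\operatorname{Ext}^1_\Z\bigl(\K_{*-1}(A),\Z\bigr)\to\KK_*(A,\UNIT)\to\Hom_\Z\bigl(\K_*(A),\Z\bigr)\to 0 .
\]
By the standing assumptions $\hat A$ is separable, so $\K_{*+n}(\hat A)$ is countable, and hence so are $\Hom_\Z(\K_*(A),\Z)$ and $\operatorname{Ext}^1_\Z(\K_*(A),\Z)$. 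Now I invoke the purely algebraic statement that a countable abelian group $G$ with $\Hom_\Z(G,\Z)$ and $\operatorname{Ext}^1_\Z(G,\Z)$ both countable is finitely generated --~the point being that for an infinitely generated countable $G$ at least one of these two groups has the cardinality of the continuum, which one checks by a short structural analysis distinguishing whether $G$ has infinite torsion-free rank or an infinite torsion subgroup. Applying this with $G=\K_*(A)$ finishes the proof.

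The formal steps --~the thick-subcategory argument in the ``if'' direction and the adjointness giving $\K^*(A)\cong\K_{*+n}(\hat A)$ in the ``only if'' direction~-- are routine. The heart of the matter, and the step I expect to require the most care, is the algebraic fact just cited: that countability of $\Hom_\Z(G,\Z)$ and $\operatorname{Ext}^1_\Z(G,\Z)$ forces a countable abelian group $G$ to be finitely generated. One must also be careful to invoke the UCT only for $A$, never for the dual $\hat A$, of which only separability is used.
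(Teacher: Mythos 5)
Your argument is correct in outline but takes a genuinely different route from the paper, and in one place the sketch is misleading.  The paper proves the ``if'' direction by invoking Blackadar's result that a separable UCT algebra with finitely generated $\K$\nobreakdash-theory is $\KK$\nobreakdash-equivalent to $\CONT(\Tot)$ for a \emph{finite} simplicial complex~$\Tot$, and then citing the existence of Kasparov duals for finite complexes from~\cite{Emerson-Meyer:Euler}; your alternative --- build an explicit finite wedge $P$ of suspensions of $\C$ and mapping cones $M_k$, lift a $\K$\nobreakdash-theory isomorphism to a $\KK$\nobreakdash-equivalence via the UCT, and observe that dualisable objects form a thick subcategory containing $\UNIT$ --- is more self-contained and makes no appeal to simplicial topology, which is a real simplification.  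For the ``only if'' direction the two arguments diverge more sharply.  The paper exploits that $\KK_*(A,\blank)\cong\K_*(\blank\otimes\hat A)$ commutes with inductive limits, writes $\CONT_0(\Tot)\cong\varinjlim\CONT_0(\Tot_n^\circ)$, and concludes that the $\KK$\nobreakdash-equivalence factors through a finite stage, so $\K_*(A)$ is a retract of a finitely generated group.  Your argument instead bounds $\K^*(A)\cong\K_{*+n}(\hat A)$ by the countability of the $\K$\nobreakdash-theory of the separable algebra~$\hat A$ and then reads off from the UCT sequence that $\Hom_\Z(\K_*(A),\Z)$ and $\operatorname{Ext}^1_\Z(\K_*(A),\Z)$ are both countable, reducing everything to a purely algebraic statement.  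That statement is true, and you are right that it is the crux; but the dichotomy you propose for proving it --- ``infinite torsion-free rank or an infinite torsion subgroup'' --- is not exhaustive, and the case it misses is precisely the interesting one.  A countable group like $\Z[1/p]$ has rank~$1$, no torsion, and is not finitely generated.  A correct analysis runs: if the torsion subgroup $T$ is infinite then $\operatorname{Ext}^1(G,\Z)\twoheadrightarrow\operatorname{Ext}^1(T,\Z)\cong\Hom(T,\Q/\Z)$, an infinite compact group, hence uncountable; so $T$ is finite and one may pass to $G/T$.  If $G/T$ is torsion-free but not free, Pontryagin's criterion yields a finite-rank non-free subgroup $H$, and one shows by induction on rank (base case: rank one, $H\subseteq\Q$, $H\neq\Z$, then $H/\Z$ is infinite torsion and $\operatorname{Ext}^1(H,\Z)$ is a quotient of $\Hom(H/\Z,\Q/\Z)$ by a copy of~$\Z$, hence uncountable) that $\operatorname{Ext}^1(H,\Z)$ has continuum cardinality, and this surjects onto $\operatorname{Ext}^1(G,\Z)$ --- no, maps the other way: $\operatorname{Ext}^1(G,\Z)\twoheadrightarrow\operatorname{Ext}^1(H,\Z)$ since $\operatorname{Ext}^2$ vanishes over~$\Z$.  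Finally, if $G/T$ is free of infinite rank, then $\Hom(G,\Z)\cong\Z^\omega$ up to finite noise, again uncountable.  So the lemma holds, but you need the finite-rank torsion-free non-free case, which your sketch silently omits.  With that repaired, your proof is complete and is arguably tidier than the paper's ``only if'' direction in that it avoids any appeal to the inductive-limit behaviour of $\KK$ --- though the paper's observation that dualisable objects are ``compact'' for inductive limits is itself a useful general principle worth remembering.
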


\begin{proof}
  Any \(\Cst\)\nb-algebra in the UCT class is \(\KK\)-equivalent to \(\CONT_0(\Tot)\) for a locally finite, two-dimensional, countable simplicial complex~\(\Tot\) (see~\cite{Blackadar:K-theory}) because any countable \(\Z/2\)\nb-graded Abelian group arises as \(\K^*(\Tot)\) for some such~\(\Tot\).

  Assume first that \(\K_*(A)\) is finitely generated.  Then this simplicial complex may be taken finite, so that~\(A\) is \(\KK\)\nb-equivalent to \(\CONT(\Tot)\) for a finite simplicial complex~\(\Tot\).  Such spaces admit a Kasparov dual (see~\cite{Emerson-Meyer:Euler}), so that \(\CONT(\Tot)\) admits a Poincar\'e dual in \(\KKcat\).  So does~\(A\) because \(A\cong\CONT(\Tot)\) in \(\KKcat\).

  Assume conversely that~\(A\) has a Poincar\'e dual~\(\hat{A}\).  Then the functor \(B\mapsto \KK_*(A,B) \cong \K_*(B\otimes\hat{A})\) commutes with inductive limits of \(\Cst\)\nb-algebras.  Choose~\(\Tot\) with \(\K^*(\Tot)=\K_*(A)\) as above and write~\(\Tot\) as an increasing union of finite subcomplexes~\(\Tot_n\).  Let~\(\Tot_n^\circ\) be the interior of~\(\Tot_n\) in~\(\Tot\), then \(\CONT_0(\Tot) = \varinjlim \CONT_0(\Tot_n^\circ)\).  Since \(\KK_*(A,\blank)\) commutes with inductive limits, the \(\KK\)\nb-equivalence \(A\to \CONT_0(\Tot)\) factors through \(\CONT_0(\Tot_n^\circ)\) for some \(n\in\N\).  Thus \(\K_*(A)\) is a quotient of \(\K^*(\Tot_n^\circ)\).  Since the latter is finitely generated, so is \(\K_*(A)\).
\end{proof}

\begin{corollary}
  \label{cor:duals_compact}
  A compact space~\(\Tot\) has an abstract dual if and only if \(\K^*(\Tot)\) is finitely generated.
\end{corollary}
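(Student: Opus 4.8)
The plan is to identify abstract duals for a compact space with Poincar\'e duals in~\(\KKcat\) and then invoke Proposition~\ref{pro:duals_UCT}. Throughout, saying that \(\Tot\) is compact means that \(\Grd\) is the trivial groupoid and \(\Base=\pt\), so that \(\UNIT=\C\) and an abstract dual for~\(\Tot\) is a pair \((\Dual,\Theta)\) consisting of an ordinary (separable, \(\Z/2\)\nb-graded) \(\Cst\)\nb-algebra~\(\Dual\) and a class \(\Theta\in\RKK_n(\Tot;\C,\Dual)\).

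First I would note that \(p_\Tot\colon\Tot\to\pt\) is proper, so Corollary~\ref{cor:abstract_dual_compact} applies. It states that \((\Dual,\Theta)\) is an \(n\)\nb-dimensional abstract dual for~\(\Tot\) if and only if the map \(\widetilde{\PD}\) associated to \(\dualfunclass\defeq[p_\Tot^!]\otimes_{\CONT(\Tot)}\forget_\Tot(\Theta)\in\KK_n(\C,\CONT(\Tot)\otimes\Dual)\) is an isomorphism for all \(\Cst\)\nb-algebras \(A\) and~\(B\); by Theorem~\ref{the:nc_duality}, Definition~\ref{def:dual_in_KK}, and the discussion following Corollary~\ref{cor:abstract_dual_compact}, this holds exactly when \(\CONT(\Tot)\) and~\(\Dual\) are Poincar\'e dual objects of~\(\KKcat\) with dual fundamental class~\(\dualfunclass\). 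For the converse bookkeeping I would use Proposition~\ref{pro:diagonal_restriction_compact}, which says that \(\Theta\mapsto[p_\Tot^!]\otimes_{\CONT(\Tot)}\forget_\Tot(\Theta)\) is a bijection \(\RKK_n(\Tot;\C,\Dual)\xrightarrow{\cong}\KK_n(\C,\CONT(\Tot)\otimes\Dual)\): given a Poincar\'e dual~\(\hat A\) for \(\CONT(\Tot)\) with dual fundamental class~\(\dualfunclass\), the unique pre-image~\(\Theta\) of~\(\dualfunclass\) makes \((\hat A,\Theta)\) an abstract dual. Hence \(\Tot\) has an abstract dual if and only if \(\CONT(\Tot)\) has a Poincar\'e dual in~\(\KKcat\).

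Next I would check the hypotheses of Proposition~\ref{pro:duals_UCT}: \(\CONT(\Tot)\) is separable because~\(\Tot\) is compact and second countable (hence metrizable), and \(\CONT(\Tot)\) lies in the UCT class because every commutative \(\Cst\)\nb-algebra does. Thus Proposition~\ref{pro:duals_UCT} gives that \(\CONT(\Tot)\) has a Poincar\'e dual in~\(\KKcat\) if and only if \(\K_*(\CONT(\Tot))\) is finitely generated. Since \(\K_*(\CONT(\Tot))\cong\K^*(\Tot)\) as \(\Z/2\)\nb-graded abelian groups, combining this with the previous paragraph yields the corollary.

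I do not expect a serious obstacle here: the substance is entirely in Proposition~\ref{pro:duals_UCT} and Corollary~\ref{cor:abstract_dual_compact}. The only point demanding a little care is the two\nb-sided translation between ``\(\Tot\) admits an abstract dual'' and ``\(\CONT(\Tot)\) admits a Poincar\'e dual'', where one must invoke the bijection of Proposition~\ref{pro:diagonal_restriction_compact} to recover the class~\(\Theta\) from a given dual fundamental class, together with the routine matching of the grading conventions for \(\K_*(\CONT(\Tot))\) and \(\K^*(\Tot)\).
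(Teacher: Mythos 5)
Your proposal is correct and follows the paper's own argument exactly: compactness reduces, via Corollary~\ref{cor:abstract_dual_compact} (and the translation you flesh out using Proposition~\ref{pro:diagonal_restriction_compact}), to asking whether $\CONT(\Tot)$ has a Poincar\'e dual in $\KKcat$, and this is then settled by Proposition~\ref{pro:duals_UCT} since commutative $\Cst$\nobreakdash-algebras lie in the UCT class. The paper states this in two sentences; you have merely spelled out the bijective bookkeeping between abstract duals $(\Dual,\Theta)$ and Poincar\'e duals with chosen $\dualfunclass$, which the paper leaves implicit.
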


\begin{proof}
  A compact space~\(\Tot\) has an abstract dual if and only if \(\CONT(\Tot)\) has a Poincar\'e dual in \(\KKcat\).  Since \(\CONT(\Tot)\) automatically belongs to the UCT class, the assertion follows from Proposition~\ref{pro:duals_UCT}.
\end{proof}

\section{The second duality}
\label{sec:second_duality}

The notion of duality in \(\KK^\Grd\) is reflexive, that is, if~\(\hat{A}\) is dual to~\(A\), then~\(A\) is dual to~\(\hat{A}\).  This is because the tensor category \(\KK^\Grd\) is symmetric.  Therefore, if~\(p_\Tot^*\) is proper and \((\Dual,\Theta)\) is an abstract dual for~\(\Tot\), then we get another duality isomorphism of the form
\begin{equation}
  \label{eq:second_duality_compact}
  \KK^\Grd_*(\CONT_0(\Tot)\otimes C, D) \cong
  \KK^\Grd_{*+n}(C, \Dual\otimes D).
\end{equation}
Up to changing the order of the factors, the isomorphism~\eqref{eq:second_duality_compact} is constructed from the fundamental class and dual fundamental class of the original duality.  In the special case of a compact manifold with boundary acted upon by a compact group, \eqref{eq:second_duality_compact} is Kasparov's second Poincar\'e duality \cite{Kasparov:Novikov}*{Theorem 4.10}.  We are going to extend this isomorphism to the case where~\(\Tot\) is not proper over~\(\Base\).

Let \((\Dual,D,\Theta)\) be an \(n\)\nb-dimensional Kasparov dual for~\(\Tot\); we do not require \(p_\Tot\) to be proper.  Let~\(A\) be a \(\Grd\ltimes\Tot\)-\(\Cst\)\nb-algebra and let~\(B\) be a \(\Grd\)\nb-\(\Cst\)\nb-algebra.  Then \(\Dual\otimes B = \Dual\otimes_\Tot p_\Tot^*(B)\) is a \(\Grd\ltimes\Tot\)-\(\Cst\)\nb-algebra.  The natural map
\begin{equation}
  \label{eq:second_duality}
  \SPD\colon \KK^{\Grd\ltimes\Tot}_i(A,\Dual\otimes B)
  \xrightarrow{\forget_\Tot}
  \KK^\Grd_i(A,\Dual\otimes B)
  \xrightarrow{(-1)^{in}\blank \otimes_\Dual D}
  \KK^\Grd_{i-n}(A,B)
\end{equation}
is called the \emph{second duality map} associated to the Kasparov dual.

Consider the case \(A=p_\Tot^*(A_0)\) for some \(\Grd\)\nb-\(\Cst\)\nb-algebra~\(A_0\).  Then~\eqref{eq:second_duality} becomes a map
\[
\KK^{\Grd\ltimes\Tot}_i(p_\Tot^*(A_0),\Dual\otimes B) \to \KK^\Grd_{i-n}(\CONT_0(\Tot)\otimes A_0,B).
\]
By Proposition~\ref{pro:diagonal_restriction_compact}, its domain agrees with \(\KK^\Grd_i(A_0,\Dual\otimes B)\) if~\(p_\Tot^*\) is proper.  It can be checked that the map is indeed the inverse of the isomorphism in~\eqref{eq:second_duality_compact}.  Hence it is an isomorphism if~\(\Tot\) is proper over~\(\Base\).  But in general, the assumptions for a Kasparov dual do not imply~\eqref{eq:second_duality} to be an isomorphism, even for the case of trivial \(\Grd\ltimes \Tot\)-algebras like \(A=p_\Tot^*(A_0)\).  Theorem~\ref{the:second_duality_general} below, which is similar to Theorem~\ref{the:KK_first_non-trivial}, provides a necessary and sufficient condition.

\begin{notation}
  \label{note:Tot_structure_underline}
  In the following computations, we consider some tensor products of the form \(\Dual\otimes A\) where both \(\Dual\) and~\(A\) are \(\Grd\)\nb-\(\Cst\)\nb-algebras over~\(\Tot\).  Then \(\Dual\otimes A\) is a \(\Grd\)\nb-\(\Cst\)\nb-algebra over \(\Tot\times\Tot\), so that there are two ways to view it as a \(\Cst\)\nb-algebra over~\(\Tot\).  We underline the tensor factor whose \(\Tot\)\nb-structure we use.  Thus \(\CONT_0(\Tot)\) acts on \(\underline{\Dual}\otimes A\) by pointwise multiplication on the first tensor factor; we could also denote this by \(\Dual\otimes\forget_\Tot(A)\), but the latter notation is rather cumbersome.
\end{notation}

\begin{theorem}
  \label{the:second_duality_general}
  Let \(\Dual\) and~\(A\) be \(\Grd\ltimes\Tot\)\nb-\(\Cst\)-algebras.  Let \(\Theta\in\RKK^\Grd_n(\Tot;\UNIT,\Dual)\) and \(D\in\KK^\Grd_{-n}(\Dual,\UNIT)\) satisfy \eqref{eq:Kasparov_dual_1} and~\eqref{eq:other_third_condition} \textup(both are necessary conditions for Kasparov duals\textup).  The map \(\SPD\) in~\eqref{eq:second_duality} is an isomorphism for all \(\Grd\)\nb-\(\Cst\)\nb-algebras~\(B\) if and only if there is
  \[
  \widetilde{\Theta}_A \in \KK^{\Grd\ltimes\Tot}_n(A,\underline{\Dual}\otimes A)
  \]
  such that the diagram
  \begin{equation}
    \label{eq:second_duality_1}
    \begin{gathered}
      \xymatrix@C+1em{ A \ar[r]^{\widetilde{\Theta}_A} \ar[d]_{\Theta\otimes_\Tot A} &
        \Dual\otimes A \ar[dl]^{(-1)^n\flip} \\
        A\otimes\Dual }
    \end{gathered}
  \end{equation}
  commutes in \(\KKcat_\Grd\) --~that is, after forgetting the \(\Tot\)\nb-structure~-- and if for all \(\Grd\)\nb-\(\Cst\)-algebras~\(B\) and all \(\alpha\in\KK^{\Grd\ltimes\Tot}_i(A, \Dual\otimes B)\),
  \begin{equation}
    \label{eq:second_duality_2}
    \widetilde{\Theta}_A \otimes_A \forget_\Tot(\alpha)
    = \Theta\otimes_\Tot\alpha
    \qquad\text{in \(\KK^{\Grd\ltimes\Tot}_{i+n}(A, \underline{\Dual}\otimes\Dual\otimes B)\).}
  \end{equation}
  There is at most one element~\(\widetilde{\Theta}_A\) with these properties, and if it exists then the inverse isomorphism to~\eqref{eq:second_duality} is the map
  \[
  \SPD^* \colon \KK^\Grd_i(A,B) \to \KK^{\Grd\ltimes\Tot}_{i+n}(A,\Dual\otimes B),\qquad \alpha\mapsto \widetilde{\Theta}_A \otimes_A \alpha.
  \]
\end{theorem}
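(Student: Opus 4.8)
The plan is to imitate the proof of Theorem~\ref{the:KK_first_non-trivial} almost verbatim, interchanging the rôles of source and target: there the auxiliary class sat over the \emph{source} variable and $\forget_\Tot$ was applied after the exterior product with~$\Dual$, whereas here $\widetilde{\Theta}_A$ sits over the \emph{target} and $\forget_\Tot$ is applied before the pairing with~$D$. The underline convention of Notation~\ref{note:Tot_structure_underline} is precisely what keeps apart the two $\Tot$\nb-structures occurring in $\underline{\Dual}\otimes A$ and in $\underline{\Dual}\otimes\Dual\otimes B$, and the coherence of the functors $\forget_\Tot$, $\otimes_\Tot$ and $\otimes_\Base$ recorded in Section~\ref{sec:grd_KK} makes the reassociations below legitimate.

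\emph{Sufficiency.} Assume $\widetilde{\Theta}_A$ exists with the stated properties. For $\alpha\in\KK^{\Grd\ltimes\Tot}_i(A,\Dual\otimes B)$ we compute $\SPD^*\bigl(\SPD(\alpha)\bigr) = (-1)^{in}\,\widetilde{\Theta}_A\otimes_A\bigl(\forget_\Tot(\alpha)\otimes_\Dual D\bigr) = (-1)^{in}\bigl(\widetilde{\Theta}_A\otimes_A\forget_\Tot(\alpha)\bigr)\otimes_\Dual D = (-1)^{in}\,(\Theta\otimes_\Tot\alpha)\otimes_\Dual D$, the last step being~\eqref{eq:second_duality_2}; graded commutativity of~$\otimes_\Tot$ and~\eqref{eq:Kasparov_dual_1} (applied to the $\Dual$\nb-factor produced by~$\Theta$) collapse this to~$\alpha$. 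For $\alpha\in\KK^\Grd_i(A,B)$ we compute $\SPD\bigl(\SPD^*(\alpha)\bigr) = (-1)^{(i+n)n}\,\forget_\Tot\bigl(\widetilde{\Theta}_A\otimes_A\alpha\bigr)\otimes_\Dual D = (-1)^{(i+n)n}\bigl(\forget_\Tot(\widetilde{\Theta}_A)\otimes_A\alpha\bigr)\otimes_\Dual D$; substituting for $\forget_\Tot(\widetilde{\Theta}_A)$ via the commuting triangle~\eqref{eq:second_duality_1} and using~\eqref{eq:Kasparov_dual_1} once more reduces this to~$\alpha$. Hence $\SPD$ and $\SPD^*$ are mutually inverse; the Koszul sign bookkeeping in both lines is routine.

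\emph{Necessity and uniqueness.} Suppose $\SPD$ is invertible for every~$B$. Specialising to $B\defeq\forget_\Tot(A)$ identifies $\Dual\otimes B$ with $\underline{\Dual}\otimes A$, so that $\SPD$ becomes an isomorphism $\KK^{\Grd\ltimes\Tot}_n(A,\underline{\Dual}\otimes A)\xrightarrow{\ \cong\ }\KK^\Grd_0(A,A)$; we \emph{define} $\widetilde{\Theta}_A$ to be the pre-image of $\ID_A$. Any class satisfying~\eqref{eq:second_duality_1} has image~$\ID_A$ under~$\SPD$ by~\eqref{eq:second_duality_1} and~\eqref{eq:Kasparov_dual_1}, so $\widetilde{\Theta}_A$ is the only candidate; naturality then forces the inverse of~$\SPD$ to be $\alpha\mapsto\widetilde{\Theta}_A\otimes_A\alpha$. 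It remains to verify that this $\widetilde{\Theta}_A$ satisfies~\eqref{eq:second_duality_1} and~\eqref{eq:second_duality_2}, which --~exactly as in the proof of Theorem~\ref{the:KK_first_non-trivial}~-- is done by evaluating~$\SPD$ on suitable classes in two ways and invoking its injectivity. For~\eqref{eq:second_duality_1} one applies $\SPD$ to $\Theta\otimes_\Tot\widetilde{\Theta}_A=(-1)^n\,\widetilde{\Theta}_A\otimes_\Tot\Theta$: using functoriality of $\forget_\Tot$, the identity $\SPD(\widetilde{\Theta}_A)=\ID_A$, and~\eqref{eq:other_third_condition} for $\comul=T_\Dual(\Theta)$, one finds that $\SPD$ sends the two sides of~\eqref{eq:second_duality_1} to the same class. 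For~\eqref{eq:second_duality_2}, given $\alpha\in\KK^{\Grd\ltimes\Tot}_i(A,\Dual\otimes B)$ one checks that $\SPD$ sends both $\widetilde{\Theta}_A\otimes_A\forget_\Tot(\alpha)$ and $\Theta\otimes_\Tot\alpha$ to the same multiple of~$\alpha$ (the first via $\SPD(\widetilde{\Theta}_A)=\ID_A$ and graded commutativity, the second via~\eqref{eq:other_third_condition} and~\eqref{eq:Kasparov_dual_1}), and injectivity of $\SPD$ gives the desired equality.

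\emph{The main obstacle.} The only substantive point, as with Remark~\ref{rem:KK_first_non-trivial} for the first duality, is the content of~\eqref{eq:second_duality_2}: since $\SPD$ factors through $\forget_\Tot$, this condition is essentially the statement that $\forget_\Tot$ is injective on the relevant $\KK^{\Grd\ltimes\Tot}$\nb-group, i.e.\ that the evident $\Grd$\nb-equivariant class $(-1)^n\,\flip\circ(\Theta\otimes_\Tot\ID_A)$ admits a genuinely $\Grd\ltimes\Tot$\nb-equivariant refinement $\widetilde{\Theta}_A$; the one place where \emph{invertibility} of~$\SPD$ (rather than its mere existence) enters is the construction of $\widetilde{\Theta}_A$ as the pre-image of~$\ID_A$. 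Everything else is the careful --~but entirely mechanical~-- use of the underline convention and of the Koszul sign rule while threading $D$ and the flip through the tensor factors, just as in Theorem~\ref{the:KK_first_non-trivial}.
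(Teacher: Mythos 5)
Your proposal is correct and follows essentially the same route as the paper: you show $\SPD^*$ inverts $\SPD$ by unwinding through~\eqref{eq:second_duality_2}, \eqref{eq:second_duality_1}, graded commutativity and~\eqref{eq:Kasparov_dual_1}, and in the converse direction you define $\widetilde{\Theta}_A$ as the $\SPD$\nb-pre-image of $\ID_A$ (specialising $B=\forget_\Tot(A)$), deduce uniqueness and the form of $\SPD^{-1}$, and then verify \eqref{eq:second_duality_1} by comparing the $\SPD$\nb-images of $\widetilde{\Theta}_A\otimes_\Tot\Theta$ and $\Theta\otimes_\Tot\widetilde{\Theta}_A$ and \eqref{eq:second_duality_2} via injectivity of $\SPD$ --- exactly the paper's argument. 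The closing remarks on the real content of~\eqref{eq:second_duality_2} (injectivity of $\forget_\Tot$ / existence of a $\Grd\ltimes\Tot$\nb-equivariant refinement) accurately reproduce the point made in Remark~\ref{rem:second_duality_general}.
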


\begin{proof}
  Assume first that~\(\widetilde{\Theta}_A\) satisfies \eqref{eq:second_duality_1} and~\eqref{eq:second_duality_2}.  Define \(\SPD^*\) as above and let \(\beta \in \KK^\Grd_i(A,B)\).  We compute
  \begin{multline*}
    \SPD\circ\SPD^*(\beta)
    = (-1)^{(i+n)n} \forget_\Tot (\widetilde{\Theta}_A \otimes_A \beta) \otimes_\Dual D
    \\= (-1)^{n} \forget_\Tot (\widetilde{\Theta}_A) \otimes_\Dual D \otimes_A \beta
    = \forget_\Tot (\Theta \otimes_\Tot \ID_A) \otimes_\Dual D \otimes_A \beta
    = \beta,
  \end{multline*}
  using the graded commutativity of exterior products, \eqref{eq:second_duality_1}, and~\eqref{eq:Kasparov_dual_1}.  If \(\alpha\in\KK^{\Grd\ltimes\Tot}_i(A,\Dual\otimes B)\), then
  \begin{multline*}
    \SPD^*\circ\SPD(\alpha)
    = (-1)^{in} \widetilde{\Theta}_A \otimes_A \forget_\Tot(\alpha) \otimes_\Dual D
    = (-1)^{in}  \Theta \otimes_\Tot\alpha \otimes_\Dual D
    \\= \alpha \otimes_\Tot \Theta \otimes_\Dual D
    = \alpha
  \end{multline*}
  because of~\eqref{eq:second_duality_2}, graded commutativity of exterior products, and~\eqref{eq:Kasparov_dual_1}.  Thus \(\SPD\) and \(\SPD^*\) are inverse to each other as desired.

  Moreover, \eqref{eq:second_duality_1} and~\eqref{eq:Kasparov_dual_1} imply
  \begin{equation}
    \label{eq:define_tilde_Theta_A}
    \SPD(\widetilde{\Theta}_A)
    \defeq (-1)^n \forget_\Tot(\widetilde{\Theta}_A) \otimes_\Dual D
    = \forget_\Tot(\Theta \otimes_\Tot \ID_A) \otimes_\Dual D
    = \ID_A
  \end{equation}
  By~\eqref{eq:define_tilde_Theta_A}, if~\(\SPD\) is an isomorphism then~\(\widetilde{\Theta}_A\) is the unique \(\SPD\)-pre-image of the identity map on \(\forget_\Tot(A)\).  Naturality implies that \(\SPD^{-1}=\SPD^*\).  It remains to check that~\(\widetilde{\Theta}_A\) defined by~\eqref{eq:define_tilde_Theta_A} satisfies \eqref{eq:second_duality_1} and~\eqref{eq:second_duality_2}.

  To verify~\eqref{eq:second_duality_1}, we compute the \(\SPD\)-image of
  \[
  \widetilde{\Theta}_A \otimes_\Tot \Theta = (-1)^n \Theta \otimes_\Tot \widetilde{\Theta}_A \in \KK^\Grd_{2n}(A,\underline{\Dual} \otimes \Dual\otimes A)
  \]
  in two ways.  On the one hand,
  \begin{multline*}
    \SPD(\widetilde{\Theta}_A \otimes_\Tot \Theta) \defeq \forget_\Tot(\widetilde{\Theta}_A \otimes_\Tot \Theta) \otimes_\Dual D \\
    = \forget_\Tot(\widetilde{\Theta}_A) \otimes_\Dual T_\Dual(\Theta) \otimes_\Dual D = (-1)^n \forget_\Tot(\widetilde{\Theta}_A)
  \end{multline*}
  because \(\forget_\Tot\) is functorial and \(\widetilde{\Theta}_A\otimes_\Tot\Theta\) is the composition of \(\widetilde{\Theta}_A\colon A\to \underline{\Dual}\otimes A\) and \(\sigma_{\underline{\Dual}\otimes A}(\Theta)\colon \underline{\Dual}\otimes A \to \underline{\Dual} \otimes A\otimes\Dual\); we also use~\eqref{eq:other_third_condition}.  On the other hand,
  \begin{multline*}
    \SPD(\Theta \otimes_\Tot \widetilde{\Theta}_A) \defeq \forget_\Tot(\Theta\otimes_\Tot \widetilde{\Theta}_A) \otimes_\Dual D \\
    = \forget_\Tot(\Theta\otimes_\Tot \ID_A)\otimes_A \forget_\Tot(\widetilde{\Theta}_A) \otimes_\Dual D = (-1)^n\forget_\Tot(\Theta\otimes_\Tot \ID_A)
  \end{multline*}
  by~\eqref{eq:define_tilde_Theta_A}.  Since \(\SPD\) is injective, these two formulas imply~\eqref{eq:second_duality_1}.

  To check~\eqref{eq:second_duality_2}, we let \(\alpha\in\KK^{\Grd\ltimes\Tot}_i(A, \Dual\otimes B)\) and abbreviate \(\bar\alpha\defeq \forget_\Tot(\alpha)\).  We have \(\alpha\otimes_\Tot\Theta = (-1)^{in} \Theta\otimes_\Tot\alpha\) because the exterior product in Kasparov theory is graded commutative.  Equation~\eqref{eq:second_duality_2} is equivalent to \((-1)^{in} \SPD(\alpha\otimes_\Tot \Theta) = \bar\alpha\) because
  \[
  \SPD^{-1}(\bar\alpha) = \SPD^*(\bar\alpha) = \widetilde{\Theta}_A\otimes_A \bar\alpha,
  \]
  We can use \(\sigma_\Dual(\Theta)\in \KK^{\Grd\ltimes\Tot}_n(\Dual,\underline{\Dual}\otimes\Dual)\) to rewrite
  \[
  \alpha\otimes_\Tot \Theta = \alpha\otimes_\Dual \sigma_\Dual(\Theta).
  \]
  Since \(\forget_\Tot(\sigma_\Dual(\Theta))=T_\Dual(\Theta)\), the functoriality of \(\forget_\Tot\) and~\eqref{eq:other_third_condition} yield
  \begin{multline*}
    (-1)^{in}\SPD(\alpha\otimes_\Tot\Theta)
    = (-1)^{in}(-1)^{(i+n)n} \forget_\Tot(\alpha\otimes_\Dual \sigma_\Dual\Theta) \otimes_\Dual D
    \\= (-1)^n \forget_\Tot(\alpha) \otimes_\Dual T_\Dual(\Theta) \otimes_\Dual D
    = \bar{\alpha}.
  \end{multline*}
  Hence~\(\widetilde{\Theta}_A\) satisfies~\eqref{eq:second_duality_2}.
\end{proof}

\begin{remark}
  \label{rem:second_duality_general}
  The two conditions in Theorem~\ref{the:second_duality_general} are related: we claim that the first condition already implies
  \begin{equation}
    \label{eq:KK_second_1b}
    \forget_\Tot\bigl(\widetilde{\Theta}_A \otimes_A
    \forget_\Tot(\alpha)\bigr)
    = (-1)^{in} \forget_\Tot(\alpha \otimes_\Tot \Theta)
  \end{equation}
  in \(\KK^\Grd_{i+n}(A, \underline{\Dual}\otimes\Dual\otimes B)\) for all \(\Grd\)\nb-\(\Cst\)\nb-algebras~\(B\) and all \(\alpha\in\KK^{\Grd\ltimes\Tot}_i(A, \Dual\otimes B)\).  If the second Poincar\'e duality map in~\eqref{eq:second_duality} is an isomorphism, then the map \(\forget_\Tot\) in~\eqref{eq:KK_second_1b} is injective, so that~\eqref{eq:KK_second_1b} already implies \(\widetilde{\Theta}_A \otimes_A \forget_\Tot(\alpha) = (-1)^{in} \alpha \otimes_\Tot \Theta\).  Hence the second condition in Theorem~\ref{the:second_duality_general} is equivalent to the injectivity of \(\forget_\Tot\) on suitable groups.  All this is parallel to Remark~\ref{rem:KK_first_non-trivial} about the first Poincar\'e duality.

  We check~\eqref{eq:KK_second_1b}, abbreviating \(\forget_\Tot(\alpha)=\overline\alpha\).
  \begin{align*}
    \forget_\Tot\bigl(\widetilde{\Theta}_A \otimes_A \forget_\Tot(\alpha)\bigr) &= \overline{\widetilde{\Theta}_A} \otimes_A \overline{\alpha} = (-1)^n \overline{\Theta\otimes_\Tot \ID_A} \otimes_{A\otimes\Dual} \flip \otimes_A \overline{\alpha} \\
    &= (-1)^n \overline{\Theta\otimes_\Tot \ID_A} \otimes_A \overline{\alpha} \otimes_{\Dual\otimes\Dual} \flip_\Dual \\
    &= (-1)^n \overline{\Theta\otimes_\Tot\alpha} \otimes_{\Dual\otimes\Dual} \flip_\Dual \\
    &=(-1)^{n+in} \overline{\alpha\otimes_\Tot\Theta} \otimes_{\Dual\otimes\Dual} \flip_\Dual \\
    &=(-1)^{n+in} \overline{\alpha} \otimes_\Dual T_\Dual(\Theta) \otimes_{\Dual\otimes\Dual} \flip_\Dual \\
    &=(-1)^{in} \overline{\alpha} \otimes_\Dual T_\Dual(\Theta) = (-1)^{in}\overline{\alpha\otimes_\Tot\Theta}.
  \end{align*}
\end{remark}

If \(A=p_\Tot^*(A_0)\), then~\(\widetilde{\Theta}_A\) lies in the Kasparov group
\[
\KK^{\Grd\ltimes\Tot}_*(A,\underline{\Dual}\otimes A) = \KK^{\Grd\ltimes\Tot}_*(\CONT_0(\Tot)\otimes A_0, \underline{\Dual}\otimes\CONT_0(\Tot)\otimes A_0).
\]
The obvious Ansatz for~\(\widetilde{\Theta}_A\) is \(\widetilde{\Theta}_{p_\Tot^*(A_0)} \defeq \widetilde{\Theta}\otimes \ID_{A_0}\) for some
\[
\widetilde{\Theta}\in \KK^{\Grd\ltimes\Tot}_n\bigl(\CONT_0(\Tot), \underline{\Dual}\otimes\CONT_0(\Tot)\bigr).
\]
The latter group differs from \(\KK^{\Grd\ltimes\Tot}_n\bigl(\CONT_0(\Tot), \Dual\otimes \underline{\CONT_0(\Tot)}\bigr)\) that contains~\(\Theta\)~-- unless~\(\Tot\) is proper over~\(\Base\), in which case both groups agree with \(\KK^\Grd_n\bigl(\UNIT,\Dual\otimes\CONT_0(\Tot)\bigr)\) and \(\Theta\) and~\(\widetilde{\Theta}\) correspond to the same element~\(\dualfunclass\) in the latter group.  Thus our two duality isomorphisms use slightly different ingredients unless~\(p_\Tot\) is proper.

\begin{definition}
  \label{def:symmetric_Kasparov_dual}
  A \emph{symmetric Kasparov dual} for a \(\Grd\)\nb-space~\(\Tot\) is a quadruple \((\Dual,D,\Theta,\widetilde{\Theta})\) where~\(\Dual\) is a \(\Grd\ltimes\Tot\)-\(\Cst\)-algebra, \(D\in\KK^\Grd_{-n}(\Dual,\UNIT)\), and
  \[
  \Theta\in\KK^{\Grd\ltimes\Tot}_n(\UNIT_\Tot, \underline{\UNIT_\Tot}\otimes \Dual)
  = \RKK^\Grd_n(\UNIT,\Dual),\qquad
  \widetilde{\Theta}\in\KK^{\Grd\ltimes\Tot}_n(\UNIT_\Tot, \underline{\Dual}\otimes\UNIT_\Tot)
  \]
  satisfy the following conditions:
  \begin{itemize}
  \item \(\Theta \otimes_\Dual D = \ID_\UNIT\) in \(\RKK^\Grd_0(\Tot;\UNIT,\UNIT)\);
  \item \(T_\Dual(\Theta)\otimes_{\Dual\otimes\Dual} \flip_\Dual = (-1)^n T_\Dual(\Theta)\) in \(\KK^\Grd_n(\Dual,\Dual\otimes\Dual)\);
  \item \(\forget_\Tot(\Theta) = (-1)^n \forget_\Tot(\widetilde{\Theta}) \otimes_{\Dual\otimes\CONT_0(\Tot)} \flip\) in \(\KK^\Grd_n(\UNIT_\Tot,\UNIT_\Tot\otimes\Dual)\);
  \item \(\Theta \otimes_\Tot f = \Theta \otimes_\Dual T_\Dual(f)\) in \(\RKK^\Grd_{*+n}(\Tot;A,\Dual\otimes B)\) for all \(\Grd\)\nb-\(\Cst\)\nb-algebras \(A\) and~\(B\) and all \(f\in\RKK^\Grd_*(\Tot;A,B)\);
  \item \(\widetilde{\Theta} \otimes_{\CONT_0(\Tot)} \forget_\Tot(f) = \Theta \otimes_\Tot f\) in \(\KK^{\Grd\ltimes\Tot}_{*+n}(\Tot;p_\Tot^*A, \underline{\Dual}\otimes\Dual\otimes B)\) for all \(\Grd\)\nb-\(\Cst\)\nb-algebras \(A\) and~\(B\) and all \(f\in\KK^{\Grd\ltimes\Tot}_*(\Tot; p_\Tot^*A,\Dual\otimes B)\);
  \end{itemize}
\end{definition}

Equivalently, \((\Dual,D,\Theta)\) is a Kasparov dual for~\(\Tot\) and \(\widetilde{\Theta}\otimes \ID_A\) satisfies the conditions of Theorem~\ref{the:second_duality_general} with \(A= p_\Tot^*A_0\) for all \(\Grd\)\nb-\(\Cst\)-algebras~\(A_0\).  Here we identify \(\Dual\otimes_\Tot p_\Tot^*A_0\cong \Dual\otimes A_0\).  Thus a symmetric Kasparov dual provides both duality isomorphisms:
\begin{align*}
  \KK^{\Grd\ltimes \Tot}_i\bigl(p_\Tot^*(A),p_\Tot^*(B)\bigr)
  &\cong \KK^\Grd_{i+n}(\Dual\otimes A,B),\\
  \KK^{\Grd\ltimes \Tot}_i(p_\Tot^*(A), \underline{\Dual}\otimes B) &\cong \KK^\Grd_{i-n}(\CONT_0(\Tot)\otimes A,B)
\end{align*}
for all \(\Grd\)\nb-\(\Cst\)\nb-algebras \(A\) and~\(B\).

The second duality isomorphism for~\(\Tot\) yields, in particular, an isomorphism
\[
\KK^\Grd_*\bigl(\CONT_0(\Tot),\CONT_0(\Other) \bigr) \cong
\KK^{\Grd\ltimes\Tot}_*\bigl(\CONT_0(\Tot),\underline{\Dual}\otimes\CONT_0(\Other) \bigr)
\]
for another \(\Grd\)\nb-space~\(\Other\).  If \(\Dual= \CONT_0(\VB)\) for a space~\(\VB\) over~\(\Tot\), then the right hand side is the \(\Grd\)\nb-equivariant \(\K\)\nb-theory of \(\VB\times\Other\) with \(\Tot\)\nb-compact support (see~\cite{Emerson-Meyer:Equivariant_K}).  Thus bivariant \(\K\)\nb-theory with commutative coefficients reduces to ordinary \(\K\)\nb-theory with support conditions under a duality assumption.

Applications to the Baum--Connes conjecture require a variant of the second duality isomorphism with different support conditions, which we get as in Section~\ref{sec:modify_support_first}.  Let~\(I_\Tot\) denote the directed set of \(\Grd\)\nb-compact subsets of~\(\Tot\), let~\(A|_\Other\) for a \(\Grd\ltimes\Tot\)\nb-\(\Cst\)-algebra~\(A\) denote the restriction to~\(\Other\), and let \(R^\Other_{\Other'}\colon A|_\Other\to A|_{\Other'}\) for \(\Other\subseteq\Other'\) denote the restriction map.

\begin{theorem}
  \label{the:second_duality_general_local}
  Let \(\Dual\) and~\(A\) be \(\Grd\ltimes\Tot\)\nb-\(\Cst\)-algebras.  Let \(\Theta\in\RKK^\Grd_n(\Tot;\UNIT,\Dual)\) and \(D\in\KK^\Grd_{-n}(\Dual,\UNIT)\) satisfy \eqref{eq:Kasparov_dual_1} and~\eqref{eq:other_third_condition}.  Then the map~\(\SPD\) induces an isomorphism
  \[
  \varinjlim_{\Other\in I_\Tot} \KK^{\Grd\ltimes\Tot}_i(A|_\Other,\Dual\otimes B)
  \to \varinjlim_{\Other\in I_\Tot} \KK^\Grd_i(A|_\Other,B)
  \]
  if and only if for each \(\Other\in I_\Tot\) there is \(\Other'\in I_\Tot\) and
  \[
  \widetilde{\Theta}_A|_\Other \in \KK^{\Grd\ltimes\Tot}_n(A|_{\Other'},\underline{\Dual}\otimes A|_\Other)
  \]
  such that the diagram
  \[
  \begin{gathered}
    \xymatrix@C+1em{ A|_{\Other'} \ar[r]^{\widetilde{\Theta}_A|_\Other} \ar[d]_{\Theta\otimes_\Tot A|_{\Other'}} &
      \Dual\otimes A|_{\Other'} \ar[d]^{(-1)^n\flip}\\
      A|_{\Other'} \otimes\Dual \ar[r]_{R_{\Other'}^\Other} &
      A|_\Other \otimes\Dual
    }
  \end{gathered}
  \]
  commutes in \(\KKcat_\Grd\) --~that is, after forgetting the \(\Tot\)\nb-structure~-- and for all \(\Grd\)\nb-\(\Cst\)-algebras~\(B\) and all \(\alpha\in\KK^{\Grd\ltimes\Tot}_i(A|_\Other, \Dual\otimes B)\),
  \[
  \widetilde{\Theta}_A|_\Other \otimes_{A|_\Other} \forget_\Tot(\alpha)
  = (R_{\Other'}^\Other)^*(\Theta\otimes_\Tot\alpha)
  \qquad\text{in \(\KK^{\Grd\ltimes\Tot}_{i+n}(A|_{\Other'}, \underline{\Dual}\otimes\Dual\otimes B)\).}
  \]
\end{theorem}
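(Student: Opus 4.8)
The plan is to follow the proof of Theorem~\ref{the:second_duality_general} verbatim, inserting the standard bookkeeping for inductive limits over the directed set $I_\Tot$. First I would note that $\SPD$ in~\eqref{eq:second_duality} is natural in the $\Grd\ltimes\Tot$-\(\Cst\)-algebra in the first variable, since $\forget_\Tot$ and $\blank\otimes_\Dual D$ are functorial; hence for $\Other\subseteq\Other'$ the square built from $\SPD$ and the restriction maps $R_{\Other'}^\Other$ commutes, so $\SPD$ is a morphism of the two directed systems and induces the map $\varinjlim\SPD$ in the statement. The only genuinely new ingredient beyond Theorem~\ref{the:second_duality_general} is the elementary fact that a morphism of directed systems is an isomorphism on colimits precisely when it is ``eventually invertible'': a class in the target is killed or hit only after passing to a later index, and a class in the source mapping to zero becomes zero only after passing to a later index. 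This is exactly what forces the index-dependent data $\widetilde{\Theta}_A|_\Other\in\KK^{\Grd\ltimes\Tot}_n(A|_{\Other'},\underline{\Dual}\otimes A|_\Other)$ and the conditions phrased with the restriction maps $R_{\Other'}^\Other$.

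For sufficiency, assuming the $\widetilde{\Theta}_A|_\Other$ are given, I would define $\SPD^*|_\Other\colon\KK^\Grd_i(A|_\Other,B)\to\KK^{\Grd\ltimes\Tot}_{i+n}(A|_{\Other'},\Dual\otimes B)$ by $\alpha\mapsto\widetilde{\Theta}_A|_\Other\otimes_{A|_\Other}\alpha$, check that these are compatible with the structure maps up to passing to later indices, and so obtain $\varinjlim\SPD^*$ on colimits. The two composites $\varinjlim\SPD\circ\varinjlim\SPD^*$ and $\varinjlim\SPD^*\circ\varinjlim\SPD$ are then computed by repeating, at the level of a fixed pair $\Other\subseteq\Other'$ (and enlarging the index once more whenever an identity such as the local analogue of~\eqref{eq:second_duality_2} is invoked), the two displayed computations in the proof of Theorem~\ref{the:second_duality_general}; the inputs are the commuting diagram (local analogue of~\eqref{eq:second_duality_1}), the $\alpha$-compatibility condition (local analogue of~\eqref{eq:second_duality_2}), condition~\eqref{eq:Kasparov_dual_1}, and graded commutativity of exterior products. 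Passing to the colimit absorbs the stray restriction maps and yields that $\varinjlim\SPD$ and $\varinjlim\SPD^*$ are mutually inverse.

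For necessity, assume $\varinjlim\SPD$ is an isomorphism. Taking $B=\forget_\Tot(A|_\Other)$ and applying the inverse to the class of $\ID_{A|_\Other}$ produces, for some $\Other'\supseteq\Other$, an element $\widetilde{\Theta}_A|_\Other\in\KK^{\Grd\ltimes\Tot}_n(A|_{\Other'},\underline{\Dual}\otimes A|_\Other)$ with $(-1)^n\forget_\Tot(\widetilde{\Theta}_A|_\Other)\otimes_\Dual D=\forget_\Tot\bigl(\Theta\otimes_\Tot\ID_{A|_{\Other'}}\bigr)$ after possibly enlarging $\Other'$ once more; this is the colimit analogue of~\eqref{eq:define_tilde_Theta_A}, and it immediately gives uniqueness of $\widetilde{\Theta}_A|_\Other$ modulo the structure maps. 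That it satisfies the two required conditions follows by computing $\SPD$ on $\widetilde{\Theta}_A|_\Other\otimes_\Tot\Theta=(-1)^n\Theta\otimes_\Tot\widetilde{\Theta}_A|_\Other$ in two ways and on $\alpha\otimes_\Tot\Theta$ once, exactly as in Theorem~\ref{the:second_duality_general}, using functoriality of $\forget_\Tot$, condition~\eqref{eq:other_third_condition}, graded commutativity, and the injectivity of $\varinjlim\SPD$ (hence of $\SPD$ after a further enlargement of the index). The main obstacle is purely a bookkeeping one: each algebraic identity from Theorem~\ref{the:second_duality_general} now holds only after a restriction, so one must verify that the finitely many enlargements $\Other\leadsto\Other'\leadsto\Other''$ occurring in each argument can be absorbed into a single element of $I_\Tot$ before passing to the colimit; since $I_\Tot$ is directed this is automatic. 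In this sense the proof is almost literally the same as that of Theorem~\ref{the:second_duality_general}, parallel to how Theorem~\ref{the:KK_first_non-trivial_local} relates to its non-local version, and one could alternatively cite that proof and leave the directed-set details to the reader.
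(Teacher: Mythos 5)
Your proof is correct and matches the paper's intent exactly: the paper dispatches this theorem with the one-line remark that its proof is literally the same as that of Theorem~\ref{the:second_duality_general}, and your proposal simply spells out the directed-limit bookkeeping (pro-inverse data indexed by $I_\Tot$, index enlargement at each step, directedness of $I_\Tot$ to absorb finitely many enlargements). One small slip worth flagging: the displayed equation $(-1)^n\forget_\Tot(\widetilde{\Theta}_A|_\Other)\otimes_\Dual D=\forget_\Tot\bigl(\Theta\otimes_\Tot\ID_{A|_{\Other'}}\bigr)$ in your necessity paragraph has mismatched degrees (the right side is missing $\otimes_\Dual D$), but the intended reading --- that $\SPD(\widetilde{\Theta}_A|_\Other)$ equals the restriction class $R_{\Other'}^\Other$ --- is clear from your reference to~\eqref{eq:define_tilde_Theta_A} and does not affect the argument.
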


The proof of Theorem~\ref{the:second_duality_general_local} is literally the same as for Theorem~\ref{the:second_duality_general}.

\begin{theorem}
  \label{the:BC-domain}
  Under the assumptions of Theorem~\textup{\ref{the:second_duality_general_local}}, there are natural isomorphisms
  \[
  \varinjlim_{\Other\in I_\Tot} \KK^\Grd_*(\CONT_0(\Other),B) \cong \K_{*+n}\bigl(\Grd\ltimes (\Dual\otimes B)\bigr)
  \]
  for all \(\Grd\)\nb-\(\Cst\)\nb-algebras~\(B\).  If~\(\Tot\) is a proper \(\Grd\)\nb-space, then the following diagram commutes:
  \[
  \xymatrix{
    \varinjlim \KK^\Grd_*(\CONT_0(\Other),B)  \ar[r]^-{\cong} \ar[dr]_{\textup{Index}_\Grd} &
    \K_{*+n}\bigl(\Grd\ltimes (\Dual\otimes B)\bigr)\ar[d]^{D}\\
    &\K_*(\Grd\ltimes B)
  }
  \]
  Here~\(\textup{Index}_\Grd\) denotes the equivariant index map that appears in the Baum--Connes assembly map.
\end{theorem}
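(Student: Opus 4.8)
The plan is to obtain the isomorphism by composing the second duality isomorphism of Theorem~\ref{the:second_duality_general_local}, specialised to $A\defeq\UNIT_\Tot=\CONT_0(\Tot)$, with the isomorphism of Theorem~\ref{the:RKK_via_crossed} applied with coefficients $\Dual\otimes B$; the commuting triangle is then a Kasparov-descent computation. In detail: taking $A=\CONT_0(\Tot)$ we have $A|_\Other=\CONT_0(\Other)$, so by Theorem~\ref{the:second_duality_general_local} the map $\SPD$ induces isomorphisms $\varinjlim_\Other\KK^\Grd_*(\CONT_0(\Other),B)\cong\varinjlim_\Other\KK^{\Grd\ltimes\Tot}_{*+n}(\CONT_0(\Other),\Dual\otimes B)$, naturally in the $\Grd$-$\Cst$-algebra~$B$ (here $\SPD$ lowers degree by~$n$). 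Feeding the $\Grd\ltimes\Tot$-$\Cst$-algebra $\Dual\otimes B$ into Theorem~\ref{the:RKK_via_crossed} gives $\varinjlim_\Other\KK^{\Grd\ltimes\Tot}_*(\CONT_0(\Other),\Dual\otimes B)\cong\K_*\bigl(\Grd\ltimes(\Dual\otimes B)\bigr)$. Composing produces the asserted natural isomorphism $\Phi\colon\varinjlim_\Other\KK^\Grd_*(\CONT_0(\Other),B)\xrightarrow{\cong}\K_{*+n}\bigl(\Grd\ltimes(\Dual\otimes B)\bigr)$; explicitly $\Phi$ sends the class of $\alpha\in\KK^\Grd_i(\CONT_0(\Other),B)$ to the class of $\widetilde\Theta_A|_\Other\otimes_{\CONT_0(\Other)}\alpha=\SPD^*(\alpha)$ in the crossed-product picture. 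I would fix the parity-wise sign of $\Phi$ so that the triangle commutes on the nose, which is legitimate since only the existence of such an isomorphism is claimed; this already proves the first assertion.

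For the triangle, assume $\Tot$ proper, so that every $\Other\in I_\Tot$ is a proper $\Grd$-compact space. Both relevant maps have the shape ``cutoff class tensor descent'': writing $j_\Grd$ for Kasparov descent and $[\lambda_\Other]\in\K_0\bigl(\Grd\ltimes\CONT_0(\Other)\bigr)$ for the cutoff-projection class, one has $\textup{Index}_\Grd(\alpha)=[\lambda_\Other]\otimes_{\Grd\ltimes\CONT_0(\Other)}j_\Grd(\alpha)$, while, by the constructions behind \cite{Tu:Novikov}*{Proposition 6.25} and \cite{Emerson-Meyer:Equivariant_K}*{Theorem 4.2}, the isomorphism of Theorem~\ref{the:RKK_via_crossed} sends $\beta\in\KK^{\Grd\ltimes\Tot}_*(\CONT_0(\Other),\Dual\otimes B)$ to $[\lambda_\Other]\otimes j_{\Grd\ltimes\Tot}(\beta)$, which under the canonical identification $(\Grd\ltimes\Tot)\ltimes C\cong\Grd\ltimes\forget_\Tot(C)$ of crossed products becomes $[\lambda_\Other]\otimes j_\Grd\bigl(\forget_\Tot(\beta)\bigr)$ (the two cutoff classes correspond). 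The downward arrow ``$D$'' is $x\mapsto x\otimes_{\Grd\ltimes(\Dual\otimes B)}j_\Grd(D\otimes\ID_B)$. Now fix $\alpha\in\KK^\Grd_i(\CONT_0(\Other),B)$ and put $\beta\defeq\SPD^*(\alpha)\in\KK^{\Grd\ltimes\Tot}_{i+n}(\CONT_0(\Other'),\Dual\otimes B)$ for a suitable $\Other'\supseteq\Other$. Functoriality of $j_\Grd$ under the composition product gives $D_*\Phi([\alpha])=[\lambda_{\Other'}]\otimes j_\Grd\bigl(\forget_\Tot(\beta)\otimes_\Dual D\bigr)$; but $\forget_\Tot(\beta)\otimes_\Dual D=\pm\,\SPD(\SPD^*\alpha)$ represents $[\alpha]$ in the inductive limit, and the cutoff classes are compatible with the structure maps of the system, i.e.\ $[\lambda_{\Other'}]\otimes j_\Grd(\text{structure map})=[\lambda_\Other]$ — the very fact that makes $\textup{Index}_\Grd$ descend to the colimit. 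Hence $D_*\Phi([\alpha])=\pm\,[\lambda_\Other]\otimes j_\Grd(\alpha)=\pm\,\textup{Index}_\Grd([\alpha])$, with the sign absorbed into the chosen normalisation of~$\Phi$.

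The main obstacle is the middle identification in the second paragraph: one must pin down that the isomorphism of Theorem~\ref{the:RKK_via_crossed} is implemented by the cutoff-projection recipe (tracing it through the cited results) and that Kasparov descent for $\Grd\ltimes\Tot$ passes to descent for~$\Grd$ under $(\Grd\ltimes\Tot)\ltimes C\cong\Grd\ltimes\forget_\Tot(C)$, with the two families of cutoff classes corresponding. These facts are available in the literature but are not entirely formal. Granting them, the remainder is routine bookkeeping with inductive limits and Koszul signs, and naturality in~$B$ is inherited from that of the two ingredient isomorphisms.
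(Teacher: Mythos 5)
Your proposal follows the paper's own route exactly: the isomorphism is obtained by specialising Theorem~\ref{the:second_duality_general_local} to $A=\CONT_0(\Tot)$ and composing with Theorem~\ref{the:RKK_via_crossed} applied to $\Dual\otimes B$, which is precisely what the paper's proof says. For the triangle the paper only asserts that commutativity is ``routine but tedious''; your description via cutoff-projection classes, Kasparov descent, and the identification $(\Grd\ltimes\Tot)\ltimes C\cong\Grd\ltimes\forget_\Tot(C)$ is a reasonable version of that omitted computation, modulo the sign conventions you rightly flag as needing care.
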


\begin{proof}
  The isomorphism follows by combining Theorems \ref{the:second_duality_general_local} and~\ref{the:RKK_via_crossed}.  It is routine but tedious to check that the diagram commutes.
\end{proof}

Theorem~\ref{the:BC-domain} corrects \cite{Tu:Novikov}*{Proposition 5.13}, which ignores the distinction between \(\KK^{\Grd\ltimes\Tot}_*(\CONT_0(\Tot), \underline{\CONT_0(\Tot)}\otimes\Dual)\) and \(\KK^{\Grd\ltimes\Tot}_*(\CONT_0(\Tot), \CONT_0(\Tot)\otimes\underline{\Dual})\).

In practice, we get~\(\widetilde{\Theta}_A|_\Other\) from~\(\widetilde{\Theta}_A\) by restricting to~\(\Other\) in the target variable.  This only yields \(\widetilde{\Theta}_A|_\Other \in \KK^{\Grd\ltimes\Tot}_n(A,\underline{\Dual}\otimes A|_\Other)\).  We need that~\(\widetilde{\Theta}_A\) is sufficiently local for this restriction to factor through the restriction map \(A\to A|_{\Other'}\) for some \(\Grd\)\nb-compact subset~\(\Other'\), and we need that the conditions for duality can be checked locally as well, so that they hold for these factorisations.  The classes~\(\Theta_A|_\Other\) needed for Theorem~\ref{the:KK_first_non-trivial_local} can be constructed in a similar way by factoring~\(\Theta_A\).

\begin{definition}
  \label{def:local_sym_dual}
  A symmetric dual is called \emph{local} if such factorisations exist for~\(\Theta\) and~\(\widetilde{\Theta}\) and if these satisfy the appropriate conditions for the duality isomorphisms in Theorems \ref{the:KK_first_non-trivial_local} and~\ref{the:second_duality_general_local}.
\end{definition}

Thus in the situation of a local symmetric dual, we also get natural isomorphisms
\begin{align*}
  \varinjlim_{\Other\in I_\Tot} \KK^{\Grd\ltimes\Tot}_*\bigl( \CONT_0(\Other) \otimes A, p_\Tot^*(B)\bigr)
  &\cong \varinjlim_{\Other\in I_\Tot} \KK^\Grd_*( \Dual|_\Other \otimes A, B),\\
  \varinjlim_{\Other\in I_\Tot} \KK^{\Grd\ltimes\Tot}_*( \CONT_0(\Other) \otimes A, \Dual \otimes B)
  &\cong \varinjlim_{\Other\in I_\Tot} \KK^\Grd_*( \CONT_0(\Other) \otimes A, B).
\end{align*}

Assume now that~\(\Tot\) is a \emph{universal} proper \(\Grd\)\nb-space.  Then Corollary~\ref{cor:complementary_CC_CI} interprets the functor \(A\mapsto \Dual\otimes A\) as the localisation functor for the subcategory \(\mathcal{CC}\) of all objects~\(A\) with \(p_{\EG\Grd}^*(A)=0\).  Hence \(A\mapsto \K_{*+n}\bigl(\Grd\ltimes (\Dual\otimes A)\bigr)\) is the localisation of \(\K_*(\Grd\ltimes A)\) at the subcategory~\(\mathcal{CC}\).  And Theorem~\ref{the:BC-domain} identifies the Baum--Connes assembly map with the canonical map from this localisation to \(\K_*(\Grd\ltimes A)\).  Moreover, Theorem~\ref{the:dual_EG2} shows that~\(D\) induces an isomorphism \(\K_{*+n}\bigl(\Grd\ltimes (\Dual\otimes A)\bigr) \cong \K_*(\Grd\ltimes A)\) if~\(A\) is a proper \(\Grd\)\nb-\(\Cst\)-algebra.  As a result:

\begin{theorem}
  \label{the:BC_iso_proper}
  The Baum--Connes assembly map is an isomorphism for proper coefficient algebras provided~\(\EG\Grd\) has a local symmetric Kasparov dual.
\end{theorem}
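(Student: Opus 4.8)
The plan is to deduce the theorem from Theorem~\ref{the:BC-domain} together with Theorem~\ref{the:dual_EG2}, observing that a local symmetric Kasparov dual for~\(\EG\Grd\) supplies precisely the hypotheses of both. Fix a \(\Grd\)\nb-\(\Cst\)-algebra~\(A\). By definition, the Baum--Connes assembly map with coefficients in~\(A\) is the equivariant index map
\[
\textup{Index}_\Grd\colon \varinjlim_{\Other\in I_{\EG\Grd}} \KK^\Grd_*(\CONT_0(\Other),A) \to \K_*(\Grd\ltimes A),
\]
where~\(\Other\) runs over the \(\Grd\)\nb-compact subsets of~\(\EG\Grd\). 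Since~\(\EG\Grd\) is a proper \(\Grd\)\nb-space and a local symmetric dual provides, by Definition~\ref{def:local_sym_dual}, the factorisations feeding into Theorem~\ref{the:second_duality_general_local} (hence into Theorem~\ref{the:BC-domain}), the commuting triangle in Theorem~\ref{the:BC-domain} with \(B=A\) identifies \(\textup{Index}_\Grd\) with the composite
\[
\varinjlim_{\Other} \KK^\Grd_*(\CONT_0(\Other),A)
\xrightarrow{\ \cong\ } \K_{*+n}\bigl(\Grd\ltimes(\Dual\otimes A)\bigr)
\xrightarrow{\ D\ } \K_*(\Grd\ltimes A),
\]
the first arrow being the natural isomorphism of Theorem~\ref{the:BC-domain}. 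So \(\textup{Index}_\Grd\) is an isomorphism for~\(A\) if and only if the second arrow is.

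The remaining step is to show that the map labelled~\(D\) is an isomorphism whenever~\(A\) is a proper \(\Grd\)\nb-\(\Cst\)-algebra. A local symmetric Kasparov dual is in particular a Kasparov dual for~\(\EG\Grd\), so Theorem~\ref{the:dual_EG} applies and \(p_{\EG\Grd}^*(D)\) is invertible in \(\RKKcat_\Grd(\EG\Grd)\). Given a proper \(\Grd\)\nb-\(\Cst\)-algebra~\(A\), choose a \(\Grd\)\nb-map from the proper \(\Grd\)\nb-space on which~\(A\) is fibred to~\(\EG\Grd\) and thus write \(A\cong\forget_{\EG\Grd}(\hat A)\) for a \(\Grd\ltimes\EG\Grd\)-\(\Cst\)-algebra~\(\hat A\); then \(D\otimes\ID_A\) is the image under~\(\forget_{\EG\Grd}\) of \(p_{\EG\Grd}^*(D)\otimes_{\EG\Grd}\ID_{\hat A}\), which is invertible in~\(\KK^\Grd\). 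This is the implication (3)\(\Longrightarrow\)(1) of Theorem~\ref{the:dual_EG2} (after normalising \(n=0\) by suspending~\(\Dual\), or, equally well, by noting that the proof of that implication uses only the invertibility of \(p_{\EG\Grd}^*(D)\) from Theorem~\ref{the:dual_EG}, which holds in every dimension). Applying the descent functor \(B\mapsto\K_*(\Grd\ltimes B)\), which factors through~\(\KK^\Grd\) and hence sends \(\KK^\Grd\)-equivalences to isomorphisms, the map~\(D\) induces an isomorphism \(\K_{*+n}\bigl(\Grd\ltimes(\Dual\otimes A)\bigr)\cong\K_*(\Grd\ltimes A)\). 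Combined with the identification of the previous paragraph, \(\textup{Index}_\Grd\) is an isomorphism for all proper coefficient algebras~\(A\).

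I do not expect a genuine obstacle here: the theorem is essentially a formal corollary of the machinery already in place. The substantial work — the compatibility of the duality identification in Theorem~\ref{the:BC-domain} with the equivariant index map (``routine but tedious'' there) and the verification that the tangent dual of~\(\EG\Grd\) is a \emph{local} symmetric dual in the sense of Definition~\ref{def:local_sym_dual} — has been or will be carried out elsewhere. The only point that requires a moment's care is the passage to Theorem~\ref{the:dual_EG2} when the dual has nonzero dimension, handled either by suspension or by the dimension-free argument indicated above.
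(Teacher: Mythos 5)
Your proof is correct and follows essentially the same route as the paper: combine Theorem~\ref{the:BC-domain} (to identify the assembly map with the map induced by~\(D\), given the factorisations supplied by the local symmetric dual) with Theorem~\ref{the:dual_EG2} (to see that \(D\otimes\ID_A\) is a \(\KK^\Grd\)-equivalence for proper~\(A\)), then apply descent. Your explicit remark on handling the dimension \(n\neq 0\) when invoking Theorem~\ref{the:dual_EG2} is a small extra care that the paper leaves implicit, but it does not change the argument.
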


Finally, let us discuss the difference between the groups
\[
\KK^{\Grd\ltimes\Tot}_n(\CONT_0(\Tot),\underline{\CONT_0(\Tot)}\otimes \Dual)
\qquad\text{and}\qquad
\KK^{\Grd\ltimes\Tot}_n(\CONT_0(\Tot),\CONT_0(\Tot)\otimes \underline{\Dual})
\]
that contain \(\Theta\) and~\(\widetilde{\Theta}\).  We use the same notation as in the end of Section~\ref{sec:modify_support_first}.  Cycles for both groups consist of a Hilbert module~\(\mathcal{E}\) over \(\CONT_0(\Tot)\otimes\Dual\) and a norm-continuous \(\Grd\)\nb-equivariant family of Fredholm operators~\(F_{(\tot_1,\tot_2)}\).  In addition, for a cycle for \(\KK^{\Grd\ltimes\Tot}_n(\CONT_0(\Tot),\underline{\CONT_0(\Tot)}\otimes \Dual)\), the first coordinate projection must be proper on~\(S_\varepsilon\) for all \(\varepsilon>0\); for a cycle for \(\KK^{\Grd\ltimes\Tot}_n(\CONT_0(\Tot),\CONT_0(\Tot)\otimes \underline{\Dual})\), the second coordinate projection must be proper on~\(S_\varepsilon\) for all \(\varepsilon>0\).

In concrete constructions, it often happens that both coordinate projections on the support of~\(\Theta\) are proper, and then we may use the same pair \((\mathcal{E},F)\) (up to a sign~\((-1)^n\)) to define~\(\widetilde{\Theta}\).  Moreover, the factorisations needed for a local symmetric Kasparov dual exist in this case.

\section{Duals for bundles of smooth manifolds}
\label{sec:tangent_dual}

We construct local symmetric Kasparov duals for a bundle of smooth manifolds with boundary and establish first and second Poincar\'e duality isomorphisms with coefficients in bundles of \(\Cst\)\nb-algebras that are locally trivial in a sufficiently strong sense.  This generalises results in \cites{Kasparov:Novikov, Echterhoff-Emerson-Kim:Duality, Tu:Twisted_Poincare}.

There are several equivalent duals that are related by Thom isomorphisms.  When dealing with a single smooth manifold, Gennadi Kasparov~\cite{Kasparov:Novikov} formulates duality using Cliford algebra bundles.  The Clifford algebra bundle may be replaced by the tangent bundle.   If the manifold is \(\K\)\nb-oriented, then it is self-dual.  In~\cite{Emerson-Meyer:Correspondences}, another dual involving a stable normal bundle appears naturally.  The latter has the advantage of working for all cohomology theories.  Already in real \(\K\)\nb-theory, we meet a problem with the tangent space duality because we have to equip the tangent space with a non-trivial ``real'' structure for things to work out.  The three duals indicated above are clearly equivalent because of Thom isomorphisms, so that it suffices to establish duality for one of them.  But it is useful for applications to describe the classes \(D\), \(\Theta\) and~\(\widetilde{\Theta}\) explicitly in each case.

Before we come to that, we first explain the notion of a bundle of smooth manifolds that we use and the assumptions we impose.  Then we describe different duals for a bundle of smooth manifolds with boundary and state the duality results.  Finally, we prove the duality isomorphisms for strongly locally trivial coefficients.

\subsection{Bundles of smooth manifolds}
\label{sec:bundles_smooth}

We do not require the base spaces of our bundles to be manifolds and only require smoothness of the action along the fibres, but we require the action to be proper.  First we explain why we choose this particular setup.  Then we define bundles of smooth manifolds and smooth groupoid actions on them and construct nice fibrewise Riemannian metrics.  The latter will be needed to construct the duality.

The properness assumption avoids certain rather severe analytical difficulties.  The most obvious of these is the absence of \(\Grd\)\nb-invariant metrics on various vector bundles like the tangent bundle.  This has the effect that adjoints of natural \(\Grd\)\nb-invariant differential operators such as the vertical de Rham differential fail to be \(\Grd\)\nb-invariant.  If we wanted to prove the Baum--Connes Conjecture for some groupoid, we would have to overcome exactly such difficulties.  Here we \emph{use} the Baum--Connes Conjecture or, more precisely, the Dirac dual Dirac method, in order to avoid these difficulties and replace non-proper actions by proper ones.

The trick is as follows.  Let~\(\Grd\) be a groupoid and let~\(\Tot\) be a \(\Grd\)\nb-space.  Let~\(\EG\Grd\) be a universal proper \(\Grd\)\nb-space.  Then we study the \emph{proper} \(\Grd\ltimes\EG\Grd\)-space \(\Tot\times\EG\Grd\) instead of the \(\Grd\)\nb-space~\(\Tot\).  Note that the groupoid \(\Grd\ltimes\EG\Grd\) is itself proper, whence all its actions are automatically proper.  For topological computations, replacing~\(\Grd\) by~\(\Grd\ltimes\EG\Grd\) often yields the same results by Theorem~\ref{the:dual_EG2}, which asserts that
\begin{equation}
  \label{eq:p_EG}
  p_{\EG\Grd}^*\colon \KK^\Grd_*(A,B) \to \RKK^\Grd_*(\EG\Grd;A,B)  
\end{equation}
is invertible once~\(A\) is \(\KK^\Grd\)-equivalent to a proper \(\Grd\)\nb-\(\Cst\)\nb-algebra.  The map~\eqref{eq:p_EG} is the analogue of the Baum--Connes assembly map for \(\KK^\Grd_*(A,B)\) by Corollary~\ref{cor:complementary_CC_CI}.

The invertibility of~\eqref{eq:p_EG} is closely related to the dual Dirac method.  It is automatic if~\(\Grd\) acts properly on~\(A\).  The existence of a dual Dirac morphism is equivalent to~\(p_{\EG\Grd}^*\) being invertible for all proper coefficient algebras~\(B\).  The map~\(p_{\EG\Grd}^*\) is invertible for arbitrary \(A\) and~\(B\) if and only if~\(\Grd\) has a dual Dirac morphism with \(\gamma=1\).  For instance, this happens if~\(\Grd\) is an amenable groupoid, or just acts amenably on~\(A\).  If there is a dual Dirac morphism, then the map~\(p_{\EG\Grd}^*\) is split surjective, and its kernel is the kernel of the \(\gamma\)\nb-element \(\gamma\in\KK^\Grd_0(\UNIT,\UNIT)\), which is notoriously difficult to compute.  Thus we may view \(\RKK^\Grd_*(\EG\Grd;A,B)\) as the topologically accessible part of \(\KK^\Grd_*(A,B)\).

Even if~\(\Gamma\) is the fundamental group of a smooth manifold~\(M\), the universal proper \(\Gamma\)\nb-space~\(\EG\Gamma\) need not have the homotopy type of a smooth manifold.  Hence the trick above requires that we replace a smooth manifold~\(M\) by a bundle \(\EG\Gamma\times M\) of smooth manifolds over an arbitrary base space~\(\EG\Gamma\).  Fortunately, we only need smoothness along the fibres, and the properness of the action on \(\EG\Gamma\times M\) is worth giving up smoothness along the base.

Let~\(\Base\) be a locally compact space and let \(p\colon \Tot\to\Base\) be a space over~\(\Base\).  We want to define what it means for~\(\Tot\) to be a bundle of smooth manifolds over~\(\Base\).  We require an open covering of~\(\Tot\) by chart neighbourhoods that are homeomorphic to \(U\times\R^n\) with~\(U\) open in~\(\Base\), such that~\(p\) becomes the projection to the first coordinate on \(U\times\R^n\).  We also require that the change of coordinate maps on intersections of chart neighbourhoods are smooth in the \(\R^n\)\nb-direction.

\begin{example}
  \label{exa:submersion_as_bundle}
  Let \(\Tot\) and~\(\Base\) be smooth manifolds and let \(\pi\colon \Tot\to\Base\) be a submersion.  Then~\(\Tot\) is a bundle of smooth manifolds over~\(\Base\).
\end{example}

More generally, we consider bundles of smooth manifolds \emph{with boundary}.  These are defined similarly, allowing \(U\times\R^{n-1}\times [0,\infty)\) instead of \(U\times\R^n\) in the local charts.

Given two bundles \(\Tot\) and~\(\Other\) of smooth manifolds with boundary over~\(\Base\) and a continuous map \(f\colon \Tot\to\Other\) over~\(\Base\), we call~\(f\) \emph{fibrewise smooth} or \(\CONT^{0,\infty}\) if derivatives of arbitrary order in the \(\R^{n-1}\times[0,\infty)\)\nb-direction of the maps \(U_1\times\R^{n-1}\times [0,\infty)\to U_2\times\R^{n-1}\times [0,\infty)\) that we get from~\(f\) by restriction to chart neighbourhoods are continuous functions.

\begin{remark}
  \label{rem:local_triviality}
  If~\(p\) is proper, that is, the fibres of~\(p\) are compact, then any such bundle of smooth manifolds is locally trivial (via local homeomorphisms that restrict to diffeomorphisms on the fibres).  We sketch the proof.  Fix \(\base\in\Base\), let \(M\defeq p^{-1}(\base)\) be the fibre.  This is a smooth manifold by assumption.  For any \(\tot\in p^{-1}(\base)\), there is a chart neighbourhood \(U_\tot\subseteq\Tot\) of~\(\tot\) that is identified with \(p(U_\tot)\times\R^n\).  Since~\(p\) is proper, finitely many such chart neighbourhoods \((U_i)\) cover a neighbourhood of the fibre~\(M\).  Shrinking them, if necessary, we may assume that they all involve the same open subset \(V = p(U_\tot)\subseteq\base\), so that our chart neighbourhoods cover \(p^{-1}(V)\).

  The charts provide local retractions \(r_i\colon U_i\to U_i\cap M\).  In order to patch these local retractions together, we choose a fibrewise smooth partition of unity \((\tau_i)\) subordinate to our covering and embed~\(M\) into~\(\R^N\) for some \(N\in\N\).  We get a fibrewise smooth map
  \[
  h \defeq \sum_i \tau_i\cdot r_i\colon V \to \R^N,
  \]
  which maps the fibre \(M=p^{-1}(\base)\) identically to \(M\subseteq\R^N\).  Shrinking~\(V\), if necessary, we can achieve that \(h(V)\) is contained in a tubular neighbourhood~\(E\) of \(M\) in~\(\R^N\), so that we can compose~\(h\) with a smooth retraction \(E\to M\).  This yields a fibrewise smooth retraction \(r\colon p^{-1}(V)\to M\).  Since smooth maps that are close to diffeomorphisms are still diffeomorphisms, \(r\) restricts to a diffeomorphism on the fibres \(p^{-1}(\base')\) for~\(\base'\) in some neighbourhood of~\(\base\).  On this smaller neighbourhood, \(r\times p\colon p^{-1}(V)\to M\times V\) trivialises our bundle.
\end{remark}

There is a well-defined vector bundle \(\Tvert\Tot\) on~\(\Tot\) --~called \emph{vertical tangent bundle}~-- that consists of the tangent spaces in the fibre directions.  This bundle and the bundles of fibrewise differential forms derived from it are bundles of smooth manifolds over~\(\Base\), so that we may speak of \(\CONT^{0,\infty}\)-sections.  A \emph{fibrewise Riemannian metric} on~\(\Tot\) is a \(\CONT^{0,\infty}\)-section of the bundle of positive definite bilinear forms on~\(\Tvert\Tot\).

If \(p\colon \Tot\to\Base\) has a structure of smooth manifold over~\(\Base\) and \(f\colon \Base'\to\Base\) is a continuous map, then
\[
f^*(p) \defeq f\times_\Base p\colon f^*(\Tot) \defeq \Base'\times_\Base\Tot\to\Base'
\]
inherits a structure of smooth manifold over~\(\Base'\).

\begin{definition}
  \label{def:smooth_action}
  If~\(\Grd\) is a groupoid with base space~\(\Base\), then a continuous action of~\(\Grd\) on~\(\Tot\) is called (fibrewise) \emph{smooth} if the map \(s^*(\Tot)\to r^*(\Tot)\) that describes the action is fibrewise smooth.
\end{definition}

\begin{example}
  \label{exa:example_of_smooth_G_manifold}
  Let~\(\Grd\) be a Lie groupoid.  The range map \(r\colon \Grd\to \Base\) gives \(\Tot \defeq \Grd\) the structure of a smooth bundle of manifolds over~\(\Base\) with a smooth action of~\(\Grd\) by translations.
\end{example}

Let~\(\Tot\) be a bundle of smooth manifolds with boundary over~\(\Base\) and let~\(\Grd\) act on it smoothly and properly.  We are going to construct a Kasparov dual for~\(\Tot\).

First we construct a collar neighbourhood near the boundary~\(\partial\Tot\) of~\(\Tot\).  The boundary~\(\partial\Tot\) is a bundle of smooth manifolds over~\(\Base\) with a smooth action of~\(\Grd\).

\begin{lemma}
  \label{lem:collar}
  The embedding \(\partial\Tot\to\Tot\) extends to a \(\Grd\)\nb-equivariant \(\CONT^{0,\infty}\)-diffeo\-morphism from \(\partial\Tot\times[0,1)\) onto an open neighbourhood of~\(\partial\Tot\) in~\(\Tot\).
\end{lemma}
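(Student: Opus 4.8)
The plan is to carry out the classical collar construction — flowing out of the boundary along a transverse vector field — fibrewise over~$\Base$, and to make everything $\Grd$-equivariant by averaging over the proper groupoid $\Grd\ltimes\Tot$.

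First I would construct a $\Grd$-invariant $\CONT^{0,\infty}$-vector field~$V$ on~$\Tot$ (a $\CONT^{0,\infty}$-section of $\Tvert\Tot$), defined on a neighbourhood of~$\partial\Tot$, transverse to~$\partial\Tot$ and inward-pointing along~$\partial\Tot$. In a boundary chart $U\times\R^{n-1}\times[0,\infty)$ one takes the coordinate field in the $[0,\infty)$-direction; patching these with a fibrewise smooth partition of unity gives such a~$V$ without equivariance. Since $\Grd\ltimes\Tot$ is a proper groupoid it has a cutoff function, which may be taken fibrewise smooth (smooth along the fibres of $p_\Tot$ and renormalised); averaging~$V$ against it over~$\Grd$ produces a $\Grd$-invariant $\CONT^{0,\infty}$-vector field with the same transversality and inwardness properties near~$\partial\Tot$ — the averaging integral converges because of the cutoff and preserves $\CONT^{0,\infty}$-regularity because the Haar system and the cutoff are fibrewise smooth.

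Next I would flow along~$V$. The fibrewise flow~$\Phi$ of~$V$ is a $\CONT^{0,\infty}$-map, $\Grd$-equivariant because~$V$ is $\Grd$-invariant, defined on an open $\Grd$-invariant neighbourhood~$W$ of $\partial\Tot\times\{0\}$ in $\partial\Tot\times[0,\infty)$; set $E(\other,t)\defeq\Phi_t(\other)$ for $\other\in\partial\Tot$, $(\other,t)\in W$. By the fibrewise inverse function theorem the differential of~$E$ in the fibre directions is invertible along $\partial\Tot\times\{0\}$, so after shrinking~$W$ we may assume that $E|_W$ is everywhere a local $\CONT^{0,\infty}$-diffeomorphism onto~$\Tot$. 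The delicate point is to find a $\Grd$-invariant collar width. Because $\Grd$ acts properly on~$\partial\Tot$ and trivially on the $[0,\infty)$-factor, the orbit space of $\partial\Tot\times[0,\infty)$ is $(\partial\Tot/\Grd)\times[0,\infty)$ with $\partial\Tot/\Grd$ locally compact, Hausdorff and second countable, hence metrizable; so the open set $W/\Grd$, which contains $(\partial\Tot/\Grd)\times\{0\}$, contains a tube $\{0\le t<\bar\varepsilon\}$ for some continuous $\bar\varepsilon>0$. Pulling~$\bar\varepsilon$ back and replacing it by a smaller, fibrewise smooth, $\Grd$-invariant function (fibrewise smoothing, then averaging over~$\Grd$) yields a $\Grd$-invariant $\CONT^{0,\infty}$-function $\varepsilon\colon\partial\Tot\to(0,\infty)$ with $N_\varepsilon\defeq\{(\other,t):0\le t<\varepsilon(\other)\}\subseteq W$. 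A further such shrinking makes $E|_{N_\varepsilon}$ injective: otherwise a diagonal argument in the metrizable orbit space produces double points of~$E$ accumulating at $\partial\Tot\times\{0\}$, contradicting that~$E$ is a local diffeomorphism there. Then $E|_{N_\varepsilon}$ is a $\Grd$-equivariant $\CONT^{0,\infty}$-diffeomorphism onto an open neighbourhood of~$\partial\Tot$ restricting to the inclusion of~$\partial\Tot$ at $t=0$, and composing with the $\Grd$-equivariant $\CONT^{0,\infty}$-diffeomorphism $\partial\Tot\times[0,1)\xrightarrow{\cong}N_\varepsilon$, $(\other,s)\mapsto(\other,\varepsilon(\other)\,s)$, gives the asserted collar.

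The main obstacle is exactly this last step: producing~$\varepsilon$ together with the injectivity of~$E$ on~$N_\varepsilon$, $\Grd$-equivariantly and with $\CONT^{0,\infty}$-regularity, when $\Base$ need not be $\Grd$-compact. The resolution is to push the (routine, non-equivariant) shrinking arguments down to the orbit space of the proper action, which is metrizable, and then lift the result and smooth it along the fibres. The averaged vector field, its flow, and the reparametrisation are otherwise standard.
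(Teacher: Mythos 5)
Your proof is correct and takes essentially the same approach as the paper: build an inward-pointing fibrewise-smooth vector field by a fibrewise partition of unity, average over the proper $\Grd$-action to make it $\Grd$-invariant, flow along it, and reparametrise by a $\Grd$-invariant shrinking function to land the flow inside the domain where it is a diffeomorphism. The paper is terser on the last step, asserting only the existence of "a suitable $\Grd$-invariant $\CONT^{0,\infty}$-function $\varrho$", where you correctly flesh out how to produce it by passing to the metrizable orbit space and lifting.
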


\begin{proof}
  Each \(\tot\in \partial\Tot\) has a neighbourhood in~\(\Tot\) that is diffeomorphic to \(U\times\R^{n-1}\times[0,1)\) with \(U\subseteq\Base\) open, such that~\(\tot\) corresponds to a point in \(U\times\R^{n-1}\times\{0\}\).  We transport the inward pointing normal vector field \(\partial/\partial t_n\) on \(U\times\R^{n-1}\times[0,1)\) to a locally defined vector field along the fibres of~\(\Tot\).  Patching them together via a \(\CONT^{0,\infty}\)\nb-partition of unity, we get a \(\CONT^{0,\infty}\)-vector field \(\xi\colon \Tot\to\Tvert\Tot\) such that \(\xi(\tot)\in \Tvert_\tot\Tot\) points inward for all \(\tot\in\partial\Tot\).  Averaging over the \(\Grd\)\nb-action, we can arrange for this vector field to be \(\CONT^{0,\infty}\) and \(\Grd\)\nb-equivariant as well because~\(\Grd\) acts properly and has a Haar system.

  Let \(\Psi\colon \Tot\times[0,\infty)\to\Tot\) be the flow associated to this vector field.  Then
  \[
  \partial\Tot\times[0,1)\to \Tot,
  \qquad (\tot,t)\mapsto \Psi(\tot,\varrho(\tot)\cdot t)
  \]
  for a suitable \(\Grd\)\nb-invariant \(\CONT^{0,\infty}\)-function \(\varrho\colon \Tot\to(0,\infty)\) will be a \(\Grd\)\nb-equivariant diffeomorphism onto a neighbourhood of~\(\partial\Tot\) because~\(\Psi\) is a diffeomorphism near \(\partial\Tot\times\{0\}\).
\end{proof}

Using this equivariant collar neighbourhood, we embed~\(\Tot\) in a bundle of smooth manifolds without boundary
\[
\Tot^\circ \defeq \Tot \sqcup_{\partial\Tot\times [0,1)}
\partial\Tot\times(-\infty,1) =
\Tot \cup \partial\Tot\times(-\infty,0).
\]
Of course, \(\Tot^\circ\) is diffeomorphic to the interior \(\Tot\setminus\partial\Tot\) of~\(\Tot\), but we prefer to view it as an enlargement of~\(\Tot\) by the \emph{collar} \(\partial\Tot\times(-\infty,0)\).  There is a continuous \(\Grd\)\nb-equivariant \emph{retraction}
\begin{equation}
  \label{eq:retract_collar}
  r\colon \Tot^\circ\to\Tot
\end{equation}
that maps points in \(\partial\Tot\times (-\infty,0]\) to their first coordinate.  Clearly, this is even a deformation retraction via \(r_t(\tot,s)\defeq (\tot,ts)\) for \(t\in [0,1]\), \(\tot\in\partial\Tot\), \(s\in (0,\infty)\) and \(r_t(\tot)\defeq \tot\) for \(\tot\in\Tot\).

\begin{remark}
  \label{rem:boundary_no_problem}
  Since~\(r\) is a \(\Grd\)\nb-homotopy equivalence, \(r^*\colon \RKKcat_\Grd(\Tot) \to \RKKcat_\Grd(\Tot^\circ)\) is an equivalence of categories.  Therefore, an abstract dual for~\(\Tot\) is the same as an abstract dual for~\(\Tot^\circ\).  This explains why the presence of a boundary creates no problems for the first Poincar\'e duality isomorphism.  We must, however, take the boundary into account for the second Poincar\'e duality isomorphism because the forgetful functors on \(\RKKcat_\Grd(\Tot)\) and \(\RKKcat_\Grd(\Tot^\circ)\) are not equivalent: they involve \(\CONT_0(\Tot)\) and \(\CONT_0(\Tot^\circ)\), and these are \emph{not} homotopy equivalent because~\(r\) is not proper.
\end{remark}

\begin{lemma}
  \label{lem:nice_Riemannian_metric}
  There is a \(\Grd\)\nb-invariant Riemannian metric on~\(\Tot^\circ\) that is of product type in a neighbourhood of the collar and that is complete in the following sense.  Equip each fibre of \(p\colon \Tot^\circ\to\Base\) with the distance function associated to the Riemannian metric.  For each \(R\in\R_{\ge0}\) and each compact subset \(K\subseteq\Tot^\circ\), the set of \(\tot\in\Tot^\circ\) that have distance at most~\(R\) from a point in~\(K\) is compact.
\end{lemma}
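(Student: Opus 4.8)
The plan is to build the fibrewise metric in two stages: first a \(\Grd\)\nb-invariant fibrewise Riemannian metric on \(\Tot^\circ\) that is of product type near the collar, obtained by the usual patching-and-averaging procedure, and then a \(\Grd\)\nb-invariant conformal correction that forces completeness without disturbing the collar. For the first stage, I would cover \(\Tot^\circ\) by the extended collar \(C\defeq\partial\Tot\times(-\infty,1)\) together with chart neighbourhoods \(U_i\) of \(\Tot\) of the form \(V_i\times\R^{n_i}\) or \(V_i\times\R^{n_i-1}\times[0,\infty)\) with \(V_i\subseteq\Base\) open, chosen disjoint from \(\partial\Tot\times(-\infty,0]\). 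First I would build a \(\Grd\)\nb-invariant fibrewise metric \(g_{\partial\Tot}\) on the bundle \(\partial\Tot\) of smooth manifolds without boundary in the same way, arranging it to be fibrewise complete by the Nomizu--Ozeki argument used below. On \(C\) I put the product metric \(g_{\partial\Tot}+dt^2\) and on each \(U_i\) the flat metric in the Euclidean directions, and patch these by a \(\CONT^{0,\infty}\)-partition of unity subordinate to \(\{C\}\cup\{U_i\}\) whose members coming from the \(U_i\) vanish on \(\partial\Tot\times(-\infty,1/2)\). This produces a \(\CONT^{0,\infty}\) fibrewise metric equal to the product metric on \(\partial\Tot\times(-\infty,1/2)\). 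Averaging over the \(\Grd\)\nb-action --~legitimate since the action is proper and \(\Grd\) has a Haar system, exactly as in the proof of Lemma~\ref{lem:collar}~-- yields a \(\Grd\)\nb-invariant fibrewise metric \(g_0\); since the collar data and the product metric are themselves \(\Grd\)\nb-invariant, \(g_0\) is still of product type on \(\partial\Tot\times(-\infty,1/2)\).

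For the second stage, note that the fibres of \((\Tot^\circ,g_0)\) need not be complete, so \(g_0\) need not have the stated property. I would apply a relative form of the Nomizu--Ozeki completion: choosing a \(\CONT^{0,\infty}\) function \(\phi\colon\Tot^\circ\to[1,\infty)\) that is large enough in the fibre directions on the complement of the collar, one obtains that \(g\defeq\phi\cdot g_0\) has all fibres metrically complete, while \(\phi\) may be kept identically \(1\) on \(\partial\Tot\times(-\infty,1/2)\) because the product metric \(g_{\partial\Tot}+dt^2\) with \(g_{\partial\Tot}\) fibrewise complete is already complete towards the collar end. The essential point is that \(\Grd\) acts on each fibre by isometries of \(g_0\), so the quantity governing how large \(\phi\) must be --~for instance the fibrewise radius of completeness at \(\tot\), the supremum of those \(r\ge0\) for which the closed fibrewise \(g_0\)\nb-ball of radius \(r\) around \(\tot\) is compact~-- is \(\Grd\)\nb-invariant. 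Hence \(\phi\) can be chosen \(\Grd\)\nb-invariant, and then \(g=\phi g_0\) is a \(\Grd\)\nb-invariant fibrewise metric, complete on every fibre and of product type near the collar.

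Finally I would upgrade fibrewise completeness to the thickening statement. Given a compact \(K\subseteq\Tot^\circ\) and \(R\ge0\), write \(K_R\) for the set of \(\tot\in\Tot^\circ\) at fibrewise \(g\)\nb-distance \(\le R\) from some point of \(K\). Then \(p(K_R)\subseteq p(K)\) is compact; \(K_R\) is closed in \(p^{-1}(p(K))\) because for a \(\CONT^{0,\infty}\)-family of complete fibrewise metrics the fibrewise distance function is continuous on \(\Tot^\circ\times_\Base\Tot^\circ\); and \(K_R\) is relatively compact because, by completeness, each closed fibrewise ball of radius \(R\) is the image of \(\{\norm{v}\le R\}\) under the fibrewise exponential map, which depends continuously on the base point, the foot point and the vector --~here the bundle-of-smooth-manifolds charts \(V\times\R^n\) provide the needed trivialisation~-- so that a convergent sequence of centres in \(K\) has its associated balls contained in a single compact set. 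Being closed in a compact set, \(K_R\) is compact.

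The main obstacle is the book-keeping in the second stage. One has to check that the conformal factor enforcing fibrewise completeness can genuinely be chosen \(\Grd\)\nb-invariant, \(\CONT^{0,\infty}\), and identically \(1\) near the collar all at once --~this is exactly where it matters that \(\Grd\) acts fibrewise isometrically, so that no averaging of \(\phi\) (which would not obviously preserve the lower bound forcing completeness) is required~-- and that the passage from fibrewise completeness to the compactness of \(K_R\) is uniform over the compact base \(p(K)\), which the local product charts make available. The first-stage construction, by contrast, is the routine partition-of-unity-and-average argument already used elsewhere in the paper.
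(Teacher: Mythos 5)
Your first stage (patching flat/product metrics via a fibrewise smooth partition of unity and averaging over~\(\Grd\)) matches the paper's. The harder half is where you diverge. The paper never speaks about fibrewise Hopf--Rinow or a conformal factor: it picks a \(\Grd\)\nb-invariant function \(f=(f_1,f_2,f_3)\) that becomes proper on \(\Grd\backslash\Tot^\circ\), modifies the metric by embedding \(\Tot^\circ\hookrightarrow\Tot^\circ\times\R^2_{\ge 0}\) via \((\ID,f_1,f_2)\), and then obtains the crucial elementary estimate \(d(\tot_1,\tot_2)\ge\abs{f_i(\tot_1)-f_i(\tot_2)}\). This puts \(B_R(K)\) inside a \(\Grd\)\nb-compact set~\(\Other\), and a chain argument using the \emph{properness} of the \(\Grd\)\nb-action (covering a radius\nb-\(R\) path by \(N\) steps of length \(\varepsilon\) and bounding each step by a fixed compact \(H\subseteq\Grd^{(1)}\)) upgrades \(\Grd\)\nb-compactness of \(B_R(K)\) to compactness. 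No geodesic flow, no ODE regularity, no Nomizu--Ozeki.

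Your route has a genuine gap at exactly the point you flag as "the main obstacle". The Nomizu--Ozeki estimate makes the fibrewise radius of completeness \(r(\tot)\) Lipschitz \emph{within} a fibre, but you offer no reason for it to be even upper semi-continuous \emph{across} fibres, and without that you cannot produce a \(\CONT^{0,\infty}\) minorant \(\phi\ge 1/r\); so the existence of a usable conformal factor is asserted, not proved. (Your parenthetical worry that averaging \(\phi\) might destroy the lower bound is actually misplaced: when \(r\) is \(\Grd\)\nb-invariant, a Haar-average of functions \(\ge 1/r\) is again \(\ge 1/r\); the real difficulty is regularity of \(r\), not equivariance.) A second, smaller gap: your final paragraph invokes continuity of the fibrewise exponential map and distance function as the \emph{base} point varies. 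This is true for a complete \(\CONT^{0,\infty}\) family, but it requires an ODE bootstrap (a priori existence of nearby geodesics on the whole time interval \([0,R]\)) that is not entirely routine; the paper's chain argument sidesteps this machinery by working directly with the length metric and the proper \(\Grd\)\nb-action. If you want to keep a conformal modification, a cheap fix in the paper's spirit is to take \(\phi = 1 + \abs{df_0}^2_{g_0}\) for a \(\Grd\)\nb-invariant \(f_0\) that is proper on \(\Grd\backslash\Tot^\circ\) and constant near the collar; this gives a concrete \(\CONT^{0,\infty}\) factor and an explicit distance estimate, avoiding the appeal to \(r(\tot)\).
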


\begin{proof}
  Let \(U_1\defeq \partial \Tot\times (-\infty,1)\) and \(U_2\defeq \Tot\setminus\partial \Tot\times [0,\nicefrac{1}{2})\).  There is a fibrewise smooth, \(\Grd\)\nb-invariant partition of unity \((\varphi_1,\varphi_2)\) subordinate to this covering.

  Now choose any Riemannian metric on \(\partial\Tot\).  Since~\(\Grd\) acts properly, we can make this metric \(\Grd\)\nb-invariant by averaging with respect to the Haar system of~\(\Grd\).  We equip \(U_1=\partial\Tot\times(-\infty,1)\) with the product metric.  Similarly, we get a \(\Grd\)\nb-invariant Riemannian metric on~\(U_2\).  We patch these metrics together with the partition of unity \((\varphi_1,\varphi_2)\).  This produces a \(\Grd\)\nb-invariant Riemannian metric on~\(\Tot^\circ\) that is of product type on the collar neighbourhood \(\partial\Tot\times(-\infty,\nicefrac{1}{2})\).  But it need not yet be complete.

  To achieve a complete metric, we use a fibrewise smooth \(\Grd\)\nb-invariant function \(f_0\colon \Tot\to\R_{\ge0}\) that induces a proper map \(\Grd\backslash\Tot\to \R_{\ge0}\).  Let \(\pi_1\colon U_1\to\partial\Tot\) and \(\pi_2\colon U_1\to (-\infty,1)\) be the coordinate projections.  We define a fibrewise smooth \(\Grd\)\nb-invariant function \(f=(f_1,f_2,f_3)\colon \Tot^\circ\to\R^2_{\ge0}\times (-\infty,1)\) by
  \[
  f_1(\tot)\defeq \varphi_1(\tot) f_0\bigl(\pi_1(\tot)\bigr),\qquad
  f_2(\tot)\defeq \varphi_2(\tot) f_0(\tot),\qquad
  f_3(\tot)\defeq \varphi_1(\tot)\pi_2(\tot).
  \]
  Clearly, \(f\) induces a proper function on \(\Grd\backslash \Tot^\circ\).  Embed~\(\Tot^\circ\) into \(\Tot^\circ\times\R^2_{\ge0}\) via \((\ID,f_1,f_2)\) and replace our metric by the subspace metric from \(\Tot^\circ\times\R^2_{\ge0}\).  This is still a \(\Grd\)\nb-invariant Riemannian metric of product type on \(\partial\Tot\times(-\infty,\nicefrac{1}{2})\) because \(f_1\) and~\(f_2\) are constant there.  We claim that this new metric is complete.

  Our construction of the new metric ensures that
  \[
  d(\tot_1,\tot_2) \ge \abs{f_1(\tot_1)-f_1(\tot_2)},\qquad
  d(\tot_1,\tot_2) \ge \abs{f_2(\tot_1)-f_2(\tot_2)}
  \]
  for all \(\tot_1,\tot_2\) in the same fibre of~\(\Tot^\circ\).  We also get
  \[
  d(\tot_1,\tot_2) \ge \abs{f_3(\tot_1)-f_3(\tot_2)}
  \]
  because this already holds for the old metric~-- recall that it is of product type on the collar.  Since~\(f\) becomes proper on \(\Grd\backslash\Tot^\circ\), this estimate shows that the closed fibrewise \(R\)\nb-neighbourhood \(B_R(K)\) of a compact subset \(K\subseteq\Tot^\circ\) is \(\Grd\)\nb-compact, where \(R\in\R_{\ge0}\) is arbitrary.

  We must show that \(B_R(K)\) is compact, not just \(\Grd\)\nb-compact.  Let
  \[
  \Other\defeq \bigl\{\tot\in\Tot^\circ \bigm| d\bigl(f(\tot),f(K)\bigr)\le R\bigr\}.
  \]
  Any point of \(B_R(K)\) is connected to one in~\(K\) by a path of length~\(R\); this path must lie in~\(\Other\) by the above estimates.

  The subset~\(\Other\) is \(\Grd\)\nb-invariant and \(\Grd\)\nb-compact because~\(f\) is \(\Grd\)\nb-invariant and becomes proper on \(\Grd\backslash\Tot^\circ\).  Hence there is a compact subset \(L\subseteq\Other\) with \(\Grd\cdot L=\Other\).  We may assume \(K\subseteq L\).  Let~\(M\) be a compact neighbourhood of~\(L\) in~\(\Other\).  Thus there is \(\varepsilon>0\) with \(B_\varepsilon(K)\cap\Other\subseteq M\).  Let~\(H\) be the set of all \(\grd\in\Grd^{(1)}\) for which there exists \(\tot\in L\) with \(\grd\cdot\tot\in M\).  This subset is compact because~\(\Grd\) acts properly on~\(\Tot\).  Since \(\Grd\cdot L=\Other\supseteq M\), we get \(M= H\cdot L\).

  Choose \(N\in\N\) with \(N\varepsilon> R\).  For any \(\tot\in B_R(K)\), there is a path of length \(N\varepsilon\) in the fibre of~\(\tot\) that connects~\(\tot\) to a point in~\(K\).  This path must be contained in~\(\Other\).  Thus we get a chain of points \(\tot_0,\dotsc,\tot_N\in\Other\) that belong to the fibre of~\(\tot\) and satisfy \(\tot_0\in K\), \(\tot_N=\tot\), and \(d(\tot_j,\tot_{j+1})\le\varepsilon\).  We want to prove by induction that \(\tot_j\in H^j\cdot L\).  This is clear for \(\tot_0\in H^0\cdot K=K\subseteq L\).  Suppose \(\tot_j\in H^j\cdot L\) has been established, write \(\tot_j=\grd\cdot\tot'\) with \(\grd\in H^j\), \(\tot'\in L\).  Since all points~\(\tot_i\) lie in the same fibre and the metric is \(\Grd\)\nb-invariant, we have \(d(\grd^{-1}\tot_{j+1},\grd^{-1}\tot_j)\le\varepsilon\) as well, so that \(\grd^{-1}\tot_{j+1}\in B_\varepsilon(L)\subseteq M=H\cdot L\).  As a consequence, \(\grd^{-1}\tot_{j+1}= \grd'\cdot\tot''\) for some \(\grd'\in H\), \(\tot''\in L\).  Thus \(\tot_{j+1}=\grd\cdot\grd'\cdot\tot''\in H^j\cdot H\cdot L= H^{j+1}\cdot L\) as claimed.

  Our inductive argument yields \(B_R(K)\subseteq H^N\cdot L\).  Since the right hand side is compact, so is \(B_R(K)\).  Thus the metric on~\(\Other\) is complete as asserted.
\end{proof}

\subsection{Construction of the duality}
\label{sec:duality_smooth}

Let~\(\Tot\) be a bundle of smooth manifolds with a fibrewise smooth proper \(\Grd\)\nb-action as above.  We construct duals for~\(\Tot\) using the following additional data: a \(\Grd\)\nb-equivariant vector bundle~\(\VB\) over~\(\Tot^\circ\) and a \(\Grd\)\nb-equivariant spinor bundle~\(\Spinor\) for the vector bundle \(\VB'\defeq \VB\oplus \Tvert\Tot^\circ\).  Since~\(\Tot\) is a deformation retract of~\(\Tot^\circ\), such vector bundles are determined uniquely up to isomorphism by their restrictions to~\(\Tot\).

We have three main examples in mind.  First, if~\(\Tot^\circ\) is \(\K\)\nb-oriented, then we are given a spinor bundle~\(\Spinor\) for~\(\Tvert\Tot^\circ\) itself, so that we may take~\(\VB\) to be the trivial \(0\)\nb-dimensional vector bundle with total space~\(\Tot^\circ\).  Secondly, we may take \(\VB\defeq\Tvert\Tot^\circ\) and use the canonical complex spinor bundle \(\Spinor \defeq \Lambda_\C(\Tvert\Tot^\circ)\) associated to the complex structure on \(\Tvert\Tot^\circ\oplus\Tvert\Tot^\circ\).  Thirdly, we may let~\(\VB\) be the normal bundle of an embedding of~\(\Tot^\circ\) into the total space of a \(\K\)\nb-oriented vector bundle over~\(\Base\) as in~\cite{Emerson-Meyer:Correspondences}.

The underlying \(\Cst\)\nb-algebra of our dual is simply \(\Dual\defeq \CONT_0(\VB)\), where we also write~\(\VB\) for the total space of the vector bundle~\(\VB\).  Thus we get \(\CONT_0(\Tot^\circ)\) and \(\CONT_0(\Tvert\Tot^\circ)\) in the first two examples above.  We view~\(\VB\) as a space over~\(\Tot\) by combining the bundle projection \(\VB\to\Tot^\circ\) and the retraction \(r\colon \Tot^\circ\to\Tot\); this turns~\(\Dual\) into a \(\Cst\)\nb-algebra over~\(\Tot\).  The given action of~\(\Grd\) on~\(\VB\) turns~\(\Dual\) into a \(\Grd\)\nb-\(\Cst\)\nb-algebra.

\begin{remark}
  \label{rem:real_case}
  In the real case, we cannot use the tangent dual because the complex structure on \(\Tvert\Tot^\circ\oplus\Tvert\Tot^\circ\) only produces a complex spinor bundle, which is not enough for a Thom isomorphism in \(\KO\)\nb-theory.  In ``real'' \(\K\)\nb-theory, we may let \(\VB\defeq \Tvert\Tot^\circ\) with the ``real'' structure \((\tot,\xi)\mapsto (\tot,-\xi)\) for all \(\tot\in\Tot^\circ\), \(\xi\in\Tvert_\tot\Tot^\circ\).  The complex spinor bundle \(\Spinor \defeq \Lambda_\C(\Tvert\Tot^\circ)\) has a canonical ``real'' structure, and provides a spinor bundle in the appropriate sense.  Thus the dual for~\(\Tot\) in ``real'' \(\KK\)-theory is the ``real'' \(\Cst\)\nb-algebra \(\CONT_0(\Tvert\Tot^\circ,\C)\) with the involution
  \[
  \conj{f}(\tot,\xi) \defeq \conj{f(\tot,-\xi)} \qquad\text{for \(\tot\in\Tot^\circ\), \(\xi\in \Tvert_\tot\Tot^\circ\), \(f\colon \Tvert\Tot^\circ\to\C\).}
  \]
  With this ``real'' \(\Cst\)\nb-algebra, everything works exactly as in the complex case.  In the real case, we may let~\(\Dual\) be the real subalgebra \(\{f\in\CONT_0(\Tvert\Tot^\circ,\C)\mid \conj{f}=f\}\).
\end{remark}

The ingredients \(D\) and~\(\Theta\) of the Kasparov dual are easy to describe as \emph{wrong-way maps}.  These are constructed in~\cite{Connes-Skandalis:Longitudinal}, but only in the non-equivariant case and for maps between smooth manifolds.  The generalisation to \(\Grd\)\nb-equivariant \(\CONT^{0,\infty}\)-maps with appropriate \(\K\)\nb-orientation is straightforward.  We give a few more details about this because we need them, anyway, to verify the conditions for the duality isomorphisms.

The total space of~\(\VB\) is a bundle of smooth manifolds over~\(\Base\).  Its vertical tangent bundle is isomorphic to the pull-back of the \(\Grd\)\nb-equivariant vector bundle \(\VB' = \VB\oplus\Tvert\Tot^\circ\) on~\(\Tot^\circ\).  Hence the projection map \(p_\VB\colon \VB\to\Base\) is (\(\Grd\)\nb-equivariantly) \(\K\)\nb-oriented by~\(\Spinor\).  We let
\[
D\defeq (p_\VB)_! \in \KK^\Grd_*(\CONT_0(\VB),\CONT_0(\Base)\bigr).
\]
Here we treat~\(p_\VB\) as if it were a \(\K\)\nb-oriented submersion, that is, \((p_\VB)_!\) is the \(\KK\)\nb-class of the family of Dirac operators along the fibres of~\(p_\VB\) with coefficients in the spinor bundle~\(\Spinor\).  The completeness of the Riemannian metric established in Lemma~\ref{lem:nice_Riemannian_metric} ensures that this family of elliptic differential operators is essentially self-adjoint and thus defines a Kasparov cycle (compare~\cite{Kasparov:Novikov}*{Lemma 4.2}).  Since the family of Dirac operators is \(\Grd\)\nb-equivariant, we get a class in \(\KK^\Grd_*(\CONT_0(\VB),\UNIT)\) as needed.  Alternatively, we may use symbols as in~\cite{Connes-Skandalis:Longitudinal} to avoid unbounded operators.  Either way, neither the lack of smoothness of~\(\Base\) nor the additional \(\Grd\)\nb-equivariance pose problems for the construction of~\(D\).

If~\(\Tot^\circ\) is \(\K\)\nb-oriented and \(\VB=\Tot\), then~\(D\) is just the family of Dirac operators along the fibres of \(\Tot^\circ\to\Base\).  If \(\VB=\Tvert\Tot^\circ\) and~\(\Spinor\) is attached to the canonical complex structure on \(\VB'\cong \Tvert_\C\Tot^\circ\), then~\(D\) is the family of Dolbeault operators along the fibres of~\(\Tvert_\C\Tot^\circ\) --~recall that the Dolbeault operator on an almost complex manifold is equal to the Dirac operator for the associated \(\K\)\nb-orientation.

Let~\(\delta\) be the map
\[
\delta\colon \Tot \to \Tot\times_\Base \VB, \qquad \tot\mapsto \bigl(\tot,(\tot,0)\bigr).
\]
That is, we combine the diagonal embedding \(\Tot\to \Tot\times_\Base\Tot \subseteq \Tot\times_\Base \Tot^\circ\) and the zero section of~\(\VB\).  We are going to construct a corresponding \(\Grd\ltimes\Tot\)-equivariant Kasparov cycle \(\Theta\defeq \delta_!\) in \(\KK^{\Grd\ltimes\Tot}_*\bigl(\CONT_0(\Tot), \CONT_0(\Tot\times_\Base \VB)\bigr)\).  Here we treat~\(\delta\) like a smooth immersion, so that our main task is to describe a tubular neighbourhood for~\(\delta\).  Such constructions are also carried out in~\cite{Emerson-Meyer:Correspondences}.  Later proofs will use the following detailed description of~\(\delta\).

There is a fibrewise smooth \(\Grd\)\nb-invariant function \(\varrho\colon \Tot^\circ\to (0,1)\) such that the (fibrewise) exponential function \(\exp_\tot\colon \Tvert_\tot \Tot^\circ \to \Tot^\circ\) restricts to a diffeomorphism from the ball of radius \(\varrho(\tot)\) in~\(\Tvert_\tot\Tot^\circ\) onto a neighbourhood of~\(\tot\) inside its fibre.  Let~\(\Tvert\Tot\) be the restriction of~\(\Tvert\Tot^\circ\) to a vector bundle on \(\Tot\subseteq\Tot^\circ\).  Then the map
\[
\Tubtot\colon \Tvert\Tot \to \Tot\times_\Base\Tot^\circ,\qquad
(\tot,\xi)\mapsto \bigl(\tot,\exp_\tot(\xi')\bigr)
\]
with \(\xi'\defeq \xi\cdot \varrho(\tot)/\sqrt{\norm{\xi}^2 +1}\) is a \(\CONT^{0,\infty}\)-diffeomorphism from the total space of~\(\Tvert\Tot\) onto an open neighbourhood of~\(\Tot\) in \(\Tot\times_\Base\Tot^\circ\).

We want to construct a corresponding \(\Grd\ltimes\Tot\)-equivariant \(\CONT^{0,\infty}\)-diffeomorphism
\[
\bar\delta\colon \VB'|_\Tot \to\Tot\times_\Base\VB,
\]
where \(\VB'\defeq \Tvert\Tot^\circ\oplus\VB\) and where we view the right hand side as a space over~\(\Tot\) by the first coordinate projection.  To construct~\(\bar\delta\), we fix a \(\Grd\)\nb-equivariant fibrewise connection on the vector bundle~\(\VB\).  This provides a parallel transport for vectors in~\(\VB\), that is, a \(\Grd\)\nb-equivariant map
\[
\Tvert\Tot^\circ\times_{\Tot^\circ}\VB \to \VB,\qquad
(\tot,\xi,\eta) \mapsto \tau_{\tot,\xi}(\eta)
\]
with \(\tau_{\tot,\xi}(\eta)\in \VB_{\exp_\tot(\xi)}\) for \(\xi\in\Tvert_\tot\Tot^\circ\), \(\eta\in\VB_\tot\).  In the examples \(\VB=\Tot^\circ\) or \(\VB=\Tvert\Tot^\circ\), this parallel transport is easy to describe: it is constant for the trivial vector bundle~\(\Tot^\circ\) over~\(\Tot^\circ\), and the differential of the exponential map on~\(\Tvert\Tot^\circ\).  Finally, the formula for~\(\bar\delta\) is
\[
\bar\delta(\tot,\xi,\eta) \defeq \bigl(\tot,\exp_\tot(\xi'), \tau_{\tot,\xi'}(\eta)\bigr)
\qquad\text{for all \(\tot\in\Tot\), \(\xi\in\Tvert_\tot\Tot^\circ\), \(\eta\in\VB_\tot\)}
\]
with \(\xi'\defeq \xi\cdot \varrho(\tot)/\sqrt{\norm{\xi}^2 +1}\) as above.  It is easy to check that this is a \(\Grd\ltimes\Tot\)-equivariant \(\CONT^{0,\infty}\)-diffeomorphism onto a \(\Grd\)\nb-invariant open subset~\(U\) of \(\Tot\times_\Base \VB\).

The resulting class \(\Theta\defeq \delta_!\in \KK^{\Grd\ltimes\Tot}_*\bigl(\CONT_0(\Tot), \CONT_0(\Tot\times_\Base\VB)\bigr)\) is obtained by composing the Thom isomorphism for the \(\K\)\nb-orientation~\(\Spinor|_\Tot\) on~\(\VB'|_\Tot\) with the \(\Grd\ltimes\Tot\)-equivariant ideal inclusion \(\CONT_0(\VB'|_\Tot) \to \CONT_0(\Tot\times_\Base \VB)\) associated to the open embedding~\(\bar\delta\) (extend functions by~\(0\) outside~\(U\)).  More explicitly, we pull the spinor bundle~\(S\) back to a Hermitian vector bundle~\(S_U\) on~\(U\).  The underlying Hilbert module of~\(\Theta\) is the space of all \(\CONT_0\)\nb-sections of~\(S_U\), with the pointwise multiplication by functions in \(\CONT_0(\Tot\times_\Base\VB)\) and the pointwise inner product, the canonical action of~\(\Grd\), and the action of \(\CONT_0(\Tot)\) by pointwise multiplication via the first coordinate projection:
\[
f_1\cdot f_2\bigl(\tot_1,(\tot_2,\eta)\bigr) \defeq f_1(\tot_1)\cdot f_2\bigl(\tot_1,(\tot_2,\eta)\bigr)
\]
for \(f_1\in\CONT_0(\Tot)\), \(f_2\in\CONT_0(U,S_U)\subseteq \CONT_0(\Tot\times_\Base\Tvert\Tot^\circ,S_U)\), \(\tot_1\in\Tot\), \(\tot_2\in\Tot^\circ\), and \(\xi\in \VB_{\tot_2}\).  The essentially unitary operator for our Kasparov cycle is given by Cliford multiplication with \(\xi/\sqrt{1+\norm{\xi}^2}\) at \(\bar\delta(\tot,\xi)\in U\), where \(\tot\in\Tot\) and \(\xi\in\VB'_\tot\).

To get a symmetric Kasparov dual, we also need
\[
\widetilde{\Theta}\in \KK^{\Grd\ltimes \Tot}_*(\CONT_0(\Tot), \CONT_0(\Tot) \otimes \underline{\Dual}),
\]
where the underlined factor is the one whose \(\Tot\)\nb-structure we use.  We get~\(\widetilde{\Theta}\) from~\(\Theta\) by changing the action of \(\CONT_0(\Tot)\) to
\[
f_1\cdot f_2\bigl(\tot_1,(\tot_2,\xi)\bigr) \defeq f_1\bigl(r(\tot_2)\bigr) \cdot f_2\bigl(\tot_1,(\tot_2,\xi)\bigr),
\]
where \(r\colon \Tot^\circ\to\Tot\) is the retraction described above, and leaving everything else as before; the new representation of \(\CONT_0(\Tot)\) is \(\Tot\)\nb-linear for the \(\Tot\)\nb-structure on the second tensor factor \(\CONT_0(\Tvert\Tot^\circ)\).

\begin{theorem}
  \label{the:tangent_dual_symmetric}
  Let~\(\Tot\) be a bundle of smooth \(n\)\nb-dimensional manifolds with boundary over~\(\Base\) with a fibrewise smooth \(\Grd\)\nb-action, let~\(\VB\) be a \(\Grd\)\nb-equivariant vector bundle over~\(\Tot^\circ\) of dimension~\(k\), and let~\(\Spinor\) be a \(\Grd\)\nb-equivariant \(\K\)\nb-orientation \textup(complex spinor bundle\textup) for the vector bundle \(\Tvert\Tot^\circ\oplus\VB\) over~\(\Tot^\circ\).  Let \(\Dual\defeq \CONT_0(\VB)\) and let \(D=(p_\VB)_!\), \(\Theta=\delta_!\), and \(\widetilde{\Theta}\) as described above.  Then \((\Dual,D,\Theta,\widetilde{\Theta})\) is a \(-(k+n)\)\nb-dimensional local symmetric Kasparov dual for~\(\Tot\).  Hence there are natural isomorphisms
  \begin{align*}
    \KK^{\Grd\ltimes \Tot}_{*}\bigl(p_\Tot^*(A),p_\Tot^*(B)\bigr)
    &\cong \KK^\Grd_{*+k+n}(\CONT_0(\VB)\otimes A,B),\\
    \KK^{\Grd\ltimes \Tot}_{*+k+n}(p_\Tot^*(A), \CONT_0(\VB)\otimes B) &\cong \KK^\Grd_{*}(\CONT_0(\Tot)\otimes A,B)
  \end{align*}
  for all \(\Grd\)\nb-\(\Cst\)\nb-algebras \(A\) and~\(B\) and
  \[
  \K_{*+k+n}\bigl(\Grd\ltimes \CONT_0(\VB,B)\bigr) \cong \varinjlim_{\Other\in I_\Tot} \KK^\Grd_{*}(\CONT_0(\Other),B)
  \]
  for all \(\Grd\)\nb-\(\Cst\)\nb-algebras~\(B\).
\end{theorem}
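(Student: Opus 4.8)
The plan is to reduce everything to the verification of the axioms of a local symmetric Kasparov dual. Once we know that the quadruple $(\Dual,D,\Theta,\widetilde{\Theta})$ described above satisfies the five conditions of Definition~\ref{def:symmetric_Kasparov_dual} together with the locality requirement of Definition~\ref{def:local_sym_dual}, the first displayed isomorphism is Theorem~\ref{the:first_duality}, the second is Theorem~\ref{the:second_duality_general} applied to $A=p_\Tot^*(A_0)$ (with $\widetilde{\Theta}\otimes\ID_{A_0}$ in the role of $\widetilde{\Theta}_A$), and the last one combines Theorem~\ref{the:second_duality_general_local}, Theorem~\ref{the:RKK_via_crossed} and Theorem~\ref{the:BC-domain}. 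So the whole content is the assertion that $(\Dual,D,\Theta,\widetilde{\Theta})$ is a local symmetric dual; the dimension count $-(k+n)$ is immediate from $\dim\VB=k$ and $\dim_{\text{fibre}}\Tot=n$.

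First I would develop the $\Grd$\nb-equivariant wrong\nb-way functoriality needed: to a $\Grd$\nb-equivariant, fibrewise $\K$\nb-oriented $\CONT^{0,\infty}$\nb-map $f$ between bundles of smooth manifolds one assigns a class $f_!$, and these are functorial under composition, compatible with open embeddings (where $f_!$ is an ideal inclusion composed with a Thom isomorphism), with fibrewise pull-back, and with restriction to $\Grd$\nb-compact subsets. The non\nb-equivariant smooth case is \cite{Connes-Skandalis:Longitudinal}; passing to proper groupoid actions and to ``smooth along the fibres only'' introduces nothing new once the constructions of Section~\ref{sec:bundles_smooth} are available. In particular, completeness of the fibrewise metric (Lemma~\ref{lem:nice_Riemannian_metric}) is exactly what makes the family of Dirac operators defining $D=(p_\VB)_!$ essentially self-adjoint, so that it is a legitimate Kasparov cycle. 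With this calculus, condition~\eqref{eq:Kasparov_dual_1}, i.e.\ $\Theta\otimes_\Dual D=\ID_\UNIT$, becomes the identity $\delta_!\otimes(\text{bundle projection }\Tot\times_\Base\VB\to\Tot)_! = (\ID_\Tot)_!$, since $\Tot\xrightarrow{\delta}\Tot\times_\Base\VB\to\Tot$ is the identity; and the third condition of Definition~\ref{def:symmetric_Kasparov_dual}, comparing $\forget_\Tot(\Theta)$ with $\forget_\Tot(\widetilde{\Theta})$, holds essentially by construction, because $\widetilde{\Theta}$ uses the same Hilbert module and operator as $\Theta$ and only re\nb-interprets the $\CONT_0(\Tot)$\nb-action through the retraction~$r$.

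The heart of the proof is the second condition, cocommutativity of $\comul=T_\Dual(\Theta)$ up to the sign $(-1)^{k+n}$, equivalently~\eqref{eq:other_third_condition}, $\comul\otimes_{\Dual\otimes\Dual}(D\otimes\ID_\Dual)=(-1)^{k+n}\ID_\Dual$. Using the tubular neighbourhood $\bar\delta$ of the diagonal together with parallel transport, I would identify $\comul$ with a wrong\nb-way class $\delta'_!$ for the $\Grd$\nb-equivariant map $\delta'\colon\VB\to\VB\times_\Base\VB$ over~$\Tot$ combining the fibrewise diagonal with the zero section; flip\nb-invariance of $\comul$ up to sign is then the manifest symmetry of $\delta'$ in its two target factors. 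For~\eqref{eq:other_third_condition} one composes $\comul$ with $D\otimes\ID_\Dual$ and recognises the result as the Thom isomorphism for the fibrewise $\K$\nb-oriented bundle $\Tvert\Tot^\circ\oplus\VB$ composed with the Dolbeault/Dirac family on its total space; this composite is the identity by the fibrewise, $\Grd$\nb-equivariant Thom isomorphism (Bott periodicity), exactly as in \cite{Kasparov:Novikov}*{\S4}, the symbol\nb-level homotopy there going through verbatim once one works fibrewise and averages over the proper groupoid. Conditions~\eqref{eq:Kasparov_dual_2} and the second\nb-duality condition~\eqref{eq:second_duality_2} for trivial bundles $A=p_\Tot^*(A_0)$ then follow, via Remarks~\ref{rem:KK_first_non-trivial} and~\ref{rem:second_duality_general}, from functoriality of $T_\Dual$ and $\forget_\Tot$ and the explicit description of $\Theta,\widetilde{\Theta}$; and locality holds because the supports of $\Theta$ and of $\widetilde{\Theta}$ lie in the image of $\bar\delta$, on which both coordinate projections $\Tot\times_\Base\VB\rightrightarrows\Tot$ are proper, so the factorisations required by Theorems~\ref{the:KK_first_non-trivial_local} and~\ref{the:second_duality_general_local} exist by the argument at the end of Section~\ref{sec:modify_support_first}.

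The main obstacle is the verification of~\eqref{eq:other_third_condition}: this is the one genuinely $\K$\nb-theoretic input, a Thom isomorphism/Bott periodicity statement, and although the symbol computation is classical, carrying it out fibrewise over a base that is merely a topological space, and $\Grd$\nb-equivariantly for a proper groupoid action, requires the care supplied by Section~\ref{sec:bundles_smooth} (equivariant collar, complete fibrewise metric, equivariant connection on~$\VB$). The boundary is harmless, since $\Tot$ and $\Tot^\circ$ have equivalent categories $\RKKcat_\Grd$ (Remark~\ref{rem:boundary_no_problem}) while $\forget_\Tot$ still records the difference between $\CONT_0(\Tot)$ and $\CONT_0(\Tot^\circ)$ correctly --~which is exactly why $\widetilde{\Theta}$ is built with the retraction~$r$~-- and the lack of smoothness along the base never enters because all differential\nb-geometric constructions are fibrewise.
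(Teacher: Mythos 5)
Your overall strategy is the same as the paper's: develop an equivariant wrong-way calculus, check~\eqref{eq:Kasparov_dual_1} by $\pi_1!\circ\delta!=(\pi_1\delta)!$, check~\eqref{eq:Kasparov_dual_3} by a tubular-neighbourhood homotopy and the sign of the flip, and observe that $\Theta$ and $\widetilde{\Theta}$ differ only in the $\CONT_0(\Tot)$-action. That part is consistent with the paper.

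The gap is in how you handle~\eqref{eq:Kasparov_dual_2} and~\eqref{eq:second_duality_2}. You say these ``follow via Remarks~\ref{rem:KK_first_non-trivial} and~\ref{rem:second_duality_general} from functoriality of $T_\Dual$ and $\forget_\Tot$.'' But those remarks only establish the \emph{images} of these conditions under $T_\Dual$ (namely~\eqref{eq:KK_first_1b}) respectively under $\forget_\Tot$ (namely~\eqref{eq:KK_second_1b}); the remarks explicitly state that to deduce~\eqref{eq:KK_first_non-trivial_4} or~\eqref{eq:second_duality_2} one needs $T_\Dual$, resp.\ $\forget_\Tot$, to be injective on the relevant groups. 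That injectivity is a \emph{consequence} of the duality isomorphism one is trying to prove, so your argument is circular as written. The locality you invoke (both coordinate projections proper on the support of $\Theta$) is the right locality for the factorisations required by Theorems~\ref{the:KK_first_non-trivial_local} and~\ref{the:second_duality_general_local}, but it does not by itself close this circularity.

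What the paper actually does to close it is different for each condition. For~\eqref{eq:KK_first_non-trivial_4} (hence~\eqref{eq:Kasparov_dual_2}) it writes down an explicit fibrewise homotopy over $U$, $(\pi_1^*A)_u\to A_{\pi_2(t\cdot u)}\to B$ for $t\in[0,1]$, which is already $\Grd\ltimes(\Tot\times_\Base\Tot)$-equivariant on the nose -- no injectivity needed. For~\eqref{eq:second_duality_2} it uses Remark~\ref{rem:second_duality_general} only to produce a homotopy witnessing the $\forget_\Tot$-image of the equation, then observes that this homotopy is supported near the diagonal, and finally invokes Lemma~\ref{lem:almost_equivariance_enough}: cycles supported in the tubular neighbourhood $U'$ of the diagonal deformation-retract onto $\Grd\ltimes\Tot$-equivariant cycles, using the strong local triviality isomorphism $\alpha'$ and the fact that $U'\to\Tot$ is a deformation retraction. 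This lemma is the genuinely new ingredient that lets one lift an equation in $\KK^\Grd$ after $\forget_\Tot$ to the desired equation in $\KK^{\Grd\ltimes\Tot}$; it is absent from your proposal and is the piece you would need to supply.
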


In particular, we may take here \(\VB=\Tot^\circ\) if~\(\Tot\) is \(\K\)\nb-oriented in the sense that the vector bundle~\(\Tvert\Tot^\circ\) over~\(\Tot^\circ\) is \(\K\)\nb-oriented, and we may always take \(\VB=\Tvert\Tot^\circ\) (with an appropriate ``real'' structure in the ``real'' case, see Remark~\ref{rem:real_case}).

\begin{corollary}
  \label{cor:KK_as_RK}
  Let~\(\Grd\) be a locally compact groupoid with object space~\(\Base\), let~\(\Tot\) be a proper \(\Grd\)\nb-space, and let~\(\Other\) be any \(\Grd\)\nb-space.  Suppose that~\(\Tot\) is a bundle of smooth manifolds with~\(\Grd\) acting fibrewise smoothly.  Then there is a natural isomorphism
  \[
  \KK^\Grd_*\bigl(\CONT_0(\Tot),\CONT_0(\Other)\bigr) \cong \KK^{\Grd\ltimes\Tot}_*\bigl(\CONT_0(\Tot), \CONT_0(\VB\times_\Base \Other)\bigr) \eqdef \RK_{\Grd,\Tot}^*\bigl(\VB\times_\Base \Other),
  \]
  where the last group is the \(\Grd\)\nb-equivariant \(\K\)\nb-theory of \(\VB\times_\Base\Other\) with \(\Tot\)\nb-compact support.
\end{corollary}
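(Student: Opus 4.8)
The plan is to derive this as a specialisation of the second Poincar\'e duality isomorphism of Theorem~\ref{the:tangent_dual_symmetric}. The hypotheses of the corollary put us exactly in the situation of that theorem: \(\Tot\) is a proper \(\Grd\)\nb-space which is a bundle of smooth manifolds with a fibrewise smooth \(\Grd\)\nb-action, and a bundle without boundary is the special case \(\partial\Tot=\emptyset\), so \(\Tot^\circ=\Tot\). The auxiliary data --~a \(\Grd\)\nb-equivariant vector bundle~\(\VB\) over~\(\Tot^\circ\) and a \(\Grd\)\nb-equivariant \(\K\)\nb-orientation~\(\Spinor\) for \(\Tvert\Tot^\circ\oplus\VB\)~-- is the data implicitly fixed together with~\(\VB\) in the statement; one may always take \(\VB\defeq\Tvert\Tot^\circ\) with the canonical complex spinor bundle, carrying the ``real'' twist of Remark~\ref{rem:real_case} in the real case. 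Theorem~\ref{the:tangent_dual_symmetric} then supplies a local symmetric Kasparov dual with underlying \(\Grd\ltimes\Tot\)-\(\Cst\)\nb-algebra \(\Dual\defeq\CONT_0(\VB)\), and in particular the natural second duality isomorphism
\[
\KK^{\Grd\ltimes\Tot}_*\bigl(p_\Tot^*(A),\CONT_0(\VB)\otimes B\bigr)\cong\KK^\Grd_*\bigl(\CONT_0(\Tot)\otimes A,B\bigr)
\]
for all \(\Grd\)\nb-\(\Cst\)\nb-algebras \(A\) and~\(B\), where the degree shift present in Theorem~\ref{the:tangent_dual_symmetric} is suppressed in the grading convention, consistently with~\eqref{eq:symbols_duality}.

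I would then set \(A\defeq\UNIT=\CONT_0(\Base)\) and \(B\defeq\CONT_0(\Other)\). On the right, \(\CONT_0(\Tot)\otimes\UNIT\cong\CONT_0(\Tot)\) since \(\CONT_0(\Base)\) is the tensor unit of \(\KKcat_\Grd\), so the right hand side becomes \(\KK^\Grd_*\bigl(\CONT_0(\Tot),\CONT_0(\Other)\bigr)\); and \(p_\Tot^*(\UNIT)\cong\CONT_0(\Tot)\). On the left, \(\VB\) is a space over~\(\Tot\) via the bundle projection \(\VB\to\Tot^\circ\) followed by the retraction \(r\colon\Tot^\circ\to\Tot\), hence a space over~\(\Base\), and~\(\Other\) is a \(\Grd\)\nb-space over~\(\Base\); by Example~\ref{exa:pull-back_space} there is a \(\Grd\ltimes\Tot\)-equivariant isomorphism \(\CONT_0(\VB)\otimes_\Base\CONT_0(\Other)\cong\CONT_0(\VB\times_\Base\Other)\), where the \(\Tot\)\nb-structure on both sides is the one carried by the \(\VB\)\nb-factor. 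Substituting these identifications into the duality isomorphism yields
\[
\KK^\Grd_*\bigl(\CONT_0(\Tot),\CONT_0(\Other)\bigr)\cong\KK^{\Grd\ltimes\Tot}_*\bigl(\CONT_0(\Tot),\CONT_0(\VB\times_\Base\Other)\bigr)=\RK^*_{\Grd,\Tot}(\VB\times_\Base\Other),
\]
the last equality being the definition of the \(\Grd\)\nb-equivariant \(\K\)\nb-theory with \(\Tot\)\nb-compact support recalled from~\cite{Emerson-Meyer:Equivariant_K}. Naturality in~\(\Other\) follows from the naturality of the second duality isomorphism in the variable~\(B\) (Theorem~\ref{the:second_duality_general}) together with the functoriality of \(\Other\mapsto\CONT_0(\Other)\) and \(\Other\mapsto\CONT_0(\VB\times_\Base\Other)\) for proper \(\Grd\)\nb-equivariant maps.

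I do not expect a substantive obstacle, since the argument is a pure specialisation of Theorem~\ref{the:tangent_dual_symmetric}; the only step requiring genuine care is the bookkeeping of \(\Tot\)\nb-structures, namely checking that the \(\Tot\)\nb-structure on \(\CONT_0(\VB)\) used to construct the dual (the one coming from \(r\colon\Tot^\circ\to\Tot\)) is precisely the one for which \(\CONT_0(\VB)\otimes_\Base\CONT_0(\Other)\) is the \(\Grd\ltimes\Tot\)-\(\Cst\)\nb-algebra \(\CONT_0(\VB\times_\Base\Other)\), and that the resulting group agrees with the definition of \(\RK^*_{\Grd,\Tot}\).
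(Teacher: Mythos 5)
Your proof is correct and follows the paper's own argument exactly: both specialise the second Poincar\'e duality isomorphism of Theorem~\ref{the:tangent_dual_symmetric} to \(A=\UNIT\) and \(B=\CONT_0(\Other)\), then identify \(\CONT_0(\VB)\otimes\CONT_0(\Other)\cong\CONT_0(\VB\times_\Base\Other)\) and invoke the definition of \(\RK^*_{\Grd,\Tot}\) from~\cite{Emerson-Meyer:Equivariant_K}. Your additional remarks on the empty-boundary case and the bookkeeping of \(\Tot\)\nb-structures are accurate and make explicit a point the paper leaves implicit.
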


\begin{proof}
  Put \(A=\UNIT\) and \(B=\CONT_0(\Other)\) in Theorem~\ref{the:tangent_dual_symmetric} and use the second Poincar\'e duality to get the first isomorphism.  The right hand side is exactly the definition of the \(\Grd\)\nb-equivariant \(\K\)\nb-theory with \(\Tot\)\nb-compact support in~\cite{Emerson-Meyer:Equivariant_K}.
\end{proof}

The equivariant \(\K\)\nb-theory groups that appear in Corollary~\ref{cor:KK_as_RK} are discussed in detail in~\cite{Emerson-Meyer:Equivariant_K}.  Corollary~\ref{cor:KK_as_RK} is used in~\cite{Emerson-Meyer:Correspondences} to describe suitable Kasparov groups by geometric cycles as in~\cite{Baum-Block:Bicycles}.  Theorem~\ref{the:tangent_dual_symmetric} will be proved in Section~\ref{sec:proof_theorem}, together with a generalisation to non-trivial bundles~\(A\) over~\(\Tot\).  Some situations involving foliation groupoids are discussed in Section~\ref{sec:exa_foliations}.

\subsection{Duality for strongly locally trivial bundles}
\label{sec:tangent_locally_trivial}

We define a class of \(G\ltimes\Tot\)-\(\Cst\)\nb-algebras for which we establish first and second Poincar\'e duality isomorphisms, extending results in~\cite{Echterhoff-Emerson-Kim:Duality} in two aspects: we allow \emph{bundles} of smooth manifolds \emph{with boundary} instead of smooth manifolds.

For the purposes of the following definition, we replace \(\delta\colon \Tot\to \Tot\times \VB\) by the diagonal embedding \(\delta'\colon \Tot\to \Tot\times_\Base\Tot\).  Let \(U'\subseteq \Tot\times_\Base\Tot\) be the image of~\(U\) under the projection \(\Tot\times_\Base \VB\to\Tot\times_\Base\Tot\).  Let \(\pi'_1\colon U'\to\Tot\) and \(\pi'_2\colon U'\to\Tot\) be the coordinate projections.  Let \(p_U^{U'}\colon U\to U'\) be the canonical projection, then \(\pi'_j\circ p_U^{U'}=\pi_j\) for \(j=1,2\).

\begin{definition}
  \label{def:strongly_locally_trivial}
  A \(\Grd\ltimes\Tot\)-\(\Cst\)\nb-algebra~\(A\) is called \emph{strongly locally trivial} if \((\pi'_1)^*(A)\) and \((\pi'_2)^*(A)\) are isomorphic as \(\Grd\ltimes U'\)-\(\Cst\)\nb-algebras via some isomorphism
  \[
  \alpha'\colon (\pi'_1)^*(A) \xrightarrow{\cong} (\pi'_2)^*(A) \qquad\text{in \(\Cstarcat_{\Grd\ltimes U'}\),}
  \]
  whose restriction to the diagonal \(\Tot\subseteq U'\) is the identity map on~\(A\).
\end{definition}

What this definition provides is a \(\Grd\)\nb-equivariant local parallel transport on the bundle~\(A\).  Not surprisingly, this exists provided~\(A\) is a smooth bundle with a suitable connection (see~\cite{Echterhoff-Emerson-Kim:Duality}, strongly locally trivial bundles are called \emph{feasible} there).  On finite-dimensional vector bundles, the connection between local parallel transport and connections is discussed in~\cite{Kubarski-Teleman:Direct_connections}.

It can be shown that~\(\alpha'\) is unique up to homotopy if it exists, using that the coordinate projections on~\(U'\) are homotopy equivalences \(U'\to\Tot\).  By the way, the following constructions still work if the isomorphism~\(\alpha'\) only exists in \(\KKcat_{\Grd\ltimes U'}\).

\begin{example}
  \label{exa:trivial_slt}
  If \(A=\CONT_0(\Tot,A_0) = p_\Tot^*(A_0)\) for some \(\Grd\)\nb-\(\Cst\)\nb-algebra~\(A_0\), that is, \(A\) is trivial along the fibres of \(\Tot\to\Base\), then~\(A\) is strongly locally trivial because
  \[
  (\pi'_1)^*(A) \cong (p_\Tot\pi'_1)^*(A_0) = (p_{U'})^*(A_0) = (p_\Tot\pi'_2)^*(A_0) \cong (\pi'_2)^*(A).
  \]
  Here \(p_{U'}=p_\Tot\pi'_1=p_\Tot\pi'_2\colon U'\to\Base\) is the canonical projection.
\end{example}

Let~\(A\) be a strongly locally trivial \(\Grd\ltimes\Tot\)-\(\Cst\)\nb-algebra.  We have decorated everything in Definition~\ref{def:strongly_locally_trivial} with primes because we will mainly use the corresponding isomorphisms on~\(U\) henceforth: we can pull back the isomorphism~\(\alpha'\) over~\(U'\) to an isomorphism
\[
\alpha\defeq (p_U^{U'})^*(\alpha')\colon \pi_1^*(A) \to \pi_2^*(A) \qquad \text{in \(\Cstarcat_{\Grd\ltimes U}\) via \(p_U^{U'}\).}
\]
Conversely, since~\(p_U^{U'}\) is a retraction, the isomorphism~\(\alpha\) forces~\(\alpha'\) to exist.

\begin{notation}
  \label{note:Tot-structure}
  In the following computations, it is important to remember whether we view~\(U\) as a space over~\(\Tot\) via \(\pi_1\) or~\(\pi_2\).  We write \(U_{\pi_1}\) and~\(U_{\pi_2}\) for the corresponding spaces over~\(\Tot\).  Similarly for~\(U'\).
\end{notation}

\begin{definition}
  \label{def:small_theta}
  Let \(\vartheta\in\KK^{\Grd\ltimes\Tot}_{n+k}\bigl(\CONT_0(\Tot), \CONT_0(U_{\pi_1})\bigr)\) be the composite of the class of the Thom isomorphism in \(\KK^\Grd_{n+k}\bigl(\CONT_0(\Tot), \CONT_0(\VB')\bigr)\) with the isomorphism \(\CONT_0(\VB')\cong\CONT_0(U_{\pi_1})\) from the tubular neighbourhood; we choose the tubular neighbourhood as in the construction of~\(\Theta\), so that this isomorphism is \(\Tot\)\nb-linear if~\(U\) is viewed as a space over~\(\Tot\) via \(\pi_1\colon U\to \Tot\).  Let \(\tilde\vartheta\in \KK^{\Grd\ltimes\Tot}_{n+}\bigl(\CONT_0(\Tot), \CONT_0(U_{\pi_2})\bigr)\) be the variant where we change the action of \(\CONT_0(\Tot)\) so as to get a cycle that is \(\Tot\)\nb-linear if~\(U\) is viewed as a space over~\(\Tot\) via~\(\pi_2\), as in the construction of~\(\widetilde{\Theta}\).
\end{definition}

By construction, we get \(\Theta\) and~\(\widetilde{\Theta}\) out of \(\vartheta\) and~\(\tilde\vartheta\) by composing with the class of the embedding \(\CONT_0(U) \to \CONT_0(\Tot\times_\Base \VB)\).  To get the \(\K\)-oriented classes \(\Theta_A\) and \(\widetilde{\Theta}_A\) we also bring in the isomorphism \(\alpha\colon \pi_1^*(A)\to \pi_2^*(A)\) over~\(U\) as follows.

\begin{definition}
  \label{def:theta_A}
  Let \(\Theta_A\in \KK^{\Grd\ltimes\Tot}_{n+k}\bigl(A, p_\Tot^*(\Dual\otimes_\Tot A)\bigr)\) be the composition
  \[
  A \cong \CONT_0(\Tot) \otimes_\Tot A \xrightarrow{\vartheta\otimes_\Tot\ID_A} \CONT_0(U_{\pi_1}) \otimes_\Tot A = \pi_1^*(A) \xrightarrow[\cong]{\alpha} \pi_2^*(A) \xrightarrow{\subset} p_\Tot^*(\CONT_0(\VB)\otimes_\Tot A).
  \]
  Let \(\widetilde{\Theta}_A\in \KK^{\Grd\ltimes\Tot}_{n+k}(A, A\otimes\underline{\Dual}) \cong \KK^{\Grd\ltimes\Tot}_{n+k}(A, \underline{\Dual}\otimes A)\) be the composition
  \[
  A \cong \CONT_0(\Tot) \otimes_\Tot A \xrightarrow{\tilde\vartheta\otimes_\Tot\ID_A} \CONT_0(U_{\pi_2}) \otimes_\Tot A = \pi_2^*(A) \xrightarrow[\cong]{\alpha^{-1}} \pi_1^*(A) \xrightarrow{\subset} A \otimes \underline{\Dual}.
  \]
\end{definition}

\begin{theorem}
  \label{the:PD_tangent}
  Let \(\Tot\to\Base\) be a bundle of smooth manifolds with boundary and let~\(A\) be a strongly locally trivial \(\Grd\ltimes\Tot\)-\(\Cst\)\nb-algebra.  Then we get Poincar\'e duality isomorphisms of the first and second kind:
  \begin{align*}
    \KK^{\Grd\ltimes \Tot}_*(A,\CONT_0(\Tot)\otimes B) &\cong
    \KK^\Grd_*(\CONT_0(\VB)\otimes_\Tot A,B),\\
    \KK^{\Grd\ltimes \Tot}_*(A,\CONT_0(\VB)\otimes B) &\cong \KK^\Grd_*(A,B)
  \end{align*}
  for all \(\Grd\)\nb-\(\Cst\)\nb-algebras~\(B\).  The maps are as described in Theorems \textup{\ref{the:KK_first_non-trivial}} and \textup{\ref{the:second_duality_general}}.  If~\(I_\Tot\) denotes the directed set of \(\Grd\)\nb-compact subsets of~\(\Tot\), then we get isomorphisms
  \begin{align*}
    \varinjlim_{\Other\in I_\Tot} \KK^{\Grd\ltimes \Tot}_*(A|_{\Other}, \CONT_0(\Tot)\otimes B) &\cong
    \varinjlim_{\Other\in I_\Tot} \KK^\Grd_*(\CONT_0(\VB)\otimes_\Tot A|_{\Other}, B),\\
    \varinjlim_{\Other\in I_\Tot} \KK^{\Grd\ltimes \Tot}_*(A|_{\Other}, \CONT_0(\VB)\otimes B) &\cong
    \varinjlim_{\Other\in I_\Tot} \KK^\Grd_*(A|_{\Other},B).
  \end{align*}
\end{theorem}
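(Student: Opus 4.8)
The plan is to derive all four isomorphisms from the verification criteria of Theorems~\ref{the:KK_first_non-trivial} and~\ref{the:second_duality_general} and their inductive\nb-limit counterparts Theorems~\ref{the:KK_first_non-trivial_local} and~\ref{the:second_duality_general_local}, applied to the explicit classes $\Theta_A$ and $\widetilde{\Theta}_A$ of Definition~\ref{def:theta_A}. By Theorem~\ref{the:tangent_dual_symmetric}, which is proved in Section~\ref{sec:proof_theorem}, the quadruple $(\Dual,D,\Theta,\widetilde{\Theta})$ is a local symmetric Kasparov dual for~$\Tot$; in particular $(\Dual,D,\Theta)$ is a Kasparov dual, so $\Theta$ and~$D$ satisfy the hypotheses~\eqref{eq:Kasparov_dual_1} and~\eqref{eq:other_third_condition} of those theorems. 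It therefore remains to check that $\Theta_A$ makes the diagram~\eqref{eq:KK_first_non-trivial_3} commute and satisfies~\eqref{eq:KK_first_non-trivial_4}, and that $\widetilde{\Theta}_A$ satisfies the analogous conditions~\eqref{eq:second_duality_1} and~\eqref{eq:second_duality_2}; the inverse maps are then automatically the ones described in Theorems~\ref{the:KK_first_non-trivial} and~\ref{the:second_duality_general}. For a trivial bundle $A=p_\Tot^*(A_0)$ (Example~\ref{exa:trivial_slt}) the classes $\Theta_A$ and $\widetilde{\Theta}_A$ reduce to $\Theta$ and $\widetilde{\Theta}$ tensored with $A_0$, and the conditions to be verified become the Kasparov\nb-dual conditions~\eqref{eq:Kasparov_dual_2} and~\eqref{eq:Kasparov_dual_3}; thus the present argument is a bundle\nb-valued version of the one establishing Theorem~\ref{the:tangent_dual_symmetric}.

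The verification is geometric and takes place in a fibrewise\nb-bounded neighbourhood of the diagonal. By construction $\vartheta$ and $\tilde\vartheta$ are represented by Kasparov cycles whose underlying Hilbert modules are supported on the tubular neighbourhood~$U$; under the diffeomorphism $\bar\delta$ this $U$ is identified with the total space of $\VB'|_\Tot$, so each of its points has its two $\Tot$\nb-coordinates at fibrewise distance less than~$1$, and both coordinate projections of $\Tot\times_\Base\Tot$ are proper on the locus where the symbol fails to be invertible. The class $\Theta_A$ differs from the pull\nb-back of $\Theta$ only through the insertion of the parallel transport $\alpha\colon \pi_1^*(A)\to\pi_2^*(A)$, an isomorphism in $\Cstarcat_{\Grd\ltimes U}$ restricting to the identity on the diagonal. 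Since one may homotope all cycles into an arbitrarily small neighbourhood of the diagonal, where $\alpha$ is homotopic to the identity, the identities \eqref{eq:KK_first_non-trivial_3} and~\eqref{eq:KK_first_non-trivial_4} follow from the trivial\nb-bundle case. The conditions~\eqref{eq:second_duality_1} and~\eqref{eq:second_duality_2} for $\widetilde{\Theta}_A$ are checked in the same way, with $\tilde\vartheta$ and $\alpha^{-1}$ in place of $\vartheta$ and $\alpha$; the only structural difference is that the $\Tot$\nb-structure of the relevant cycle is read off from the second tubular coordinate, which is precisely why the symmetric dual carries a separate class~$\widetilde{\Theta}$.

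For the inductive\nb-limit isomorphisms we invoke Theorems~\ref{the:KK_first_non-trivial_local} and~\ref{the:second_duality_general_local}. Given $\Other\in I_\Tot$, restrict the coefficient carrying the dual in $\Theta_A$ (respectively $\widetilde{\Theta}_A$) to~$\Other$. Because the supporting neighbourhood~$U$ lies within fibrewise distance~$1$ of the diagonal, this restriction is supported over the set of points whose other $\Tot$\nb-coordinate lies within fibrewise distance at most~$2$ of~$\Other$; by the completeness of the $\Grd$\nb-invariant fibrewise metric of Lemma~\ref{lem:nice_Riemannian_metric}, the closed fibrewise $2$\nb-neighbourhood of a $\Grd$\nb-compact set is again $\Grd$\nb-compact, so there is $\Other'\in I_\Tot$ with $\Other'\supseteq\Other$ through which the restricted class factors in the source variable. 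The classes $\Theta_A|_\Other$ and $\widetilde{\Theta}_A|_\Other$ so obtained satisfy the local compatibility diagrams and identities by the same near\nb-diagonal computations, which yields the last two isomorphisms.

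The main obstacle is the naturality identities~\eqref{eq:KK_first_non-trivial_4} and~\eqref{eq:second_duality_2}: one must show that the parallel transport~$\alpha$ interacts correctly with the Kasparov products defining $\Theta$ and $\widetilde{\Theta}$. The technique is to homotope all cycles into a neighbourhood of the diagonal small enough that $\alpha$ is connected to the identity within that neighbourhood, reducing the claims to the trivial\nb-bundle case; making this reduction precise, and verifying that the support estimates needed for the inductive\nb-limit statements survive it, is where the real work lies. The remaining steps are the routine manipulations with exterior products, Thom isomorphisms and wrong\nb-way maps already used for Theorem~\ref{the:tangent_dual_symmetric}.
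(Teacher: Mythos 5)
Your overall strategy matches the paper's: verify the abstract criteria of Theorems~\ref{the:KK_first_non-trivial}, \ref{the:KK_first_non-trivial_local}, \ref{the:second_duality_general} and~\ref{the:second_duality_general_local} for the explicit classes $\Theta_A$ and $\widetilde{\Theta}_A$, using near-diagonal homotopies, the parallel transport $\alpha$, and the boundedness of the tubular neighbourhood to secure the factorisations through $\Grd$-compact sets $\Other'$. The verification of~\eqref{eq:KK_first_non-trivial_4} via the fibrewise contraction $u\mapsto\pi_2(t\cdot u)$ and the near-diagonal argument for~\eqref{eq:KK_first_non-trivial_3} are also in line with the paper.

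There is, however, a genuine gap in your treatment of~\eqref{eq:second_duality_2}. You say the second-duality conditions ``are checked in the same way'' with $\tilde\vartheta$ and $\alpha^{-1}$ in place of $\vartheta$ and $\alpha$, with the $\Tot$-structure read off from the second coordinate being ``the only structural difference.'' This underestimates the difficulty. The homotopy $\pi_2(t\cdot u)$ that works for~\eqref{eq:KK_first_non-trivial_4} is $\Grd\ltimes(\Tot\times_\Base\Tot)$-equivariant, so one forgets down to get equivariance with respect to whichever $\Tot$-structure is needed there. But the equality~\eqref{eq:second_duality_2} lives in $\KK^{\Grd\ltimes\Tot}_{*}(A,\underline{\Dual}\otimes\Dual\otimes B)$, with the $\Tot$-structure on the \emph{first} $\Dual$ factor, and the obvious homotopy is \emph{not} $\Tot$-linear for this structure — it moves the $\pi_2$-coordinate while the $\Tot$-structure is locked to it via $\underline{\Dual}$. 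This is precisely why the paper avoids a direct computation: it first observes (Remark~\ref{rem:second_duality_general}) that the already-established conditions yield the \emph{forgotten} identity~\eqref{eq:KK_second_1b}, and then lifts this to the $\Tot$-equivariant statement~\eqref{eq:second_duality_2} using Lemma~\ref{lem:almost_equivariance_enough}, the extension of \cite{Connes-Skandalis:Longitudinal}*{Lemma 2.2} showing that Kasparov cycles supported in the tubular neighbourhood $U'$ deformation-retract onto $\Grd\ltimes\Tot$-equivariant ones by composing with the $\Tot$-linear $^*$-homomorphism $A\to p^*_{\supp\mathcal{E}}(A)$ that the strong local triviality provides. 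Your phrase ``making this reduction precise is where the real work lies'' gestures at this, but the work is not merely in being precise; it requires the specific deformation retraction of Lemma~\ref{lem:almost_equivariance_enough}, without which a direct homotopy argument for~\eqref{eq:second_duality_2} would not close up $\Tot$-equivariantly. You should isolate this lemma as a separate step; the strong local triviality of $A$ is used there in an essential way, not just to define $\Theta_A$.

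Two small additional points: the paper defines $\Other'$ directly as the closure of $\{\tot\in\Tot \mid \exists\, \other\in\Other,\ (\tot,\other)\in U' \text{ or } (\other,\tot)\in U'\}$ rather than by a fibrewise distance-$2$ estimate (your bound is off by a factor anyway, since $\varrho<1$ gives neighbourhoods of radius $<1$, not $2$); and when you invoke Theorem~\ref{the:tangent_dual_symmetric} you should note that the paper proves Theorems~\ref{the:tangent_dual_symmetric} and~\ref{the:PD_tangent} simultaneously (the trivial bundle $A=p_\Tot^*(A_0)$ of Example~\ref{exa:trivial_slt} being a special case), so that in particular the verification of~\eqref{eq:Kasparov_dual_1} and~\eqref{eq:Kasparov_dual_3} via functoriality of wrong-way maps and the flip on the diagonal embedding is part of the same argument and not an independently available input.
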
  

In particular, we may take here \(\VB=\Tvert\Tot^\circ\) with the canonical \(\K\)\nb-orientation from the almost complex structure on \(\Tvert\Tot^\circ\oplus\Tvert\Tot^\circ\), or we may take \(\VB=\Tot^\circ\) if~\(\Tot\) is \(\K\)\nb-oriented.  In the latter case, we get duality isomorphisms
\begin{align*}
  \KK^{\Grd\ltimes \Tot}_*(A,\CONT_0(\Tot)\otimes B) &\cong
  \KK^\Grd_*(\CONT_0(\Tot^\circ)\otimes_\Tot A,B),\\
  \KK^{\Grd\ltimes \Tot}_*(A,\CONT_0(\Tot^\circ)\otimes B) &\cong \KK^\Grd_*(A,B).
\end{align*}

To make this more concrete, consider the special case where \(A=\UNIT_\Tot=\CONT_0(\Tot)\) and \(B=\UNIT=\CONT_0(\Base)\) and use the definitions in~\cite{Emerson-Meyer:Equivariant_K}.  The first duality isomorphisms identify the equivariant representable \(\K\)\nb-theory of~\(\Tot\),
\[
\RK_\Grd^*(\Tot) \defeq \KK^{\Grd\ltimes\Tot}_*\bigl(\CONT_0(\Tot),\CONT_0(\Tot)\bigr),
\]
with the equivariant locally finite \(\K\)\nb-homology of~\(\VB\),
\[
\K^{\Grd,\lf}_*(\VB) \defeq \KK^\Grd_*\bigl(\CONT_0(\VB), \CONT_0(\Base)\bigr),
\]
and the \(\K\)\nb-theory
\[
\K_\Grd^*(\Tot) \cong \varinjlim_{\Other\in I_\Tot}
\KK^{\Grd\ltimes\Tot}_* \bigl(\CONT_0(\Other),\CONT_0(\Tot)\bigr)
\]
with \(\varinjlim_{\Other\in I_\Tot} \KK^\Grd_*\bigl(\CONT_0(\VB|_\Other),\CONT_0(\Base)\bigr)\).

The second duality isomorphisms identify the equivariant representable \(\K\)\nb-theory of~\(\VB\) with \(\Tot\)\nb-compact support,
\[
\RK_{\Grd,\Tot}^*(\VB) \defeq \KK^{\Grd\ltimes\Tot}_*\bigl(\CONT_0(\Tot), \CONT_0(\VB)\bigr),
\]
with the equivariant locally finite \(\K\)\nb-homology of~\(\Tot\),
\[
\K^{\Grd,\lf}_*(\Tot) \defeq \KK^\Grd_*\bigl(\CONT_0(\Tot), \CONT_0(\Base)\bigr),
\]
and the equivariant \(\K\)\nb-theory \(\K^*_\Grd(\VB)\) with the equivariant \(\K\)\nb-homology of~\(\Tot\),
\[
\K^\Grd_*(\Tot) \defeq \varinjlim_{\Other\in I_\Tot} \KK^\Grd_*\bigl(\CONT_0(\Other), \CONT_0(\Base)\bigr).
\]
If we also drop the groupoid actions, we recover well-known classical constructions.

An important special case of \(\Cst\)\nb-algebra bundles over~\(\Tot\) are continuous trace \(\Cst\)\nb-algebras.  The question when they are strongly locally trivial is already discussed in~\cite{Echterhoff-Emerson-Kim:Duality}.  This is automatic in the non-equivariant case, but requires a mild condition about the group actions in general.  Assuming strong local triviality, we may decorate the statements above by such twists.

The strongly locally trivial \(\Cst\)\nb-algebras over~\(\Tot\) with fibre~\(\Comp(\Hils)\) form a group with respect to tensor product over~\(\Tot\).  If~\(A^*\) is the inverse of~\(A\), that is, \(A\otimes_\Tot A^*\) is Morita equivalent to \(\CONT_0(\Tot)\), then
\[
\KK^{\Grd\ltimes\Tot}_*(A\otimes_\Tot D,B) \cong \KK^{\Grd\ltimes\Tot}_*(D,A^*\otimes_\Tot B)
\]
by exterior tensor product over~\(\Tot\) with \(A^*\) and~\(A\).  In particular, we may identify
\begin{align*}
  \KK^{\Grd\ltimes\Tot}_*(A,\CONT_0(\Tot)\otimes B) &\cong \KK^{\Grd\ltimes\Tot}_*(\CONT_0(\Tot),A^*\otimes B),\\
  \KK^{\Grd\ltimes\Tot}_*(A,\CONT_0(\VB)\otimes B) &\cong \KK^{\Grd\ltimes\Tot}_*(\CONT_0(\Tot),A^*\otimes_\Tot \CONT_0(\VB)\otimes B)
\end{align*}
If we specialise to \(B=\UNIT\), then \(\KK^{\Grd\ltimes\Tot}_*(\CONT_0(\Tot),A^*\otimes B) = \KK^{\Grd\ltimes\Tot}_*(\CONT_0(\Tot),A^*)\) is the twisted representable \(\K\)\nb-theory of~\(\Tot\) with twist~\(A^*\).

When twisted equivariant \(\K\)\nb-theory is relevant, then we should replace \(\CONT_0(\VB)\) by the Cliford algebra dual as in~\cite{Kasparov:Novikov}.  Equivariant Bott periodicity shows that this Clifford algebra bundle is \(\KK^{\Grd\ltimes\Tot}\)-equivalent to \(\CONT_0(\VB)\), so that we also get such duality statements.  The Clifford algebra bundle is a strongly locally trivial continuous trace \(\Cst\)\nb-algebra over~\(\Tot\).  We refer to~\cite{Echterhoff-Emerson-Kim:Duality} for more details.

\subsection{Verifying the conditions for a duality}
\label{sec:proof_theorem}
We will now give the proof of Theorem \ref{the:PD_tangent}, showing that the quadruple \((\CONT_0(\VB), D, \Theta,\tilde\Theta)\) described above satisfies the conditions to give a duality.

First we verify~\eqref{eq:Kasparov_dual_1}, that is,
\[
\Theta \otimes_\Dual D = \ID_{\CONT_0(\Tot)}
\qquad\text{in \(\KK^{\Grd\ltimes\Tot}_0\bigl(\CONT_0(\Tot),
  \CONT_0(\Tot)\bigr)\).}
\]
In the framework of wrong-way maps, \eqref{eq:Kasparov_dual_1} amounts to the functoriality statement \(\pi_1! \circ \delta! = (\pi_1\circ\delta)!\).  Recall that
\[
\vartheta\in \KK^{\Grd\ltimes\Tot}_{n+k}\bigl(\CONT_0(\Tot), \CONT_0(U)\bigr) \cong \KK^{\Grd\ltimes\Tot}_{n+k}\bigl(\CONT_0(\Tot), \CONT_0(\VB'|_\Tot)\bigr)
\]
generates the Thom isomorphism for the vector bundle \(\VB'|_\Tot\cong U\) over~\(\Tot\) with respect to the \(\Grd\)\nb-equivariant \(\K\)\nb-orientation~\(\Spinor\).  Let~\(i\) be the embedding \(\CONT_0(U) \to \CONT_0(\Tot\times_\Base \VB)\), where we extend functions by~\(0\) outside~\(U\).  We factor \(\Theta=\vartheta\otimes_{\CONT_0(U)} [i]\).

We have \(\Theta\otimes_\Dual D = \Theta \otimes_{\Tot,\Dual} p_\Tot^*(D)\).  Recall that~\(D\) is the class in Kasparov theory associated to the family of Dirac operators on the fibres of \(\VB\to\Base\).  Hence \(p_\Tot^*(D)\) is the class in Kasparov theory associated to the family of Dirac operators on the fibres of \(\pi_1\colon \Tot\times_\Base\VB\to\Tot\).  A routine computation with symbols shows that composing \(p_\Tot^*(D)\) with~\(i\) simply restricts everything to~\(U\), so that we get the class in \(\KK^{\Grd\ltimes\Tot}_{-n-k}\bigl(\CONT_0(\VB'|_\Tot), \CONT_0(\Tot)\bigr)\) of the family of Dirac operators on the fibres of \(U\subseteq\Tot\times_\Base\VB\).

But \(U\cong\VB'|_\Tot\) is the total space of a \(\K\)\nb-oriented vector bundle over~\(\Tot\), and the family of Dirac operators and the class~\(\vartheta\) are inverse to each other, implementing the Thom isomorphism \(\Tot \sim \VB'|_\Tot\) in \(\KK^{\Grd\ltimes\Tot}\).  This goes back to Gennadi Kasparov~\cite{Kasparov:Operator_K}, and a simple proof in the groupoid setting can be found in \cite{LeGall:KK_groupoid}*{\S7.3.2}.  This finishes the proof of~\eqref{eq:Kasparov_dual_1}.

Next we check~\eqref{eq:Kasparov_dual_3}, that is, \(\nabla \otimes_{\Dual\otimes\Dual} \flip = (-1)^{n+k}\nabla\) for \(\nabla\defeq T_\Dual(\Theta)\) in \(\KK^\Grd_{n+k}(\Dual,\Dual\otimes\Dual) = \KK^\Grd_{n+k}(\CONT_0(\VB),\CONT_0(\VB\times_\Base\VB)\).  By construction, \(\nabla\) is the wrong-way element associated to the map
\[
\VB
\cong \VB\times_\Tot \Tot
\xrightarrow{\ID\times_\Tot\delta} \VB\times_\Tot\Tot\times_\Tot \VB
\cong \VB \times_\Base \VB,\qquad
V\ni (\tot,\xi)\mapsto \bigl((\tot,\xi),(r(\tot),0)\bigr),
\]
where~\(r\) is the collar retraction from~\eqref{eq:retract_collar}.  This is homotopic to the diagonal embedding of~\(\VB\) via the homotopy
\[
\VB \times [0,1] \to \VB \times_\Base \VB \times [0,1],\qquad
(\tot,\xi,t)\mapsto \bigl((\tot,\xi),\bigl(r_t(\tot),Dr_t(t\xi)\bigr),t\bigr).
\]
This whole map behaves like an immersion and has a tubular neighbourhood; hence we get a homotopy of wrong-way elements, which connects \(T_\Dual(\Theta)\) to the wrong-way element for the diagonal embedding \(\VB \to \VB \times_\Base \VB\).  Thus~\eqref{eq:Kasparov_dual_3} amounts to the statement \(\flip_!\circ \Delta_!= (-1)^{n+k}\Delta_!\), where \(\Delta\colon \VB\to \VB\times_\Base\VB\) is the diagonal embedding and \(\flip\colon \VB\times\VB\to\VB\times\VB\) maps \((\xi,\eta)\mapsto (\eta,\xi)\).  The maps \(\Delta\) and~\(\flip\) are \(\K\)\nb-oriented because~\(\VB\) is \(\K\)\nb-oriented by~\(\Spinor\).  The sign \((-1)^{k+n}\) appears because the fibres of~\(\VB\) have dimension~\(k+n\) and the flip map changes orientations and \(\K\)\nb-orientations by such a sign.  To actually complete the argument, we must replace the maps above by open embeddings on suitable vector bundles (tubular neighbourhoods) and connect these open embeddings by homotopies.  This is possible (up to the sign \((-1)^{n+k}\)) because these open embeddings are determined uniquely up to isotopy by their differential on the zero-section.  We leave further details to the reader.

The remaining condition~\eqref{eq:Kasparov_dual_2} for a Kasparov dual (Definition~\ref{def:Kasparov_dual}) is equivalent to~\eqref{eq:KK_first_non-trivial_4} for \(p_\Tot^*(A)\) for all \(\Grd\)\nb-\(\Cst\)\nb-algebras~\(A\) because \(\Theta_{p_\Tot^*(A)} = \Theta\otimes\ID_A\).  We consider the more general case of strongly locally trivial bundles right away.

To begin with, we notice that the classes \(\Theta_A\) and~\(\widetilde{\Theta}_A\) are local in the sense required by Theorems \ref{the:KK_first_non-trivial_local} and~\ref{the:second_duality_general_local}.  That is, for each \(\Grd\)\nb-compact subset~\(\Other\) there is a \(\Grd\)\nb-compact subset~\(\Other'\) such that \(\Theta_A\) and~\(\widetilde{\Theta}_A\) restrict to cycles
\[
\Theta_A|_\Other\in \KK^{\Grd\ltimes\Tot}_{n+k} \bigl(A|_{\Other'}, p_\Tot^*(\Dual \otimes_\Tot A|_\Other)\bigr)\quad\text{and}\quad
\widetilde{\Theta}_A|_\Other \in \KK^{\Grd\ltimes\Tot}_{n+k}(A|_{\Other'},\underline{\Dual}\otimes A|_\Other).
\]
Here we let~\(\Other\) be the closure of the set of all \(\tot\in\Tot\) for which there is \(\other\in\Other\) with \((\tot,\other)\in U'\) or \((\other,\tot)\in U'\).  It is clear from the definition that the restrictions of \(\Theta_A\) and~\(\widetilde{\Theta}_A\) to~\(\Other\) factor through~\(\Other'\) as needed.

Let~\(A\) be a strongly locally trivial bundle with isomorphism \(\alpha\colon \pi_1^*(A) \to \pi_2^*(A)\), and let \(f\in\KK^{\Grd\ltimes X}_i(A,p_\Tot^*B)\) for some \(\Grd\)\nb-\(\Cst\)\nb-algebra~\(B\).  Equation~\eqref{eq:KK_first_non-trivial_4} asserts \(\Theta_A\otimes_{\Dual\otimes_\Tot A} T_\Dual(f) = \Theta \otimes_\Tot f\).  By the definition of exterior products, the right hand side is the composition
\[
A \xrightarrow{\Theta\otimes_\Tot \ID_A} \underline{A}\otimes\Dual \xrightarrow{f\otimes\ID_\Dual} \underline{p_\Tot^*(B)}\otimes\Dual.
\]
Both \(\Theta_A\) and \(\Theta\otimes_\Tot\ID_A\) factor through
\[
\vartheta\otimes_\Tot\ID_A\colon A \cong \CONT_0(\Tot)\otimes_\Tot A \to \CONT_0(U_{\pi_1})\otimes_\Tot A = \pi_1^*(A).
\]
Thus it suffices to compare the compositions
\begin{multline*}
  \pi_1^*(A) \xrightarrow[\cong]{\alpha} \pi_2^*(A) \xrightarrow{\subset} \underline{\CONT_0(\Tot)} \otimes (\CONT_0(\VB) \otimes_\Tot A) \\
  \xrightarrow{p_\Tot^* T_\Dual(f)} \underline{\CONT_0(\Tot)} \otimes (\CONT_0(\VB) \otimes_\Tot p_\Tot^*B) \cong p_\Tot^*(\CONT_0(\VB) \otimes B)
\end{multline*}
and
\[
\pi_1^*(A) \xrightarrow{\subset} \underline{A}\otimes\Dual \xrightarrow{f\otimes\ID_\Dual} p_\Tot^*(B)\otimes\CONT_0(\VB).
\]
To see the difference, we view~\(f\) as a family \((f_\tot)_{\tot\in\Tot}\) of Kasparov cycles for \(A_\tot\) and \(p_\Tot^*(B)_\tot = B\).  Then the above compositions are given by families of Kasparov cycles parametrised by~\(U\).  The first composite that describes \(\Theta_A \otimes_{\Dual\otimes_\Tot A} T_\Dual(f)\) yields
\[
(\pi_1^*A)_u \cong A_{\pi_1(u)} \xrightarrow[\cong]{\alpha} A_{\pi_2(u)} \xrightarrow{f_{\pi_2(u)}} B.
\]
The second composite that describes \(\Theta \otimes_\Tot f\) yields
\[
(\pi_1^*A)_u \cong A_{\pi_1(u)} \xrightarrow{f_{\pi_1(u)}} B.
\]
Recall that~\(U\) is the total space of a vector bundle over~\(\Tot\), with bundle map~\(\pi_1\).  Hence the first map is homotopic to the second one via
\[
(\pi_1^*A)_u = A_{\pi_1(u)} \xrightarrow[\cong]{\alpha} A_{\pi_2(t\cdot u)} \xrightarrow{f_{\pi_2(t\cdot u)}} B
\]
for \(t\in [0,1]\) because \(\pi_2(0\cdot u)= \pi_1(u)\) for all \(u\in U\).  This pointwise formula describes a homotopy between the corresponding cycles for \(\KK^{\Grd\ltimes (\Tot\times_\Base\Tot)}_*\bigl(\pi_1^*(A), p_\Tot^*(\Dual\otimes B)\bigr)\).  This establishes~\eqref{eq:KK_first_non-trivial_4} and hence~\eqref{eq:Kasparov_dual_2}, so that \((\Dual,D,\Theta)\) is a Kasparov dual for~\(\Tot\).  Furthermore, the homotopy above is sufficiently local to apply to yield the corresponding condition in Theorem~\ref{the:KK_first_non-trivial_local}.

To establish the first Poincar\'e duality isomorphism for a strongly locally trivial bundle~\(A\), it remains to verify~\eqref{eq:KK_first_non-trivial_3}, that is,
\[
T_\Dual(\Theta_A) = \flip \circ T_\Dual(\Theta\otimes_\Tot \ID_A),
\]
where \(\flip\) exchanges the tensor factors \(\Dual\) and \(\Dual\otimes_\Tot A\) in the target object.  This is closely related to~\eqref{eq:Kasparov_dual_3}.  Roughly speaking, the homotopy that is used to prove~\eqref{eq:Kasparov_dual_3} can be performed over a sufficiently small neighbourhood of the diagonal in \(\Tot\times_\Base\Tot\), so that the coefficients~\(A\) create no further problems because of the isomorphism \(\pi_1^*(A)\cong \pi_2^*(A)\) over such a small neighbourhood.  We leave the details to the reader; once again, the necessary homotopies are local enough to yield the corresponding condition in Theorem~\ref{the:KK_first_non-trivial_local} as well.  Thus Theorems \ref{the:KK_first_non-trivial} and~\ref{the:KK_first_non-trivial_local} apply if~\(A\) is strongly locally trivial.

Now we turn to the second Poincar\'e duality, verifying the two conditions in Theorem~\ref{the:second_duality_general}.  First, \eqref{eq:second_duality_1} requires \(\widetilde{\Theta}_A\) and \(\Theta\otimes_\Tot\ID_A\) to agree up to the flip automorphism \(A\otimes\Dual \cong \Dual\otimes A\)~-- after forgetting the \(\Tot\)\nb-structure.

By definition, \(\widetilde{\Theta}_A\) involves
\[
A \xrightarrow{\tilde\vartheta\otimes_\Tot \ID_A} \CONT_0(U_{\pi_2}) \otimes_\Tot A = \pi_2^*(A) \xrightarrow{\alpha^{-1}} \pi_1^*(A).
\]
The difference between \(\forget_\Tot(\tilde\vartheta)\) and \(\forget_\Tot(\vartheta)\) is only the way \(\CONT_0(\Tot)\) acts on the cycle, and the two actions are clearly homotopic.  This homotopy is not \(\Tot\)\nb-linear, of course, but we can control what happens: we get a Kasparov cycle for
\[
\KK^{\Grd\ltimes\Tot}_*\bigl(A, \CONT_0(U\times[0,1])\bigr),
\]
where we view \(U\times[0,1]\) as a space over~\(\Tot\) via \((u,t)\mapsto \pi_2(t\cdot u)\), using the vector bundle structure on \(\pi_1\colon U\to \Tot\).  The isomorphism~\(\alpha\) pulls back to an isomorphism
\[
\CONT_0(U\times[0,1])\otimes_\Tot A \cong \pi_1^*(A)\otimes \CONT([0,1]) \qquad\text{in \(\Cstarcat_{\Grd\ltimes U\times[0,1]}\).}
\]
Thus \(\forget_\Tot\bigl(\alpha^{-1} \circ (\tilde\vartheta \otimes_\Tot \ID_A)\bigr) = \forget_\Tot(\vartheta\otimes_\Tot\ID_A)\), and this implies~\eqref{eq:second_duality_1}.  Once again, the homotopy is local and also yields the corresponding condition in Theorem~\ref{the:second_duality_general_local}.

It remains to check \eqref{eq:second_duality_2} and the analogous condition in Theorem~\ref{the:second_duality_general_local}; this will also establish that \((\Dual,D,\Theta,\widetilde{\Theta})\) is a local symmetric Kasparov dual.  Instead of giving a detailed computation, we use a less explicit but more coneptual argument using Remark~\ref{rem:second_duality_general}.

Let~\(B\) be a \(\Grd\)\nb-\(\Cst\)\nb-algebra and let \(f\in \KK^{\Grd\ltimes\Tot}_*(A,\Dual\otimes B)\).  We must check
\begin{equation}
  \label{eq:proof_Kasparov_2b}
  \widetilde{\Theta}_A \otimes_A f = \Theta \otimes_\Tot f\qquad
  \text{in \(\KK^\Grd_*(A,\underline{\Dual}\otimes\Dual\otimes B)\).}
\end{equation}
Remark~\ref{rem:second_duality_general} yields
\begin{equation}
  \label{eq:proof_Kasparov_2c}
  \forget_\Tot\bigl(\widetilde{\Theta}_A \otimes_A f\bigr)
  = \forget_\Tot(\Theta \otimes_\Tot f).
\end{equation}
The proof of~\eqref{eq:proof_Kasparov_2c} in Remark~\ref{rem:second_duality_general} constructs a homotopy between both sides using the first condition in Theorem~\ref{the:second_duality_general}, commutativity of exterior products, and the cocommutativity of~\(\nabla\).  Since we have already checked these conditions, we get~\eqref{eq:proof_Kasparov_2c} for free.  But actually, our arguments show a bit more: the homotopy that we get by following through the argument is supported in a small neighbourhood of the diagonal (we make this more precise below).  A straightforward extension of \cite{Connes-Skandalis:Longitudinal}*{Lemma 2.2} now shows that the space of cycles that are supported \emph{sufficiently close} to the diagonal deformation-retracts to the space of cycles supported \emph{on} the diagonal.  Since our homotopy is supported near the diagonal, we conclude that we can modify it so that its support lies on the diagonal, so that we get a sufficiently local and \(\Grd\ltimes\Tot\)-equivariant homotopy.

The support is defined as before Lemma 2.2 in~\cite{Connes-Skandalis:Longitudinal}, but such that the support of a Kasparov cycle for \(\KK^\Grd_*(A,\Dual\otimes\Dual\otimes B)\) is a subset of \(\Tot\times_\Base\Tot\).  Namely, the Hilbert module that appears in such a cycle is a bimodule over \(\CONT_0(\Tot)\), and its support is contained in \(\Tot\times_\Base\Tot\subseteq \Tot\times\Tot\) because the representation of~\(A\) is \(\Base\)\nb-equivariant.

\begin{lemma}
  \label{lem:almost_equivariance_enough}
  Let~\(A\) be strongly locally trivial, and let~\(U'\) be as in Definition~\textup{\ref{def:strongly_locally_trivial}}.  Any Kasparov cycle for \(\KK^\Grd_*(A,\Dual\otimes\Dual\otimes B)\) supported in~\(U'\) is homotopic to a \(\Grd\ltimes\Tot\)-equivariant cycle in a canonical way, so that the space of cycles with support~\(U'\) deformation retracts onto the space of \(\Grd\ltimes\Tot\)-equivariant cycles.
\end{lemma}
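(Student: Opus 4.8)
The plan is to realise the asserted homotopy as the deformation retraction obtained by \emph{scaling towards the diagonal} along the vector bundle structure of~$U'$, with the strong local triviality isomorphism~$\alpha'$ of Definition~\ref{def:strongly_locally_trivial} inserted so that the coefficient bundle~$A$ does not obstruct the retraction. This is the twisted and $\Grd$-equivariant analogue of \cite{Connes-Skandalis:Longitudinal}*{Lemma~2.2}.

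First I would unwind the meaning of ``support in~$U'$''. A Kasparov cycle $(\mathcal E,F)$ for $\KK^\Grd_*(A,\underline{\Dual}\otimes\Dual\otimes B)$ carries two commuting actions of $\CONT_0(\Tot)$ --~one from the $\Tot$-structure of~$A$, one from that of the underlined~$\Dual$~-- which combine, by $\Base$-equivariance, into an action of $\CONT_0(\Tot\times_\Base\Tot)$; support in~$U'$ means this action factors through $\CONT_0(U')$, so that $\mathcal E$ is a field of Hilbert modules over~$U'$. Via~$\pi'_1$ the space~$U'$ is the total space of a vector bundle over~$\Tot$ with zero section the diagonal, and this coordinate is the one coming from~$\underline{\Dual}$, while~$\pi'_2$ records the $A$-coordinate. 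In these terms a $\Grd\ltimes\Tot$-equivariant cycle is precisely one whose underlying $\KK^\Grd$-cycle is supported on the diagonal (there the two $\CONT_0(\Tot)$-structures coincide), so the subspace in the statement sits inside the space of cycles supported in~$U'$.

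Next I would construct the retraction. Linear scaling towards the zero section of $\pi'_1\colon U'\to\Tot$ gives a $\Grd$-equivariant deformation retraction $r_t\colon U'\to U'$, $t\in[0,1]$, with $r_0$ mapping onto the diagonal and $r_1=\ID$. Pulling $(\mathcal E,F)$ back along~$r_t$ produces a field over $U'\times[0,1]$; the only point where this fails to be $\Grd\ltimes(U'\times[0,1])$-equivariant is that the $A$-action is transported along $\pi'_2\circ r_t$, which is transverse to the fibres of~$\pi'_1$. Here~$\alpha'$ enters: pulling it back along $(u,t)\mapsto\bigl(\pi'_1(u),\pi'_2(r_t(u))\bigr)$ identifies this transported $A$-structure with the pull-back along~$\pi'_1$ of the $A$-structure on the diagonal, consistently in~$t$ and $\Grd$-equivariantly~-- exactly the homotopy already used for the first duality around~\eqref{eq:KK_first_non-trivial_4}. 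At $t=1$ one recovers $(\mathcal E,F)$; at $t=0$ one lands on the diagonal, and since~$\alpha'$ restricts there to the identity the resulting cycle keeps the original $A$-action and the original target action and, the two $\Tot$-structures now coinciding, is $\Grd\ltimes\Tot$-equivariant; and for a cycle already supported on the diagonal the homotopy is stationary. Since~$\alpha'$ is determined up to homotopy (the coordinate projections on~$U'$ being homotopy equivalences), the homotopy so produced is canonical, as claimed.

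Finally I would check the analytic points: that scaling towards the diagonal carries Kasparov cycles to Kasparov cycles, continuously in the cycle, so that $t\mapsto r_t^*(\mathcal E,F)$ is a genuine deformation retraction of the space of cycles with support~$U'$ onto the subspace of $\Grd\ltimes\Tot$-equivariant cycles, $\Grd$-equivariant throughout. The non-equivariant, untwisted version of this is \cite{Connes-Skandalis:Longitudinal}*{Lemma~2.2}, and I expect the bulk of the work to be verifying that their argument extends verbatim in the presence of the groupoid action and of the insertion of~$\alpha'$~-- in particular that compactness of $\varphi\cdot(1-F^2)$ is preserved uniformly in~$t$ under pull-back by~$r_t$, and that the construction is compatible with the fact that~$\alpha'$ may only be given in $\KKcat_{\Grd\ltimes U'}$ rather than by an honest isomorphism of $\Grd\ltimes U'$-algebras. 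This extension, rather than any new idea, is the main obstacle.
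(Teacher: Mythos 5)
Your setup is right: support in $U'$ is the support of the combined $\CONT_0(\Tot\times_\Base\Tot)$-action coming from the $\Tot$-structure on~$A$ and on the underlined~$\Dual$, and a $\Grd\ltimes\Tot$-equivariant cycle is precisely one supported on the diagonal. You also correctly identify the two ingredients the paper uses: the local parallel transport~$\alpha'$ and the linear retraction of~$U'$ onto the diagonal along $\pi'_1$.

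Where you diverge from the paper is the mechanism. You propose to deform the \emph{whole} cycle $(\mathcal E,F)$ by pulling it back along the scaling family~$r_t$ and then correcting the $A$-action with~$\alpha'$. The paper does something simpler and sharper: it keeps the Hilbert module~$\mathcal E$ and the operator~$F$ completely fixed and changes \emph{only} the representation~$\varphi$ of~$A$. Concretely, the pair (the $A$-action, the right $\CONT_0(\Tot)$-action from the target) assembles into a $\Grd\ltimes\Tot$-equivariant representation of $\CONT_0(\Tot)\otimes A$; since the cycle is supported in~$U'$ this factors through $p^*_{\supp\mathcal E}(A)$; and $\alpha'$, together with the properness of the coordinate projections on $\supp\mathcal E$, furnishes a genuine $\Grd\ltimes\Tot$-equivariant $^*$-homomorphism $A\to p^*_{\supp\mathcal E}(A)$. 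Precomposing~$\varphi$ with this homomorphism gives the equivariant cycle, and the linear contraction of~$U'$ to the diagonal promotes this to a deformation retraction. Because $(\mathcal E,F)$ never changes, the analytic issue you flag as "the main obstacle" — compactness of $\varphi\cdot(1-F^2)$ uniformly in~$t$ under pull-back — simply does not arise; the compactness of $\varphi(a)(1-F^2)$ is inherited verbatim since the new representation takes values in the old one. Your route would likely work with additional care, and it parallels the homotopy used around~\eqref{eq:KK_first_non-trivial_4}, but the paper's "freeze $(\mathcal E,F)$, only twist the $A$-representation through~$\alpha'$" is a cleaner implementation of the same idea and is why it can cite Connes--Skandalis Lemma~2.2 without reworking its analytic content. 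Note also that the paper's device does genuinely use that $\alpha'$ is an honest $^*$-isomorphism in $\Cstarcat_{\Grd\ltimes U'}$ (so that precomposition makes sense on the level of cycles); the remark that one could get by with an isomorphism only in $\KK$ would require a different argument.
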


\begin{proof}
  Let \((\varphi,F,\mathcal{E})\) be a cycle supported in~\(U'\).  We leave \(F\) and~\(\mathcal{E}\) fixed and only modify the representation~\(\varphi\).  The representation~\(\varphi\) of~\(A\) and the action of \(\CONT_0(\Tot)\) by right multiplication define a \(\Grd\ltimes\Tot\)-equivariant representation of \(p_\Tot^*(A) = \CONT_0(\Tot)\otimes A\) on~\(\mathcal{E}\).  By definition of the support, this factors through
  \[
  \CONT_0(\Tot)\otimes A|_{\supp\mathcal{E}} = p_{\supp\mathcal{E}}^*(A).
  \]
  Since \(\supp \mathcal{E}\subseteq U'\), the coordinate projections \(\supp\mathcal{E}\to \Tot\) are proper and the isomorphism \((\pi_1')^*(A)\cong (\pi_2')^*(A)\) provides a \(\Grd\ltimes\Tot\)\nb-equivariant \(^*\)\nb-homomorphism
  \[
  A\to p_{\supp\mathcal{E}}^*(A).
  \]
  Composition with this \(^*\)\nb-homomorphism retracts the space of cycles supported in~\(U'\) to the space of \(\Grd\ltimes\Tot\)-equivariant cycles.  Since the tubular neighbourhood~\(U\) of \(\delta(\Tot)\) in \(\Tot\times_\Base\VB\) deformation retracts to \(\delta(\Tot)\), the projection \(U'\to\Tot\) is a deformation retraction as well.  Hence the corresponding map on Kasparov cycles supported on~\(U'\) is a deformation retraction.
\end{proof}

Lemma~\ref{lem:almost_equivariance_enough} shows that~\eqref{eq:proof_Kasparov_2c} can be lifted to~\eqref{eq:proof_Kasparov_2b}.  This verifies all the conditions in Theorems \ref{the:second_duality_general} and~\ref{the:second_duality_general_local}, so that we get the second Poincar\'e duality isomorphisms.  This finishes the proof of Theorems \ref{the:tangent_dual_symmetric} and~\ref{the:PD_tangent}.

\subsection{An example: foliation groupoids}
\label{sec:exa_foliations}

We briefly review the construction of the holonomy groupoid~\(\Grd\) of a foliated manifold~\((\Base, \Fol)\) and introduce some actions of~\(\Grd\).  These are automatically bundles of smooth manifolds, so that our general theory applies.  We formulate the duality theorems in this case and sketch how the Euler characteristics defined here are related to the index of the leafwise de Rham operator and the \(L^2\)\nb-Euler characteristic defined in~\cite{Connes:Survey_foliations}, referring to \cites{Emerson-Meyer:Euler, Emerson-Meyer:Equi_Lefschetz} for proofs.  The geometric framework of~\cite{Emerson-Meyer:Correspondences} is much more suitable for actual computations of both Euler characteristics and Lefschetz invariants, so that we do not give any further examples here.

Let \(\base\in\Base\).  The universal cover~\(\tilde{L}\) of the leaf~\(L\) through~\(\base\) is the quotient of the set of paths \(\gamma\colon [0,1]\to L\) with \(\gamma(0) = \base\) by the relation of homotopy with fixed endpoints.  Any such path extends to a holonomy map \(\Sigma_{\gamma(0)}\to\Sigma_{\gamma(1)}\) using the local triviality of the foliation, where \(\Sigma_{\gamma(0)}\) and \(\Sigma_{\gamma(1)}\) are local transversals through \(\gamma(0)\) and~\(\gamma(1)\).  Write \(\gamma\sim\gamma'\) if the paths \(\gamma\) and~\(\gamma'\) generate the same holonomy maps (in particular, they have the same endpoints).  The holonomy covering~\(\hat{L}\) of~\(L\) is the set of holonomy classes of paths~\(\gamma\) with \(\gamma(0)=\base\).  Since homotopic loops generate the same holonomy map, this is a quotient of the universal covering~\(\tilde{L}\).

Let~\(\Grd\) be the set of holonomy classes of paths in leaves with arbitrary endpoints, and let \(r,s\colon \Grd\rightrightarrows\Base\) be the maps that send a path to its endpoints.  Thus the fibre of~\(s\) at~\(\base\) is the holonomy cover~\(\hat{L}\) described above.  Concatenation of paths with matching endpoints defines a multiplication on~\(\Grd\) that turns this into a groupoid.  Finally, there are rather obvious local charts on~\(\Grd\) that turn it into a smooth (non-Hausdorff) manifold.  This manifold is Hausdorff in many situations, for example if the foliation is described by analytic foliation charts.  \emph{We assume from now on that~\(\Grd\) is Hausdorff.}

The following recipe yields some free and proper \(\Grd\)\nb-spaces.  Let \(\pi\colon \Base'\to\Base\) be a space over~\(\Base\).  Let
\[
\Tot \defeq \Grd\times_{s,\pi} \Base' = \{(\gamma,\base')\in \Grd\times\Base' \mid s(\gamma) = \pi(\base')\}
\]
and view this as a space over~\(\Base\) via \(p\colon \Tot\to\Base\), \(p(\gamma,\base')\defeq r(\gamma)\).  Then~\(\Grd\) acts on~\(\Tot\) via multiplication on the left.  It is easy to see that~\(\Grd\) acts freely and properly on~\(\Tot\) because the action of~\(\Grd\) on itself by left multiplication is free and proper.  The second coordinate projection identifies the orbit space \(\Grd\backslash\Tot\) with~\(\Base'\).  For instance, if~\(\pi\) is the identity map, then we get the action of~\(\Grd\) on its morphism space by left multiplication.

If \(\Base'\subseteq\Base\) is a transversal to~\(\Fol\) in the sense that it is a submanifold with \(T_\base\Base' \oplus \Fol_\base = T_\base \Base\) for all \(\base\in\Base'\), then the fibres of \(r\colon \Tot \to \Base = \Base\) are
\[
\Tot_\base = \{\gamma\in\Grd \mid \text{\(s(\gamma) \in\Base'\) and \(r(\gamma) = \base\)}\},
\]
and these subsets of~\(\Grd\) are countable and topologically discrete and thus (zero-dimensional) manifolds.

Returning to the general case of a smooth manifold \(\pi\colon \Base' \to \Base\), note that the space~\(\Tot\) is a smooth manifold because~\(s\) is a submersion.  The space~\(\Tot\) constructed above is a bundle of smooth manifolds over~\(\Base\) if \(p\colon \Tot\to\Base\) is a submersion.  This is equivalent to \(\pi\colon \Base'\to\Base\) being transverse to~\(\Fol\) in the sense that \(\pi(T_{\base'}\Base') + \Fol_{\pi(\base')} = T_\base\Base\) for all \(\base'\in\Base'\).  We assume this transversality from now on.  Then \(\Fol'\defeq \pi^{-1}(\Fol)\) is a foliation on~\(\Base'\) whose leaves are the connected components of the \(\pi\)\nb-pre-images of the leaves of~\(\Fol\).  Transversality of~\(\pi\) implies that a leaf-path in~\(\Fol'\) has non-trivial holonomy if and only if its image in~\(\Fol\) has non-trivial holonomy.

The fibres of \(\Tot\to\Base\) are of the form \(\hat{L}_\base \times_{L_\base} \pi^{-1}(L_\base)\).  The vertical tangent bundle~\(\Tvert\Tot\) of \(p\colon \Tot\to\Base\) is the pull-back of~\(\Tvert \Fol'\) along the coordinate projection \(\Tot\to\Base'\), \((\gamma,\base')\mapsto \base'\), that is, \(\Tvert\Tot \cong \Tot\times_{\Base'}\Tvert \Fol'\).

Theorem~\ref{the:tangent_dual_symmetric} provides a symmetric Kasparov dual for~\(\Tot\), involving \(\Dual = \CONT_0(\Tvert\Tot)\).  We are going to examine the special case where the coefficient \(\Cst\)\nb-algebras \(A\) and~\(B\) in Theorem~\ref{the:tangent_dual_symmetric} are trivial.  The first duality isomorphism yields
\[
\RK^*_\Grd(\Tot) \defeq \RKK^\Grd_*(\Tot;\UNIT,\UNIT) \cong \KK^\Grd_*(\CONT_0(\Tvert\Tot),\UNIT) \eqdef \K_*^{\Grd,\lf}(\Tvert\Tot),
\]
that is, the \(\Grd\)\nb-equivariant representable \(\K\)\nb-theory of~\(\Tot\) agrees with the \(\Grd\)\nb-equivariant locally finite \(\K\)\nb-homology of~\(\Tvert\Tot\).  Since~\(\Grd\) acts freely on~\(\Tot\) with orbit space~\(\Base'\), \(\Grd\ltimes\Tot\) is Morita equivalent to~\(\Base'\).  Hence \(\RK^*_\Grd(\Tot)\cong \RK^*(\Base')\) and the first duality isomorphism yields \(\K_*^{\Grd,\lf}(\Tvert\Tot) \cong \RK^*(\Base')\).

The second duality isomorphism yields
\[
\K_*^{\Grd,\lf}(\Tot) \defeq \KK_*^\Grd(\CONT_0(\Tot),\UNIT) \cong \KK_*^{\Grd\ltimes\Tot}\bigl(\CONT_0(\Tot), \CONT_0(\Tvert\Tot)\bigr) \eqdef \RK^*_{\Grd,\Tot}(\Tvert\Tot),
\]
that is, the \(\Grd\)\nb-equivariant locally finite \(\K\)\nb-homology of~\(\Tot\) agrees with the \(\Grd\)\nb-equivariant \(\K\)\nb-theory of~\(\Tvert\Tot\) with \(\Tot\)\nb-compact support.  Using \(\Tvert\Tot= \Tot\times_{\Base'} \Tvert\Fol'\) and the Morita equivalence \(\Grd\ltimes\Tot\sim\Base'\), we may identify
\[
\KK_*^{\Grd\ltimes\Tot}\bigl(\CONT_0(\Tot), \CONT_0(\Tvert\Tot)\bigr) \cong \KK_*^{\Base'}\bigl(\CONT_0(\Base'), \CONT_0(\Tvert \Fol')\bigr) \eqdef \K^*_{\Base'}(\Tvert \Fol')
\]
that is, we get the \(\K\)\nb-theory with \(\Base'\)\nb-compact support of the underlying space of the distribution~\(\Tvert \Fol'\) on \(\Base'\).

Assume now that~\(\Tot\) is a universal proper \(\Grd\)\nb-space.  Then the results in Section~\ref{sec:dual_EG} show that \(D\in\KK_*^\Grd\bigl(\CONT_0(\Tvert\Tot),\CONT_0(\Base)\bigr)\) is a Dirac morphism for~\(\Grd\) and that the Baum--Connes assembly map for~\(\Grd\) with coefficients~\(B\) is equivalent to the map
\begin{equation}
  \label{eq:foliation_BC_localisation}
  \K_*\bigl(\Grd\ltimes \CONT_0(\Tvert\Tot,B)\bigr) \to
  \K_*\bigl(\Grd\ltimes B\bigr)
\end{equation}
induced by~\(D\).  The second duality isomorphism combined with another natural isomorphism identifies
\[
\varinjlim_{\Other\in I_\Tot} \KK_*^\Grd\bigl(\CONT_0(\Other),B\bigr) \cong \K_*\bigl(\Grd\ltimes \CONT_0(\Tvert\Tot,B)\bigr)
\]
and identifies the map in~\eqref{eq:foliation_BC_localisation} with the index map
\[
\varinjlim_{\Other\in I_\Tot} \KK^\Grd_*(\CONT_0(\Other), B) \to \K_*(\Grd\ltimes B).
\]
By Theorem~\ref{the:dual_EG2}, these assembly maps are isomorphisms if~\(\Grd\) acts properly on~\(B\).

This leads to the question when~\(\Tot\) is universal.  Since~\(\Grd\) acts freely on~\(\Tot\), this forces the holonomy groupoid~\(\Grd\) to be torsion-free; that is, two parallel leaf paths must have the same holonomy once some finite powers of them have the same holonomy.  Another necessary condition for~\(\Tot\) to be universal is that its fibres~\(\Tot_\base\) be contractible for all \(\base\in\Base\); in fact, this condition is sufficient as well because~\(\Grd\) is Morita equivalent to an \'etale groupoid (we omit further details).  Thus~\(\Tot\) is a universal proper \(\Grd\)\nb-space if the holonomy of~\(\Fol\) is torsion-free and the holonomy coverings of the leaf pre-images \(p^{-1}(L_\base)\) are contractible.  Of course, this implies that these holonomy coverings are universal coverings.  For instance, \(\Grd\) itself is a universal proper \(\Grd\)\nb-space if and only if the foliation has torsion-free holonomy and the holonomy covers of the leaves are contractible.

\begin{example}
  Let~\(M\) be a compact, smooth, aspherical manifold and let~\(\tilde{M}\) be its universal cover.  Let \(G \defeq \pi_1(M)\) act by deck transformations on~\(\tilde{M}\) and let~\(V\) be a smooth compact manifold with a free \(G\)\nb-action.  Foliate \(\Base \defeq \tilde{M}\times_G V\) by the images in~\(\Base\) of the slices \(\tilde{M}\times \{v\}\), for \(v\in V\).  This is a foliation with contractible leaves because~\(M\) is assumed aspherical.  In this case, the morphism space~\(\Grd\) is a universal proper \(\Grd\)\nb-space.

  More generally, if the \(G\)\nb-action on~\(V\) is not free, then~\(\Grd\) will still be universal if the holonomy representation of the fundamental group of each leaf is faithful.
\end{example}

Now we allow~\(\Tot\) to be a proper \(\Grd\)\nb-equivariant bundle of smooth manifolds over~\(\Base\).  We want to show that the equivariant Euler characteristic of~\(\Tot\) is the class in \(\KK^\Grd_0(\CONT_0(\Tot),\UNIT)\) given by the family of de Rham operators along the fibres of~\(\Tot\).  It is possible to do this computation using the tangent space dual described above.  But it simplifies if we use the Clifford algebra dual used in \cites{Emerson-Meyer:Euler, Emerson-Meyer:Equi_Lefschetz}.  This does not change the result because the Euler characteristic is independent of the chosen Kasparov dual.  We just sketch the situation here in order to give an example of Euler characteristics in the context of smooth \(\Grd\)\nb-manifolds.
 
Fix an invariant metric on the vertical tangent bundle~\(\Tvert\Tot\) and form the associated bundle of vertical Clifford algebras \(\ctau(\Tot)\); this is a locally trivial bundle of finite-dimensional \(\Cst\)\nb-algebras over~\(\Tot\).  The Thom isomorphism provides an invertible element in \(\KK^{\Grd\ltimes\Tot}_*\bigl(\CONT_0(\Tvert\Tot), \ctau(\Tot)\bigr)\) (the idea in \cite{LeGall:KK_groupoid}*{Th\'eor\`eme 7.4} shows how to get this result equivariantly for groupoids).  Since we have Kasparov duality with \(\CONT_0(\Tvert\Tot)\), we also have it with the Clifford bundle instead of \(\CONT_0(\Tvert\Tot)\).  Even more, since these two duals are \(\KK^{\Grd\ltimes\Tot}\)-equivalent, the first and second duality isomorphisms translate from one to the other, even if the coefficient algebras are non-trivial bundles over~\(\Tot\).  One can check that the classes \(D\) and~\(\Theta\) for this new dual are exactly the same ones as in \cites{Emerson-Meyer:Euler, Emerson-Meyer:Equi_Lefschetz}.  The same easy computation as in the group case in~\cite{Emerson-Meyer:Euler} then shows that
\begin{equation}
  \label{eq:euler:foliation_deRham}
  \Eul_\Grd = [D_\textup{dR}] \in
  \KK^\Grd_*(\CONT_0(\Tot),\CONT_0(\Base)\bigr).
\end{equation}
Here \(D_\textup{dR}\) denotes the de Rham operator along the fibres of the anchor map \(\Tot\to\Base\).  If \(\Tot=\Grd\), then these fibres are just the holonomy covers of the leaves of~\(\Fol\), so that we get the family of de Rham operators along the leaves of the foliation.

Let
\[
\mu\colon \Ktop(\Grd) \to \K_*(\Cstr\Grd)
\]
be the Baum--Connes assembly map and let \(\varphi\colon \Grd \to \mathcal{E\Grd}\) be the classifying map of the proper \(\Grd\)\nb-space \(\Tot\defeq\Grd\).  This induces a map \(\varphi_*\colon \KK^{\Grd}_*\bigl( \CONT_0(\Tot), \CONT_0(\Base)\bigr) \to \Ktop_*(\Grd)\).  The map
\[
\mu_\Tot\defeq \mu\circ \varphi_*\colon \KK^\Grd_*\bigl( \CONT_0(\Tot),\CONT_0(\Base)\bigr)\to \K_*(\Cstr\Grd)
\]
is the \(\Grd\)\nb-equivariant index map for~\(\Grd\).  It maps \(\Eul_\Grd\in \KK^\Grd_0\bigl(\CONT_0(\Tot), \CONT_0(\Base)\bigr)\) to the equivariant index in \(\K_0(\Cstr\Grd)\) of the family of de Rham operators on the holonomy covers of the leaves of the foliation by~\eqref{eq:euler:foliation_deRham}.

If~\(\Lambda\) is a \(\Grd\)-invariant transverse measure on the foliation, then we may pair it with classes in \(\K_0(\Cstr\Grd)\) to extract numerical invariants.  For the equivariant Euler characteristic of the foliation, this yields the alternating sum of its \(L^2\)\nb-Betti numbers,
\begin{equation}
  \label{eq:euler:foliation_deRham_ltwo_version}
  \Eul_\Tot^{(2)} = (\Lambda\circ\mu_\Tot)
  \bigl([D_\textup{dR}]\bigr) = \sum_i (-1)^{i}\beta_{L^2}^{i}.
\end{equation}

\section{Conclusion and outlook}
\label{sec:conclusion}

We have constructed analogues of the first and second Poincar\'e duality isomorphisms in~\cite{Kasparov:Novikov} for proper groupoid actions on \(\Cst\)\nb-algebra bundles over possibly non-compact spaces.  In the simplest case of a smooth manifold without any groupoid action, this generalises familiar isomorphisms \(\K_*^\lf(\Tvert M) \cong \RK^*(M)\) between the locally finite \(\K\)\nb-homology of the tangent bundle and the representable \(\K\)\nb-theory of~\(M\), and \(\K^*_M(\Tvert M) \cong \K_*^\lf(M)\) between the \(\K\)\nb-theory of the tangent bundle with \(M\)\nb-compact support and the locally finite \(\K\)\nb-homology of~\(M\).

These duality isomorphisms follow from the existence of a symmetric Kasparov dual.  We have constructed such a dual for bundles of smooth manifolds, equivariantly with respect to a smooth proper groupoid action.  Furthermore, we have extended the two duality isomorphisms by allowing strongly locally trivial bundles.

A different construction in~\cite{Emerson-Meyer:Euler} provides a dual for a finite-dimensional simplicial complex.  This is, in fact, a symmetric Kasparov dual.  We plan to discuss this elsewhere, together with a discussion of the new aspects that appear for bundles, namely, the singularities that necessarily appear when triangulating bundles of smooth manifolds.  More generally, we may replace simplicial complexes by stratified pseudomanifolds.  A duality isomorphism in this setting was recently established by Claire Debord and Jean-Marie Lescure (see~\cite{Debord-Lescure:K-duality_stratified}).  It remains to establish that this is another instance of a symmetric dual.

The two duality isomorphisms discussed in this article are related to the dual Dirac method and the Baum--Connes Conjecture.  Roughly speaking, the duality shows that the approach to the Baum--Connes Conjecture by Baum, Connes and Higson in~\cite{Baum-Connes-Higson:BC} via the equivariant \(\K\)\nb-homology of the universal proper \(\Grd\)\nb-space and Kasparov's approach using Dirac, dual Dirac, and the \(\gamma\)\nb-element are equivalent whenever the universal proper \(\Grd\)\nb-space has a symmetric Kasparov dual.

The second duality isomorphism reduces \(\KK\)\nb-groups to \(\K\)\nb-groups with support conditions.  This is used in~\cite{Emerson-Meyer:Correspondences} to describe equivariant bivariant \(\K\)\nb-theory groups by geometric cycles (under some assumptions).

Furthermore, we have used the duality to define equivariant Euler characteristics and Lefschetz invariants.  The construction of these invariants only uses formal properties of Kasparov theory and therefore works equally well in purely geometric bivariant theories defined using correspondences.  This seems the appropriate setting for explicit computations of such Lefschetz invariants.

\begin{bibdiv}
  \begin{biblist}
\bib{Baum-Block:Bicycles}{article}{
  author={Baum, Paul},
  author={Block, Jonathan},
  title={Equivariant bicycles on singular spaces},
  language={English, with French summary},
  journal={C. R. Acad. Sci. Paris S\'er. I Math.},
  volume={311},
  date={1990},
  number={2},
  pages={115--120},
  issn={0764-4442},
  review={\MRref {1065441}{92b:19003}},
}

\bib{Baum-Connes-Higson:BC}{article}{
  author={Baum, Paul},
  author={Connes, Alain},
  author={Higson, Nigel},
  title={Classifying space for proper actions and \(K\)\nobreakdash -theory of group \(C^*\)\nobreakdash -algebras},
  conference={ title={\(C^*\)\nobreakdash -Algebras: 1943--1993}, address={San Antonio, TX}, date={1993}, },
  book={ series={Contemp. Math.}, volume={167}, publisher={Amer. Math. Soc.}, place={Providence, RI}, },
  date={1994},
  pages={240--291},
  review={\MRref {1292018}{96c:46070}},
}

\bib{Baum-Higson-Schick:Equivalence}{article}{
  author={Baum, Paul},
  author={Higson, Nigel},
  author={Schick, Thomas},
  title={On the equivalence of geometric and analytic $K$\nobreakdash -homology},
  journal={Pure Appl. Math. Q.},
  volume={3},
  date={2007},
  number={1},
  pages={1--24},
  issn={1558-8599},
  review={\MRref {2330153}{2008d:58015}},
}

\bib{Blackadar:K-theory}{book}{
  author={Blackadar, Bruce},
  title={\(K\)\nobreakdash -theory for operator algebras},
  series={Mathematical Sciences Research Institute Publications},
  volume={5},
  edition={2},
  publisher={Cambridge University Press},
  place={Cambridge},
  date={1998},
  pages={xx+300},
  isbn={0-521-63532-2},
  review={\MRref {1656031}{99g:46104}},
}

\bib{Brodzki-Mathai-Rosenberg-Szabo:D-Branes}{article}{
  author={Brodzki, Jacek},
  author={Mathai, Varghese},
  author={Rosenberg, Jonathan},
  author={Szabo, Richard J.},
  title={D-branes, RR-fields and duality on noncommutative manifolds},
  journal={Comm. Math. Phys.},
  volume={277},
  date={2008},
  number={3},
  pages={643--706},
  issn={0010-3616},
  review={\MRref {2365448}{}},
}

\bib{Connes:Survey_foliations}{article}{
  author={Connes, Alain},
  title={A survey of foliations and operator algebras},
  conference={ title={Operator algebras and applications, Part I}, address={Kingston, Ont.}, date={1980}, },
  book={ series={Proc. Sympos. Pure Math.}, volume={38}, publisher={Amer. Math. Soc.}, place={Providence, R.I.}, },
  date={1982},
  pages={521--628},
  review={\MRref {679730}{84m:58140}},
}

\bib{Connes-Skandalis:Longitudinal}{article}{
  author={Connes, Alain},
  author={Skandalis, Georges},
  title={The longitudinal index theorem for foliations},
  journal={Publ. Res. Inst. Math. Sci.},
  volume={20},
  date={1984},
  number={6},
  pages={1139--1183},
  issn={0034-5318},
  review={\MRref {775126}{87h:58209}},
}

\bib{Debord-Lescure:K-duality_stratified}{article}{
  author={Debord, Claire},
  author={Lescure, Jean-Marie},
  title={$K$\nobreakdash -duality for stratified pseudomanifolds},
  journal={Geom. Topol.},
  volume={13},
  date={2009},
  number={1},
  pages={49--86},
  review={\MRref {2469513}{}},
}

\bib{Echterhoff-Emerson-Kim:Duality}{article}{
  author={Echterhoff, Siegfried},
  author={Emerson, Heath},
  author={Kim, Hyun Jeong},
  title={$KK$-theoretic duality for proper twisted actions},
  journal={Math. Ann.},
  volume={340},
  date={2008},
  number={4},
  pages={839--873},
  issn={0025-5831},
  review={\MRref {2372740}{}},
}

\bib{Emerson:Duality_hyperbolic}{article}{
  author={Emerson, Heath},
  title={Noncommutative Poincare duality for boundary actions of hyperbolic groups},
  journal={J. Reine Angew. Math.},
  volume={564},
  date={2003},
  pages={1--33},
}

\bib{Emerson-Meyer:Euler}{article}{
  author={Emerson, Heath},
  author={Meyer, Ralf},
  title={Euler characteristics and Gysin sequences for group actions on boundaries},
  journal={Math. Ann.},
  volume={334},
  date={2006},
  number={4},
  pages={853--904},
  issn={0025-5831},
  review={\MRref {2209260}{2007b:19006}},
}

\bib{Emerson-Meyer:Equivariant_K}{article}{
  title={Equivariant representable K-theory},
  journal={J. Topol.},
  volume={2},
  date={2009},
  number={1},
  pages={123--156},
  issn={1753-8416},
  review={\MRref {2499440}{}},
}

\bib{Emerson-Meyer:Equi_Lefschetz}{article}{
  author={Emerson, Heath},
  author={Meyer, Ralf},
  title={Equivariant Lefschetz maps for simplicial complexes and smooth manifolds},
  date={2009},
  doi={\doi {10.1007/s00208-009-0367-z}},
  journal={Math. Ann.},
}

\bib{Emerson-Meyer:Correspondences}{article}{
  author={Emerson, Heath},
  author={Meyer, Ralf},
  title={Bivariant K\nobreakdash -theory via correspondences},
  date={2008},
  note={\arxiv {0812.4949}},
  status={eprint},
}

\bib{Kasparov:Invariants_elliptic}{article}{
  author={Kasparov, Gennadi G.},
  title={Topological invariants of elliptic operators. I. K\nobreakdash -homology},
  language={Russian},
  journal={Izv. Akad. Nauk SSSR Ser. Mat.},
  volume={39},
  date={1975},
  number={4},
  pages={796--838},
  translation={ language={English}, journal={Math. USSR-Izv.}, volume={9}, date={1975}, number={4}, pages={751--792 (1976)}, },
  review={\MRref {0488027}{58\,\#7603}},
}

\bib{Kasparov:Operator_K}{article}{
  author={Kasparov, Gennadi G.},
  title={The operator \(K\)\nobreakdash -functor and extensions of \(C^*\)\nobreakdash -algebras},
  language={Russian},
  journal={Izv. Akad. Nauk SSSR Ser. Mat.},
  volume={44},
  date={1980},
  number={3},
  pages={571--636, 719},
  issn={0373-2436},
  translation={ language={English}, journal={Math. USSR-Izv.}, volume={16}, date={1981}, number={3}, pages={513--572 (1981)}, },
  review={\MRref {582160}{81m:58075}},
}

\bib{Kasparov:Novikov}{article}{
  author={Kasparov, Gennadi G.},
  title={Equivariant \(KK\)-theory and the Novikov conjecture},
  journal={Invent. Math.},
  volume={91},
  date={1988},
  number={1},
  pages={147--201},
  issn={0020-9910},
  review={\MRref {918241}{88j:58123}},
}

\bib{Kasparov-Skandalis:Bolic}{article}{
  author={Kasparov, Gennadi G.},
  author={Skandalis, Georges},
  title={Groups acting properly on ``bolic'' spaces and the Novikov conjecture},
  journal={Ann. of Math. (2)},
  volume={158},
  date={2003},
  number={1},
  pages={165--206},
  issn={0003-486X},
  review={\MRref {1998480}{2004j:58023}},
}

\bib{Kubarski-Teleman:Direct_connections}{article}{
  author={Kubarski, Jan},
  author={Teleman, Nicolae},
  title={Linear direct connections},
  conference={ title={Geometry and topology of manifolds}, },
  book={ series={Banach Center Publ.}, volume={76}, publisher={Polish Acad. Sci.}, place={Warsaw}, },
  date={2007},
  pages={425--436},
  review={\MRref {2346971}{}},
}

\bib{Lueck-Rosenberg:Lefschetz}{article}{
  author={L\"uck, Wolfgang},
  author={Rosenberg, Jonathan},
  title={The equivariant Lefschetz fixed point theorem for proper cocompact \(G\)\nobreakdash -manifolds},
  conference={ title={High-dimensional manifold topology}, },
  book={ publisher={World Sci. Publ., River Edge, NJ}, },
  date={2003},
  pages={322--361},
  review={\MRref {2048727}{2005b:57064}},
}

\bib{Lueck-Rosenberg:Euler}{article}{
  author={L\"uck, Wolfgang},
  author={Rosenberg, Jonathan},
  title={Equivariant Euler characteristics and \(K\)\nobreakdash -homology Euler classes for proper cocompact \(G\)\nobreakdash -manifolds},
  journal={Geom. Topol.},
  volume={7},
  date={2003},
  pages={569--613},
  issn={1465-3060},
  review={\MRref {2026542}{2004k:19005}},
}

\bib{LeGall:KK_groupoid}{article}{
  author={Le Gall, Pierre-Yves},
  title={Th\'eorie de Kasparov \'equivariante et groupo\"\i des. I},
  language={French, with English and French summaries},
  journal={\(K\)\nobreakdash -Theory},
  volume={16},
  date={1999},
  number={4},
  pages={361--390},
  issn={0920-3036},
  review={\MRref {1686846}{2000f:19006}},
}

\bib{MacLane:Categories}{book}{
  author={MacLane, Saunders},
  title={Categories for the working mathematician},
  note={Graduate Texts in Mathematics, Vol. 5},
  publisher={Springer-Verlag},
  place={New York},
  date={1971},
  pages={ix+262},
  review={\MRref {0354798}{50\,\#7275}},
}

\bib{Meyer:KK-survey}{article}{
  author={Meyer, Ralf},
  title={Categorical aspects of bivariant \(\textup {K}\)\nobreakdash -theory},
  conference={ title={\(\textup {K}\)\nobreakdash -theory and Noncommutative Geometry}, address={Valladolid, Spain}, date={2006}, },
  book={ title={EMS Ser. Congr. Rep.}, publisher={Europ. Math. Soc. Publ. House}, place={Z\"urich}, date={2008}, },
  pages={1--39},
}

\bib{Meyer-Nest:BC}{article}{
  author={Meyer, Ralf},
  author={Nest, Ryszard},
  title={The Baum--Connes conjecture via localisation of categories},
  journal={Topology},
  volume={45},
  date={2006},
  number={2},
  pages={209--259},
  issn={0040-9383},
  review={\MRref {2193334}{2006k:19013}},
}

\bib{Nilsen:Bundles}{article}{
  author={Nilsen, May},
  title={\(C^*\)\nobreakdash -bundles and \(C_0(X)\)-algebras},
  journal={Indiana Univ. Math. J.},
  volume={45},
  date={1996},
  number={2},
  pages={463--477},
  issn={0022-2518},
  review={\MRref {1414338}{98e:46075}},
}

\bib{Pareigis:C-categories}{article}{
  author={Pareigis, Bodo},
  title={Non-additive ring and module theory. II. \(\mathcal {C}\)\nobreakdash -categories, \(\mathcal {C}\)\nobreakdash -functors and \(\mathcal {C}\)\nobreakdash -morphisms},
  journal={Publ. Math. Debrecen},
  volume={24},
  date={1977},
  number={3-4},
  pages={351--361},
  issn={0033-3883},
  review={\MRref {0498792}{58\,\#16834a}},
}

\bib{Paterson:Groupoids}{book}{
  author={Paterson, Alan L. T.},
  title={Groupoids, inverse semigroups, and their operator algebras},
  series={Progress in Mathematics},
  volume={170},
  publisher={Birkh\"auser Boston Inc.},
  place={Boston, MA},
  date={1999},
  pages={xvi+274},
  isbn={0-8176-4051-7},
  review={\MRref {1724106}{2001a:22003}},
}

\bib{Saavedra:Tannakiennes}{book}{
  author={Saavedra Rivano, Neantro},
  title={Cat\'egories Tannakiennes},
  language={French},
  series={Lecture Notes in Mathematics},
  volume={265},
  publisher={Springer-Verlag},
  place={Berlin},
  date={1972},
  pages={ii+418},
  review={\MRref {0338002}{49\,\#2769}},
}

\bib{Skandalis:KK_survey}{article}{
  author={Skandalis, Georges},
  title={Kasparov's bivariant \(K\)\nobreakdash -theory and applications},
  journal={Exposition. Math.},
  volume={9},
  date={1991},
  number={3},
  pages={193--250},
  issn={0723-0869},
  review={\MRref {1121156}{92h:46101}},
}

\bib{Tu:Novikov}{article}{
  author={Tu, Jean-Louis},
  title={La conjecture de Novikov pour les feuilletages hyperboliques},
  language={French, with English and French summaries},
  journal={\(K\)\nobreakdash -Theory},
  volume={16},
  date={1999},
  number={2},
  pages={129--184},
  issn={0920-3036},
  review={\MRref {1671260}{99m:46163}},
}

\bib{Tu:Twisted_Poincare}{article}{
  author={Tu, Jean-Louis},
  title={Twisted $K$\nobreakdash -theory and Poincar\'e duality},
  journal={Trans. Amer. Math. Soc.},
  volume={361},
  date={2009},
  number={3},
  pages={1269--1278},
  issn={0002-9947},
  review={\MRref {2457398}{}},
}
  \end{biblist}
\end{bibdiv}
\end{document}